\title{Representation theory and homological stability}
\author{Thomas Church and Benson Farb \thanks{The authors
    gratefully acknowledge support from the National Science
    Foundation.}}
\theoremstyle{plain}
\newtheorem{theorem}{Theorem}[section]
\newtheorem{proposition}[theorem]{Proposition}
\newtheorem{lemma}[theorem]{Lemma}
\newtheorem{corollary}[theorem]{Corollary}
\newtheorem{conjecture}[theorem]{Conjecture}
\newtheorem{question}[theorem]{Question}
\newtheorem*{theorem:pc}{Theorem \ref{theorem:principal congruence}}
\newtheorem*{theorem:jk}{Theorem \ref{theorem:johnson}}
\newtheorem*{theorem:br}{Theorem \ref{theorem:brunnian}}
\theoremstyle{definition}
\newtheorem{remark}[theorem]{Remark}
\newtheorem{definition}[theorem]{Definition}
\newtheorem{xample}[theorem]{Example}
\newtheorem{nonexample}[theorem]{Non-example}
\newcommand{\nc}{\newcommand}
\nc{\dmo}{\DeclareMathOperator}
\nc{\I}{\mathcal{I}}
\nc{\K}{\mathcal{K}}
\nc{\U}{\mathcal{U}}
\renewcommand{\H}{\mathcal{H}}
\renewcommand{\L}{\mathcal{L}}
\nc{\Q}{\mathbb{Q}}
\nc{\R}{\mathbb{R}}
\nc{\Z}{\mathbb{Z}}
\nc{\C}{\mathbb{C}}
\nc{\N}{\mathbb{N}}
\dmo{\GL}{GL}
\dmo{\PSL}{PSL}
\nc{\gin}{i}
\nc{\ga}{\Gamma}
\dmo{\Out}{Out}
\dmo{\Aut}{Aut}
\dmo{\Stab}{Stab}
\dmo\im{im}
\dmo\id{id}
\dmo\SL{SL}
\dmo\Sp{Sp}
\dmo\Mod{Mod}
\dmo\PMod{PMod}
\dmo\fd{fd}
\dmo\F{\mathbb{F}}
\dmo\IA{IA}
\dmo\Schur{\mathbb{S}}
\dmo\Sym{Sym}
\dmo\Ind{Ind}
\dmo\Res{Res}
\dmo\tr{tr}
\dmo\gr{gr}
\dmo\Free{Free}
\dmo\spn{span}
\dmo\codim{codim}
\def\Fpbar{\overline{\mathbb{F}}_p}
\dmo\Tor{\mathcal{T}}
\dmo\Torsion{tor}
\def\fsl{\mathfrak{sl}}
\def\fh{\mathfrak{h}}
\def\fn{\mathfrak{n}}
\def\fp{\mathfrak{p}}
\def\fsp{\mathfrak{sp}}
\nc{\bwedge}{\textstyle{\bigwedge}}
\def\cF{\mathcal{F}}
\def\fgl{\mathfrak{gl}}
\renewcommand{\epsilon}{\varepsilon}
\nc{\coloneq}{\mathrel{\mathop:}\mkern-1.2mu=}
\nc{\margin}[1]{}
\nc{\para}[1]{\medskip\noindent\textbf{#1.}}
\begin{document}
\maketitle
\begin{abstract}
  We introduce the idea of \emph{representation stability} (and
  several variations) for a sequence of representations $V_n$ of
  groups $G_n$.  A central application of the new viewpoint we introduce here is
the importation of representation theory into the study of homological stability. 
This makes it possible to extend classical theorems of homological stability to a much broader variety of examples.   Representation stability also
  provides a framework in which to find and to predict patterns, from
  classical representation theory (Littlewood--Richardson and Murnaghan rules,
  stability of Schur functors), to cohomology of groups (pure braid,
  Torelli and congruence groups), to Lie algebras and their homology,
  to the (equivariant) cohomology of flag and Schubert varieties, to
  combinatorics (the $(n+1)^{n-1}$ conjecture).  The majority of this paper is devoted to 
exposing this phenomenon through examples.  In doing this we obtain 
applications, theorems and conjectures.  

Beyond the discovery of new phenomena, the viewpoint of representation stability can be useful in solving problems outside the theory.  In addition to the applications given in this paper, it is applied in \cite{CEF2} to counting problems in number theory and finite group theory.   Representation stability is also used in \cite{C} to give broad generalizations and new proofs 
  of classical homological stability theorems for configuration spaces on oriented manifolds.
  
  \end{abstract}

\maketitle

\newpage
\tableofcontents

\section{Introduction}
In this paper we introduce the idea of \emph{representation stability} (and
  several variations) for a sequence of representations $V_n$ of
  groups $G_n$. 
A central application of the new viewpoint we introduce here is
the importation of representation theory into the study of homological stability. 
This make it possible to extend classical theorems of homological stability to a much broader variety of examples. 
Representation stability also
  provides a framework in which to find and to predict patterns, from
  classical representation theory (Littlewood--Richardson and Murnaghan rules,
  stability of Schur functors), to cohomology of groups (pure braid,
  Torelli and congruence groups), to Lie algebras and their homology,
  to the (equivariant) cohomology of flag and Schubert varieties, to
  combinatorics (the $(n+1)^{n-1}$ conjecture).  The majority of this paper is devoted to 
exposing this phenomenon through examples.  In doing this we obtain 
applications, theorems and conjectures.  

Beyond the discovery of new phenomena, the viewpoint of representation stability can be useful in solving problems outside the theory.  In addition to the applications given in this paper, representation stability is used in \cite{C} to give broad generalizations and new proofs 
  of classical homological stability theorems for configuration spaces on oriented manifolds. In \cite{CEF2} representation stability is applied to counting problems in number theory. 
  
We begin with some context and motivation.

\para{Classical homological stability} Let $\{Y_n\}$ be a sequence of
groups, or topological spaces, equipped with maps (e.g.\ inclusions)
$\psi_n\colon Y_n\to Y_{n+1}$. The sequence $\{Y_n\}$ is
\emph{homologically stable} (over a coefficient ring $R$) if for each
$i\geq 1$ the map
\[(\psi_n)_\ast\colon H_i(Y_n,R)\to H_i(Y_{n+1},R)\] is an isomorphism
for $n$ large enough (depending on $i$).  Homological stability is
known to hold for certain sequences of arithmetic groups, such as
$\{\SL_n\Z\}$ and $\{\Sp_{2n}\Z\}$.  It is also known for braid groups,
mapping class groups of surfaces with boundary, and for (outer)
automorphism groups of free groups, by major results of many people
(including Borel, Arnol'd, Harer, Hatcher--Vogtmann and many others; 
see, e.g.\ \cite{Coh,Vo} and the references therein).  Further,
in many of these cases the stable homology groups have been computed.

In contrast, even for $\Q$ coefficients (or for $\F_p$ coefficients in the arithmetic examples), 
almost nothing is known about the homology of finite
index and other natural subgroups of the above-mentioned groups, even
in the simplest examples.  Indeed, homological stability is known to
fail in many cases, and it is not even clear what a closed-form description
of the homology might look like.  We now consider an example to
illustrate this point.

\para{A motivating example} Consider the set $X_n$ of ordered
$n$--tuples of distinct points in the complex plane:
\[X_n\coloneq \big\{(z_1,\ldots,z_n)\in \C^n\big|z_i\neq z_j\text{ for
  all }i\neq j\big\}.\] The set $X_n$ can be considered as a
hyperplane complement in $\C^n$.  The fundamental group of $X_n$ is
the \emph{pure braid group} $P_n$.  It is known that $X_n$ is
aspherical, and so $H_i(P_n;\Z)=H_i(X_n;\Z)$.  The symmetric group $S_n$
acts freely on $\C^n$ by permuting the coordinates, and this action
clearly restricts to a free action by homeomorphisms on $X_n$.  The
quotient $Y_n\coloneq X_n/S_n$ is the space of unordered $n$--tuples
of distinct points in $\C$.  The space $Y_n$ is aspherical, and so is
a classifying space for its fundamental group $B_n$, the \emph{braid
  group}.  We have an exact sequence:
\[1\to P_n\to B_n\to S_n\to 1\] 

Arnol'd \cite{Ar} and F.\ Cohen \cite{Co} proved that the sequence of
braid groups $\{B_n\}$ satisfies homological stability with integer
coefficients.  Over the rationals, they proved for $n\geq 3$ that
\[H_i(B_n;\Q)=
\begin{cases}
\Q&\text{if }i=0,1\\
0&\text{if }i\geq 2
\end{cases}
\]
and so stability holds in a trivial way.  In contrast, 
\[H_1(P_n;\Q)=\Q^{n(n-1)/2}\] and so the pure braid groups $\{P_n\}$
do not satisfy homological stability, even for $i=1$.

Arnol'd also gave a presentation for the cohomology algebra
$H^\ast(P_n;\Q)$ (see \S\ref{section:braids} for the description).
But we can try to extract much finer information, using representation
theory, as follows.  The action of $S_n$ on the space $X_n$ induces an
action of $S_n$ on the vector space $H^i(P_n;\Q)$, making it an
$S_n$--representation for each $i\geq 0$.  Each of these
representations is finite-dimensional, and so can be decomposed as a
finite direct sum of irreducible $S_n$--representations.  The question
of how many of these summands are trivial is already interesting: an
easy transfer argument gives that \[H^i(B_n;\Q)=H^i(P_n;\Q)^{S_n};\]
that is, $H^i(B_n;\Q)$ is the subspace of $S_n$--fixed vectors in
$H^i(P_n;\Q)$.  Thus we see that the ``trivial piece'' of
$H^i(P_n;\Q)$ already contains the Arnol'd--Cohen computation of
$H^i(B_n;\Q)$; the other summands evidently contain even deeper
information.

Now, the irreducible representations of $S_n$ are completely
classified: they are in bijective correspondence with partitions
$\lambda$ of $n$.  Which irreducibles (that is, which partitions)
occur in the $S_n$--representation $H^i(P_n;\Q)$? What are their
multiplicities?  There have been a number of results in this direction
(most notably by Lehrer--Solomon \cite{LS}), but an explicit count of
the multiplicity of a fixed partition $\lambda$ is known only for a
few $\lambda$; an answer for arbitrary $\lambda$ and arbitrary $i$
seems out of reach.

On the other hand, using the notation $V(a_1,\ldots ,a_r)$ to denote
the irreducible $S_n$--representation corresponding to the partition
$((n-\sum_{i=1}^r a_i), a_1, \ldots ,a_r)$ (see \S\ref{section:reps1}
below for more details), it is not hard to check that
\begin{equation}
\label{eq:h1pn}
H^1(P_n;\Q)=V(0)\oplus V(1)\oplus V(2) \qquad \text{for } n\geq 4.
\end{equation}
Note that, with our notation, the right-hand side of \eqref{eq:h1pn}
has a uniform description, independent of $n$ as long as $n\geq 4$.
More interestingly, using work of Lehrer--Solomon and the computer program Magma, we computed the following:
\begin{equation}
\label{eq:stabilizationinaction}
\begin{array}{l}
  H^2(P_4;\Q) = V(1)^{\oplus 2}\oplus V(1,1) \oplus V(2)\\  \\
  H^2(P_5;\Q) = V(1)^{\oplus 2} \oplus V(1,1)^{\oplus 2}
  \oplus V(2)^{\oplus 2} \oplus 
  V(2,1)\\  \\
  H^2(P_6;\Q) = V(1)^{\oplus 2} \oplus  V(1,1)^{\oplus 2}
  \oplus V(2)^{\oplus 2} \oplus 
  V(2,1)^{\oplus 2} \oplus V(3)\\  \\
  H^2(P_n;\Q) = V(1)^{\oplus 2} \oplus V(1,1)^{\oplus 2}
  \oplus V(2)^{\oplus 2} \oplus 
  V(2,1)^{\oplus 2} \oplus  V(3) \oplus V(3,1)
\end{array}
\end{equation}
where we carried out the computation in the last line for $n=7$, $8$, and $9$. We will see below that the last line of \eqref{eq:stabilizationinaction} in fact holds for all $n\geq 7$, so the irreducible 
decomposition of $H^2(P_n;\Q)$ \emph{stabilizes}.  These low-dimensional ($i=1$, $2$) cases are indicative of a more general
pattern.  The language needed to describe this pattern is given by the
main concept in this paper, which we now describe (in a special case).

\para{Representation stability}
Let $V_n$ be a sequence of $S_n$--representations, equipped with
linear maps $\phi_n\colon V_n\to V_{n+1}$, making the following diagram
commute for each $g\in S_n$:
\[\xymatrix{
  V_n\ar^{\phi_n}[r]\ar_{g}[d]&V_{n+1}\ar^{g}[d]\\
  V_n\ar_{\phi_n}[r]&V_{n+1} }\]
where $g$ acts on $V_{n+1}$ by its image under the standard inclusion
$S_n\hookrightarrow S_{n+1}$.  We call such a sequence of
representations \emph{consistent}.

We want to compare the representations $V_n$ as $n$ varies.  However,
since $V_n$ and $V_{n+1}$ are representations of different groups, we
cannot ask for an isomorphism as representations. But we can ask for
injectivity and surjectivity, once they are properly
formulated. Moreover, by using the decomposition into irreducibles, we
can formulate what it means for $V_n$ and $V_{n+1}$ to be the ``same
representation''.

\begin{definition}[Representation stability, special case] 
  Let $\{V_n\}$ be a consistent sequence of $S_n$--representations.  We
  say that the sequence $\{V_n\}$ is \emph{representation stable} if,
  for sufficiently large $n$, each of the following conditions holds:
\begin{enumerate}[I.]
\item \textbf{Injectivity:} The maps $\phi_n\colon V_n\to V_{n+1}$ are 
  injective.
\item \textbf{Surjectivity:} The span of the $S_{n+1}$--orbit of
  $\phi_n(V_n)$ equals all of $V_{n+1}$.
\item \textbf{Multiplicities:} Decompose $V_n$ into irreducible
  $S_n$--representations as
  \[V_n=\bigoplus_\lambda c_{\lambda,n}V(\lambda)\] with
  multiplicities $0\leq c_{\lambda,n}\leq \infty$. For each $\lambda$,
  the multiplicities $c_{\lambda,n}$ are eventually independent of
  $n$.
\end{enumerate}
\end{definition}

The idea of representation stability can be extended to other families
of groups whose representation theory has a ``consistent naming
system'', for example $\GL_n\Q$, $\Sp_{2n}\Q$ and the hyperoctahedral
groups; see \S\ref{section:repstab:def} for the precise definitions.  

As an easy example, let $V_n=\Q^n$ denote the standard representation
of $\GL_n\Q$.  Then the decomposition $V_n\otimes
V_n=\Sym^2V_n\oplus\bwedge^2V_n$ into irreducibles shows that the
sequence of $\GL_n\Q$--representations $\{V_n\otimes V_n\}$ is
representation stable; see Example~\ref{example:tensor}.  A natural
non-example is the sequence of regular representations $\{\Q S_n\}$ of
$S_n$.  These are not representation stable since, for any partition
$\lambda$, the multiplicity of $V(\lambda)$ in $\Q S_n$ is
$\dim(V(\lambda))$, which is not constant, and indeed tends to
infinity with $n$.

In \S\ref{section:reps1} and \S\ref{section:repsGLSp} we review the representation theory of all the groups we will be considering. In \S\ref{section:repstab:def} we develop the foundations of representation stability, in particular giving a number of useful examples, variations and refinements, such as uniform stability. In particular, we introduce strong stability, used
when one wishes to more finely control the $G_{n+1}$--span of the image of $V_n$ under 
$\phi_n$; this is important for applications.  We also develop the idea of ``mixed tensor stability'', which is meant to capture in certain cases subtle phenomena not detected by representation stability.  

With the above language in hand, we can state our first theorem.
``Forgetting the $(n+1)^{\text{st}}$ marked point'' gives a
homomorphism $P_{n+1}\to P_n$ and thus induces a homomorphism
$H^{i}(P_n;\Q)\to H^{i}(P_{n+1};\Q)$.  For each fixed $i\geq 1$ the
sequence of $S_n$--representations $\{H^i(P_n;\Q)\}$ is 
consistent in the sense given above.  While the exact multiplicities
in the decomposition of $H^i(P_n;\Q)$ into $S_n$--irreducible
subspaces are far from known, we have discovered the following.

\bigskip
\noindent
\textbf{Theorem~\ref{thm:pure}, slightly weaker version.}  {\it For each fixed
  $i\geq 0$, the sequence of $S_n$--representations $\{H^i(P_n;\Q)\}$
  is representation stable.  Indeed the sequence stabilizes once $n\geq 4i$.}

\bigskip See \S\ref{section:braids} for the proof.   Note that the example in \eqref{eq:stabilizationinaction} above shows that the ``stable range'' we give in 
Theorem \ref{thm:pure} is close to being sharp.  

The obvious explanation for the stability in Theorem~\ref{thm:pure}
would be that that each $V(\lambda)\subseteq H^i(P_n;\Q)$ includes
into $H^i(P_{n+1};\Q)$ with $S_{n+1}$--span equal to
$V(\lambda)$, at least for $n$ large enough.  But in fact this
coherence never happens, even for the trivial representation $V(0)$.  Thus the mechanism 
for stability of multiplicities in $\{H^i(P_n;\Q)\}$ must be more subtle, and indeed it is 
perhaps surprising that this stability occurs at all.  
See \S\ref{section:braid} for a discussion.

To prove Theorem~\ref{thm:pure}, we use work of
Lehrer--Solomon \cite{LS} to reduce the problem to a statement about
stability for certain sequences of induced representations of $S_n$.
We conjectured this stability to D.~Hemmer \cite{He}, who then proved
it (and more).  A.~Putman has informed us that he has a different
approach to Theorem~\ref{thm:pure}.  In \S\ref{section:braids} we
derive classical homological stability for $B_n$ with twisted
coefficients as a corollary of Theorem~\ref{thm:pure}. We extend these results to generalized braid groups in \S\ref{section:gbg}.

\para{Three applications}
In joint work \cite{CEF2} with Jordan Ellenberg, we use the
Grothendieck--Lefschetz trace formula to translate results proved here
on the representation-stable cohomology of spaces into counting theorems about points on varieties
over finite fields.  We then apply this to obtain statistics for 
polynomials over $\F_q$ and for maximal tori in certain finite groups of Lie type such as $\GL_n(\F_q)$.   

For each fixed partition $\lambda$, stability for  
the multiplicity of $V(\lambda)$ in $\{H^i(P_n;\Q)\}$ is
related to a different counting problem in $\F_q[T]$.  For example,
Theorem~\ref{thm:pure} for the sign representation $V(1,\ldots ,1)$
implies that the discriminant of a random monic squarefree polynomial
is equidistributed between residues and non-residues in $\F_q^{\times}$.
Theorem~\ref{thm:pure} for the standard representation $V(1)$ implies
that the expected number of linear factors of a random monic
squarefree polynomial of degree $n$ is
\[1-\frac{1}{q}+\frac{1}{q^2}-\frac{1}{q^3}+\cdots\pm\frac{1}{q^{n-2}}.\]
The stability of
$\{H^i(P_n;\Q)\}$ itself, even without knowing what the stable
multiplicities are, already implies that the associated counting
problems all have limits as the degree of the polynomials tends to
infinity.   One can also obtain the Prime Number Theorem for $\F_q[T]$, counting the number of
irreducible polynomials of degree $n$, this way. At present this approach 
reproduces results already known to analytic number theorists,
but our methods should generalize to wider classes of
examples, such as sections of line bundles on curves other than $\mathbf{P}^1$.  

In \cite{CEF2} we also give an application of representation stability of the cohomology of 
flag varieties (see Section~\ref{section:flags}), obtaining for each $V(\lambda)$ a counting theorem for maximal tori in $\GL_n\F_q$ and for Lagrangian tori in $\Sp_{2n}\F_q$. For the trivial representation
$V(0)$ we obtain Steinberg's theorem that the number of maximal tori in $\GL_n\F_q$ is $N=q^{n^2-n}$.  The standard representation $V(1)$ gives a formula for the expected number of eigenvectors of a random maximal torus in $\GL_n(\F_q)$ which are defined over $\F_q$.  The
sign representation gives a theorem of Srinivasan \cite[Lemma 5]{Sr}: when splitting a random maximal torus into irreducible factors, the number of factors is more likely to be even than odd, with bias exactly $\frac{1}{\sqrt{N}}$.

Another application of representation stability is given in \cite{C}. Thinking of  Theorem~\ref{thm:pure} as a statement about the configuration space of points in the plane, this is generalized to prove representation stability for the cohomology of ordered configuration spaces on an arbitrary orientable manifold.  Specializing to the case of stability for the trivial representation 
already gives new proofs and vast generalizations of classical homological stability 
theorems of McDuff and Segal for open manifolds. One reason these theorems were not known for general manifolds is that for closed manifolds, there are no maps connecting the unordered configuration spaces for different numbers of points, so it is hard to compare these different spaces, and indeed homological stability often fails integrally. Looking instead at representation stability for the ordered configuration spaces makes it possible to relate different configuration spaces, then push the results down to unordered configuration spaces by taking invariants.

\para{Representation stability in group homology} The example of pure
braid groups given above fits into a much more general framework.
Suppose $\Gamma$ is a group with normal subgroup $N$ and quotient
$A\coloneq \Gamma/N$.  The conjugation action of $\Gamma$ on $N$
induces a $\Gamma$--action on the group homology (and cohomology) of
$N$, with any coefficients $R$. This action factors through an
$A$--action on $H_i(N,R)$, making $H_i(N,R)$ into an $A$--module.

As with pure braid groups, the structure of $H_i(N,R)$ as an
$A$--module encodes fine information.  For example, the
transfer isomorphism shows that when $A$ is finite and $R=\Q$ the space
$H_i(\Gamma;\Q)$ appears precisely as the subspace of $A$--fixed
vectors in $H_i(N;\Q)$.  But there are typically many other summands,
and knowing the representation theory of $A$ (over $R$) gives us a
language with which to access these.

The following table summarizes some of the examples fitting in to this
framework.  Each example will be explained in detail later in this
paper: the first in \S\ref{section:braids}, the second and third in \S\ref{section:torelli}, and the fourth and fifth in \S\ref{section:congruence}.

\vspace{.2in}
\begin{tabular}{c|c|c|c|c}
  kernel $N$ & group $\Gamma$ & acts on & quotient $A$
  & $H_1(N,R)$ for big $n$\\
  \hline 
  & & & & \\
  $P_n$ & $B_n$ & $\{1,\ldots ,n\}$ & $S_n$ & ${\rm Sym}^2V/V$\\
  & & & & \\
  Torelli group $\I_n$ & mapping class&$H_1(\Sigma_n,\Z)$&$\Sp_{2n}\Z$& 
  $\bwedge^3V/V$ \\
  & group $\Mod_n$ & & & \\
  & & & & \\
  $\IA(F_n)$&$\Aut(F_n)$&$H_1(F_n,\Z)$&$\GL_n\Z$
  &$V^\ast\otimes \bwedge^2V$\\
  & & & & \\
  congruence&$\SL_n\Z$&$\F_p^n$&$\SL_n\F_p$&$\fsl_n\F_p$\\
  subgroup $\Gamma_n(p)$& & & &\\
  & & & & \\
  level $p$ subgroup&$\Mod_n$&$H_1(\Sigma_n;\F_p)$&$\Sp_{2n}\F_p$&
  $\bwedge^3 V/V\oplus\ \fsp_{2n}\F_p$\\
  $\Mod_n(p)$&&&&
  \end{tabular}

\vspace{.3in} Here $R=\Q$ in the first three examples, $R=\F_p$ in the
fourth and fifth, and $V$ stands in each case for the standard
representation of $A$. In the last example $p$ is an odd prime. 

In each of the examples given, the groups $\Gamma$ are known to
satisfy classical homological stability.  In contrast, the rightmost
column shows that none of the groups $N$ satisfies homological
stability, even in dimension 1.  In fact, except for the first
example, almost nothing is known about the $A$--module $H_i(N,R)$ for
$i>1$, and indeed it is not clear if there is a nice closed form
description of these homology groups.  However, the appearance of some
kind of ``stability'' can already be seen in the rightmost column, as
the names of the irreducible composition factors of these $A$--modules are
constant for large enough $n$; this is discussed in detail for each
example in the body of the paper.

A crucial observation for us is that each of the groups $A$ in the
table above has an inherent stability in the naming of its irreducible algebraic
representations (over $R$).  For example, an irreducible algebraic
representation of $\SL_n$ is determined by its highest weight vector,
and these vectors may be described uniformly without reference to
$n$. For example, for $\SL_n$ the irreducible representation
$V(L_1+L_2+L_3)$ with highest weight $L_1+L_2+L_3$ is isomorphic to
$\bigwedge^3 V$ regardless of $n$, where $V$ is the standard
representation of $\SL_n$ (see Section~\ref{section:repsGLSp} for the
representation theory of $\SL_n$).  This inherent stability can be
used, at least conjecturally, to give a closed form description for
$H_i(N,R)$ (for $n$ large enough, depending on $i$).  One idea is that
the growth in $\dim_R(H_i(N,R))$ should be fully accounted for by the
fact that each element of $H_i(N,R)$ brings along with it an entire
$A$--orbit.

\para{Homology of Lie algebras}  
In \S\ref{section:liealg} we develop representation stability for Lie algebras and their homology.  
The main theoretical result here,  Theorem~\ref{thm:equivhomLie}, proves the equivalence between stability for a family of Lie algebras and stability for its
  homology.  Both directions of this implication are applied to
  give nontrivial results. For example, in Corollary~\ref{corollary:nilp} we deduce stability for the homology of nilpotent Lie algebras, which is quite complicated, from stability for the homology of free Lie algebras, which is trivial to compute; the proof uses both directions of Theorem~\ref{thm:equivhomLie} in an essential way. We also give applications to the adjoint homology of free nilpotent Lie algebras   (Corollary \ref{cor:adjnil}) and the homology of Heisenberg Lie algebras (Examples~\ref{example:Heis} and \ref{example:adjHeis}).   
  
Although homological stability results for lattices in semisimple Lie groups has been known for some time, we emphasize that  there do not seem to have been any stability results for the homology of lattices in nilpotent Lie groups. Since Nomizu proved in 1954 that  the rational homology of a lattice in a nilpotent Lie group $N$ is isomorphic to the Lie algebra homology of the Lie algebra of $N$, such homological stability results follow from our theorems on nilpotent Lie algebras.

\para{The ubiquity of representation stability}
The phenomenon of representation stability occurs in a number of
different places in mathematics.   The majority of this paper is devoted to 
exposing this phenomenon through examples.  In doing this we obtain applications, theorems and conjectures. The examples include:

\begin{enumerate}

\item Classical representation theory
  (\S\ref{section:classicalstability}): stability of Schur functors;
  Littlewood--Richardson rule; Murnaghan's theorem on stability of
  Kronecker coefficients; other natural constructions.  These
  constructions arise in most other examples, and so their stability
  underlies the whole theory of representation stability.
  
\item Cohomology of moduli spaces (\S\ref{section:braids}, \S\ref{section:torelli}):  pure braid groups and generalized pure braid groups; conjecturally in the Torelli subgroups 
of mapping class groups $\Mod(S)$ and the analogue for automorphism groups $\Aut(F_n)$ of free groups.  
We prove representation stability for the homology of pure
  braid groups in Theorem~\ref{thm:pure} and pure generalized braid groups in Theorem~\ref{thm:genpure}.  In \S\ref{section:torelli} we give a number of conjectures about the stable homology of the Torelli groups and their analogues.  Previously there had been few (if any) general suggestions in this direction.

\item Lie algebras (\S\ref{section:liealg}): graded components of free 
Lie algebras; homology of various families of Lie algebras, for
  example free nilpotent Lie algebras and Heisenberg Lie algebras; Malcev
  completions of surface groups and (conjecturally) pure braid groups.  As discussed in the introduction, the main tool proved is Theorem~\ref{thm:equivhomLie}.  We apply it to prove 
  representation stability for various nilpotent Lie algebras.
  
\item (Equivariant) cohomology of flag and Schubert varieties
  (\S\ref{section:flags}).  As explained in \S\ref{section:flags}, the space $\cF_n$ of complete flags in $\C^n$ admits a nontrivial action of $S_n$, and the resulting representation on $H^i(\cF_n;\Q)$ is rather complicated.  Similarly, the hyperoctahedral group acts on the space $\cF'_n$ of complete flags on Lagrangian subspaces of $\C^{2n}$.   
  For each $i\geq 1$ the natural families $\{H^i({\cF_n};\Q)\}$ and 
  $\{H^i({\cF'_n};\Q))\}$ do not satisfy classical (co)homological stability.  However, we prove in each case (see Theorem \ref{thm:flag} and Theorem \ref{thm:flagsp}) that these sequences are representation stable.     
  
  Another class of well-studied families of varieties are the \emph{Schubert varieties}.  Each permutation $w$ of any finite set determines a family $\{X_w[n]\}$ of Schubert varieties 
  (see \S\ref{section:schubert}).  For each $i\geq 0$ the ($T$--equivariant, for a certain torus $T$) cohomology $H_T^i(X_w[n];\Q)$ admits a non-obvious action by $S_n$.  While the sequence 
  $\{H_T^i(X_w[n];\Q)\}$ does not satisfy homological stability in the classical sense, we prove in Theorem \ref{theorem:schubert} that this sequence is representation stable.

\item Algebraic combinatorics (\S\ref{section:flags}): Lefschetz
  representations associated to rank-selected posets and
  cross-polytopes.  Here Stanley's counts of multiplicities in
  terms of various Young tableaux are shown to give representation
  stability.  We also conjecture representation stability for the bigraded pieces of the diagonal
  coinvariant algebras.  This gives an ``asymptotic refinement'' of the famous 
  $(n+1)^{(n-1)}$ conjecture in algebraic combinatorics (proved by Haiman), as well as conjectures for ``higher coinvariant algebras'', where very little is known.

\item Homology of congruence subgroups and modular representations.  In 
\S\ref{section:congruence} we study congruence subgroups of certain arithmetic groups and their analogues for $\Mod(S)$ and for $\Aut(F_n)$.  Each of these groups $\Gamma$ admits an 
action by outer automorphisms by a finite group $G$ of Lie type, such as $G=\SL_n(\F_p)$.   This action makes each homology vector space $H_i(\Gamma,\F_p)$ a $G$--representation.    
As $p$ divides the order of $G$, this is a modular representation.  Thus, in order to obtain results and conjectures about these important representations, we need to develop a 
version of our theory using modular representation theory.  
Here a new phenomenon  occurs: \emph{stable periodicity} of a sequence of representations
  (see \S\ref{section:congruence}).  
  
  For each of the sequences of groups $\Gamma$ above we 
  state a ``stable periodicity conjecture'' for its homology with $\F_p$ coefficients.   The 
  few computations that have been completely worked out are almost all in degree $1$, and these 
  use deep mathematics (e.g.\ the congruence subgroup problem, work of Johnson, etc.).   These computations show that our conjectures are satisfied for $H_1$.  {See \S\ref{section:congruence} for details.}
  \end{enumerate}

\para{Historical notes} Various stability phenomena have been
known in representation theory at least as far back as the 1930s, when
formulas were given for the decomposition of tensor products of
irreducible representations of $\SL_n\Q$ (by Littlewood--Richardson,
see e.g.\ \cite[Appendix A]{FH}) and of the symmetric group $S_n$ (by
Murnaghan \cite{Mu}). Some aspects of representation stability can be
found in previous work on Lie algebras.  Related ideas appear in terms
of mixed tensor representations in the work of Hanlon \cite{Han} and
R.~Brylinski \cite{Bry} on Lie algebra cohomology of non-unital
algebras; in Tirao's description in \cite{Ti} of the homology of free
nilpotent Lie algebras; and in Hain's description in \cite{Ha} of the
associated graded Lie algebra of the fundamental group of a closed
surface. In an unpublished 1991 manuscript, Hain conjectured a phenomenon quite close to representation stability for the action of $\Sp_{2g}\Z$ on $H^i(\I_{g,1};\Q)$, as $g$ varies and $i\geq 0$ is fixed.

\para{Recent work} The first version of the present paper was posted in August 2010.  Since that time, a number of papers have appeared that relate to this paper or build on its ideas.  Representation stability has been shown to hold for certain polynomial algebras \cite{AAB}, the second homology of the Torelli group \cite{BD} (partially verifying Conjecture~\ref{conjecture:torelli} below), the homology of moduli spaces of $n$-pointed Riemann surfaces \cite{J1,J2}, the cohomology of certain hyperplane arrangements \cite{Mo}, syzygies of line bundles on Segre--Veronese varieties \cite{Ra}, the cohomology of pure string motion groups \cite{Wi1} (verifying Conjecture~\ref{conjecture:string} below), and 
the cohomology of configuration spaces of $n$ ordered points on any manifold \cite{C}.  

In \cite{Pu2} Putman devised for symmetric groups a similar theory of ``central stability'' and applied it to the homology of congruence subgroups.  Related applications have also been given in \cite{CE} and \cite{CEFN}. 

In joint work with Ellenberg \cite{CEF1}, we expanded the theory of representation stability for $S_n$-representations   via the study of ``FI-modules''.  This theory allows us to prove representation stability for many more examples, including diagonal co-invariant algebras (answering Question~\ref{question:coinv} below) and the Malcev Lie algebras of the pure braid groups (proving Conjecture~\ref{conjecture:malcevpn} below). It also has applications outside the theory of representation stability. Most notably, we prove that the characters of representation stable 
sequences are ``polynomial'' for large $n$; in particular, the dimension of a representation stable sequence is eventually polynomial. We also use FI-modules to reprove and extend (e.g.\ to finite field coefficients) the representation stability of the cohomology of configuration spaces originally proved in \cite{C}. In further joint work with Ellenberg and Nagpal~\cite{CEFN}, we showed that FI-modules make it possible to extend the theory to integral coefficients, where the methods of the current paper are not applicable. The theory of representation stability and FI-modules has been extended by Wilson~\cite{Wi2} to other sequences of Weyl groups.

FI-modules in turn can be considered as a special case of much more general structures called ``twisted commutative algebras'', a theory developed in \cite{Sn,SS1,SS2,SS3} by Sam and Snowden as a different approach to ``stable representation theory''.

\para{Acknowledgements} We thank Jon Alperin, Jordan
Ellenberg, Victor Ginzburg, David Hemmer, Rita Jimenez Rolland, Brian Parshall, Andy Putman, Steven Sam, 
Richard Stanley, and Paulo Tirao for helpful discussions.  We are grateful to Jenny Wilson for useful comments, and in particular for pointing out a simplification to the proof of Theorem~\ref{thm:genpure}.

\section{Representation stability}
\label{section:representationstability}

In order to define representation stability and its variants, we will
need to be precise in the labeling of the irreducible
representations of the various groups we consider. We begin by
reviewing the representation theory of the following families of
groups in order to establish uniform notation across the different
families.

In this section $G_n$ will always denote one of the following families of groups:
\begin{itemize}
\item $G_n=\SL_n\Q$, the special linear group.
\item $G_n=\GL_n\Q$, the general linear group.
\item $G_n=\Sp_{2n}\Q$, the symplectic group.
\item $G_n=S_n$, the symmetric group.
\item $G_n=W_n$, the hyperoctahedral group.
\end{itemize}

By a \emph{representation} of a group $G$ we mean a $\Q$--vector space
equipped with a linear action of $G$.  With the exception of Section~\ref{section:congruence}, throughout this paper we work
over $\Q$, but the definitions and results hold over any field of
characteristic $0$, in particular over $\C$.  In Section~\ref{section:congruence} we will extend the definition of representation stability to modular representations of $\SL_n\F_p$ and
$\Sp_{2n}\F_p$. 

\subsection{Symmetric and hyperoctahedral groups}
\label{section:reps1}

Our basic reference for representation theory is Fulton--Harris
\cite{FH}.  For hyperoctahedral groups, see Geck--Pfeiffer \cite[\S
1.4 and \S 5.5]{GP}.

\para{Symmetric groups} The irreducible representations of $S_n$ are
classified by the partitions $\lambda$ of $n$. A \emph{partition} of
$n$ is a sequence $\lambda=(\lambda_1\geq \cdots \geq \lambda_\ell\geq0)$
with $\lambda_1+\cdots+\lambda_\ell=n$; we write $|\lambda|=n$ or
$\lambda\vdash n$. These partitions are identified with
Young diagrams, where the diagram corresponding to $\lambda$ has
$\lambda_i$ boxes in the $i$th row. We identify partitions if their nonzero entries coincide; every partition then can be uniquely written with $\lambda_\ell>0$, in which case we say that $\ell=\ell(\lambda)$ is the \emph{length} of $\lambda$. The irreducible representation
corresponding to the partition $\lambda$ is denoted $V_\lambda$. This
irreducible representation can be obtained as the image $\Q S_n \cdot
c_\lambda$ of a certain idempotent $c_\lambda$ in the group algebra
$\Q S_n$. The fact that every irreducible representation of $S_n$ is
defined over $\Q$ implies that any $S_n$--representation defined over $\Q$
decomposes over $\Q$ into irreducibles. Since every representation of $S_n$ is defined over $\Q$, or alternately since $g$ is conjugate to $g^{-1}$ for all $g\in S_n$, every representation of $S_n$ is self-dual.

For example, the irreducible $V_{(n-1,1)}$ is the standard
representation of $S_n$ on $\Q^n/\Q$. The representation
$\bwedge^3 V_{(n-1,1)}$ is the irreducible representation
$V_{(n-3,1,1,1)}$. To remove the dependence of this notation on $n$,
we make the following definition. If $\lambda=(\lambda_1,\ldots,\lambda_\ell)\vdash k$ is any
partition of a fixed number $k$, then for any $n\geq
k+\lambda_1$ we may define the \emph{padded partition}
\[\lambda[n]=(n-k,\lambda_1,\ldots,\lambda_\ell).\]
The condition $n\geq k+\lambda_1$ is needed so that this sequence is
nonincreasing and defines a partition. For $n\geq k+\lambda_1$ we then
define $V(\lambda)_n$ to be the irreducible $S_n$--representation
\[V(\lambda)_n=V_{\lambda[n]}.\] Every irreducible representation of
$S_n$ is of the form $V(\lambda)_n$ for a unique partition
$\lambda$. When unambiguous, we denote this representation simply by
$V(\lambda)$. In this notation, the standard representation is $V(1)$,
and the identity $\bwedge^3 V(1)=V(1,1,1)$ holds whenever both sides
are defined.

\para{Hyperoctahedral groups} The hyperoctahedral group $W_n$ is the
wreath product $\Z/2\Z\wr S_n$; that is, the semidirect product
$(\Z/2\Z)^n\rtimes S_n$ where the action is by permutations.  $W_n$ can also
be thought of as the group of signed permutation matrices. General
analysis of wreath products shows that the irreducible representations
of $W_n$ are classified by \emph{double partitions}
$(\lambda^+,\lambda^-)$ of $n$, meaning that
$|\lambda^+|+|\lambda^-|=n$. Given any representation $V$ of $S_n$, we
may regard $V$ as a representation of $W_n$ by pullback.  The
irreducible representation $V_\lambda$ of $S_n$ yields the irreducible
representation $V_{(\lambda,0)}$ of $W_n$. Let $\nu$ be the
one-dimensional representation of $W_n$ which is trivial on $S_n$,
while each $\Z/2\Z$ factor acts by $-1$.  Then $V_{(\lambda,0)}\otimes
\nu=V_{(0,\lambda)}$. In general, if $\lambda^+\vdash k$ and
$\lambda^-\vdash n-k$, the irreducible $V_{(\lambda^+,\lambda^-)}$ is
obtained as the induced representation
\[V_{(\lambda^+,\lambda^-)}=\Ind_{W_k\times W_{n-k}}^{W_n}
V_{(\lambda^+,0)}\boxtimes V_{(0,\lambda^-)}\]
 where $V_{(\lambda^+,0)}\boxtimes V_{(0,\lambda^-)}$ denotes the vector space $V_{(\lambda^+,0)}\otimes V_{(0,\lambda^-)}$ considered as a representation of $W_k\times W_{n-k}$.  
 
 As before, for an arbitrary double partition
$\lambda=(\lambda^+,\lambda^-)$ with $|\lambda^+|+|\lambda^-|=k$, for
$n\geq k+\lambda^+_1$ we define the padded partition
\[\lambda[n]=((n-k,\lambda^+),\lambda^-),\]
and define $V(\lambda)_n$ to be the irreducible $W_n$--representation
\[V(\lambda)_n=V_{\lambda[n]}.\] Every irreducible representation of
$W_n$ is of the form $V(\lambda)_n$ for a unique double partition
$\lambda$.

\subsection{The algebraic groups $\SL_n$, $\GL_n$ and $\Sp_{2n}$}
\label{section:repsGLSp}

In this subsection we recall the representation theory of the
algebraic groups $\SL_n$, $\GL_n$ and $\Sp_{2n}$.  

\para{Special linear groups} We first review the representation theory
of $\SL_n\Q$ and $\GL_n\Q$. There is an interplay between two
important perspectives here, that of highest weight vectors and that
of Schur functors.

Every representation of $\SL_n\Q$ induces a representation of the Lie
algebra $\fsl_n\Q$. Fixing a basis gives a triangular decomposition
$\fsl_n\Q=\fn^-\oplus \fh\oplus \fn^+$, consisting of strictly lower
triangular, diagonal, and strictly upper triangular matrices
respectively. Given a representation $V$ of $\fsl_n\Q$, a
\emph{highest weight vector} is a vector $v\in V$ which is an
eigenvector for $\fh$ and is annihilated by $\fn^+$. Every irreducible
representation contains a unique highest weight vector and is
determined by the corresponding eigenvalue in $\fh^*$, called a
\emph{weight}.

Considering the obvious basis for the diagonal matrices, we obtain
dual functionals $L_i$; this
yields \[\fh^*=\Q[L_1,\ldots,L_n]/(L_1+\cdots+L_n=0).\] Every weight
lies in the \emph{weight lattice}
\[\Lambda_W=\Z[L_1,\ldots,L_n]/(L_1+\cdots+L_n).\]

The \emph{fundamental weights} are $\omega_i=L_1+\cdots+L_i$. A
\emph{dominant weight} is a weight that can be written as a
nonnegative integral combination $\sum c_i\omega_i$ of the fundamental
weights. A highest weight vector always has a dominant weight as its
eigenvalue, and every dominant weight is the highest weight of a
unique irreducible representation. If $\lambda=\sum c_i\omega_i$ is a
dominant weight, we denote by $V(\lambda)_n$ the irreducible
representation of $\SL_n\Q$ with highest weight $\lambda$. These
representations remain distinct and irreducible when restricted to
$\SL_n\Z$.

We now give another labeling of the irreducible $\SL_n\Q$--representations.  Let $\lambda=(\lambda_1\geq \cdots \geq \lambda_\ell)$ be a
partition of $d$.  Each such partition determines a \emph{Schur
  functor} $\Schur_\lambda$ which attaches to any vector space $V$ the
vector space
\[\Schur_\lambda(V)=V^{\otimes d}\otimes_{\Q S_d}V_\lambda,\]
where $\Q S_d$ acts on $V^{\otimes d}$ by permuting the factors. If
$\dim V$ is less than the number of rows $\ell(\lambda)$ of $\lambda$, then 
$\Schur_\lambda(V)$ is the zero representation.  If $V$ is a representation of a
group $G$, the induced action makes $\Schur_\lambda(V)$ a
representation of $G$ as well.

Consider the standard representation of $\SL_n\Q$ on $\Q^n$. For any
partition $\lambda=(\lambda_1\geq\cdots\geq \lambda_n\geq 0)$ with at
most $n$ rows, the resulting representation $\Schur_\lambda(\Q^n)$ is 
isomorphic to $V(\lambda_1L_1+\cdots+\lambda_nL_n)_n$ as
$\SL_n\Q$--representations. In particular, $\Schur_\lambda(\Q^n)$ is
irreducible, and all irreducible representations arise this way;  see
\cite[\S 6 and \S 15.3]{FH}.  For example, let $V=\Q^n$.  When $\lambda=d$ is the trivial partition
then $\Schur_\lambda(V)=\Sym^dV$, and when $\lambda=1+\cdots +1$ then
$\Schur_\lambda(V)=\bwedge^dV$. Note that since $L_1+\cdots+L_n=0$,
two partitions $\lambda$ and $\mu$ determine the same
$\SL_n\Q$--representation if and only if $\lambda_i-\mu_i$ is constant
for all $1\leq i\leq n$. Thus we may always take our partitions to
have $\lambda_n=0$ (see the ``important notational convention'' remark below).

If $\lambda=(\lambda_1\geq\ldots \geq \lambda_k\geq 0)$ is a partition
with $k$ rows, then for any $n>k$ we define
\begin{equation}
  \label{eq:Vlambda}
  V(\lambda)_{n}\coloneq \Schur_{\lambda}(\Q^n).
\end{equation}
With this convention, every irreducible representation of $\SL_n\Q$ is
of the form $V(\lambda)_n$ for a unique partition $\lambda$.  As
before, we will sometimes refer to $V(\lambda)_n$ as $V(\lambda)$ when
the dimension is clear from context. Note that with this terminology 
$V(3,1)_n$ has the same meaning as $V(3,1,0,0)_n$.

\para{Important notational convention} The right side of
\eqref{eq:Vlambda} makes sense even when $n=k$ or $n<k$. However, we
intentionally decline to define $V(\lambda)_n$ when $n\leq k$. The
reason is that as noted above, $V(\lambda_1,\ldots,\lambda_n)$
coincides with
$V(\lambda_1-\lambda_n,\ldots,\lambda_n-\lambda_n)$. This coincidence
causes confusion with intuitive expectations about
multiplicity. For example, we would expect that the multiplicity of
the irreducible $\SL_n\Q$--representation $\Schur_{(2,2,2,2)}(\Q^n)$
in the trivial representation $\Q$ is 0, and this is in fact true for all
$n>4$. However, when $n=4$ we have $\Schur_{(2,2,2,2)}(\Q^4)=\Q$, and
so the multiplicity in this case is 1. For $n<4$ the representation
$\Schur_{(2,2,2,2)}(\Q^n)$ is the zero representation, so the
multiplicity is not well-defined. Another benefit of this convention
is the important fact that every irreducible representation of
$\SL_n\Q$ is of the form $V(\lambda)_n$ for a \emph{unique}
$\lambda$. This notational convention is equivalent to requiring all
partitions to have $\lambda_n=0$, as mentioned above.

\para{General linear groups} Consider the standard representation of
$\GL_n\Q$ on $\Q^n$. If $\lambda=(\lambda_1\geq\cdots\geq
\lambda_n\geq 0)$ is a partition with at most $n$ rows, then
$V(\lambda)_n=\Schur_\lambda(\Q^n)$ is an irreducible representation
of $\GL_n\Q$. The partition $(1,\ldots,1)$ with $n$ rows yields the
representation $V(1,\ldots,1)_n=\bwedge^n\Q^n=D$, the one-dimensional
determinant representation of $\GL_n\Q$, and in general for any
positive $k$ we
have \[\Schur_{(\lambda_1+k,\ldots,\lambda_n+k)}(\Q^n)=
\Schur_{(\lambda_1,\ldots,\lambda_n)}(\Q^n) \otimes D^{\otimes k}.\] A \emph{pseudo-partition} is a sequence
$\lambda=(\lambda_1\geq \cdots\geq \lambda_\ell)$, where the integers
$\lambda_i$ are allowed to be negative. The length $\ell(\lambda)$ of a pseudo-partition is the largest $i$ for which $\lambda_i\neq 0$. We extend the definition of
$V(\lambda)$ to pseudo-partitions by the above formula. That is, for
any pseudo-partition $\lambda=(\lambda_1\geq \cdots\geq \lambda_k)$
and any $n\geq k$, we define
\[V(\lambda_1,\ldots,\lambda_k)_n\coloneq
\Schur_{(\lambda_1-\lambda_k,\ldots,\lambda_k-\lambda_k)}(\Q^n)\otimes
D^{\otimes\lambda_k}.\] Every irreducible representation of $\GL_n\Q$
is of the form $V(\lambda)_n$ for a unique pseudo-partition
$\lambda$. For example, the dual of $V(\lambda_1,\ldots,\lambda_n)$ is
$V(-\lambda_n,\ldots,-\lambda_1)$.  As before, the obvious basis for
the diagonal matrices yields dual functionals $L_i$. For a
pseudo-partition $\lambda=(\lambda_1\geq\cdots\geq\lambda_k)$, the
irreducible representation $V(\lambda)_n$ has heighest weight
$\lambda_1L_1+\cdots+\lambda_kL_k$. When restricted to $\GL_n\Z$, all
of these representations remain irreducible, and $D^{\otimes 2}$
becomes trivial; thus two pseudo-partitions $\lambda$ and $\mu$
determine the same representation of $\GL_n\Z$ if and only if
$\lambda_i-\mu_i$ is constant and even for all $1\leq i\leq n$.

\begin{remark}
 Note that for $\GL_n\Q$--representations, $V(3,1)_n$ has the same meaning as
 $V(3,1,1,1)_n$, while $V(3,1,0)_n$ has the same meaning as
 $V(3,1,0,0)_n$. The discrepancy between the terminology for
 representations of $\SL_n\Q$ and $\GL_n\Q$ comes from the fact that
 for $\SL_n\Q$ we always assume that $\lambda_n=0$.
\end{remark}

\para{Symplectic groups} We now review the representation theory of
$\Sp_{2n}\Q$. Every representation of $\Sp_{2n}\Q$ induces a
representation of the Lie algebra $\fsp_{2n}\Q$. Again we have a
decomposition $\fsp_{2n}\Q=\fn^-\oplus \fh\oplus \fn^+$, with
$\fh^*=\Q[L_1,\ldots,L_n]$. The fundamental weights are
$\omega_i=L_1+\cdots+L_i$, and so for any dominant weight
$\lambda=\sum c_i\omega_i$ there is a unique irreducible
representation $V(\lambda)_n$ of $\Sp_{2n}\Q$.

These can be identified explicitly as follows. Let $V=\Q^{2n}$ be the
standard representation of $\Sp_{2n}\Q$. For each $1\leq i<j\leq d$,
the symplectic form gives a contraction $V^{\otimes d}\to V^{\otimes
  d-2}$ as $\Sp_{2n}\Q$--modules. Define $V^{\langle d\rangle}\leq
V^{\otimes d}$ to be the intersection of the kernels of these
contractions. For any partition $\lambda\vdash d$, the representation
$\Schur_\lambda V$ is realized as the image of $c_\lambda\in \Q S_d$
acting on $V^{\otimes d}$. If $k$ is the number of rows of the
partition $\lambda$, for any $n\geq k$ we define $V(\lambda)_n$ to be
the intersection
\[V(\lambda)_n\coloneq\Schur_\lambda V\cap V^{\langle d\rangle}.\] The notation
$\Schur_{\langle\lambda\rangle}V$ is also used for the intersection
$\Schur_\lambda V\cap V^{\langle d\rangle}$. We
remark that this intersection is trivial if $n$ is less than the
number of rows of $\lambda$. Every irreducible representation of
$\Sp_{2n}\Q$ is of the form $V(\lambda)_n$ for a unique partition
$\lambda$. In particular, it follows that each irreducible
representation $V(\lambda)_n$ is self-dual. These representations
remain distinct and irreducible when restricted to $\Sp_{2n}\Z$.

\begin{remark}\label{rem:Spweights}
  There is one issue which can cause confusion when comparing weights
  for $\GL_{2n}\Q$ and $\Sp_{2n}\Q$. To clarify, we work out the
  comparison explicitly in terms of a basis. Let
  $\{a_1,b_1,\ldots,a_n,b_n\}$ be a symplectic basis for $\Q^{2n}$,
  meaning that the symplectic form satisfies $\omega(a_i,b_i)=1$ and
  $\omega(a_i,b_j)=\omega(a_i,a_j)=\omega(b_i,b_j)=0$. By abuse of
  notation, in this remark we also denote by
  $\{a_1,b_1,\ldots,a_n,b_n\}$ the corresponding basis for $\fh_\fgl$,
  the diagonal matrices in $\fgl_{2n}\Q$, with dual basis
  $\{a_1^*,b_1^*,\ldots,a_n^*,b_n^*\}$ for $\fh^*_\fgl$. These
  elements, in some order, will be the weights
  $\{L^{\fgl}_1,\ldots,L^{\fgl}_{2n}\}$, but we defer until later the
  explicit identification.

  If $\fh_\fsp$ denotes the diagonal matrices in $\fsp_{2n}\Q$, the
  dual $\fh^*_\fsp$ has basis $\{L^{\fsp}_i=a_i^*-b_i^*\}$. These
  weights are ordered so that $L^{\fsp}_1>\cdots>L^{\fsp}_n$. The
  restriction from $\fh^*_\fgl$ to $\fh^*_\fsp$ maps $a_i^*\mapsto
  L^{\fsp}_i$ and $b_i^*\mapsto -L^{\fsp}_i$. To correctly compare
  representations of $\GL_{2n}\Q$ with those of $\Sp_{2n}\Q$, this
  restriction should preserve the ordering on weights (for example, so
  that the notions of ``highest weight'' agree). This forces us to
  label the weights of $\GL_{2n}\Q$ as \[L^{\fgl}_1=a^*_1,\ \ldots,\
  L^{\fgl}_n=a^*_n,\ \ L^{\fgl}_{n+1}=b^*_n,\ \ldots,\
  L^{\fgl}_{2n}=b^*_1.\] Thus the restriction maps $L^{\fgl}_i\mapsto
  L^{\fsp}_i$ and $L^{\fgl}_{2n-i+1}\mapsto -L^{\fsp}_i$ for $1\leq
  i\leq n$. With respect to the ordered basis
  $\{a_1,\ldots,a_n,b_n,\ldots,b_1\}$, the subalgebra $\fn^+$ consists
  of exactly those matrices in $\fsp_{2n}\Q$ that are
  upper-triangular.
\end{remark}

\subsection{Definition of representation stability}
\label{section:repstab:def}

We are now ready to define the main concept of this paper.  Let $G_n$ be one of the families $\GL_n\Q$, $\SL_n\Q$, $\Sp_{2n}\Q$,
$S_n$, or $W_n$. In this section $\lambda$ refers to the datum
determining the irreducible representations of the corresponding
family, namely a pseudo-partition, a partition, or a double
partition. For each family we have natural inclusions
$G_n\hookrightarrow G_{n+1}$: for $S_n$ and $W_n$ we take the standard
inclusions, and for $\GL_n\Q$, $\SL_n\Q$, and $\Sp_{2n}\Q$ we take the
upper-left inclusions.

Let $\{V_n\}$ be a sequence of $G_n$--representations, equipped with
linear maps $\phi_n\colon V_n\to V_{n+1}$, making the following diagram
commute for each $g\in G_n$:
\[\xymatrix{
  V_n\ar^{\phi_n}[r]\ar_{g}[d]&V_{n+1}\ar^{g}[d]\\ V_n\ar_{\phi_n}[r]&V_{n+1}
}\]
On the right side we consider $g$ as an element of $G_{n+1}$ by the
inclusion $G_n\hookrightarrow G_{n+1}$.  This condition is equivalent 
to saying that $\phi_n$, thought of as a map  from $V_n$ to the restriction $V_{n+1}\downarrow{G_n}$, is a map of $G_n$--representations.  
We allow the vector spaces
$V_n$ to be infinite-dimensional, but we ask that each vector lies in
some finite-dimensional representation. This ensures that $V_n$
decomposes as a direct sum of finite-dimensional irreducibles. We call
such a sequence of representations \emph{consistent}.   

We want to compare the representations $V_n$ as $n$ varies.  However,
since $V_n$ and $V_{n+1}$ are representations of different groups, we
cannot ask for an isomorphism as representations. But we can ask for
injectivity and surjectivity, once they are properly
formulated. Moreover, using the uniformity of our labeling of irreducible representations,  we
can formulate what it means for $V_n$ and $V_{n+1}$ to be the ``same
representation''.

\begin{definition}[Representation stability] 
\label{definition:repstab1}
Let $\{V_n\}$ be a consistent sequence of $G_n$--representations.
The sequence $\{V_n\}$ is \emph{representation stable} if, for
sufficiently large $n$, each of the following conditions holds.
\begin{enumerate}[{\bf I.}]
\item \textbf{Injectivity:} The natural map $\phi_n\colon V_n\to
  V_{n+1}$ is injective.
\item \textbf{Surjectivity:} The span of the $G_{n+1}$--orbit of
  $\phi_n(V_n)$ equals all of $V_{n+1}$.
\item \textbf{Multiplicities:} Decompose $V_n$ into irreducible
  representations as
  \[V_n=\bigoplus_\lambda c_{\lambda,n}V(\lambda)_n\] with
  multiplicities $0\leq c_{\lambda,n}\leq \infty$. For each $\lambda$,
  the multiplicities $c_{\lambda,n}$ are eventually independent of
  $n$.
\end{enumerate}
\end{definition}

It is not hard to check that, given Condition I for $\phi_n$, Condition II for $\phi_n$ is equivalent to the following when $G_n$ is finite: $\phi_n$ is a composition of the inclusion $V_n\hookrightarrow \Ind_{G_n}^{G_{n+1}}V_n$ with a surjective $G_{n+1}$--module homomorphism $ \Ind_{G_n}^{G_{n+1}}V_n\to V_{n+1}$. 

By requiring Condition III just for the multiplicity of the single
irreducible representation $V(\lambda)_n$, we obtain the notion of
\emph{$\lambda$--representation stable} for a fixed $\lambda$.  In the
presence of Condition IV below, $\lambda$--representation stability is
exactly equivalent to representation stability for the
$\lambda$--isotypic components $V_n^{(\lambda)}$.

\begin{remark}
  Fix either $G_n=S_n$ or $W_n$ and take for each $n\geq 1$ an exact
  sequence of groups
  \[1\to A_n\to \Gamma_n\to G_n\to 1.\] Then an easy transfer argument
  shows that $\lambda$--representation stability of $\{H_i(A_n;\Q)\}$
  for the trivial representation ($\lambda=0$) is equivalent to
  classical homological stability for the sequence
  $\{H_i(\Gamma_n;\Q)\}$.
\end{remark}

\begin{remark}
It seems likely that many of the results in this paper can be extended to orthogonal groups and to the corresponding Weyl groups; it would be interesting to know what differences arise, if any.
\end{remark}

\para{Uniform stability} In Definition~\ref{definition:repstab1} we did not require the
multiplicities of all the irreducible representations to stabilize
simultaneously.  We will see that in many cases a stronger form of
stability holds, as follows.

\begin{definition}[Uniform representation stability] 
A consistent sequence $\{V_n\}$ of $G_n$--representations is
\emph{uniformly representation stable} if Conditions I and II hold for
sufficiently large $n$, and the following condition holds:
\begin{enumerate}
\item[{\bf III$'$.}] \textbf{Multiplicities (uniform):} There is some
  $N$, not depending on $\lambda$, so that for $n\geq N$ the
  multiplicities $c_{\lambda,n}$ are independent of $n$ for all
  $\lambda$. In particular, for any $\lambda$ for which $V(\lambda)_N$
  is not defined, $c_{\lambda,n}=0$ for all $n\geq N$.
\end{enumerate}
\end{definition}
For example, if $G_n=\GL_n\Q$, the latter condition applies to any
partition $\lambda$ with more than $N$ rows. We will see examples
below both of uniform and nonuniform representation stability.

\para{Multiplicity stability}
It sometimes happens that for a sequence $\{V_n\}$ of $G_n$--representations
there are no natural maps $V_n\to V_{n+1}$. For example, this is the
situation for the Torelli groups of closed surfaces (see
Section~\ref{section:torelli} below). In this case we can still ask
whether the decomposition of $V_n$ into irreducibles stabilizes in
terms of multiplicities.
\begin{definition}[(Uniform) multiplicity stability]
  A sequence of $G_n$--representations $V_n$ is called
  \emph{multiplicity stable} (respectively \emph{uniformly
    multiplicitly stable}) if Condition III (respectively Condition
  III$'$) holds.
\end{definition}

\para{Reversed maps} 
The definitions above capture the behavior of a sequence of representations, one including into the next.  In a number of contexts (see, e.g., \S~\ref{section:flags} below) we are given sequences of representations with the maps going the other way, from $V_{n+1}\to V_n$.  In this case we need to alter the definition of representation stability, in particular injectivity and surjectivity.


\begin{definition}[Representation stability with maps reversed]
\label{definition:repstabrev}
  A consistent sequence of $G_n$--representations $\{V_n\}$ with maps
  $\phi_n\colon V_n\leftarrow V_{n+1}$ is \emph{representation stable}
  if for sufficiently large $n$, Condition III holds, and the
  following conditions hold:
\begin{enumerate}[I$'$.]
\item \textbf{Surjectivity:} The map $\phi_n\colon V_n\leftarrow
  V_{n+1}$ is surjective.
\item \textbf{Injectivity:} There exists a subspace $V_{n+1}$ which
  maps isomorphically under $\phi_n$ to $V_n$, and whose
  $G_{n+1}$--orbit spans $V_{n+1}$.
\end{enumerate}
\end{definition}

We remark that Definition~\ref{definition:repstabrev} is \emph{not} equivalent to representation stability for the dual sequence $\phi_n^\ast:V_n^\ast\to V_{n+1}^\ast$.  For an explanation, and a way to handle dual sequences, see the discussion of  ``mixed tensor stability'' in \S\ref{section:strong}.

\para{Examples of representation stability} We will see many examples
of representation stability below; indeed much of this paper is an
exploration of such examples.  For now, we mention the following
simple examples.

\begin{xample}
  \label{example:tensor}
  Let $V_n=\Q^n$ be the standard representation of $\GL_n\Q$.  Then
  $\{V_n\otimes V_n\}$ is uniformly representation stable.  This
  follows easily from the decomposition \[V_n\otimes
  V_n=\Sym^2V_n\oplus \bwedge^2V_n\] of $V_n\otimes V_n$ into
  irreducibles. We will see in
  Section~\ref{section:classicalstability} that $\{V_n\otimes V_n\}$
  will be uniformly representation stable for any sequence $\{V_n\}$ of
  uniformly stable representations.
\end{xample}

In the other direction, we have the following.
\begin{nonexample}
  Let $V_n=\Q^n$ be the standard representation of $\SL_n\Q$, and let
  $W_n=\bwedge^* V_n$.  Then $\{W_n\}$ is a stable sequence of
  $\SL_n\Q$--representations, but it is not a uniformly stable
  sequence:  the multiplicity of the irreducible
  representation $V(1,\ldots ,1)_n$, with $k$ occurrences of
  $1$, does not stabilize until $n>k$.
\end{nonexample}

\begin{nonexample}
  \label{nonexample:regular}
  Let $G_n$ be either $S_n$ or $W_n$. Then the sequence of regular representations $\{\Q G_n\}$ is not representation stable, or even
  $\lambda$--representation stable for any partition or double
  partition $\lambda$.  This follows from the standard
  fact that the multiplicity of $V(\lambda)_n$ in the regular
  representation equals $\dim(V(\lambda)_n)$, which is not constant,
  and indeed tends to infinity with $n$.
\end{nonexample}

\subsection{Strong and mixed tensor stability}
\label{section:strong}

In this subsection we define two variations of
representation stability.  Both variations will be used later in the
paper in the analysis of certain examples.  The reader might want to
skip this subsection until encountering those examples and move to \S\ref{section:classicalstability}.

\para{Strong stability} Conditions I and II together give a kind of 
``isomorphism'' between representations of different groups, but
they give no information about the subrepresentations of the
$V_n$. Condition III better captures the internal structure of the
representations $V_n$, but ignores the maps between the
representations. For example, Condition III alone does not rule out
the possibility that the maps $\phi_n\colon V_n\to V_{n+1}$ are all
zero. The following condition combines these approaches to give
careful control over the behavior of a subrepresentation under
inclusion. We require that for every irreducible $V(\lambda)_n\subset
V_n$, the $G_{n+1}$--span of the image $\phi_n(V(\lambda)_n)$ is
isomorphic to $V(\lambda)_{n+1}$.

\begin{definition}[Strong representation stability]
\label{definition:repstab2}
A consistent sequence  $\{V_n\}$ of $G_n$--representations is
\emph{strong representation stable} if for sufficiently large $n$, not
depending on $\lambda$, Conditions I, II, and III$'$ hold (that is, 
$\{V_n\}$ is uniformly representation stable), and the following condition holds:
\begin{enumerate}
\item[{\bf IV.}] \textbf{Type-preserving:} For any subrepresentation
  $W\subset V_n$ so that $W\approx V(\lambda)_{n}$, the span of the
  $G_{n+1}$--orbit of $\phi_n(W)$ is isomorphic to $V(\lambda)_{n+1}$.
\end{enumerate}
\end{definition}
It is possible to embed the $\GL_n\Q$--module $\bwedge^i
\Q^n=V(\omega_i)_n$ into the $\GL_{n+1}\Q$--module
$\bwedge^{i+1}\Q^{n+1}=V(\omega_{i+1})_{n+1}$ by $v\mapsto v\wedge
x_{n+1}$. This embedding respects the group actions, but the
$\GL_{n+1}\Q$--span of the image is all of $\bwedge^{i+1}\Q^{n+1}$;
similar embeddings occur for other pairs of irreducible
representations. Condition IV rules out this type of phenomenon.  One 
example of a uniformly stable sequence of $S_n$--representations that
is not strongly stable is given by the cohomology of pure braid
groups; see \S\ref{section:braid}.

\begin{remark}
  \label{remark:strong}
  For applications, we will need the stronger statement that any
  subspace isomorphic to $V(\lambda)_n^{\oplus k}$ has $G_{n+1}$--span
  isomorphic to $V(\lambda)_{n+1}^{\oplus k}$, where the multiplicity
  $k$ may be greater than $1$. Fortunately, this stronger statement
  follows from Condition IV above. First, the maps
  $V_n\to V_{n+1}$ are injective (apply Condition IV to any $W$
  contained in the kernel). Furthermore, for a fixed $\lambda$,
  Condition IV implies that the inclusions $V_n\hookrightarrow
  V_{n+1}$ restrict to inclusions of $\lambda$--isotypic components
  $V_n^{(\lambda)}\hookrightarrow V_{n+1}^{(\lambda)}$.

  It is thus clear that the $G_{n+1}$--span of $V(\lambda)_{n}^{\oplus
    k}$ is $V(\lambda)_{n+1}^{\oplus \ell}$ with $\ell\leq k$. The
  potential problem is that two independent subrepresentations
  $W,W'\approx V(\lambda)_n\subset V_n$ could both map into the same
  $V(\lambda)_{n+1}\subset V_{n+1}$. This is ruled out by the
  following property, shared by each of our families of groups: the
  restriction $V(\lambda)_{n+1}\downarrow G_n$ contains the
  irreducible $G_n$--representation $V(\lambda)_n$ with multiplicity
  1. Thus the multiplicity of $V(\lambda)_n$ in
  $V(\lambda)_{n+1}^{\oplus \ell}\downarrow G_n$ is $\ell$. But as
  $G_n$--representations, we have an inclusion
  \[V(\lambda)_n^{\oplus k}\hookrightarrow
  \big(V(\lambda)_{n+1}^{\oplus \ell}\downarrow G_n\big),\] which
  implies $k\leq \ell$, verifying the stronger statement as desired.

  For $G_n=\SL_n\Q$ or $\GL_n\Q$, the property mentioned above can be
  seen from the formula \eqref{eq:SLres} given in the proof of
  Theorem~\ref{thm:classicalstability}(6) below. For $G_n=\Sp_{2n}\Q$,
  it follows from \eqref{eq:Spres} below. For $G_n=S_n$, this is the
  classical \emph{branching rule} \cite[Equation 4.42]{FH}:
  \[V(\lambda)_{n+1}\downarrow S_n=V(\lambda)_n\oplus
  \bigoplus_\mu V(\mu)_n\] where $\mu$ ranges over those partitions
  obtained by removing one box from $\lambda$. For $G_n=W_n$, the
  branching rule has the form \cite[Lemma 6.1.3]{GP}:
  \[V(\lambda^+,\lambda^-)_{n+1}\downarrow W_n
  =V(\lambda^+,\lambda^-)_n\oplus
  \bigoplus_{\mu^+}V(\mu^+,\lambda^-)_n\oplus
  \bigoplus_{\mu^-}V(\lambda^+,\mu^-)_n\] where $\mu^+$ is obtained
  from $\lambda^+$, and $\mu^-$ is obtained from $\lambda^-$, by
  removing one box.

  It follows that assuming surjectivity, Condition IV also implies
  Condition III$'$. Conversely, as long as the $V_n$ are
  finite-dimensional, or even have finite multiplicities $0\leq
  c_{\lambda,n}<\infty$, Conditions III$'$ and IV together imply
  Condition II.
\end{remark}

\para{An equivalent formulation of Condition IV}
\label{reformulation}
When $G_n$ is $\SL_n\Q$ or $\GL_n\Q$, Condition IV can be stated in a
more familiar basis-dependent form.  Let $P_{n+1}$ be the
$n$--dimensional subgroup of $\SL_{n+1}\Q$ preserving and acting
trivially on $\Q^n<\Q^{n+1}$; that is, agreeing with the identity outside the rightmost column. Then assuming uniform multiplicity
stability, Condition IV can be stated as follows.

\begin{proposition}
  \label{prop:SLstrong}
  For $G_n=\SL_n\Q$ or $\GL_n\Q$, let $\{V_n\}$ be a uniformly
  multiplicity stable sequence of $G_n$--representations. Assume that
  the maps $\phi_n\colon V_n\hookrightarrow V_{n+1}$ are injective. The
  sequence $\{V_n\}$ is type-preserving (satisfies Condition IV) for
  sufficiently large $n$ if and only if the following condition is
  satisfied for sufficiently large $n$.
  \end{proposition}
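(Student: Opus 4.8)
The plan is to prove that, assuming the $\phi_n$ are injective, Condition~IV holds for all large $n$ if and only if $P_{n+1}$ acts trivially on $\phi_n(V_n)$ (equivalently $\phi_n(V_n)\subseteq V_{n+1}^{P_{n+1}}$) for all large $n$. Since $P_{n+1}$ is connected unipotent, being $P_{n+1}$-fixed is the same as being annihilated by its Lie algebra $\mathfrak u_{n+1}$, and the elementary fact that will drive everything is the vector-space decomposition $\fn^+_{n+1}=\fn^+_n\oplus\mathfrak u_{n+1}$ (the strictly upper-triangular $(n{+}1)\times(n{+}1)$ matrices split into the top-left $n\times n$ block and the last column). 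From this we deduce the \emph{parabolic-invariants computation}: for any irreducible $G_{n+1}$-representation $L$ whose highest weight has length $<n$, the space $L^{P_{n+1}}$ is the irreducible $G_n$-representation with highest weight that of $L$ restricted to $\fh_n$ --- indeed $G_n$ normalizes $P_{n+1}$ so $L^{P_{n+1}}$ is $G_n$-stable, and by the decomposition any $\fn^+_n$-highest vector in $L^{P_{n+1}}$ is killed by all of $\fn^+_{n+1}$ and so spans the unique highest-weight line of $L$, whence $L^{P_{n+1}}$, being completely reducible with a single highest-weight line, is irreducible. Because $V_n$ is completely reducible and $\phi_n$ injective, both sides of the desired equivalence reduce to a claim about a single irreducible $W\cong V(\lambda)_n\subseteq V_n$: with $W'\coloneq\phi_n(W)$ and $U\coloneq\langle G_{n+1}\cdot W'\rangle$, Condition~IV says $U\cong V(\lambda)_{n+1}$, while the other condition says $P_{n+1}$ fixes $W'$ pointwise. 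Here uniform multiplicity stability enters: for $n$ large it forces all constituents of $V_{n+1}$ (hence of $U$) to be $V(\mu)_{n+1}$ with $\mu$ in a fixed, length-bounded set, so the computation applies to each and, via consistency of the labelling, identifies its $P_{n+1}$-invariants with $V(\mu)_n$. We treat $\GL_n\Q$; the $\SL_n\Q$ case is identical.

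For the direction $(\Leftarrow)$, assume $P_{n+1}$ fixes $\phi_n(V_n)$, so $W'\subseteq V_{n+1}^{P_{n+1}}$. Decomposing $U$ into $G_{n+1}$-irreducibles and using that $W'$ generates $U$, the projection of $W'$ to each constituent is nonzero; since $W'\subseteq U^{P_{n+1}}$, that projection lands in the $P_{n+1}$-invariants of the constituent, which by the computation above is an irreducible $G_n$-module, so it must be isomorphic to $W'\cong V(\lambda)_n$. Matching labels then forces every constituent of $U$ to be $\cong V(\lambda)_{n+1}$, so $U\cong V(\lambda)_{n+1}^{\oplus m}$. To conclude $m=1$, pick an $\fn^+_n$-highest vector $w\in W'$; it is killed by $\fn^+_n$ and, being $P_{n+1}$-fixed, by $\mathfrak u_{n+1}$, hence by $\fn^+_{n+1}$. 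Since $\fh_{n+1}$ normalizes $P_{n+1}$, the space $V_{n+1}^{P_{n+1}}$ is $\fh_{n+1}$-stable, so we may decompose $w$ there into $\fh_{n+1}$-weight components; each component is again killed by $\fn^+_{n+1}$, hence is a genuine $G_{n+1}$-highest vector generating a constituent of $U$. As all constituents of $U$ are $\cong V(\lambda)_{n+1}$, which has a single highest weight, only one component is nonzero, i.e.\ $w$ is already a $G_{n+1}$-highest vector of the highest weight of $V(\lambda)_{n+1}$. Then $U$ is generated over $G_{n+1}$ by the single highest vector $w$, hence equals the irreducible summand of $V_{n+1}$ containing $w$, which is $\cong V(\lambda)_{n+1}$. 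Summing over an irreducible decomposition of $V_n$ yields Condition~IV.

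For the direction $(\Rightarrow)$, assume Condition~IV, so $U\cong V(\lambda)_{n+1}$. By the branching rule~\eqref{eq:SLres}, $V(\lambda)_{n+1}\downarrow G_n$ contains $V(\lambda)_n$ with multiplicity $1$, so its $V(\lambda)_n$-isotypic part is a single irreducible copy $U_0$, and since $W'\cong V(\lambda)_n$ sits inside $U\downarrow G_n$ we get $W'=U_0$ by equality of (finite) dimensions. By the parabolic-invariants computation $U^{P_{n+1}}\cong V(\lambda)_n$ is likewise a copy of $V(\lambda)_n$ in $U\downarrow G_n$, so $U^{P_{n+1}}=U_0=W'$; hence $P_{n+1}$ fixes $W'=\phi_n(W)$ pointwise, and summing over an irreducible decomposition of $V_n$ shows $P_{n+1}$ fixes $\phi_n(V_n)$.

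The step I expect to be the main obstacle is the direction $(\Leftarrow)$: knowing only that $P_{n+1}$ kills $\phi_n(V_n)$, one must rule out both a ``twist'' among the constituents of $U$ (some $V(\mu)_{n+1}$ with $\mu\neq\lambda$ but $V(\mu)_{n+1}^{P_{n+1}}\cong V(\lambda)_n$, which can happen for ill-behaved $\mu$ whose length grows with $n$) and a multiplicity $m>1$. The twist is excluded by combining the parabolic-invariants computation with uniform multiplicity stability --- which caps the lengths of the constituents so that passing to $P_{n+1}$-invariants is ``label preserving'' in the stable range --- while the multiplicity is handled by the $\fh_{n+1}$-weight-component argument above. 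Setting up the parabolic-invariants computation carefully, and checking that it is compatible with the paper's labelling conventions for $V(\lambda)_n$ over $\GL_n\Q$ versus $\SL_n\Q$, is the part demanding the most care.
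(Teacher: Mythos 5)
Your proof is correct. It rests on the same two facts as the paper's argument --- the splitting $\fn^+_{n+1}=\fn^+_n\oplus\fp_{n+1}$ and the multiplicity-one occurrence of $V(\lambda)_n$ in $V(\lambda)_{n+1}\downarrow G_n$ --- but it is organized around a different key lemma, namely your computation that $V(\mu)_{n+1}^{P_{n+1}}$ is the irreducible $G_n$--module $V(\mu)_n$ (for $\mu$ of controlled length), from which both implications follow by Schur's lemma and isotypic bookkeeping. The paper instead manipulates highest weight vectors directly. In the direction IV$'$ $\implies$ IV the two arguments are close in spirit: your $\fh_{n+1}$--weight-component decomposition of $w$ is a more careful version of the paper's ``by possibly rechoosing $v$'' step, and your treatment of the multiplicity $m=1$ is more explicit. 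The real divergence is in IV $\implies$ IV$'$: after showing that $\fp_{n+1}$ kills the highest weight vector, the paper must still propagate this to all of $V(\lambda)_n\subset V(\lambda)_{n+1}$, which it does by an explicit weight computation (applying $E_{i,n+1}$ would raise the coefficient sum $\sum\mu_i$ above its maximal value $\sum\lambda_i$, and no such weight occurs), whereas you get the same conclusion for free from the identification $U^{P_{n+1}}=U_0=W'$. Your route is more structural and avoids the root-by-root bookkeeping, at the price of establishing the invariants lemma first; and in both proofs uniform multiplicity stability enters at the same point, to rule out label ``twists'' --- for you by capping the lengths of the constituents of $V_{n+1}$ so that passing to $P_{n+1}$--invariants is label-preserving, for the paper by ensuring that only one weight occurring in $V_{n+1}$ restricts to $\lambda$.
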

  \begin{enumerate}
  \item[{\bf IV$'$.}] $P_{n+1}$ acts trivially on the image
    $\phi(V_n)$ of $V_n$ in $V_{n+1}$.
  \end{enumerate}

Condition IV$'$ is in practice much easier to check than Condition
IV. As we will see in Theorem~\ref{thm:classicalstability}, Condition
IV$'$ is also preserved by many natural constructions. It is
equivalent to the statement that $\phi_n$ takes highest weight vectors in $V_n$ to 
highest weight vectors in $V_{n+1}$. 

\begin{proof}
  Within this proof, let $\fp_{n+1}$ be the Lie algebra of
  $P_{n+1}$. Explicitly, $\fp_{n+1}$ is the span of the elementary
  matrices $E_{i,n+1}$ with $1\leq i\leq n$. The subgroup $P_{n+1}$
  was chosen exactly so that $\fn^+_{n+1}$ is spanned by $\fn^+_n$
  together with $\fp_{n+1}$.

  \para{IV$'$ $\implies$ IV} In fact, we need only assume that $P_{n+1}$
  acts trivially on the image of each highest weight vector. Consider
  a highest weight vector $v\in V_n$, so $v$ is an eigenvector for
  $\fh_n$ with weight $\lambda\in\fh_n^*$, and $v$ is annihilated by
  $\fn^+_n$. By possibly rechoosing $v$, we may assume that $\phi(v)$
  is an eigenvector for $\fh_{n+1}$ with weight
  $\lambda'\in\fh_{n+1}^*$. The consistency of the map $V_n\to
  V_{n+1}$ implies that under the restriction map $\fh_{n+1}^*\to
  \fh_n^*$, the weight $\lambda'$ restricts to $\lambda$. The
  condition that $P_{n+1}$ acts trivially on $\phi(V_n)$ implies that
  $\fp_{n+1}$ annihilates $\phi(V_n)$. It follows that
  $\fn^+_{n+1}=\fn^+_n\oplus \fp_{n+1}$ annihilates $\phi(v)$, so
  $\phi(v)$ is a highest weight vector for $G_{n+1}$. By assumption
  $\{V_n\}$ is uniformly multiplicity stable. This implies that once
  $n$ is sufficiently large, the only weight $\lambda'$ occurring in
  $V_{n+1}$ which restricts to $\lambda\in \fh_n^*$ is the weight
  satisfying $V(\lambda)_{n+1}=V(\lambda')_{n+1}$. Thus we see that
  $\phi(v)$ spans the subrepresentation $V(\lambda)_{n+1}$, as
  desired. Since this holds for all highest weight vectors $v$, and
  each irreducible subrepresentation is the span of a highest weight
  vector, Condition IV follows.

  \para{IV $\implies$ IV$'$} Conversely, if $\phi\colon V_n\hookrightarrow
  V_{n+1}$ is type-preserving, let $v\in V_n$ be a highest weight
  vector for $G_n$ spanning $V(\lambda)_n$, and consider its image in
  $V_{n+1}$. Certainly $\phi(v)$ remains a highest weight vector for
  $G_n$ with weight $\lambda$. By Condition IV, its $G_{n+1}$--span is
  isomorphic to $V(\lambda)_{n+1}=V(\lambda')_{n+1}$. Let $w\in
  V(\lambda)_{n+1}$ be the $G_{n+1}$--highest weight vector with
  heighest weight $\lambda'$.  Then $w$ is evidently a highest weight vector
  for $G_n$ with weight $\lambda$ as well. But as noted in
  Remark~\ref{remark:strong}, the restriction
  $V(\lambda)_{n+1}\downarrow G_n$ contains $V(\lambda)_n$ with
  multiplicity 1, so $V(\lambda)_{n+1}$ contains a unique
  $G_n$--highest weight vector with weight $\lambda$. Thus $\phi(v)$
  must coincide with $w$, and in particular $\phi(v)$ is a highest
  weight vector for $G_{n+1}$.

  This implies that $P_{n+1}$ acts trivially on $\phi(v)$ for each
  highest weight vector $v$. It remains to show that $P_{n+1}$ acts
  trivially on the entire image of $V_n$, that is on the $G_n$--span
  of the highest weight vectors $\phi(v)$. This is a general fact of
  representation theory. Since $P_{n+1}$ is contained in
  $\SL_{n+1}\Q$, we may assume that $G_n=\SL_n\Q$.  For the rest of
  the argument we identify
  $\mathfrak{h}_{n+1}^*=\Z[L_1,\ldots,L_{n+1}]/(L_1+\cdots+L_{n+1})$
  with $\Z[L_1,\ldots,L_n]$ by setting $L_{n+1}=0$. Restrict to the
  inclusion of a single irreducible $V(\lambda)_n\subset
  V(\lambda)_{n+1}$, with highest weight vector $v$ of weight
  $\lambda=\lambda_1L_1+\cdots+\lambda_nL_n$. Let $k\coloneq \sum
  \lambda_i$ be the sum of the coefficients.

  The irreducible representation $V(\lambda)_{n+1}$ is the span of $v$
  under $\fn^-_{n+1}$, which is spanned by the elementary matrices
  $\{E_{j,i}|1\leq i<j\leq n+1\}$. If $j\leq n$ the
  matrix $E_{j,i}$ has weight $L_j-L_i$, while $E_{n+1,i}$ has weight
  $L_{n+1}-L_i=-L_i$. Adding the former does not change the sum of the
  coefficients, while adding the latter decreases the sum, so every
  weight $\mu=\mu_1L_1+\cdots+\mu_nL_n$ occurring in $V(\lambda)_{n+1}$
  has $\sum \mu_i\leq k$. The subspace $V(\lambda)_n$ is the span of
  $v$ under $\fn^-_n$, which is spanned by the $\{E_{j,i}|1\leq
  i<j\leq n\}$ with roots $\{L_j-L_i|1\leq i<j\leq n\}$. Thus the
  weights $\mu$ occurring in $V(\lambda)_n$ all have
  $\sum\mu_i=k$. Applying any matrix $E_{i,n+1}\in \fp_{n+1}$ with
  weight $L_i-L_{n+1}=L_i$ to such a vector would yield a vector with
  weight
  \[\mu_1L_1+\cdots+(\mu_i+1)L_i+\cdots+\mu_nL_n.\]
  The sum of the coefficients of such a weight is $k+1$, and we have
  already said that no such weight occurs in $V(\lambda)_{n+1}$. It
  follows that $\fp_{n+1}$ must annihilate every element of
  $V(\lambda)_{n}$, and thus $P_{n+1}$ acts trivially on
  $V(\lambda)_n\subset V(\lambda)_{n+1}$, as desired. We conclude that
  $P_{n+1}$ acts trivially on $\phi(V_n)\subset V_{n+1}$.
\end{proof}

\begin{remark}
  There is no such nice formulation of Condition IV for
  representations of $\Sp_{2n}\Q$. Indeed we will see in
  Theorem~\ref{thm:classicalstability} that if $\{V_n\}$ is a strongly
  stable sequence of $\SL_n\Q$--representations, then for any Schur
  functor $\Schur_\lambda$, the sequence $\{\Schur_\lambda(V_n)\}$ is
  strongly stable as well. The proof hinges upon the equivalence of
  Conditions IV and IV$'$.

  The corresponding fact for $\Sp_{2n}\Q$ is false. For example,
  $\{V_n=\Q^{2n}\}$ is certainly strongly stable. However,
  $\{\bwedge^2 V_n=\bwedge^2 \Q^{2n}\}$ is not strongly stable. The
  unique trivial subrepresentation of $\bwedge^2 \Q^{2n}$ is spanned
  by $a_1\wedge b_1+\cdots+a_n\wedge b_n$. However, this vector is not
  taken to a trivial subrepresentation of $\bwedge^2 \Q^{2n+2}$. In
  fact, the $\Sp_{2n+2}\Q$--span of $a_1\wedge b_1+\cdots+a_n\wedge
  b_n$ is all of $\bwedge^2 \Q^{2n+2}$. This failure is related to the
  fact, described in Remark~\ref{rem:Spweights}, that the upper-left
  inclusion $\Sp_{2n}\Q\subset \Sp_{2n+2}\Q$ does not respect the
  ordering of the roots. Of course there does exist some map
  $V(0)_n\oplus V(\lambda_2)_n\to V(0)_{n+1}\oplus V(\lambda_2)_{n+1}$
  which is type-preserving, but viewed as a map $\bwedge^2 \Q^{2n}\to
  \bwedge^2 \Q^{2n+2}$ this map appears wholly unnatural.
\end{remark}

\para{Mixed tensor representations}
There are certain natural families of representations with an inherent
``stability'', which indeed satisfy the definition of
representation stability given above, but for trivial reasons that do
not capture the real nature of their stability.  For example, the dual
of the standard representation of $\GL_n\Q$ has highest weight $-L_n$.
In terms of pseudo-partitions, the dual representation
$V(1,0,\ldots,0)_n^*$ is the representation $V(0,\ldots,0,-1)_n$,
which is given by a different pseudo-partition for each $n$. So in the
sequence of representations $\{V_n=(\Q^n)^*\}$, for each $\lambda$ the
irreducible $V(\lambda)_n$ appears in $V_n$ for at most one $n$, from
which it follows that the sequence $\{V_n\}$ does fit the definition of
representation stable given above.  However, the ``stable
representation'' is trivial, since each representation $V(\lambda)$
eventually has multiplicity 0.

To accurately capture the stability of this sequence, as well as other
natural sequences such as $\{V_n^*\otimes \bwedge^2 V_n\}$ and the adjoint
representations $\{\fsl_n\Q\}$,  we will use mixed tensor representations to 
define a stronger condition than representation stability. Given
two partitions $\lambda=(\lambda_1,\ldots,\lambda_k)$ and
$\mu=(\mu_1,\ldots,\mu_\ell)$, for $n\geq k+\ell$ the \emph{mixed
  tensor representation} $V(\lambda;\mu)_n$ is the irreducible
representation of $\GL_n\Q$ with highest
weight \[\lambda_1L_1+\cdots+\lambda_kL_k-\mu_\ell
L_{n-\ell+1}-\cdots-\mu_1L_n.\] Equivalently, $V(\lambda;\mu)_n$ is
the irreducible representation corresponding to the pseudo-partition
$(\lambda_1,\ldots,\lambda_k,0,\ldots,0,-\mu_\ell,\ldots,-\mu_1)$. Note
that when restricted to $\SL_n\Q$, this representation corresponds to
the partition \[(\mu_1+\lambda_1,\ldots,\mu_1+\lambda_k,\mu_1,
\ldots,\mu_1,\mu_1-\mu_\ell,\ldots,\mu_1-\mu_2,0).\]

\begin{definition}[Mixed tensor stable] 
  A consistent sequence of $\GL_n\Q$--representations or
  $\SL_n\Q$--representations $\{V_n\}$ is called \emph{mixed
    representation stable} if Conditions I and II are satisified for
   large enough $n$, and if in addition the following condition
  is satisfied:
  \begin{enumerate} 
  \item[{\bf MTIII.}] For all partitions $\lambda$ and
    $\mu$, the multiplicity of the mixed tensor representation
    $V(\lambda;\mu)_n$ in $V_n$ is eventually constant.
  \end{enumerate}
\end{definition}

Note that mixed tensor stability implies representation stability.  As an example of mixed tensor stability, consider the adjoint representation of $\SL_n\Q$ or
$\GL_n\Q$ on $\fsl_n\Q$. This corresponds to the partition
$(2,1,\ldots,1,0)$, or to the pseudo-partition
$(1,0,\ldots,0,-1)$. Thus $\fsl_n\Q$ is the mixed tensor
representation $V(1;1)_n$. Similarly, the dual $(\Q^n)^*$ of the
standard representation is $V(0;1)_n$. In general, the dual of
$V(\lambda;\mu)_n$ is $V(\mu;\lambda)_n$, so in particular if a
sequence $\{V_n\}$ is representation stable, the sequence of duals
$\{V_n^*\}$ is mixed tensor stable. We remark that a sequence
which is mixed representation stable is essentially never type-preserving.

Mixed representation stability is used in
Sections~\ref{section:torelli} and \ref{section:congruence}. This
notion was applied by Hanlon \cite{Han} to Lie algebra cohomology over
non-unital algebras, and futher applied by R. Brylinski \cite{Bry}.

\section{Stability in classical representation theory}
\label{section:classicalstability}

In this section we discuss examples of representation stability in
classical representation theory.  We remark that the definition of
representation stability itself already relies upon an inherent
stability in the classification of irreducible representations of the
groups $G_n$, in the sense that the system of names of representations
of the varying groups $G_n$ can be organized in a coherent way.

\subsection{Combining and modifying stable sequences}
The ubiquity of representation stability would be unlikely were it not
that many of the natural constructions in classical representation
theory preserve representation stability.  We now formalize this.
Many of the results follow from well-known classical theorems.

\begin{theorem}
\label{thm:classicalstability}
  Suppose that $G_n=\SL_n\Q$ and that $\{V_n\}$ and $\{U_n\}$ are
  multiplicity stable sequences of finite-dimensional
  $G_n$--representations. Fix partitions $\lambda$ and $\mu$.  Then
  the following sequences of $G_n$--representations are multiplicity
  stable.
  \begin{enumerate}
  \item \textbf{Tensor products: } $\{V_n\otimes
    U_n\}$.
  \item \textbf{Schur functors: } $\{\Schur_\lambda
    (V_n)\}$.
  \item \textbf{Schur functors of direct sums: } $\{\Schur_\lambda(V_n\oplus
    U_n)\}$.
  \item \textbf{Schur functors of tensor products: } $\{\Schur_{\lambda}(V_n\otimes
    U_n\})$.
  \item \textbf{Compositions of Schur functors: }
    $\{\Schur_\lambda(\Schur_\mu(V_n))\}$.\\ For example,
    $\{\Sym^r(\bwedge^s(V_n))\}$ for each fixed $r,s\geq 0$.
  \end{enumerate}
  If $G_n$ is $\SL_n\Q$, $\GL_n\Q$ or $\Sp_{2n}\Q$ and $\{V_n\}$ and
  $\{U_n\}$ are uniformly multiplicity stable sequences of
  finite-dimensional $G_n$--representations, then all the preceding
  examples are uniformly multiplicity stable, as are the following two
  examples.
  \begin{enumerate}
  \item[6.] \textbf{Shifted sequences: } The restrictions
    $\{V_n\downarrow G_{n-k}\}$ for any fixed $k\geq 0$.
  \item[7.] \textbf{Restrictions: } The restrictions $\{V_n\downarrow
    \SL_n\Q\}$ and $\{V_{2n}\downarrow\Sp_{2n}\Q\}$.
  \end{enumerate}
  If $G_n$ is $\SL_n\Q$ or $\GL_n\Q$ and $\{V_n\}$ and
  $\{U_n\}$ are strongly stable, then the resulting sequences in Parts
  1--5 are strongly stable.
\end{theorem}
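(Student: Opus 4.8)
The plan is to prove the statements in the order 1--7, with the multiplicity-stability statements (for $\SL_n\Q$) handled first, then bootstrapped to the uniform and symplectic cases, and finally to the strong case. Throughout, the key organizing principle is to translate each construction into a statement about *stability of Littlewood--Richardson-type coefficients*: the multiplicity of $V(\nu)_n$ in each constructed sequence is a fixed nonnegative-integer combination of multiplicities $c_{\lambda,n}$ of the input sequences, with coefficients that are themselves "classically stable" (independent of $n$ once $n$ is large relative to the partitions involved). The relevant classical stability facts are: the Littlewood--Richardson rule (for tensor products and for $\Schur_\lambda$ of a direct sum), which has the crucial feature that the structure constants $c^\nu_{\lambda\mu}$ do not depend on $n$ once $\ell(\nu)\leq n$; the stability of the plethysm coefficients in $\Schur_\lambda(\Schur_\mu(V))$; and the stable decomposition of $\Schur_\lambda(V\otimes U)$ (which combines the Cauchy formula with Littlewood--Richardson). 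I would cite \cite[Appendix A]{FH} for these.

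First, for Part 1 (tensor products): writing $V_n=\bigoplus_\lambda c_{\lambda} V(\lambda)_n$ and $U_n=\bigoplus_\mu d_\mu V(\mu)_n$ for $n$ large (so the $c_\lambda, d_\mu$ are constant), the multiplicity of $V(\nu)_n$ in $V_n\otimes U_n$ is $\sum_{\lambda,\mu} c_\lambda d_\mu \cdot (\text{mult of }V(\nu)_n\text{ in }V(\lambda)_n\otimes V(\mu)_n)$; by Littlewood--Richardson this inner multiplicity stabilizes, so the whole sum does once $n$ exceeds a bound depending on $\nu$ and the (finitely many relevant) $\lambda,\mu$. For Parts 2--5 I would proceed similarly: Part 2 is the special case of Part 3 with $U_n=0$, Part 3 reduces to iterated applications of the Littlewood--Richardson rule together with Part 1, Part 4 uses the stable form of $\Schur_\lambda(V\otimes U)$, and Part 5 uses stability of plethysm. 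In each case the point is that a fixed construction decomposes $\Schur$ applied to the inputs into finitely many pieces built from $\otimes$, $\oplus$ and smaller Schur functors, with $n$-independent combinatorial multiplicities. The main subtlety — and I expect this to be the principal technical annoyance rather than a deep obstacle — is bookkeeping the stable ranges: one must check that for each target $\nu$ only finitely many input partitions $\lambda$ contribute, which follows because $\Schur_\lambda$ or $V(\lambda)_n\otimes V(\mu)_n$ can only produce $V(\nu)_n$ with $|\nu|$ bounded in terms of $|\lambda|$ (and the degree of the construction), so degree considerations confine the sum.

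For the uniform statements: the upgrade from multiplicity stable to uniformly multiplicity stable is exactly the assertion that the stable range can be taken independent of $\nu$. Given uniform stability of the inputs (a single $N$ works for all $\lambda$), the degree bounds above give, for each fixed construction, a uniform $N'$; here one uses that $\Schur_\lambda$ (for fixed $\lambda$) and tensoring two stable sequences each change $|\nu|$ in a controlled way, so "$\nu$ large" forces the contributing $\lambda,\mu$ to be controlled. Part 6 (shifted sequences, $V_n\downarrow G_{n-k}$) follows by induction on $k$ from the branching rules quoted in Remark~\ref{remark:strong}: the branching rule expresses $V(\lambda)_n\downarrow G_{n-1}$ as a sum of $V(\mu)_{n-1}$ over $\mu$ obtained by removing boxes, with multiplicities independent of $n$, so iterating $k$ times and combining with uniform stability of $\{V_n\}$ gives the result. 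Part 7 (restriction $\GL_n\Q\downarrow\SL_n\Q$ and $\GL_{2n}\Q\downarrow\Sp_{2n}\Q$) uses the restriction formulas \eqref{eq:SLres} and \eqref{eq:Spres} referenced in the text, which again have $n$-independent branching multiplicities.

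Finally, for the strong-stability claim (Parts 1--5 for $G_n=\SL_n\Q$ or $\GL_n\Q$): here the key is Proposition~\ref{prop:SLstrong}, which reduces strong stability to Condition IV$'$ — that the connecting map $\phi_n$ carries highest weight vectors to highest weight vectors, equivalently that $P_{n+1}$ acts trivially on $\phi_n(V_n)$. The plan is to observe that each of the constructions in Parts 1--5 is functorial and is built out of $\otimes$, $\oplus$, and $\Schur_\lambda$, all of which are manifestly $\GL$-equivariant; if $P_{n+1}$ acts trivially on $\phi_n(V_n)\subset V_{n+1}$ and on $\phi_n(U_n)\subset U_{n+1}$, then $P_{n+1}$ acts trivially on $\phi_n(V_n)\otimes\phi_n(U_n)$, on $\phi_n(V_n)\oplus\phi_n(U_n)$, and on $\Schur_\lambda(\phi_n(V_n))$ inside the corresponding construction at level $n+1$, since these are functorial images of $P_{n+1}$-fixed data. (One should be slightly careful with the $\GL_n\Q$ padding conventions, e.g.\ the determinant twists, but these commute with the $P_{n+1}$-action.) Combining this verification of IV$'$ with the uniform multiplicity stability already established and applying Proposition~\ref{prop:SLstrong} yields Condition IV, hence strong stability. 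I expect this last part to be the one requiring the most care, precisely because strong stability is a statement about the maps and not just the multiplicities, so one cannot simply quote a classical decomposition theorem — one has to track the connecting maps through each functorial construction and invoke the $P_{n+1}$-triviality criterion at every step.
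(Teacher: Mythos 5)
Your proposal is correct and follows essentially the same route as the paper: reduce each construction to classical $n$--independent structure constants (Littlewood--Richardson for tensor products and Schur functors of direct sums, plethysm coefficients, and the decomposition of $\Schur_\lambda(V\otimes U)$), use the degree constraints $|\nu|=|\lambda|+|\mu|$ and $|\nu|=|\lambda|\cdot|\mu|$ to confine the sum to finitely many contributing inputs in the non-uniform $\SL_n\Q$ case, handle Parts 6--7 by the branching/restriction formulas, and obtain strong stability by checking that Condition IV$'$ is preserved by $\oplus$, $\otimes$ and $\Schur_\lambda$ and then invoking Proposition~\ref{prop:SLstrong} together with Remark~\ref{remark:strong}. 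The one caveat is that for the $\Sp_{2n}\Q$ case of Parts 2 and 5 the needed stability of symplectic plethysm is not in \cite[Appendix A]{FH} but is a theorem of Kabanov \cite{Kab}, which the paper cites.
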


We also have a version of Theorem~\ref{thm:classicalstability} for
$S_n$--representations.

\begin{theorem}
\label{thm:classicalSn}
  Let $\{V_n\}$ and $\{W_n\}$ be consistent sequences of
  $S_n$--representations that are uniformly multiplicity stable.  Then
  the following sequences of $S_n$--representations are uniformly
  multiplicity stable.
  \begin{enumerate}
  \item \textbf{Tensor products:} $\{V_n\otimes W_n\}$
  \item \textbf{Shifted sequences:} The restrictions $\{V_n\downarrow
    S_{n-k}\}$ for any fixed $k\geq 0$.
  \end{enumerate}
\end{theorem}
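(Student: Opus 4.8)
The plan is to reduce both statements to manipulations with the characters of the symmetric groups, phrased in the language of stable multiplicities $c_{\lambda,n}$, and to exploit the two stability-preserving operations that are genuinely classical: the Littlewood--Richardson rule (in the guise of Pieri / induction-product formulas) and the branching rule. Throughout, recall that since $G_n = S_n$ is finite, Condition~II is equivalent (given Condition~I) to $\phi_n$ factoring as $V_n \hookrightarrow \Ind_{S_n}^{S_{n+1}} V_n \twoheadrightarrow V_{n+1}$, so once we control multiplicities we get the maps for free. Hence the real content of both parts is Condition~III$'$, i.e.\ uniform multiplicity stability of the target sequences.

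\medskip
\noindent\textbf{Part~2 (shifted sequences).} This is the easier of the two, and I would do it first since Part~1 will use it. By induction it suffices to treat $k=1$, i.e.\ to show $\{V_n\downarrow S_{n-1}\}$ is uniformly multiplicity stable. Reindexing, write $W_n \coloneq V_{n+1}\downarrow S_n$; I want the multiplicity of $V(\mu)_n$ in $W_n$ to be eventually constant, uniformly in $\mu$. By the branching rule quoted in Remark~\ref{remark:strong},
\[
V(\lambda)_{n+1}\downarrow S_n = V(\lambda)_n \oplus \bigoplus_{\nu} V(\nu)_n,
\]
where $\nu$ ranges over partitions obtained from $\lambda[n+1]$ by removing one box; crucially, in the padded notation each such $\nu$ is $\lambda$ itself, or a partition $\mu$ with $|\mu|\le |\lambda|+1$ obtained by a bounded modification that does not depend on $n$ (removing a box from one of the fixed rows, or shrinking the first row). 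So the multiplicity of $V(\mu)_n$ in $W_n$ is a \emph{finite} sum $\sum_\lambda c_{\lambda}\, a_{\lambda\mu}$ of stable multiplicities $c_\lambda = c_{\lambda,n}$ (constant for $n\ge N$) against branching coefficients $a_{\lambda\mu}$ that are themselves $n$-independent once $n$ is large; moreover only finitely many $\lambda$ contribute to a given $\mu$, and the uniform bound $N$ for $\{V_n\}$ furnishes a uniform bound for $\{W_n\}$. The one point requiring care is bookkeeping: making the claim ``the branching coefficients and the set of contributing $\lambda$ are eventually $n$-independent in the padded labelling'' precise, which amounts to checking that for $n$ large the box one removes is never forced to come from the (long) first row in a way that interacts with $n$. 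This is a routine but necessary verification.

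\medskip
\noindent\textbf{Part~1 (tensor products).} I would compute the multiplicity of $V(\nu)_n$ in $V_n\otimes W_n$ via the Kronecker-product-versus-induction-product comparison, but it is cleaner to go through \emph{Murnaghan's stability of Kronecker coefficients}, which (per the Historical Notes, \cite{Mu}) is available to us: the Kronecker coefficient $g_{\lambda[n],\mu[n]}^{\nu[n]}$ for the decomposition $V_{\lambda[n]}\otimes V_{\mu[n]} = \bigoplus_\nu g^{\nu[n]}_{\lambda[n],\mu[n]} V_{\nu[n]}$ is, for $n$ sufficiently large (depending only on $|\lambda|,|\mu|$), independent of $n$ and vanishes unless $|\nu|\le |\lambda|+|\mu|$. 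Writing $V_n = \bigoplus_\lambda c_\lambda V(\lambda)_n$ and $W_n = \bigoplus_\mu d_\mu V(\mu)_n$ with $c_\lambda, d_\mu$ eventually constant and with uniform stabilization bound $N$, we get
\[
c_{\nu}(V_n\otimes W_n) = \sum_{\lambda,\mu} c_\lambda\, d_\mu\, \bar g^{\nu}_{\lambda,\mu},
\]
where $\bar g$ denotes the stable Kronecker coefficient. For fixed $\nu$ only finitely many $(\lambda,\mu)$ contribute (those with $|\lambda|+|\mu|\ge |\nu|$ and $\lambda,\mu$ small enough that $c_\lambda d_\mu\neq 0$ — here finite-dimensionality of the $V_n,W_n$ is used), and each term is eventually constant, so the sum is eventually constant. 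Uniformity: the stabilization threshold for $c_\nu(V_n\otimes W_n)$ can be taken to be a function of $N$ and of $|\nu|$ alone; but Condition~III$'$ demands a single $N'$ working for all $\nu$ simultaneously. This is the step I expect to be the main obstacle. It is resolved exactly as in the arithmetic behind uniform stability elsewhere in the paper: because $\{V_n\}$ and $\{W_n\}$ are \emph{uniformly} stable, there are only finitely many $\lambda$ (resp.\ $\mu$) with $c_\lambda\neq 0$ is false in general — rather, the point is that for $n\ge N$ no \emph{new} irreducibles appear, and Murnaghan stability kicks in at a threshold $\max(N, |\lambda|+|\mu|+ O(1))$; combining, one checks the set of $(\lambda,\mu)$ with $c_\lambda d_\mu \neq 0$ and $|\lambda|+|\mu|\le |\nu|$ is finite and that beyond $n = 2N + |\nu|$ (say) all relevant Kronecker coefficients are stable. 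A clean way to package this is to first prove it for $V_n, W_n$ each a single stable irreducible summand and then take the (locally finite) direct sum, tracking that the threshold depends linearly on $|\nu|$ and on the two uniform bounds; alternatively, one can invoke Theorem~\ref{thm:classicalstability} applied to the restrictions $\{(V_n\otimes W_n)\downarrow S_{n}\hookleftarrow\}$ — but the honest route is Murnaghan plus careful indexing. Finally, Conditions~I and~II for $V_n\otimes W_n$ follow from the componentwise injections $\phi_n\otimes\psi_n$ and the finite-group reformulation of surjectivity noted after Definition~\ref{definition:repstab1}.
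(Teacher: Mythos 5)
Your proof is correct and follows essentially the same route as the paper's: Murnaghan's stability of Kronecker coefficients (the paper also cites the Briand--Orellana--Rosas bound $n\ge|\lambda|+|\mu|+\lambda_1+\mu_1$) for Part 1, and the branching rule plus induction on $k$ for Part 2. The only quibble is that the uniformity step in Part 1 is simpler than you suggest: Condition III$'$ forces $c_{\lambda,n}=0$ for $n\ge N$ unless $V(\lambda)_N$ is defined, i.e.\ $|\lambda|+\lambda_1\le N$, so only finitely many irreducible types occur in $\{V_n\}$ and $\{W_n\}$ at all, and a single threshold past which all of the finitely many products $V(\lambda)_n\otimes V(\mu)_n$ have stabilized gives uniformity over all $\nu$ at once by distributivity.
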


Before proving Theorem~\ref{thm:classicalstability} and
Theorem~\ref{thm:classicalSn}, we give a number of examples in order
to illustrate the necessity of various hypotheses in the theorems.

\begin{nonexample}
  In the final part of the statement of
  Theorem~\ref{thm:classicalstability}, we need to assume strong
  stability even to conclude standard representation stability of the
  various combinations of representations.  This strong assumption is
  used via the ``type-preserving'' condition, and without this
  assumption stability may not hold.  Perhaps surprisingly, the issue
  is not the stability of the multiplicities, but surjectivity. Here
  is a simple example which illustrates the problem.  This example is
  not injective, but it can easily be made so; we do this
  below. Let \[V_n=V(0)_n\oplus V(1)_n=\Q\oplus \Q^n,\] with maps
  $V_n\to V_{n+1}$ defined by
  \[\Q\oplus \Q^n\ni(a,v)\mapsto (a,ax_{n+1})\in\Q\oplus \Q^{n+1}\]
  where $x_{n+1}$ is the basis vector $x_{n+1}=(0,\ldots,0,1)$.  The tensor product 
  $V_n\otimes V_n$ decomposes into irreducibles as ${V(0)\oplus
    V(1)^{\oplus 2}\oplus V(2)\oplus V(1,1)}$, where the last two
  factors come from the decomposition $\Q^n\otimes
  \Q^n=\Sym^2\Q^n\oplus \bwedge^2 \Q^n$. It is easy to check that the
  $\SL_{n+1}\Q$--span of the image of $V_n\otimes V_n$ in
  $V_{n+1}\otimes V_{n+1}$ is exactly $V(0)\oplus V(1)^{\oplus
    2}\oplus V(2)$; the $\bwedge^2 \Q^n$ factor is inaccessible.

  For an example which is actually representation stable, let
  \[V_n=V(0)_n\oplus V(1)_n\oplus V(2)_n=\Q\oplus \Q^n\oplus
  \Sym^2\Q^n\] with $V_n\hookrightarrow V_{n+1}$ defined by $(a,v,w)
  \mapsto (a,ax_{n+1},w+v\cdot x_{n+1})$. The sequence $\{V_n\}$ is consistent and uniformly representation stable.  The tensor product $V_n\otimes V_n$
  contains $V(1,1)=\bwedge^2 \Q^n$ with multiplicity 1, but the
  $\SL_{n+1}\Q$ image of $V_n\otimes V_n$ does not contain this
  factor. Thus the sequence $\{V_n\otimes V_n\}$ is not surjective in
  the sense of Definition~\ref{definition:repstab1}.
  \end{nonexample}
  
\begin{nonexample}
  Even when the sequence $\{V_n\}$ is type-preserving (and so strongly
  stable), restrictions often fail to be surjective. Take
  $V_n=V(1)_n=\Q^n$, which certainly is strongly stable. The
  restriction $W_n=V_{n+1}\downarrow \GL_n\Q$ splits as $V(1)_n\oplus
  V(0)_n=\Q^n\oplus \Q$, which is multiplicity stable. But the image
  of $W_n$ in $W_{n+1}=\Q^{n+2}$ is invariant under $\GL_{n+1}\Q$, and
  so the sequence $\{W_n\}$ is not stable due to the failure of
  surjectivity.
\end{nonexample}

\begin{proof}[Proof of Theorem~\ref{thm:classicalstability}]
  In each case, injectivity is either trivial or follows from the
  functoriality of the Schur functor $\Schur_\lambda$. The proofs of
  multiplicity stability generally separate into two parts. First, we
  check stability when each $V_n$ is a single irreducible; the proof
  in this case often corresponds to a classical fact of representation
  theory. Second, we promote this to the general case when $\{V_n\}$
  is an arbitrary representation stable sequence. This sometimes
  requires bootstrapping off the first step of other parts.
  
  To reduce confusion, we have labeled the two steps of the proofs
  separately as (for example) Parts 1a and 1b. For simplicity, we
  refer to ``stability'' and ``uniform stability'' in the course of
  the proofs, but we reiterate that we are not claiming surjectivity,
  so these should properly be references to ``multiplicity
  stability''. We defer the discussion of strong stability until after
  the claims have been verified in the stable and uniformly stable
  cases. \\

  \textbf{1.} In general, the problem of decomposing the tensor
  product of two irreducible representations is called the
  \emph{Clebsch--Gordan problem}. The quintessential example of
  stability is the Littlewood--Richardson rule, which answers the
  Clebsch--Gordan problem for $\SL_n$ and shows that the
  multiplicities in the decomposition are independent of $n$. Given
  two partitions $\lambda$ and $\mu$, and a partition $\nu\vdash
  |\lambda|+|\mu|$, the \emph{Littlewood--Richardson coefficient}
  $C^{\nu}_{\lambda\mu}$ is the number of ways that $\nu$ can be
  obtained as a strict $\mu$--expansion of $\lambda$ (see
  \cite[Appendix A]{FH} for full definitions). The tensor product then
  decomposes as \cite[Equation 6.7]{FH}
  \begin{equation}
    \label{eq:LR}
    \Schur_\lambda(V)\otimes \Schur_\mu(V)=\bigoplus
    C^{\nu}_{\lambda\mu}\Schur_\nu(V).
  \end{equation}
  
  \textbf{1a.} We first verify the claim in the case of a single
  irreducible. We show that in each case, the tensor
  $V(\lambda)_n\otimes V(\mu)_n$ decomposes as $\bigoplus
  N_{\lambda\mu}^\nu V(\nu)_n$ for some constant $N_{\lambda\mu}^\nu$
  independent of $n$. For $\SL_n\Q$, by the Littlewood--Richardson
  rule $V(\lambda)_n\otimes V(\mu)_n$ decomposes as $\bigoplus
  C^{\nu}_{\lambda\mu}V(\nu)_n$, so we may take
  $N^{\nu}_{\lambda\mu}=C^{\nu}_{\lambda\mu}$. For $\GL_n\Q$, recall
  that $D$ denotes the determinant representation, and note that for fixed
  $\ell$, the representation $D^\ell\otimes V_n$ is stable if and only
  if $V_n$ is stable. Every irreducible $V(\lambda)_n$ can be written
  as $\Schur_{\overline{\lambda}}\Q^n\otimes D^\ell$ for a unique
  partition $\overline{\lambda}$ and integer $\ell$ (namely
  $\ell=\lambda_n$ and
  $\overline{\lambda}_i=\lambda_i-\lambda_n$). Then
  \[V(\lambda)_n\otimes V(\mu)_n
  =\Schur_{\overline{\lambda}}(\Q^n)\otimes D^\ell\otimes
  \Schur_{\overline{\mu}}(\Q^n)\otimes D^m=D^{\ell+m}\otimes \bigoplus
  C^{\overline{\nu}}_{\overline{\lambda}\overline{\mu}}\Schur_{\overline{\nu}}(\Q^n).\]
  The decomposition of the right side into irreducibles
  $V(\nu)_n=D^{\ell+m}\otimes \Schur_{\overline{\nu}}(\Q^n)$ is
  independent of $n$. Thus we may take
  $N^{\nu}_{\lambda\mu}=C^{\overline{\nu}}_{\overline{\lambda}\overline{\mu}}$
  for those $\nu$ with $\nu_n=\lambda_n+\mu_n$, and
  $N^{\nu}_{\lambda\mu}=0$ otherwise.

  For $\Sp_{2n}\Q$, the corresponding formula is \cite[Equation
  25.27]{FH}:
  \begin{equation}
    \label{eq:SpLR}
    \Schur_{\langle\lambda\rangle}(\Q^{2n}) \otimes
    \Schur_{\langle\mu\rangle}(\Q^{2n}) =\bigoplus
    \sum_{\zeta,\sigma,\tau}C^{\lambda}_{\zeta\sigma}
    C^{\mu}_{\zeta\tau}
    C^{\nu}_{\sigma\tau}\Schur_{\langle\nu\rangle}(\Q^{2n})
  \end{equation}
  where the sum is over all partitions $\zeta,\sigma,\tau$.  Thus
  $N^{\nu}_{\lambda\mu}=\sum_{\zeta,\sigma,\tau}
  C^{\lambda}_{\zeta\sigma} C^{\mu}_{\zeta\tau}C^{\nu}_{\sigma\tau}$.

  \textbf{1b.} Now consider arbitrary consistent sequences $\{V_n\}$ and
  $\{U_n\}$. If these sequences are uniformly representation stable, their decompositions
  $V_n=\bigoplus c_{\lambda,n}V(\lambda)_n$ and $U_n=\bigoplus
  d_{\mu,n}V(\mu)_n$ are eventually independent of $n$. Thus the
  decomposition of the tensor product as
  \[V_n\otimes U_n =\left(\bigoplus
    c_{\lambda,n}V(\lambda)_n\right)\otimes\left(\bigoplus
    d_{\mu,n}V(\mu)_n\right) =\bigoplus_\nu
  \sum_{\lambda,\mu}c_{\lambda,n}d_{\mu,n}N_{\lambda\mu}^\nu
  V(\nu)_n\] is eventually independent of $n$.

  For $\SL_n\Q$ the assumption of uniform stability is not
  necessary. In this case $N_{\lambda\mu}^\nu$ is the
  Littlewood--Richardson coefficient, which is nonzero only if
  $|\nu|=|\lambda|+|\mu|$. Thus for fixed $\nu$, only finitely many
  pairs $(\lambda,\mu)$ can contribute to the
  $\sum_{\lambda,\mu}c_{\lambda,n}d_{\mu,n}N_{\lambda\mu}^\nu
  V(\nu)_n$ term above. Thus we may take $n$ large enough that these
  finitely many coefficients $c_{\lambda,n}$ and $d_{\mu,n}$ are all
  independent of $n$, and the multiplicity
  $\sum_{\lambda,\mu}c_{\lambda,n}d_{\mu,n}N_{\lambda\mu}^\nu$ of
  $V(\nu)_n$ is eventually independent of $n$ as desired.\\

  \textbf{2a.} The classical \emph{plethysm} problem is to decompose the
  composition of two Schur functors:
  \begin{equation}
    \label{eq:SLpleth}
    \Schur_{\lambda}(\Schur_\mu V)
    =\bigoplus M_{\lambda\mu}^\nu \Schur_\nu V
  \end{equation}
  To compute the coefficients $M_{\lambda\mu}^\nu$ is difficult, but
  it is known that such coefficients exist, and are nonzero only when
  $|\nu|=|\lambda|\cdot|\mu|$ \cite[Exercise 6.17a]{FH}. It
  immediately follows that the sequence $\{\Schur_\lambda(V(\mu)_n\}$
  of $\SL_n\Q$--representations $\Schur_\lambda(V(\mu)_n)=\bigoplus
  M_{\lambda\mu}^\nu V(\nu)_n$ is representation stable. For
  $\GL_n\Q$, write $V(\mu)_n=\Schur_{\overline{\mu}}\Q^n\otimes
  D^{\ell}$ for some partition $\lambda$ and integer $\ell$. Note that
  in general, if $\rho$ acts on $V$ diagonally by multiplication by
  $R$, then the action of $\rho$ on $\Schur_{\lambda}V$ will be
  multiplication by $R^{|\lambda|}$. Since the center of $\GL_n\Q$
  acts diagonally, it follows that
  \[\Schur_\lambda(V(\mu)_n)
  =\Schur_\lambda(\Schur_{\overline{\mu}}(\Q^n)\otimes
  D^\ell)=\Schur_\lambda(\Schur_{\overline{\mu}}(\Q^n))\otimes
  D^{\ell|\lambda|}=\bigoplus
  M_{\lambda\overline{\mu}}^{\overline{\nu}}
  \Schur_{\overline{\nu}}(\Q^n)\otimes D^{\ell|\lambda|}\] and thus
  $\{\Schur_\lambda(V(\mu)_n)\}$ is representation stable. For the
  symplectic group the stability of the
  plethysm \[\Schur_\lambda(\Schur_{\langle\mu\rangle}(\Q^{2n}))=\bigoplus
  L_{\lambda\mu}^\nu\Schur_{\langle\nu\rangle}(\Q^{2n})\] was only
  proved recently by Kabanov \cite[Theorem 7]{Kab}. If $\mu$ has $\ell=\ell(\mu)$
  rows, the coefficients $L_{\lambda\mu}^\nu$ are independent of $n$
  once $n\geq \ell|\lambda|$ and are nonzero only for those $\nu$ with at
  most $\ell|\lambda|$ rows.

  \textbf{2b and 3.} We now verify Parts 2 and 3 in parallel by
  induction on total multiplicity. We do this first under the
  assumption of uniform stability, so that total multiplicity is
  well-defined. We then explain how to extend this to all
  representation stable sequences in the case of $\SL_n\Q$. In
  general, when a Schur functor is applied to a direct sum we have the
  decomposition \cite[Exercise 6.11]{FH}
  \begin{equation}
    \label{eq:Schursum}
    \Schur_\lambda(V\oplus U)
    =\bigoplus C_{\mu\nu}^\lambda(\Schur_\mu V\otimes \Schur_\nu U).
  \end{equation}
  We have already verified Part 2 when $V_n$ has total multiplicity 1,
  and Part 3 reduces to Part 2 when $V_n\oplus U_n$ has total
  multiplicity 1. We now prove Part 3 when $V_n\oplus U_n$ has total
  multiplicity $k$ by strong induction. Assume that $\{V_n\}$ and
  $\{U_n\}$ are uniformly representation stable sequences, and that neither is eventually
  zero. So we may assume that Part 2 of the theorem holds for
  $\{V_n\}$ and for $\{U_n\}$ by induction. By Part 2 we have that
  $\{\Schur_\mu V_n\}$ and $\{\Schur_\nu U_n\}$ are each uniformly
  stable.  By Part 1, the tensor product $\{\Schur_\mu V_n\otimes
  \Schur_\nu U_n\}$ is uniformly stable. Thus the sum
  \[\Schur_\lambda(V_n\oplus U_n) =\bigoplus
  C_{\mu\nu}^\lambda(\Schur_\mu V_n\otimes \Schur_\nu U_n)\] is uniformly
  stable, verifying Part 3. To verify Part 2 when $V_n$ has total
  multiplicity $k$, write $V_n=U_n\oplus W_n$ with each factor
  uniformly stable and apply Part 3. Although the splitting
  $V_n=U_n\oplus W_n$ might not respect the maps $V_n\to V_{n+1}$, we
  are only concerned with multiplicities at this point so this is not
  a problem. When we revisit this issue later, it will be under the
  assumption of strong stability, in which case $\{V_n\}$ does split
  as a sum of consistent, strongly stable sequences $\{U_n\}$ and $\{W_n\}$.

  We now consider the case when $G_n=\SL_n\Q$ and the sequences are
  not necessarily uniformly stable. For a fixed finite-dimensional
  $\SL_n\Q$--representation $V=\bigoplus c_\eta V(\eta)$, consider
  decomposing $\Schur_\lambda(V)=\Schur_{\lambda}(\bigoplus c_\eta
  V(\eta))$ by repeatedly applying the formula \eqref{eq:Schursum} for
  $\Schur_\lambda(V\oplus W)$. We obtain a decomposition of the form
  \[\Schur_\lambda(V)=\bigoplus X_\bullet
  \Schur_{\mu_\bullet}V(\eta_\bullet)\otimes \cdots\otimes
  \Schur_{\mu_\bullet}V(\eta_\bullet),\] where the $V(\eta_\bullet)$
  range over the irreducible summands of $V$. Consider the individual
  terms $\Schur_{\mu}V(\eta)$. As we noted above, the decomposition
  $\Schur_{\mu}V(\eta)=\bigoplus M_{\mu\eta}^\zeta V(\zeta)$ only
  contains those $V(\zeta)$ with $|\zeta|=|\mu|\cdot
  |\eta|$. Furthermore, recall that the coefficients
  $C_{\mu\nu}^\lambda$ are only nonzero if $|\mu|+|\nu|=|\lambda|$
  (this is where we use that $G_n=\SL_n\Q$). It follows that the
  irreducibles $V(\nu)$ appearing in a tensor $V(\zeta_1)\otimes
  \cdots V(\zeta_k)$ all satisfy
  $|\nu|=|\zeta_1|+\cdots+|\zeta_k|$. Combining this, we obtain the
  key point of the argument: when considering the multiplicity of
  $V(\nu)$ in $\Schur_\lambda(\bigoplus c_\eta V(\eta))$, \emph{we
    need only consider those $V(\eta)$ with $|\eta|\leq |\nu|$}.

  Using this observation, we reduce this case to the uniformly stable
  case as follows. For fixed $\nu$, replace $V_n=\bigoplus c_{\eta,n}
  V(\eta)$ with \[V_n^{\leq \nu}=\bigoplus_{|\eta|\leq |\nu|}
  c_{\eta,n} V(\eta).\] If the sequence $\{V_n\}$ is representation
  stable, then since only finitely many $\eta$ satisfy $|\eta|\leq
  |\nu|$, the sequence $\{V_n^{\leq \nu}\}$ is uniformly stable. Thus
  applying Part 2, we conclude that
  $\{\Schur_\lambda(V_n^{\leq\nu})\}$ is uniformly stable; in
  particular, the multiplicity of $V(\nu)$ is eventually constant. By
  the observation, this is the same as the multiplicity of $V(\nu)$ in
  $\Schur_\lambda(V_n)$. Thus $\{\Schur_\lambda(V_n)\}$ is
  multiplicity stable, as desired.\\

  \textbf{4.} In general, when a Schur functor is applied to a tensor
  product, we have the decomposition \cite[Exercise 6.11b]{FH}
  \[\Schur_\lambda(V\otimes W)=\bigoplus D_{\mu\nu}^\lambda(\Schur_\mu
  V\otimes \Schur_\nu W),\] where the sum is over partitions with
  $|\mu|=|\nu|=|\lambda|$ and the coefficients are defined as
  follows. Let $d=|\lambda|$; given a partition $\eta\vdash d$, let
  $C_\eta$ be the conjugacy class in $S_d$ whose cycle decomposition
  is encoded by $\eta$. Let $\chi_\lambda$ be the character of the
  irreducible $S_d$--representation $V(\lambda)$. Then 
  \[D_{\mu\nu}^\lambda=\sum_{\eta\vdash d}
  \frac{\chi_\lambda(C_\eta)\chi_\mu(C_\eta)\chi_\nu(C_\eta)}
  {|Z_{S_d}(C_\eta)|}\] where $Z_{S_d}(C_\eta)$ is the centralizer in
  $S_d$ of a representative of $C_\eta$. Now assume that $\{V_n\}$ and
  $\{U_n\}$ are uniformly stable, and consider
  $\Schur_\lambda(V_n\otimes U_n)$. By Part 2, $\{\Schur_\mu V_n\}$
  and $\{\Schur_\nu U_n\}$ are uniformly stable for each $\mu$ and
  $\nu$.  By Part 1 we have that $\{\Schur_\mu V_n\otimes \Schur_\nu
  U_n\}$ is uniformly stable.  Thus the sum
  \[\Schur_\lambda(V_n\oplus U_n) =\bigoplus
  D_{\mu\nu}^\lambda(\Schur_\mu U_n\otimes \Schur_\nu W_n)\] is
  uniformly stable. The case when $G_n=\SL_n\Q$ and uniform stability
  is not assumed proceeds exactly as in Part 2, since the coefficents
  $D_{\mu\nu}^\lambda$ are only nonzero if $|\mu|=|\nu|=|\lambda|$.\\

  \textbf{5.} Stability for the composition of Schur functors
  $\Schur_\lambda(\Schur_\mu(V_n))$ can be deduced from the plethysm
  decomposition in \eqref{eq:SLpleth}, or just by applying Part 2
  twice, first to $\{\Schur_\mu(V_n)\}$ and then to
  $\{\Schur_\lambda(\Schur_\mu(V_n))\}$.\\

  \textbf{6.} For the restriction of
  $V(\lambda)_n=\Schur_\lambda(\Q^n)$ from $\GL_n\Q$ to $\GL_{n-k}\Q$,
  the restriction decomposes as \cite[Exercise 6.12]{FH}
  \begin{equation}
    \label{eq:SLres}
    \Schur_\lambda(\Q^n)\downarrow \GL_{n-k}\Q
    =\bigoplus_\nu\big(\sum_\mu C_{\mu\nu}^\lambda
    \dim\Schur_\mu(\Q^k)\big)\Schur_\nu(\Q^{n-k}).
  \end{equation}
  Note that $\dim \Schur_\mu(\Q^k)$ does not depend on $n$. The claim
  for a single irreducible representation of $\SL_n\Q$ immediately
  follows: \[V(\lambda)_n\downarrow
  \SL_{n-k}\Q=\bigoplus_\nu\big(\sum_\mu C_{\mu\nu}^\lambda \dim
  \Schur_\mu(\Q^k)\big)V(\nu)_{n-k}\] For $\GL_{n-k}\Q$ it follows
  after noting that the determinant representation restricts to the
  determinant representation: $D\downarrow \GL_{n-k}\Q=D$, so if
  $V(\lambda)_n=\Schur_{\overline{\lambda}}(\Q^n)\otimes D^\ell$ we
  get \[\Schur_{\overline{\lambda}}(\Q^n)\otimes D^\ell\downarrow
  \GL_{n-k}\Q=\bigoplus_{\overline{\nu}}\big(\sum_\mu
  C_{\mu\overline{\nu}}^{\overline{\lambda}} \dim
  \Schur_\mu(\Q^k)\big)\Schur_{\overline{\nu}}(\Q^{n-k})\otimes
  D^\ell.\] The claim for a uniformly stable sequence $\{V_n\}$
  follows by taking $n$ large enough that the decomposition
  $V_n=\bigoplus c_{\lambda,n}V(\lambda)_n$ is independent of
  $n$. Note that uniform stability is necessary here even for
  $\SL_n\Q$. Indeed, from \eqref{eq:SLres} we see that for every
  partition $\lambda$ with $\ell(\lambda)\leq k$, the restriction
  $\Schur_\lambda(\Q^n)\downarrow \SL_{n-k}\Q$ contains the trivial
  representation with multiplicity $\dim \Schur_\lambda(\Q^k)$. Thus
  the multiplicity of $V(0)$ in $V_n\downarrow \SL_{n-k}\Q$ is at
  least the total multiplicity of subrepresentations $V(\lambda)_n$ of
  $V_n$ with $\ell(\lambda)\leq k$, which need not be eventually constant
  if we do not assume uniform stability.

  For $\Sp_{2n}\Q$, we consider the restriction to $\Sp_{2n-2}\Q$. For
  a single irreducible representation $V(\lambda)_n$ of $\Sp_{2n}\Q$,
  the restriction decomposes as \cite[Equation 25.36]{FH}
  \begin{equation}
    \label{eq:Spres}
    V(\lambda)_n\downarrow \Sp_{2n-2}\Q=\bigoplus_\nu N_\lambda^\nu
    V(\nu)_{n-1},
  \end{equation}
  where the sum is over partitions with $\nu_n=0$. The
  coefficient $N_\lambda^\nu$ is the number of sequences
  $p_1,\ldots,p_n$ satisfying:
  \begin{align*}
    \lambda_1\geq\ &p_1\geq \lambda_2\geq p_2\geq
    \cdots\geq \lambda_n\geq p_n\\
    &p_1\geq\nu_1\geq p_2\geq \cdots\geq\nu_{n-1}\geq p_n\geq\nu_n= 0
  \end{align*} Note that if $\lambda_k=0$, then $p_i=0$ for $i\geq k$,
  and thus any $\nu$ contributing to this sum has $\nu_i=0$ for
  $i>k$. It follows that for fixed $\lambda$, the collection of $\nu$
  that contribute to this sum is independent of $n$ once $n\geq \ell(\lambda)$, so the collection of
  $p_1,\ldots,p_n$ above and the multiplicities $N_\lambda^\nu$ are
  also eventually independent of $n$. Thus $\{V(\lambda)_n\downarrow
  \Sp_{2n-2}\Q\}$ is stable, and as above it follows that
  $\{V_n\downarrow \Sp_{2n-2}\Q\}$ is uniformly stable if $\{V_n\}$ is
  uniformly stable. Uniform stability for the restriction to
  $\Sp_{2n-2k}\Q$ now follows by induction.\\

  \textbf{7.} Every irreducible $\GL_n\Q$--representation
  $V(\lambda)_n$ remains irreducible when restricted to $\SL_n\Q$; the
  resulting representation is $V(\overline{\lambda})_n$, where
  $\overline{\lambda}$ is the partition defined by
  ${\overline{\lambda}_i=\lambda_i-\lambda_n}$. If
  $V_n=\bigoplus_\lambda c_{\lambda,n}V(\lambda)_n$, the restriction
  $V_n\downarrow \SL_n\Q$ is $\bigoplus_\mu\sum_\lambda
  c_{\lambda,n}V(\mu)$ where the sum is over those $\lambda$ with
  $\overline{\lambda}=\mu$. For fixed $\mu$, the collection of such
  $\lambda$ is independent of $n$. Thus if $\{V_n\}$ is uniformly
  stable and $c_{\lambda,n}$ are eventually independent of $n$, the
  same is true of the multiplicities $\sum_\lambda
  c_{\lambda,n}$ of $V(\mu)_n$.

  It thus suffices to consider the restriction from $\SL_{2n}\Q$ to
  $\Sp_{2n}\Q$. Littlewood proved that if $\lambda$ is a partition
  with at most $n$ rows, the restriction of the irreducible
  $V(\lambda)_{2n}=\Schur_\lambda(\Q^{2n})$ decomposes as
  \cite[Equation 25.39]{FH}
  \begin{equation}
    \label{eq:SLSpres}
    \Schur_\lambda(\Q^{2n})\downarrow \Sp_{2n}\Q
    =\bigoplus_\mu \sum_\eta C_{\eta\mu}^\lambda \Schur_{\langle\mu\rangle}(\Q^{2n}),
  \end{equation}
  where the sum is over all partitions $\eta=(\eta_1=\eta_2\geq
  \eta_3=\eta_4\geq \cdots)$ where each number appears an even number
  of times. Note that this formula is independent of $n$ once $n\geq \ell(\lambda)$ (so that the formula applies). Thus the sequence 
  $\{V(\lambda)_{2n}\downarrow \Sp_{2n}\Q\}$ is representation stable. If
  $\{V_n=\bigoplus c_{\lambda,n}V(\lambda)_n\}$ is uniformly stable, let
  $N$ be the largest number of rows of any partition $\lambda$ for
  which the eventual multiplicity of $V(\lambda)_n$ is
  positive. Assume that the decomposition of $V_n$ stabilizes once
  $n\geq N$. Then for $n\geq N$, we may apply Littlewood's rule to
  conclude that \[V_n\downarrow \Sp_{2n}\Q =
  \bigoplus_\mu\sum_{\lambda,\eta}c_{\lambda,n}C^{\lambda}_{\eta\mu}V(\mu)_n\]
  is independent of $n$. We conclude that the sequence 
  $\{V_n\downarrow \Sp_{2n}\Q\}$ is uniformly representation stable.\\

  \textbf{Strong stability.} We now consider the case when
  $G_n=\SL_n\Q$ or $\GL_n\Q$ and $\{V_n\}$ and $\{U_n\}$ are strongly
  stable, meaning they are not only uniformly stable but also
  type-preserving. Recall that for $G_n=\SL_n\Q$ or $\GL_n\Q$, this
  implies Condition IV$'$: that $P_{n+1}<G_{n+1}$ acts trivially on
  $V_n\subset V_{n+1}$. This property is preserved by direct sum and
  by tensor product: if $P_{n+1}$ acts trivially on $V_n\subset
  V_{n+1}$ and on $U_n\subset U_{n+1}$, it acts trivially on
  $V_n\oplus U_n\subset V_{n+1}\oplus U_{n+1}$ and $V_n\otimes
  U_n\subset V_{n+1}\otimes U_{n+1}$. The functoriality of
  $\Schur_\lambda$ implies that if $P_{n+1}$ acts trivially on
  $V_n\subset V_{n+1}$, it acts trivially on
  $\Schur_\lambda(V_n)\subset\Schur_\lambda(V_{n+1})$. The
  constructions in Parts 1--5 are obtained by composing these
  operations, so we conclude that Condition IV$'$ holds for each of
  the resulting sequences.

  We have already proved above that the resulting sequences in Parts
  1--5 are uniformly multiplicity stable (Condition III$'$). Thus we
  may apply Proposition~\ref{prop:SLstrong} to conclude that these
  sequences are type preserving (Condition IV). Finally, by
  Remark~\ref{remark:strong} Conditions III$'$ and IV together imply
  surjectivity (Condition II). This concludes the proof of strong
  stability in Parts 1--5, and thus completes the proof of
  Theorem~\ref{thm:classicalstability}.
\end{proof}

\begin{proof}[Proof of Theorem~\ref{thm:classicalSn}]\ 

  \textbf{1.} For irreducible representations $V(\lambda)_n$ and
  $V(\mu)_n$ of $S_n$, Murnaghan proved that the decomposition of
  the tensor product $V(\lambda)_n\otimes V(\mu)_n$ into irreducibles
  $V(\nu)_n$ is eventually independent of $n$ (see Section 1 of
  \cite{Mu}), and Briand--Orellana--Rosas have recently proved that the decomposition of $V(\lambda)_n\otimes V(\mu)_n$ stabilizes once $n\geq |\lambda|+|\mu|+\lambda_1+\mu_1$ \cite[Theorem 1.2]{BOR}. If $\{V_n=\bigoplus c_{\lambda,n}V(\lambda)_n\}$ and
  $\{W_n=\bigoplus d_{\mu,n}V(\mu)_n\}$ are uniformly multiplicity
  stable, taking $n$ large enough that the decomposition of
  $V(\lambda)_n\otimes V(\mu)_n$ stabilizes for all $\lambda$ and
  $\nu$ occurring in $V_n$ and $W_n$, it follows by distributivity that
  $\{V_n\otimes W_n\}$ is uniformly multiplicity stable.\\

  \textbf{2.} For $k=1$, we repeat from Remark~\ref{remark:strong} the
  branching rule for restrictions from $S_n$ to $S_{n-1}$:
  \[V(\lambda)_n\downarrow S_{n-1}=V(\lambda)_{n-1}\oplus
  \bigoplus_\mu V(\mu)_{n-1}\] where $\mu$ ranges over those
  partitions obtained by removing one box from $\lambda$. It is
  immediate that uniform multiplicity stability is preserved. For
  restrictions from $S_n$ to $S_{n-k}$ with $k>1$ a similar formula
  can be given explicitly \cite[Exercise 4.44]{FH}, but to
  conclude stability we can just inductively apply the result for
  $k=1$.
\end{proof}

\subsection{Reversing the Clebsch--Gordan problem} We conclude this
section by discussing the possibility of reversing the conclusion of
Theorem~\ref{thm:classicalstability}(1). This idea will play an 
important role in Section~\ref{section:liealg}.

\begin{theorem}\label{thm:LRinvert}
  Let $G_n=\SL_n\Q$, $\GL_n\Q$, or $\Sp_{2n}\Q$. If
  $\{W_n\}$ and $\{V_n\otimes W_n\}$ are nonzero and multiplicity stable as
  $G_n$--representations, then $\{V_n\}$ is multiplicity stable.
  This remains true if ``multiplicity stable'' is replaced by ``uniformly multiplicity
  stable''.
\end{theorem}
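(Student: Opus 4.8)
The plan is to reduce the statement to the classical fact that the ring $\Lambda$ of symmetric functions is an integral domain, so that the multiplicities of $\{V_n\}$ can be recovered from those of $\{V_n\otimes W_n\}$ by ``dividing'' by the stable multiplicities of $\{W_n\}$. I would work in the stable range $n\gg 0$, where the Littlewood--Richardson rule \eqref{eq:LR} (for $\SL_n\Q$ and $\GL_n\Q$) and the Littlewood formula \eqref{eq:SpLR} (for $\Sp_{2n}\Q$) compute tensor products with coefficients independent of $n$. Grade the irreducible labels by size: for $\SL_n\Q$ and $\Sp_{2n}\Q$ put $\deg V(\lambda)_n=|\lambda|$, and for $\GL_n\Q$ write $V(\lambda)_n=\Schur_{\bar\lambda}(\Q^n)\otimes D^{\,m}$ and record the pair $(|\bar\lambda|,m)$. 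For $\SL_n\Q$ and $\GL_n\Q$ this degree is exactly additive under tensor product by \eqref{eq:LR}; for $\Sp_{2n}\Q$, equation \eqref{eq:SpLR} shows that $V(\lambda)_n\otimes V(\mu)_n$ involves only $V(\nu)_n$ with $|\nu|\le|\lambda|+|\mu|$, and that the top-degree summand (where $|\nu|=|\lambda|+|\mu|$, i.e.\ $\zeta=\varnothing$) is governed by the ordinary Littlewood--Richardson coefficients $C^\nu_{\lambda\mu}$. Thus in every case the leading (top-degree) part of a tensor product is computed by multiplication of Schur functions in $\Lambda$.

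Write $W_n=\bigoplus_\mu d_{\mu,n}V(\mu)_n$ with $d_{\mu,n}$ eventually equal to $d_\mu$; since $W_n\ne0$ for large $n$, pick $m_0$ minimal with $d_\mu\ne0$ for some $|\mu|=m_0$, and set $w_0=\sum_{|\mu|=m_0}d_\mu s_\mu\in\Lambda_{m_0}$, a nonzero homogeneous symmetric function. Write $V_n=\bigoplus_\lambda c_{\lambda,n}V(\lambda)_n$; assume first that all multiplicities are finite (as in the applications). I would prove by induction on $k\ge0$ that $c_{\lambda,n}$ is eventually independent of $n$ for all $\lambda$ with $\deg\lambda=k$ --- a finite set of labels. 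Fix $k$, assume the claim in all smaller degrees, and for $n$ large expand, for each $\nu$ with $\deg\nu=k+m_0$, the multiplicity $e_{\nu,n}$ of $V(\nu)_n$ in $V_n\otimes W_n$ as $\sum_{\lambda',\mu}c_{\lambda',n}\,d_{\mu,n}\cdot(\text{leading tensor coefficient})$. A nonzero term forces $\deg\lambda'+\deg\mu=k+m_0$ and $\deg\mu\ge m_0$, hence $\deg\lambda'\le k$. The terms with $\deg\lambda'<k$ involve finitely many pairs $(\lambda',\mu)$ and, by the inductive hypothesis together with the stability of $\{W_n\}$, sum to a quantity $f_{\nu,n}$ eventually constant in $n$. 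The remaining terms have $\deg\lambda'=k$, forcing $\deg\mu=m_0$ and $d_{\mu,n}=d_\mu$, and contribute $\sum_{\deg\lambda'=k}c_{\lambda',n}\,M_{\nu\lambda'}$, where $M_{\nu\lambda'}$ is the coefficient of $s_\nu$ in $s_{\lambda'}\cdot w_0$. So for $n$ large the vector $(c_{\lambda',n})_{\deg\lambda'=k}$ satisfies the linear system $M\,(c_{\lambda',n})=(e_{\nu,n}-f_{\nu,n})_{\deg\nu=k+m_0}$ whose right-hand side is eventually constant in $n$. But $M$ is exactly the matrix, in the Schur basis, of multiplication by $w_0$ from $\Lambda_k$ to $\Lambda_{k+m_0}$; since $\Lambda$ is an integral domain and $w_0\ne0$ this map is injective, so $M$ has a left inverse over $\Q$, and hence $(c_{\lambda',n})_{\deg\lambda'=k}$ is eventually constant. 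This completes the induction and shows $\{V_n\}$ is multiplicity stable. (For $\GL_n\Q$ the $D$-degree is additive and merely shifted by tensoring with $w_0$, so one runs the argument inside each fixed $D$-degree, or twists by a power of $D$ to reduce to polynomial representations and then to $\SL_n\Q$.)

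For the uniform version one must show the thresholds above can be chosen independent of $k$. The key point is that if $V(\lambda')_n\subseteq V_n$ and $V(\mu)_n\subseteq W_n$ then $V(\lambda'+\mu)_n\subseteq V_n\otimes W_n$ (the componentwise sum $\lambda'+\mu$ is a partition occurring in the Littlewood--Richardson product with coefficient $1$, and likewise in the top-degree part of \eqref{eq:SpLR}), and multiplicities never cancel. Since $\lambda'+\mu$ has at least as many rows as $\lambda'$, uniform multiplicity stability of $\{W_n\}$ and $\{V_n\otimes W_n\}$ forces a uniform bound $R$ on the number of rows of the irreducibles occurring in $V_n$ for $n$ large. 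With row lengths bounded by $R$, the Littlewood--Richardson (resp.\ Littlewood) formula is valid in the relevant range once $n\ge R+1$, a bound not depending on $k$; combining this with a common threshold for the uniform stability of $\{W_n\}$ and $\{V_n\otimes W_n\}$, the recursion yields a single $n_0$ beyond which every $c_{\lambda,n}$ is constant, i.e.\ $\{V_n\}$ is uniformly multiplicity stable.

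The step I expect to be the real work is getting the grading/filtration bookkeeping right across the three families --- in particular the observation that for $\Sp_{2n}\Q$ the product \eqref{eq:SpLR} is only \emph{filtered}, with associated graded equal to the ordinary Littlewood--Richardson product, so that the division problem still collapses onto ``$\Lambda$ is a domain,'' and correctly tracking the determinant twists for $\GL_n\Q$. Once the problem is phrased as ``multiplication by a nonzero homogeneous symmetric function is injective,'' the rest is formal. A minor technical point is the treatment of labels with infinite multiplicity, which can be excluded (it does not arise in the applications) or dispatched by noting that $W_n\ne0$ forces the same infinitude somewhere in $V_n\otimes W_n$.
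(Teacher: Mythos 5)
Your argument for $\SL_n\Q$ (and, after twisting by the determinant, for $\GL_n\Q$) is sound, and it is genuinely different in mechanism from the paper's proof: the paper works globally in the representation ring $R_n\hookrightarrow \Z[\Lambda_n]^{W_n}$, uses that $R_n$ is a domain to see that $[V_n]$ is the \emph{unique} solution of $x\cdot[W_n]=[V_n\otimes W_n]$, and then compares $R_n$ with $R_{n+1}$ via explicit projection/inclusion maps (killing the partitions with $\ell(\lambda)=n$), whereas you recover the multiplicities of $V_n$ degree by degree by inverting multiplication by the lowest homogeneous piece $w_0$ of $W$ in $\Lambda$. Your version has the advantage of being constructive (it is close in spirit to the algorithm sketched in the remark after the theorem), and your row-bound observation for the uniform case plays the same role as the paper's argument that a new $V(\lambda)_{n+1}$ with $\ell(\lambda)=n$ in $V_{n+1}$ would force one in $V_{n+1}\otimes W_{n+1}$.

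However, there is a genuine gap in the symplectic case. Your induction runs from low degree to high degree, and for a fixed $\nu$ with $|\nu|=k+m_0$ you assert that a nonzero contribution to $e_{\nu,n}$ forces $|\lambda'|+|\mu|=k+m_0$. For $\Sp_{2n}\Q$ this is false: by \eqref{eq:SpLR} the coefficient $N^{\nu}_{\lambda'\mu}=\sum_{\zeta,\sigma,\tau}C^{\lambda'}_{\zeta\sigma}C^{\mu}_{\zeta\tau}C^{\nu}_{\sigma\tau}$ is nonzero whenever $|\nu|=|\lambda'|+|\mu|-2|\zeta|$, so the degree-$(k+m_0)$ isotypic part of $V_n\otimes W_n$ receives contributions from components $V(\lambda')_n$ of $V_n$ with $|\lambda'|>k$ (indeed of arbitrarily large degree when $V_n$ is infinitely supported, as the hypotheses allow). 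These terms are exactly the ones your induction has not yet determined, so the linear system you write down is not the correct identity and the ``error term'' $f_{\nu,n}$ is not under control. Knowing that the associated graded of the symplectic tensor product is the Littlewood--Richardson product lets you divide \emph{top} degrees by \emph{top} degrees, but since neither $V_n$ nor $V_n\otimes W_n$ need have a top degree, this does not rescue a bottom-up recursion. The clean repair is essentially the paper's: work in the full character ring of $\Sp_{2n}\Q$ (a domain, as a subring of $\Z[\Lambda_n]^{W_n}$), where $[V_n]$ is globally the unique solution of $x\cdot[W_n]=[V_n\otimes W_n]$, and then transport this uniqueness across $n$; alternatively, one can run your filtration argument after first reducing to finitely supported $V_n$ by a truncation/compactness step, but that step needs to be justified rather than asserted.
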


\begin{proof}
  We will prove the theorem in the following form: given the irreducible decompositions of $W$ and of $V\otimes W$, the irreducible decomposition of $V$ can be determined, and without reference to $n$. This will be formalized in the course of the proof. In the remark following the proof, we sketch a constructive way to determine the decomposition of $V$. But the theorem as stated follows from more general properties, as we now explain.

First, we will use that the representation ring is a domain for any such group $G_n$. Recall that the \emph{representation ring} $R_n$ consists of formal differences $V-U$ of representations of $G_n$, with addition given by direct sum and multiplication given by tensor product. Complete reducibility implies that as a group, $R_n$ is the free abelian group on the irreducible representations. The ring structure is more complicated, but we can in fact describe $R_n$ explicitly. Indeed, let $\Lambda_n$ be
the weight lattice in $\fh^*_n$. Any representation $V$ determines a ``character'' in the group ring $\Z[\Lambda_n]$, where the coefficient of the weight $L\in \Lambda_n$ is the dimension of the eigenspace $V^{(L)}$:
\[V\mapsto \sum_{L\in \Lambda_n}\dim V^{(L)}\cdot L\]

The highest weight decomposition as described in Section~\ref{section:repsGLSp} implies that a representation is determined by its character; that is, the induced ring homomorphism $R_n\hookrightarrow  \Z[\Lambda_n]$ is injective. This would suffice for our purposes, but we can say more: any such character is invariant under the Weyl group $W_n$, and in fact $R_n$ is exactly the subring $\Z[\Lambda_n]^{W_n}$ of invariants (\cite{FH}, Theorem 23.24, combined with Exercise 23.36(d) for $\GL_n\Q$).

Since $R_n$ is a domain, $V_n$ is the unique solution in $R_n$ to the equation $x\cdot [W_n]=[V_n\otimes W_n]$. It remains to see that given the decompositions of $W_n$ and $V_n\otimes W_n$, the solution to this equation does not depend on $n$. To do this, we need to relate the representation rings $R_n$ and $R_{n+1}$.
There is a natural homomorphism $R_{n+1}\to R_n$ given by restriction from $G_{n+1}$ to $G_n$, but this is \emph{not} the map we want, since restriction does not take irreducibles to irreducibles. Instead, assume first that $G_n=\SL_n\Q$; by identifying $V(\lambda)_n$ with $\lambda$, we get an identification of $R_n$ with the free abelian group $\Z[\{\lambda|\ell(\lambda)<n\}]$ on partitions with fewer than $n$ rows. The map we want is simply the projection
\[R_{n+1}=\Z[\{\lambda|\ell(\lambda)< n+1\}]\overset{\pi}{\twoheadrightarrow}  \Z[\{\lambda|\ell(\lambda)< n\}]= R_n\]
which sends $\lambda\mapsto 0$ if $\ell(\lambda)=n$ and $\lambda\mapsto \lambda$ otherwise. With respect to this basis of partitions, multiplication in the ring is given by the Littlewood--Richardson coefficients; in a sense, Theorem~\ref{thm:classicalstability}(1) is based on the fact that this map is a ring homomorphism. This projection has a right inverse $i\colon R_n\to R_{n+1}$ defined by the inclusion ${\{\lambda|\ell(\lambda)< n\}}\subset \{\lambda|\ell(\lambda)< n+1\}$; this is not a ring homomorphism, however. Note that uniform 
multiplicity stability is equivalent to $i([V_n]) = [V_{n+1}]$ for large enough $n$.

Assume that the sequences in question are uniformly stable, and that $n$ is large enough that the decompositions of $W_n$ and $V_n\otimes W_n$ have stabilized, meaning \[i([W_n])=[W_{n+1}]\ \ \text{ and }\ \ i([V_n\otimes W_n])=[V_{n+1}\otimes W_{n+1}].\] Then we have $\pi([W_{n+1}])=[W_n]$ and $\pi([V_{n+1}\otimes W_{n+1}])=[V_n\otimes W_n]$. Thus $[V_{n+1}]$ projects to a solution of $x\cdot [W_n]=[V_n\otimes W_n]$, so by uniqueness we have $\pi([V_{n+1}])=[V_n]$. 

We want to prove that $i([V_n])=[V_{n+1}]$.  Suppose not; that is, assume the difference $[V_{n+1}]-i([V_n])$ is not zero.  Since $\pi([V_{n+1}])=[V_n]$, this difference consists of all those irreducibles $V(\lambda)_{n+1}$ contained in $V_{n+1}$ having $\ell(\lambda)=n$.  It is easy to check from the definition of the Littlewood--Richardson coefficients that if a representation $V$ contains such a $V(\lambda)_{n+1}$, then for any nonzero representation $W$ the tensor $V\otimes W$ also contains some $V(\mu)_{n+1}$ with $\ell(\mu)=n$.  Applying this to $V_{n+1}\otimes W_{n+1}$ gives that $[V_{n+1}\otimes W_{n+1}]\neq i([V_n\otimes W_n])$, contradicting the uniform stability of $V_n\otimes W_n$. We conclude that $[V_{n+1}]=i([V_n])$ and so $\{V_n\}$ is uniformly stable, as desired.

For $G_n=\Sp_{2n}\Q$, the argument proceeds identically, except that $R_n$ is identified with the free abelian group $\Z[\{\lambda|\ell(\lambda)\leq n\}]$ on partitions with at most $n$ rows. We can deduce from \eqref{eq:SpLR} the desired property that if $\ell(\lambda)={n+1}$, $V(\lambda)_{n+1}\otimes W$ contains some $V(\mu)_{n+1}$ with $\ell(\mu)={n+1}$. For $G_n=\GL_n\Q$, $R_n$ is the free abelian group $\Z[\{\lambda|\ell(\lambda)\leq n\}]$ on \emph{pseudo-}partitions with at most $n$ rows, and we also have to modify the maps between $R_n$ and $R_{n+1}$. In this case the inclusion $i\colon R_n\to R_{n+1}$ takes $\lambda=(\lambda_1\geq \cdots\geq \lambda_n)$ to $i(\lambda)=(\lambda_1\geq \cdots\geq \lambda_n\geq \lambda_n)$; the projection $\pi\colon R_{n+1}\to R_n$ sends the pseudo-partition $\lambda=(\lambda_1\geq \cdots\geq \lambda_n\geq \lambda_{n+1})$ to $0$ if $\lambda_n\neq \lambda_{n+1}$, and to $\pi(\lambda)=(\lambda_1\geq \cdots\geq \lambda_n)$ if $\lambda_n=\lambda_{n+1}$. The argument above then goes through, with the role of the partitions with $\ell(\lambda)=n$ played by the pseudo-partitions having $\lambda_n\neq \lambda_{n+1}$.

We sketch a compactness argument to extend this to the case when the sequences are multiplicity stable but not uniformly so. Consider for example the ideal of $R_n$ spanned by partitions $\lambda$ with $|\lambda|> k$. The corresponding quotients have basis the partitions $\lambda$ with $|\lambda|\leq k$.  Since this set is finite, the corresponding subset of the multiplicities converges uniformly, and we can apply the argument above. Letting $k\to \infty$, we conclude that $\{V_n\}$ is multiplicity stable.
\end{proof}

\begin{remark}
A related theorem also holds: if $\{V_n\otimes V_n\}$ is multiplicity stable, then $\{V_n\}$ is multiplicity stable, and similarly for uniform stability.  Both this claim and Theorem~\ref{thm:LRinvert} can be proved constructively, by an algorithm which we now sketch. The Littlewood--Richardson coefficients have the following property with respect to the lexicographic order on partitions: given $\lambda$ and $\mu$, the largest partition occurring in $V(\lambda)\otimes V(\mu)$ is $\lambda+\mu$, with multiplicity 1. Thus the largest partition occurring in $V_n\otimes V_n$ will be $\lambda+\lambda$, where $\lambda$ is the largest partition occurring in $V_n$. The next largest must be $\lambda+\mu$, where $\mu$ is the next largest partition in $V_n$. Continue, at each stage finding the largest irreducible in $V_n\otimes V_n$ not yet accounted for by those irreducibles already found. The algorithm pivots on partitions of the form $\lambda+\mu$, so since $\ell(\lambda)\leq \ell(\lambda+\mu)$ and $\ell(\mu)\leq \ell(\lambda+\mu)$, the steps in the algorithm will not depend on $n$ (if the sequences are not uniformly stable, we also need a compactness argument as above).
\end{remark}

\section{Cohomology of pure braid and related groups}
\label{section:braids}

Let $P_n$ denote the pure braid group, as discussed in the
introduction.  As explained there, the action of $S_n$ on the
configuration space $X_n$ makes each cohomology group
$H^i(X_n;\Q)=H^i(P_n;\Q)$ into an $S_n$--representation.  Explicit
formulas for the multiplicity of an irreducible $V(\lambda)$ in
$H^i(P_n;\Q)$ are not known.  However, we do have the following.

\begin{theorem}
\label{thm:pure}
For each fixed $i\geq 0$, the sequence of $S_n$--representations
$\{H^i(P_n;\Q)\}$ is uniformly representation stable, and in fact stabilizes once $n\geq 4i$.
\end{theorem}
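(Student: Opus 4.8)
The plan is to reduce Theorem~\ref{thm:pure} to a stability statement for a finite family of induced representations of $S_n$ and then settle that statement with Pieri's rule; the number of induced pieces will yield the stable range $n\ge 4i$. The case $i=0$ is immediate since $H^0(P_n;\Q)=\Q$, so I would assume $i\ge 1$ throughout.

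\textbf{Step 1: the Lehrer--Solomon decomposition.} First I would recall that $H^i(P_n;\Q)=H^i(X_n;\Q)$ is the degree--$i$ part of the Orlik--Solomon algebra of the braid arrangement, and invoke the result of Lehrer--Solomon \cite{LS}: as an $S_n$--representation $H^i(X_n;\Q)$ is a direct sum, over conjugacy classes $c$ of elements of $S_n$ with exactly $n-i$ cycles, of induced representations $\Ind_{Z_{S_n}(c)}^{S_n}\xi_c$ for certain one-dimensional characters $\xi_c$ of the centralizers. An element with $n-i$ cycles has cycle type $(1^{a_1}2^{a_2}\cdots)$ with $\sum_j a_j=n-i$, hence $\sum_{j\ge 2}(j-1)a_j=i$, so the number $m=\sum_{j\ge 2}ja_j$ of points it moves satisfies $i+1\le m\le 2i$ (using $j\le 2(j-1)$ for $j\ge 2$ for the upper bound). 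Since $Z_{S_n}(c)=S_{a_1}\times Z'$ splits off the symmetric group on the $a_1$ fixed points, on which $\xi_c$ is trivial, collecting classes by $m=n-a_1$ and inducing in stages should give an isomorphism of $S_n$--representations
\[
H^i(P_n;\Q)\ \cong\ \bigoplus_{m=i+1}^{2i}\Ind_{S_{n-m}\times S_m}^{S_n}\big(\Q\boxtimes L^i_m\big)
\qquad (n\ge 2i),
\]
where $\Q$ denotes the trivial $S_{n-m}$--representation and the $L^i_m$ are representations of $S_m$ not depending on $n$. I would then check that the map $\phi_n$ induced by forgetting a point is, summand by summand, the evident inclusion $\Ind_{S_{n-m}\times S_m}^{S_n}(\Q\boxtimes W)\hookrightarrow\Ind_{S_{n+1-m}\times S_m}^{S_{n+1}}(\Q\boxtimes W)$; this should follow from the naturality of the no-broken-circuit basis of the Orlik--Solomon algebra under deletion of a coordinate.

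\textbf{Step 2: Conditions I and II.} Injectivity is then clear from this identification of $\phi_n$ with an inclusion of summands (equivalently, the map $P_n\to P_{n+1}$ adding a trivial strand splits the forgetful map, so $\phi_n$ is split injective). For surjectivity I would view $\Ind_{S_{n-m}\times S_m}^{S_n}(\Q\boxtimes W)$ as $\bigoplus_T W_T$, the sum over $m$--element subsets $T\subseteq\{1,\dots,n\}$ of copies of $W$, with $S_n$ permuting the $T$'s; the image of $\phi_n$ is the sum of the $W_T$ with $T\subseteq\{1,\dots,n\}$, and since $S_{n+1}$ permutes the $m$--subsets of $\{1,\dots,n+1\}$ transitively, its $S_{n+1}$--span is all of $H^i(P_{n+1};\Q)$. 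So Conditions~I and II would hold for $n\ge 2i$.

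\textbf{Step 3: Condition III$'$ and the range $n\ge 4i$.} By Step~1 it suffices to prove that for each $\mu\vdash m$ with $m\le 2i$, the sequence $\{\Ind_{S_{n-m}\times S_m}^{S_n}(\Q\boxtimes V_\mu)\}$ is uniformly multiplicity stable with decomposition constant once $n\ge 2m$; summing over the finitely many, $n$--independent constituents of $L^i_{i+1},\dots,L^i_{2i}$ then gives Condition~III$'$ for $\{H^i(P_n;\Q)\}$. By Pieri's rule \cite{FH}, $\Ind_{S_{n-m}\times S_m}^{S_n}(\Q\boxtimes V_\mu)=\bigoplus_\nu V_\nu$ over $\nu\vdash n$ with $\nu/\mu$ a horizontal strip of size $n-m$; equivalently, writing $V(\lambda)_n=V_{\lambda[n]}$, the multiplicity of $V(\lambda)_n$ is $1$ if $n-|\lambda|\ge\mu_1\ge\lambda_1\ge\mu_2\ge\lambda_2\ge\cdots$ and $0$ otherwise. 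The only occurrence of $n$ is the inequality $\mu_1\le n-|\lambda|$; since $\mu_j\ge\lambda_j$ for $j\ge 2$ forces $\mu_1=m-\sum_{j\ge2}\mu_j\le m-|\lambda|+\lambda_1$, this inequality becomes automatic once $n\ge m+\lambda_1$, so the multiplicity is then independent of $n$. A nonzero multiplicity requires $\lambda_1\le\mu_1\le m-|\lambda|+\lambda_1$, hence $|\lambda|\le m$ and so $\lambda_1\le m$, giving $m+\lambda_1\le 2m\le 4i$. Thus every multiplicity in $H^i(P_n;\Q)$ would be constant for $n\ge 4i$, only the finitely many $\lambda$ with $|\lambda|\le 2i$ would occur, and Condition~III$'$ would hold with $N=4i$ --- completing, together with Step~2, the proof that $\{H^i(P_n;\Q)\}$ is uniformly representation stable and stabilizes once $n\ge 4i$.

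\textbf{The main obstacle} is Step~1: establishing the Lehrer--Solomon $S_n$--module structure of $H^i(X_n;\Q)$ and verifying that the forgetful maps are compatible with the resulting decomposition. Given that, the stability of the induced sequences in Step~3 --- essentially the statement proved by Hemmer \cite{He} --- is a routine application of Pieri's rule, and extracting the precise bound $n\ge 4i$ is a short computation using $i+1\le m\le 2i$.
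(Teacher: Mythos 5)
Your proposal is correct and its overall strategy coincides with the paper's: both reduce $H^i(P_n;\Q)$ to a finite, $n$--independent list of induced representations of the form $\Ind_{H\times S_{n-k}}^{S_n}(V\boxtimes\Q)$ with $k\leq 2i$, and both get the range $n\geq 4i$ from the bound $k\leq 2i$ together with a stabilization threshold of $2k$ for each induced piece. The differences are in how each half is carried out. For the decomposition, the paper uses only the coarser Orlik--Solomon splitting $H^*(P_n;\Q)=\bigoplus_{\mathcal{S}}H^{\mathcal{S}}(P_n)$ over set partitions $\mathcal{S}$ of $\{1,\ldots,n\}$, grouped into $S_n$--orbits to produce inductions from stabilizers $\Stab(\mathcal{S}_\mu)=H_\nu\times S_{m_1}$ (and explicitly notes that it does \emph{not} need the finer Lehrer--Solomon description by linear characters of centralizers, which you invoke); the combinatorial bookkeeping is the same, since a set partition with block sizes $\mu$ carries the same data as a conjugacy class of cycle type $\mu$, and the key point in both cases is that the factor permuting the singleton blocks (fixed points) acts trivially. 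For the stability of the induced pieces, the paper cites Hemmer's theorem as a black box, whereas you prove exactly the needed special case directly: decomposing $\Ind_H^{S_m}V$ into irreducibles $V_\mu$ of $S_m$ and applying Pieri's rule, with the interlacing inequalities giving both the stabilization at $n\geq m+\lambda_1\leq 2m$ and the finiteness of the partitions that occur. This makes your argument self-contained and yields the $n\geq 2k$ threshold transparently, at the cost of having to verify the Lehrer--Solomon module structure and the compatibility of the forgetful maps with it --- the one step you rightly flag as the real work, and which the paper handles at the Orlik--Solomon level by checking that $\psi_n^\ast$ carries $H^{\mathcal{S}}(P_n)$ isomorphically onto $H^{\mathcal{S}\langle n+1\rangle}(P_{n+1})$.
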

\noindent For example, for $n\geq 4$,
\[H^1(P_n;\Q)=V(0)\oplus V(1)\oplus V(2)\] and thanks to computations
by Hemmer, for $n\geq 7$ we have:
\begin{equation}
\label{eq:braid2stab}
H^2(P_n;\Q)= V(1)^{\oplus 2}\oplus V(1,1)^{\oplus 2}\oplus
V(2)^{\oplus 2}\oplus V(2,1)^{\oplus 2}\oplus V(3)\oplus V(3,1).
\end{equation}

As mentioned in the introduction, it is tempting to guess that the
reason for the stability in \eqref{eq:braid2stab} is that each factor
$V(\lambda)\subset H^2(P_n;\Q)$ has $S_{n+1}$--span inside
$H^2(P_{n+1};\Q)$ isomorphic to $V(\lambda)$.  In the terminology of
\S\ref{section:strong}, we would hope that the natural homomorphism
$H^2(P_n;\Q)\to H^2(P_{n+1};\Q)$ is type-preserving and so the
sequence $\{H^2(P_n;\Q)\}$ is strongly stable. However, this is false
for $\{H^2(P_n;\Q)\}$ and indeed is false for $\{H^i(P_n;\Q)\}$ for
every $i\geq 1$.

We can see this failure explicitly for $H^1(P_n;\Q)$ as
follows. We will see below that $H^1(P_n;\Q)$ has basis
$\{w_{ij}|1\leq i<j\leq n\}$; after identifying $w_{ji}=w_{ij}$, the group $S_n$
acts on this basis by permuting the indices. Thus the unique trivial
subrepresentation $V(0)\subset H^1(P_n;\Q)$ is spanned by the vector
\[v=\sum_{1\leq i<j\leq n}w_{ij}.\] This vector is, up to a scalar,
the sum $\sum_{\sigma\in S_n}\sigma\cdot w_{12}$, and thus is
certainly $S_n$--invariant.  But after including it into $H^1(P_{n+1};\Q)$ this 
vector is not invariant under $S_{n+1}$ (for example, it does not involve any basis elements of the form $w_{i,n+1}$). In fact, it is not too
hard to check that the $S_{n+1}$--span of this vector is $V(0)\oplus
V(1)\subset H^1(P_{n+1};\Q)$.\\

In trying to prove Theorem~\ref{thm:pure}, we were able to use work of
Orlik--Solomon \cite{OS} and Lehrer--Solomon \cite{LS} to reduce the
problem to a stability statement for certain induced representations
of symmetric groups. We conjectured the following theorem to D. Hemmer
in certain special cases. The theorem was then proved by Hemmer in
much greater generality than we had hoped. This result itself provides
another example of representation stability.

We begin by presenting Hemmer's result, which we will use in the proof
of Theorem~\ref{thm:pure}. Fix a subgroup $H$ of the symmetric group
$S_k$, and fix any representation $V$ of $H$. For $n\geq k$ we may
extend the action of $H$ on $V$ to the subgroup $H\times S_{n-k}<S_n$
by letting $S_{n-k}$ act trivially on $V$; this representation of
$H\times S_{n-k}$ is denoted $V\boxtimes \Q$. Finally, we may consider
the induced representation $\Ind_{H\times S_{n-k}}^{S_n}(V\boxtimes
\Q)$, which is a representation of $S_n$.

\begin{theorem}[Hemmer \cite{He}]
  \label{thm:hemmer}
  Fix $k\geq 1$, a subgroup $H<S_k$, and a representation $V$ of $H$.
  Then the sequence of $S_n$--representations $\{\Ind_{H\times
    S_{n-k}}^{S_n}(V\boxtimes \Q)\}$ is uniformly
  representation stable. The decomposition of this sequence stabilizes once $n\geq 2k$.
\end{theorem}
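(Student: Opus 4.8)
The plan has two essentially independent parts: Conditions I and II (the ``isomorphism'' conditions), which I would handle via a concrete model of the induced representation; and Condition III$'$ (uniform multiplicity stability), which reduces to Pieri's rule. For Conditions I and II, use the model $\Ind_{H\times S_{n-k}}^{S_n}(V\boxtimes\Q)=\Q[\operatorname{Inj}(k,n)]\otimes_{\Q H}V$, where $\operatorname{Inj}(k,n)$ is the set of injections $\{1,\ldots,k\}\hookrightarrow\{1,\ldots,n\}$, with $S_n$ acting by postcomposition and $H$ by precomposition. The inclusion $\{1,\ldots,n\}\subset\{1,\ldots,n+1\}$ induces an $S_n$-- and $H$--equivariant injection of sets $\operatorname{Inj}(k,n)\hookrightarrow\operatorname{Inj}(k,n+1)$, hence a consistent injective map $\phi_n$ between the corresponding induced representations; this gives Condition~I. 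Since $S_{n+1}$ acts transitively on $\operatorname{Inj}(k,n+1)$, the $S_{n+1}$--orbit of the image of $\phi_n$ already contains every basis vector $f\otimes v$ with $f\in\operatorname{Inj}(k,n+1)$, so its span is everything; this gives Condition~II. (Equivalently, $\phi_n$ factors as $\Ind_{H\times S_{n-k}}^{S_n}(V\boxtimes\Q)\hookrightarrow \Ind_{H\times S_{n-k}}^{S_{n+1}}(V\boxtimes\Q)\twoheadrightarrow\Ind_{H\times S_{n+1-k}}^{S_{n+1}}(V\boxtimes\Q)$, the second map being the natural surjection onto the smaller induced representation, so Condition~II follows from the equivalent formulation noted just after Definition~\ref{definition:repstab1}.)

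For Condition III$'$, I would first reduce to $H=S_k$ and $V=V_\mu$ irreducible: transitivity of induction gives $\Ind_{H\times S_{n-k}}^{S_n}(V\boxtimes\Q)=\Ind_{S_k\times S_{n-k}}^{S_n}\big((\Ind_H^{S_k}V)\boxtimes\Q\big)$, and decomposing $\Ind_H^{S_k}V=\bigoplus_{\mu\vdash k}m_\mu V_\mu$ reduces the problem (by additivity of multiplicities, taking the largest stable range among the summands) to the sequence $M_n\coloneq \Ind_{S_k\times S_{n-k}}^{S_n}(V_\mu\boxtimes\Q)$ for a fixed $\mu\vdash k$. Since the trivial $S_{n-k}$--representation is $V_{(n-k)}$, the representation $M_n$ corresponds under the characteristic map to $s_\mu\cdot h_{n-k}$, so Pieri's rule gives $M_n=\bigoplus_\nu V_\nu$ summed over $\nu\vdash n$ with $\nu/\mu$ a horizontal strip (equivalently, $\nu_1\geq\mu_1\geq\nu_2\geq\mu_2\geq\cdots$), each with multiplicity $1$. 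The remaining step is bookkeeping: writing $\lambda=(\nu_2,\nu_3,\ldots)$, so that $\nu=\lambda[n]$ is the padded partition of $\lambda$, the horizontal-strip condition becomes the $n$--free condition $\lambda\in\mathcal{S}_\mu\coloneq\{\lambda\mid \mu_i\geq\lambda_i\geq\mu_{i+1}\text{ for all }i\geq1\}$ together with the inequality $n-|\lambda|\geq\mu_1$ (which encodes $\nu_1\geq\mu_1$ and also guarantees that $V(\lambda)_n$ is defined, as $\lambda_1\leq\mu_1$). Since $\mathcal{S}_\mu$ is a fixed finite set and $|\lambda|+\mu_1$ is maximized over $\lambda\in\mathcal{S}_\mu$ at $\lambda=\mu$, with value $k+\mu_1\leq 2k$, we get $M_n=\bigoplus_{\lambda\in\mathcal{S}_\mu}V(\lambda)_n$ for all $n\geq 2k$. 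Hence for the original $V$ the multiplicity of $V(\lambda)_n$ is $\sum_{\mu:\,\lambda\in\mathcal{S}_\mu}m_\mu$, independent of $n$ for $n\geq 2k$, which is Condition III$'$ with the asserted range. Combined with Conditions I and II, this proves uniform representation stability.

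The main obstacle --- really the only nontrivial point --- is the bookkeeping in the last paragraph: correctly translating the horizontal-strip description of the Pieri decomposition into the padded-partition indexing $V(\lambda)_n$, and reading off the sharp range $n\geq 2k$ (one should double-check consistency of this range with the reductions, but since all $\mu$ occurring have $|\mu|=k$ the bound $k+\mu_1\leq 2k$ is uniform). The reduction to $V_\mu$, Pieri's rule itself, and the model-theoretic verification of Conditions I and II are all routine.
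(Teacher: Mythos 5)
Your proposal is correct, and it supplies more than the paper does: the paper only verifies Conditions I and II (by essentially your observation, phrased as the statement that $\Ind_{H\times S_{n-k}}^{S_n}(V\boxtimes\Q)$ sits inside $\Ind_{H\times S_{n-k+1}}^{S_{n+1}}(V\boxtimes\Q)$ as the $S_n$--span of $V\boxtimes\Q$) and then cites Hemmer's paper for the uniform multiplicity statement, which is the real content. Your self-contained route for Condition III$'$ --- transitivity of induction to reduce to $\Ind_{S_k\times S_{n-k}}^{S_n}(V_\mu\boxtimes\Q)$ for $\mu\vdash k$, then Pieri's rule $s_\mu h_{n-k}=\sum_\nu s_\nu$ over horizontal strips $\nu/\mu$, then the translation $\nu=\lambda[n]$ with the interlacing condition $\mu_i\geq\lambda_i\geq\mu_{i+1}$ --- is sound, and the bookkeeping checks out: the only $n$--dependent constraint is $n-|\lambda|\geq\mu_1$, whose worst case over $\lambda\in\mathcal{S}_\mu$ is $\lambda=\mu$, giving $n\geq k+\mu_1$, which is at most $2k$ uniformly over all $\mu\vdash k$ appearing in $\Ind_H^{S_k}V$. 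This recovers exactly the stable range $n\geq 2k$ asserted in the theorem (and shows it is sharp, e.g.\ for $H=S_k$ and $V=\Q$). What your argument buys is a direct, elementary proof with explicit stable multiplicities $\sum_{\mu\colon\lambda\in\mathcal{S}_\mu}m_\mu$, where $m_\mu$ is the multiplicity of $V_\mu$ in $\Ind_H^{S_k}V$; what the citation to Hemmer buys the authors is a stronger statement proved "in much greater generality," but for the theorem as stated here your proof is complete.
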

\noindent Injectivity and surjectivity are immediate from the
definition of induced representation; indeed $\Ind_{H\times
  S_{n-k}}^{S_n}(V\boxtimes \Q)$ sits inside $\Ind_{H\times
  S_{n-k+1}}^{S_{n+1}}(V\boxtimes \Q)$ as the $S_n$--span of
$V\boxtimes \Q$. For the proof of uniform multiplicity stability, see
Hemmer \cite{He}.

\subsection{Stability of the cohomology of pure braid groups}
\label{section:braid}

With the above tool in hand, we can now prove representation stability
for $\{H^i(P_n;\Q)\}$ for each fixed $i\geq 0$.

\begin{proof}[Proof of Theorem~\ref{thm:pure}] We continue with the
  notation given in the introduction. The projections of configuration
  spaces $X_{n+1}\to X_n$ given by forgetting the last coordinate give
  surjections $\psi_n\colon P_{n+1}\to P_n$.  These surjections induce
  maps \[\psi_n^\ast\colon H^*(P_n;\Q)\to H^*(P_{n+1};\Q).\]
  
  We will prove representation stability with respect to these maps.
  For each pair $j\neq k$, let $w_{jk}\in H^1(P_n;\Q)$ be the
  cohomology class represented by the differential form $\frac{1}{2\pi
    i}\frac{dz_j-dz_k}{z_j-z_k}$ on $X_n\subset\C^n$.  Note that
  $w_{jk}=w_{kj}$. The vector space $H^1(P_n;\Q)$ is spanned by
  the vectors $w_{jk}$, and the map $H^1(P_n;\Q)\to H^1(P_{n+1};\Q)$
  sends $w_{jk}\in H^1(P_n;\Q)$ to $w_{jk}\in
  H^1(P_{n+1};\Q)$. Furthermore, Arnol'd proved that $H^*(P_n;\Q)$ is
  generated as a $\Q$--algebra by $H^1(P_n;\Q)$, subject only to the
  relations
  \[R_{jkl}\colon \quad w_{jk}\wedge w_{kl}+w_{kl}\wedge
  w_{lj}+w_{lj}\wedge w_{jk}=0.\] This implies that, as a vector space, $H^i(P_n;\Q)$ has basis 
    \[\big\{w_{j_1k_1}\wedge\cdots \wedge
  w_{j_ik_i}\big|k_1<\cdots<k_i, \ \mbox{and $j_m<k_m$ for all
  }m\big\}.\]
  
  Injectivity of each $\psi_n^\ast$ is then immediate.  To prove
  surjectivity of $\psi_n^\ast$ (in the sense of the definition of
  representation stability), consider an arbitrary basis element for
  $H^i(P_{n+1};\Q)$. Note that for $n\geq 2i$, no basis element can
  involve all the numbers from $1$ to $n+1$ as indices. It follows
  that by applying some element of $S_{n+1}$, we may assume that our
  basis element can be written without $n+1$ as an index. But such an
  element is in the subalgebra of $H^*(P_{n+1};\Q)$ spanned by the
  image of $H^1(P_n)$, and thus is contained in the image of
  $H^i(P_n;\Q)$, as desired.
  
  We now prove uniform stability of multiplicities; we will defer the computation of the stable range until afterwards. The work of
  Orlik--Solomon on the cohomology of hyperplane complements implies
  that $H^*(P_n;\Q)$ splits into pieces ``supported on the top
  cohomology of Young subgroups'', as follows.   For details of what follows, 
  see Lehrer--Solomon
  \cite{LS}.  Any subset of
  $\{1,\ldots,n\}$ of cardinality $k$ determines a projection $P_n\to
  P_k$ by forgetting the other $n-k$ coordinates (strands). Given a
  partition $\mathcal{S}$ of $\{1,\ldots,n\}$ into subsets, the
  product over all these projections gives a projection of $P_n$ onto
  the group $P_{\mathcal{S}}$ defined as the product of the pure braid
  groups of sizes corresponding to elements of the partition. For
  concreteness we illustrate this explicitly for the partition of
  $\{1,\ldots,n\}$ into $\{1,\ldots,k\}$ and $\{k+1,\ldots,n\}$, which
  determines a projection $P_n\to P_{\mathcal{S}}=P_k\times
  P_{n-k}$. There is always a splitting $P_{\mathcal{S}}\to P_n$,
  given in this case for example by realizing $P_k$ and $P_{n-k}$
  disjointly. Note that the partition may contain subsets of size
  1. For example, the partition of $\{1,\ldots,n\}$ into
  $\{1,\ldots,k\}, \{k+1\},\ldots,\{n\}$ determines the group
  $P_k\times P_1\times\cdots P_1\approx P_k$.

  We refer to these groups $P_\mathcal{S}$ as \emph{Young subgroups}
  of $P_n$, by analogy with Young subgroups of symmetric groups, such
  as $S_k\times S_{n-k}<S_n$. This is a slight abuse of notation,
  since the embedding of $P_\mathcal{S}$ as a subgroup is not unique;
  the important thing is the projection $P_n\to P_\mathcal{S}$. The
  projection onto such a Young subgroup gives an inclusion
  $H^*(P_\mathcal{S};\Q)\to H^*(P_n;\Q)$. We now consider the image in
  $H^*(P_n;\Q)$ of the top cohomology of $P_{\mathcal{S}}$. For
  example, the cohomological dimension of $P_k\times P_{n-k}$ is
  $(k-1)+(n-k-1)=n-2$, and we consider the image of the top cohomology
  $H^{n-2}(P_k\times P_{n-k};\Q)$ inside $H^{n-2}(P_n;\Q)$. For each
  partition $\mathcal{S}$ of $\{1,\ldots,n\}$ into $i$ subsets, the
  corresponding Young subgroup $P_{\mathcal{S}}$ is the product of $i$
  pure braid groups, so the image of its top cohomology determines a
  subspace $H^\mathcal{S}(P_n)$ of $H^{n-i}(P_n;\Q)$.  Orlik--Solomon
  \cite[Proposition 2.10]{OS} implies that $H^*(P_n;\Q)$ splits as an
  $S_n$--module as a direct
  sum \[H^*(P_n;\Q)=\bigoplus_{\mathcal{S}}H^\mathcal{S}(P_n)\] over
  all partitions $\mathcal{S}$ of $\{1,\ldots,n\}$, and that $S_n$
  permutes the summands according to its action on $\{1,\ldots,n\}$.

  Every partition $\mathcal{S}$ of $\{1,\ldots,n\}$ determines a
  partition $\overline{\mathcal{S}}$ of $n$, listing the sizes of the
  subsets in $\mathcal{S}$. The term $H^{\mathcal{S}}(P_n)$
  contributes to $H^i(P_n;\Q)$ exactly if $|\mathcal{S}|=\ell(\overline{\mathcal{S}})=n-i$. The action of $S_n$ on $\{1,\ldots,n\}$
  induces an action on partitions $\mathcal{S}$ of $\{1,\ldots,n\}$,
  and the summands $H^{\mathcal{S}}(P_n)$ are permuted according to
  this action. In particular, for a fixed $\mu\vdash n$, the direct
  sum  $\bigoplus_{\overline{\mathcal{S}}=\mu}H^{\mathcal{S}}(P_n)$ is a
  subrepresentation of $H^i(P_n;\Q)$.  We will need explicit orbit representations, so for any partition $\mu\vdash
  n$, let $\mathcal{S}_\mu$ be the partition of $\{1,\ldots,n\}$ given
  by \[\{1,\ldots,\mu_1\},\{\mu_1+1,\ldots,\mu_1+\mu_2\},
  \ldots,\{\mu_1+\cdots+\mu_{n-1}+1,\ldots,n\}.\] This gives for each $\mu$ an orbit representative
  $\mathcal{S}_\mu$ with
  $\overline{\mathcal{S}_\mu}=\mu$. For a fixed $\mu$, the
  subrepresentation
  $\bigoplus_{\overline{\mathcal{S}}=\mu}H^{\mathcal{S}}(P_n)$ is
  generated by one summand $H^{\mathcal{S}_\mu}(P_n)$ and is the direct sum of its
  translates. Thus by the definition of induced representation we have
  \begin{equation}
    \label{eq:mupiece}
    \bigoplus_{\overline{\mathcal{S}}=\mu}H^{\mathcal{S}}(P_n)
    =\Ind_{\Stab(\mathcal{S}_\mu)}^{S_n}H^{\mathcal{S}_\mu}(P_n).
  \end{equation}

  We would like to apply Theorem~\ref{thm:hemmer} to the terms
  \eqref{eq:mupiece}. Consider the projection onto the Young subgroup
  $P_n\to P_k\times P_{n-k}$. Pulling back by the projection
  $P_{n+1}\to P_n$, this pulls back to the projection $P_{n+1}\to
  P_k\times P_{n-k}\times P_1$. In general, the Young subgroup
  $P_{\mathcal{S}}<P_n$ pulls back to $P_{\mathcal{S}\langle n+1\rangle}<P_{n+1}$,
  where $\mathcal{S}\langle n+1\rangle$ is the partition $\mathcal{S}\cup\{n+1\}$. Note
  that if $\overline{\mathcal{S}}=\mu=(\mu_1,\ldots,\mu_{n-i})$, then
  $\overline{\mathcal{S}\langle n+1\rangle}=\mu\langle n+1\rangle\coloneq (\mu_1,\ldots,\mu_{n-i},1)$. For larger $m\geq n$, we  define $\mathcal{S}\langle m\rangle\coloneq \mathcal{S}\cup\{n+1\}\cup \cdots\cup \{m\}$ and $\mu\langle m\rangle\coloneq  (\mu_1,\ldots,\mu_{n-i},1,\ldots,1)$ similarly.

Since
  $\mathcal{S}\langle n+1\rangle$ is a partition of $\{1,\ldots,n+1\}$ into $(n+1)-i$
  sets, $H^{\mathcal{S}\langle n+1\rangle}(P_{n+1})$ is contained in $H^i(P_{n+1};\Q)$,
  and in fact the natural map $H^*(P_n;\Q)\to H^*(P_{n+1};\Q)$ restricts to
  an isomorphism $H^{\mathcal{S}}(P_n)\to H^{\mathcal{S}\langle n+1\rangle}(P_{n+1})$.

 Certainly not every partition of $\{1,\ldots,n+1\}$ contains the
  singleton set $\{n+1\}$. But fixing $i$, every partition of $n+1$
  with $(n+1)-i$ entries must have some entry equal to 1 once $n\geq
  2i$. This means that any such partition is equal to $\mu\langle n+1\rangle$ for some
  $\mu\vdash n$. Note that we chose the definition of
  $\mathcal{S}_\mu$ so that
  $\mathcal{S}_{\mu\langle n+1\rangle}=\mathcal{S}_\mu\langle n+1\rangle$. Thus writing the
  decomposition \[H^i(P_n;\Q)= \bigoplus_{\substack{\mu\vdash n\\
      \ell(\mu)=n-i}}
  \bigoplus_{\overline{\mathcal{S}}=\mu}H^{\mathcal{S}}(P_n)=
  \bigoplus_{\substack{\mu\vdash n\\ \ell(\mu)=n-i}}
  \Ind_{\Stab(\mathcal{S}_\mu)}^{S_n}H^{\mathcal{S}_\mu}(P_n)\] we
  have for $n\geq 2i$ a decomposition of $H^i(P_{n+1};\Q)$ over the
  same partitions $\mu$:
\begin{align*}
H^i(P_{n+1};\Q)&=\bigoplus_{\substack{\nu\vdash n+1\\\ell(\mu)=n+1-i}}
  \Ind_{\Stab(\mathcal{S}_\nu)}^{S_{n+1}}H^{\mathcal{S}_\nu}(P_{n+1})\\
&=\bigoplus_{\substack{\mu\vdash n\\\ell(\mu)=n-i}}
  \Ind_{\Stab(\mathcal{S}_\mu\langle n+1\rangle)}^{S_{n+1}}H^{\mathcal{S}_\mu\langle n+1\rangle}(P_{n+1})
\end{align*}
  We already mentioned above that
  $H^{\mathcal{S}_\mu\langle n+1\rangle}(P_{n+1})\approx
  H^{\mathcal{S}_\mu}(P_n)$. There is a bijection between the set of partitions
  $\mu\vdash n$ with $\ell(\mu)=n-i$ rows and the set of partitions of $i$, given by subtracting one box from each of the $n-i$ rows. Thus the index set for the direct sum above is independent of $n$, so it suffices to prove for each
  $\mu$ that the sequence $\{\Ind_{\Stab(\mathcal{S}_\mu\langle m\rangle)}^{S_m}H^{\mathcal{S}_\mu}(P_m)\}$
  is uniformly representation stable as $m\to \infty$.

  Note that the stabilizer $\Stab(\mathcal{S})$ of a partition
  $\mathcal{S}$ of $\{1,\ldots,n\}$ need not preserve the individual
  subsets making up $\mathcal{S}$, only the overall decomposition into
  subsets. Thus if $\mathcal{S}$ has $m_j$ subsets of size $j$, the
  stabilizer $\Stab(\mathcal{S})$ will be a product of wreath products
  $S_j\wr S_{m_j}=(S_j)^{m_j}\rtimes S_{m_j}$, where the $(S_j)^{m_j}$
  factor acts on the subsets of size $j$, and the $S_{m_j}$ factor
  permutes them. In particular, the $S_1\wr S_{m_1}=S_{m_1}$ factor
  acts by permuting the singleton sets in $\mathcal{S}_\mu$. This
  corresponds to rearranging the $P_1\times \cdots P_1$ factors in the
  Young subgroup $P_\mathcal{S}$. From this we see that the $S_{m_1}$
  factor of $\Stab(\mathcal{S})$ acts trivially on
  $H^\mathcal{S}(P_n)$.

If we write $\Stab(\mathcal{S}_\mu)=H\times
  S_{m_1}$, we have $\Stab(\mathcal{S}_\mu\langle n+1\rangle)=H\times S_{m_1+1}$, and so
  on. Take $k=n-m_1$ and let $\nu\vdash k$ be the partition obtained
  from $\mu$ by deleting those entries equal to $1$. The subgroup $H$
  is exactly $H_\nu\coloneq \Stab(\mathcal{S}_\nu)<S_k$, and
  identifying $H^{\mathcal{S}_\mu}(P_n)$ with
  $H^{\mathcal{S}_{\nu}}(P_k)$, the sequence in question can be
  written as $\{\Ind_{H_\nu\times
    S_{n-k}}^{S_n}H^{\mathcal{S}_{\nu}}(P_k)\boxtimes \Q\}$. Thus Theorem~\ref{thm:hemmer} applies and gives that this
  sequence is uniformly multiplicity stable, as desired.

To compute the stable range, it suffices to bound the number $k=|\nu|$ which appears in the last paragraph of the proof. It is not hard to check that for a fixed $i$, the maximum $k$ occurs for the partition $\mu=(2,2,\ldots,2,1,\ldots,1)$, corresponding to Young subgroups isomorphic to $P_2\times \cdots\times P_2$. For such $\mu$ we have $\nu=(2,\ldots,2)$ with $\ell(\nu)=i$, and thus the maximal $k$ is $k=2i$. Since the stability range is Theorem~\ref{thm:hemmer} is $n\geq 2k$, we conclude that $\{H^i(P_n;\Q)\}$ stabilizes once $n\geq 4i$, as claimed.
\end{proof}

\begin{remark}
  By a careful analysis of the individual pieces $H^\mathcal{S}(P_n)$,
  Lehrer--Solomon \cite{LS} decompose $H^i(P_n;\Q)$ into a direct sum
  of representations induced from 1--dimensional representations of
  certain centralizers in $S_n$. Though we did not need this
  description to prove that $\{H^i(P_n;\Q)\}$ is representation
  stable, it is indispensable when actually computing multiplicities
  of irreducibles. We revisit these multiplicities in \cite{CEF2},
  where we explicitly compute the multiplicities of certain
  irreducible representations in $H^i(P_n;\Q)$ and give arithmetic
  consequences of their stable values.
\end{remark}
\bigskip

Arnol'd \cite{Ar} (see also F. Cohen \cite{Co}) established
homological stability for the integral homology groups $H_i(B_n;\Z)$.
He also showed that $H_i(B_n;\Z)$ is finite for $i\geq 2$, so that
$H_i(B_n;\Q)$ is trivial in this range. As a corollary of
Theorem~\ref{thm:pure}, we obtain homological stability for $B_n$ with
twisted coefficients. Any representation of $S_n$ can be regarded as a
representation of $B_n$ by composing with the standard projection
$B_n\to S_n$.

\begin{corollary}
  \label{corollary:twistedstability}
  For any partition $\lambda$ the sequence $\{H_*(B_n;V(\lambda)_n)\}$
  of twisted homology groups satisfies classical homological
  stability: for each fixed $i\geq 0$, once $n\geq 4i$ there is an isomorphism
  \begin{equation}
    \label{eq:twisted}
    H_i(B_n;V(\lambda)_n) \approx H_i(B_{n+1}; V(\lambda)_{n+1}).
  \end{equation}
\end{corollary}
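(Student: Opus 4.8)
The plan is to express the twisted homology of $B_n$ purely in terms of the multiplicities computed in Theorem~\ref{thm:pure}, so that the corollary becomes a formal consequence. Recall that $P_n$ is normal in $B_n$ with finite quotient $B_n/P_n = S_n$, and that $V(\lambda)_n$, viewed as a $B_n$--module, is pulled back along $B_n \twoheadrightarrow S_n$; in particular $P_n$ acts trivially on it. First I would run the Lyndon--Hochschild--Serre spectral sequence $H_p(S_n; H_q(P_n; V(\lambda)_n)) \Rightarrow H_{p+q}(B_n; V(\lambda)_n)$. Since $S_n$ is finite and we work over $\Q$, the functor $H_p(S_n; -)$ vanishes for $p>0$ and is the coinvariants functor for $p=0$, so the spectral sequence collapses and gives $H_i(B_n; V(\lambda)_n) \cong H_i(P_n; V(\lambda)_n)_{S_n} \cong H_i(P_n; V(\lambda)_n)^{S_n}$ (coinvariants and invariants agree in characteristic $0$ for finite groups). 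This is just the twisted-coefficient version of the transfer isomorphism already used in the introduction. Using that inner automorphisms act trivially on homology together with universal coefficients, $H_i(P_n; V(\lambda)_n) \cong H_i(P_n;\Q)\otimes V(\lambda)_n$ as $S_n$--representations, whence
\[
  H_i(B_n; V(\lambda)_n) \cong \bigl(H_i(P_n;\Q)\otimes V(\lambda)_n\bigr)^{S_n}.
\]

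Next I would turn the right-hand side into a multiplicity. For any finite-dimensional $S_n$--representation $U$ there is a natural isomorphism $(U\otimes V(\lambda)_n)^{S_n}\cong \operatorname{Hom}_{S_n}(V(\lambda)_n^{\ast}, U)$, and since every $S_n$--representation is self-dual this is $\operatorname{Hom}_{S_n}(V(\lambda)_n, U)$, which by Schur's lemma has dimension equal to the multiplicity of $V(\lambda)_n$ in $U$. Taking $U = H_i(P_n;\Q)$ and using universal coefficients and self-duality once more to identify $H_i(P_n;\Q)\cong H^i(P_n;\Q)$ as $S_n$--representations, I would conclude
\[
  \dim_\Q H_i(B_n; V(\lambda)_n) \;=\; \bigl[\,H^i(P_n;\Q) : V(\lambda)_n\,\bigr],
\]
the multiplicity of the irreducible $V(\lambda)_n$ in $H^i(P_n;\Q)$. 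Since $Y_n = X_n/S_n$ is a finite aspherical complex and $V(\lambda)_n$ is finite-dimensional, each $H_i(B_n; V(\lambda)_n)$ is a finite-dimensional $\Q$--vector space, hence determined up to isomorphism by this one number.

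Finally I would invoke Theorem~\ref{thm:pure}: the sequence $\{H^i(P_n;\Q)\}$ is uniformly representation stable and its decomposition into irreducibles is constant for $n\geq 4i$, so $[H^i(P_n;\Q):V(\lambda)_n] = [H^i(P_{n+1};\Q):V(\lambda)_{n+1}]$ whenever $n\geq 4i$. Combined with the dimension formula above this yields $\dim_\Q H_i(B_n; V(\lambda)_n) = \dim_\Q H_i(B_{n+1}; V(\lambda)_{n+1})$ for all $n\geq 4i$, and hence the isomorphism \eqref{eq:twisted}. There is no genuinely hard step here: all the real content is in Theorem~\ref{thm:pure}, which is already established, and everything else is standard homological algebra for group extensions with finite quotient over $\Q$. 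The one point worth flagging is that there is no natural homomorphism $B_{n+1}\to B_n$ (forgetting a strand is not defined on the full braid group), so \eqref{eq:twisted} is produced as an abstract isomorphism of $\Q$--vector spaces rather than as a map functorially induced by a map of spaces; if a canonical identification is wanted, one can observe that both sides are canonically the $V(\lambda)$--multiplicity space of the representation-stable sequence $\{H^i(P_n;\Q)\}$ and transport the stability isomorphism from there.
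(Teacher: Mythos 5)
Your argument is correct and is essentially the paper's own proof: transfer to $S_n$--invariants of $H_i(P_n;V(\lambda)_n)$, use that $P_n$ acts trivially on $V(\lambda)_n$ to rewrite this as $\bigl(H_i(P_n;\Q)\otimes V(\lambda)_n\bigr)^{S_n}$, identify its dimension via self-duality and Schur's lemma with the multiplicity of $V(\lambda)_n$ in $H^i(P_n;\Q)$, and conclude by Theorem~\ref{thm:pure}. Your closing remark that the isomorphism is only an abstract equality of dimensions, not induced by a map of spaces, matches the caveat the paper itself makes.
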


\begin{proof}
  There are no natural maps between the homology groups in
  (\ref{eq:twisted}), but we show that their dimension is eventually
  constant.  Since $P_n$ has finite index in $B_n$ and our
  coefficients are vector spaces over $\Q$, the transfer map gives an
  isomorphism
  \[H_i(B_n;V(\lambda)_n)\approx H_i(P_n;V(\lambda)_n)^{S_n}\] with
  the $S_n$--invariants in $H_i(P_n;V(\lambda)_n)$.  Since the action of $B_n$ on $V(\lambda)_n$
  factors through $S_n$, the representation $V(\lambda)_n$ is trivial
  when restricted to $P_n$. Thus \[H_i(P_n;V(\lambda)_n)^{S_n}\approx
  \big(H_i(P_n;\Q)\otimes V(\lambda)_n\big)^{S_n}.\] Recall from Section~\ref{section:representationstability} that every representation of $S_n$ is self-dual. Schur's lemma thus gives that
  $V(\mu)\otimes V(\nu)$ contains the trivial representation if and
  only if $\mu=\nu$, in which case the trivial representation appears
  with multiplicity 1. It follows that the dimension of
  $\big(H_i(P_n;\Q)\otimes V(\lambda)_n\big)^{S_n}$ is exactly the
  multiplicity of $V(\lambda)_n$ in $H_i(P_n;\Q)$,  which is the same
  as the multiplicity of $V(\lambda)_n$ in $H^i(P_n;\Q)$. By
  Theorem~\ref{thm:pure}, this multiplicity is constant once $n\geq 4i$, as
  desired.
\end{proof}

\begin{remark}
  The space $X_n$ is the configuration space of $n$ ordered points in 
  $\C$, and Theorem~\ref{thm:pure} states that its
  cohomology groups $\{H^i(X_n;\Q)\}$ are representation
  stable. Similarly, the space $Y_n$ is the configuration space of $n$
  unordered points in $\C$, and
  Corollary~\ref{corollary:twistedstability} gives classical
  homological stability for the sequence $\{H_i(Y_n;\Q)\}$.  These results are extended to configuration spaces of arbitrary orientable manifolds in \cite{C}.
\end{remark}

\subsection{Generalized braid groups}
\label{section:gbg}

The \emph{generalized pure braid group} of type $B_n$ is the
fundamental group $WP_n\coloneq \pi_1(X'_n)$ of the configuration
space
\[X'_n\coloneq \big\{\mathbf{z}\in \C^{2n}\big|z_i\neq z_j, z_i\neq
-z_j, z_i\neq 0\big\}.\] This configuration space is aspherical, so
$H^*(WP_n;\Q)=H^*(X'_n;\Q)$. The hyperoctahedral group $W_n$ acts on $X'_n$ by permuting and negating the
coordinates; this induces an action of $W_n$ on $H^*(WP_n;\Q)$. The
quotient $Y'_n\coloneq X'_n/W_n$ is the space of unordered $n$--tuples
of distinct sets $\{z,-z\}\subset \C$ with $z\neq 0$.  Identifying each set
$\{z,-z\}$ with the point $z^2\in \C$, the space $Y'_n$ is identified with
the space of unordered $n$--tuples of distinct nonzero points. Thus
the \emph{generalized braid group} $WB_n\coloneq \pi_1(Y'_n)$ can be
identified with the subgroup $B_{1,n}<B_{n+1}$ which is the preimage
of the stabilizer $\Stab(1)<S_{n+1}$. See the survey by Vershinin
\cite{Ve} for an overview of generalized braid groups.

\begin{theorem}
\label{thm:genpure}
For each fixed $i\geq 0$, the sequence $\{H^i(WP_n;\Q)\}$ of
$W_n$--representations is uniformly representation
stable.
\end{theorem}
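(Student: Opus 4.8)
The plan is to follow the proof of Theorem~\ref{thm:pure} almost verbatim, replacing the braid arrangement of type $A_{n-1}$ by the reflection arrangement of type $B_n$ (hyperplanes $z_i=0$ and $z_i=\pm z_j$). The comparison maps $\phi_n\colon H^i(WP_n;\Q)\to H^i(WP_{n+1};\Q)$ come from the projection $X'_{n+1}\to X'_n$ that forgets a coordinate, which is $W_n$--equivariant; this makes $\{H^i(WP_n;\Q)\}$ a consistent sequence of $W_n$--representations. Since the Orlik--Solomon algebra of any arrangement is generated in degree one, $H^*(WP_n;\Q)$ is generated by the classes $\omega_j$ (from $z_j=0$) and $\omega_{jk}^{\pm}$ (from $z_j=\pm z_k$), and $\phi_n$ carries the no-broken-circuit basis of $H^i(WP_n;\Q)$ to part of that of $H^i(WP_{n+1};\Q)$; this gives injectivity (Condition~I). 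For surjectivity, note that a degree--$i$ monomial in these generators involves at most $2i$ of the indices $1,\dots,n+1$, so once $n+1>2i$ any basis element of $H^i(WP_{n+1};\Q)$ can be moved by an element of $S_{n+1}<W_{n+1}$ to one not involving the index $n+1$, and such an element lies in the image of $\phi_n$; this gives Condition~II for $n\geq 2i$.

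For the multiplicities, the key input is the $W_n$--equivariant Orlik--Solomon decomposition of $H^*(WP_n;\Q)$ indexed by the flats of the type $B_n$ arrangement, i.e.\ by partial signed partitions of $\{1,\dots,n\}$ (a zero-block $Z$ together with a partition of $[n]\setminus Z$ into blocks, each carrying a sign pattern up to global flip), with $W_n$ permuting the summands according to its action on flats. This is the type $B_n$ analogue of the Orlik--Solomon/Lehrer--Solomon splitting used for Theorem~\ref{thm:pure} and is available in the literature (Orlik--Terao, Lehrer). The local summand $H^X$ at a flat $X$ of rank $i$ lies in $H^i(WP_n;\Q)$; an elementary codimension count shows $|Z|\leq i$, that $X$ has exactly $n-i$ blocks (counting singletons), and that the number of non-singleton blocks is at most $i-|Z|$ — so the ``dependent part'' of $X$ (the zero-block together with the non-singleton blocks) has size bounded by $2i$, a function of $i$ alone. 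Hence for $n$ large the $W_n$--orbits of rank--$i$ flats fall into finitely many combinatorial types $\mu$ independent of $n$, and summing over orbit representatives $X_\mu$ gives
\[
H^i(WP_n;\Q)=\bigoplus_{\mu}\Ind_{\Stab(X_\mu)}^{W_n}H^{X_\mu}(WP_n),
\]
where $\Stab(X_\mu)\cong G_\mu\times W_{m}$ with $G_\mu<W_k$ a fixed group ($k\leq 2i$), $m$ the number of free coordinates, and the $W_m$ factor — which merely permutes and negates the free coordinates — acting trivially on $H^{X_\mu}(WP_n)$, a fixed finite-dimensional $G_\mu$--representation independent of $n$ (the top cohomology of the product of a type $B$ and several type $A$ subarrangements).

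It therefore suffices to prove the hyperoctahedral analogue of Hemmer's Theorem~\ref{thm:hemmer}: for any fixed $k$, subgroup $G<W_k$, and representation $V$ of $G$, the sequence $\{\Ind_{G\times W_{n-k}}^{W_n}(V\boxtimes \Q)\}$ is uniformly representation stable. Injectivity and surjectivity are again immediate from the definition of an induced representation. For the multiplicities one reduces (by additivity) to the case where $V$ is an irreducible $V_{(\alpha,\beta)}$ of $W_k$ and applies the Pieri-type branching rule for the wreath products $W_k\times W_{n-k}\hookrightarrow W_n$: since $\Q$ is the trivial (hence pullback from $S_{n-k}$) representation, one gets $\Ind_{W_k\times W_{n-k}}^{W_n}\big(V_{(\alpha,\beta)}\boxtimes\Q\big)=\bigoplus_{\gamma}V_{(\gamma,\beta)}$ with $\gamma$ running over partitions obtained from $\alpha$ by adding a horizontal strip of size $n-k$ (each with multiplicity one); in the padded notation $V(\lambda^+,\beta)_n$ the condition on $\lambda^+$ is the interlacing $\alpha_{j+1}\leq\lambda^+_j\leq\alpha_j$, which is finite and independent of $n$, so the decomposition is uniformly multiplicity stable. (Alternatively one can adapt Hemmer's argument directly to $W_n$, or split $\Q W_n$--representations into their $(\Z/2)^n$--isotypic pieces and invoke Theorem~\ref{thm:hemmer} together with Theorem~\ref{thm:classicalSn}.) Feeding this back into the flat decomposition yields uniform multiplicity stability of $\{H^i(WP_n;\Q)\}$, completing the proof.

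The main obstacle is exactly the $W_n$--version of Hemmer's statement in the last paragraph: the reduction via Orlik--Solomon is formal once one has the equivariant flat decomposition for the type $B_n$ arrangement, but establishing stability of the induced representations $\{\Ind_{G\times W_{n-k}}^{W_n}(V\boxtimes\Q)\}$ requires genuine input — either the wreath-product Pieri calculus sketched above or an adaptation of Hemmer's methods. If desired, an explicit stable range (linear in $i$, as in Theorem~\ref{thm:pure}) can be extracted by combining the bound $k\leq 2i$ on the dependent part of a rank--$i$ flat with the stable range in the $W_n$--Hemmer statement.
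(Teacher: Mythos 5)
Your proposal follows the paper's proof essentially step for step: the same injectivity and surjectivity arguments (naturality of the Orlik--Solomon relations, and the observation that a degree--$i$ monomial involves at most $2i$ coordinates), the same equivariant decomposition over $W_n$--orbits of flats with stabilizers of the form $G_\mu\times W_m$ acting trivially through the second factor, and the same resolution of the key step --- the hyperoctahedral analogue of Hemmer's theorem --- via the Littlewood--Richardson/Pieri rule for $W_k\times W_{n-k}\hookrightarrow W_n$ from Geck--Pfeiffer applied to $V_{(\lambda^+,\lambda^-)}\boxtimes\Q$, which is exactly how the paper ``mimics Hemmer's proof.'' The only cosmetic difference is bookkeeping: the paper treats each summand $H^J$ as non-homogeneous and introduces graded pieces $H^{J|i}$, whereas you place the summand of a flat in the single cohomological degree given by its rank.
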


\begin{proof}
  The results of Orlik--Solomon are a bit more involved in this case,
  so we cover the necessary definitions in more detail. See Douglass
  \cite{Do} for an excellent exposition of these results in the case
  of $H^*(X'_n;\Q)$. Let $\mathcal{H}_n$ be the set of hyperplanes
  defined by the equations $z_i=z_j$, $z_i=-z_j$, and $z_i=0$ for $1\leq i,j\leq n$. To each
  $H\in \mathcal{H}_n$ defined by the linear equation $L=0$ we associate
  the element $w_H\in H^1(X'_n;\Q)$ represented by the $1$-form $\frac{1}{2\pi
    i}\frac{dL}{L}$. The action of $W_n$ on $X'_n$ induces an action on $\mathcal{H}_n$.  This action  
    in turn induces an action of $W_n$ on $\{w_H:H\in\mathcal{H}_n\}$ via $\sigma\cdot w_H:=w_{\sigma\cdot H}$.

  Brieskorn \cite{Bri} proved that $H^*(WP_n;\Q)=H^*(X'_n;\Q)$ is
  generated by the $w_H$, subject to certain relations \cite[Theorem
  5.2]{OS}. Injectivity in the definition of representation stability
  follows from the naturality of these relations; this is essentially
  the observation that any relation supported on the image of
  $H^*(X'_n;\Q)$ in $H^*(X'_{n+1};\Q)$ already holds in
  $H^*(X'_n;\Q)$. For surjectivity, simply note that each generator $w_H$ involves at most two coordinates. Therefore any monomial of
  length $i$ involves at most $2i$ coordinates, and thus up to the
  $W_{n+1}$--action it is contained in $H^i(X'_n;\Q)$ once $n\geq 2i$.

  Let $\mathcal{J}$ be the set of intersections of hyperplanes in
  $\mathcal{H}_n$. The \emph{support} of a monomial $w_{H_1}\cdots
  w_{H_k}$ is the intersection $H_1\cap \cdots\cap H_k\in
  \mathcal{J}$.   
  Orlik--Solomon \cite[Proposition 2.10]{OS} prove that
  $H^*(X'_n;\Q)$ splits as a direct sum over $J\in\mathcal{J}$
  \[H^*(X'_n;\Q)=\bigoplus_{J\in\mathcal{J}}\big\langle w_{H_1}\cdots
  w_{H_k} \big|H_1\cap \cdots\cap H_k=J\big\rangle\] of the subspace
  spanned by monomials with support $J$. The factors are permuted according to the action of
  $W_n$ on $\mathcal{J}$.
  
  The codimension of an intersection $H_1\cap \cdots \cap H_k$ in $\C^n$ is always $\leq k$; a monomial $w_{H_1}\cdots w_{H_k}$ is \emph{dependent} if $\codim(H_1\cap \cdots \cap H_k)<k$.
  As pointed out in \cite[Equation 2.2]{LS}, the Orlik--Solomon relations is that any dependent monomial $w_{H_1}\cdots w_{H_k}$ vanishes in $H^*(X_n';\Q)$. Therefore, the only nonzero monomials with support $J$ are those of length $k=\codim(J)$. For a given intersection $J\in \mathcal{J}$, let $k=\codim(J)$, and let $H^J$ be the summand of $H^k(X_n';\Q)$ spanned by all monomials $w_{H_1}\cdots
  w_{H_k}$ with $H_1\cap \cdots\cap H_k=J$. Then the
  splitting above becomes the decomposition \[H^k(X'_n;\Q)=\bigoplus_{\substack{J\in\mathcal{J}\\\codim(J)=k}}H^J.\] We are grateful to Jenny Wilson for pointing out this simplification. 
  
  To write this as a sum of induced representations, we need to understand the orbits of the $W_n$--action on $\mathcal{J}$, the set of subspaces $J$ which
  occur as intersections of the defining hyperplanes $\mathcal{H}_n$.  A
  representative example is the subspace defined by the equations
  \[z_1=-z_2=-z_4,\quad z_3=z_5=-z_6,\quad z_7=z_8=0.\] In general,
  any element $J\in \mathcal{J}$ determines a grouping of the indices $1\leq i\leq n$ into disjoint blocks, where two indices $i$ and $j$ are grouped together if $z_i=\pm z_j$ holds. For example, the nontrivial blocks for the subspace above are $\{1,2,4\}$, $\{3,5,6\}$, and $\{7,8\}$. One block (possibly empty) corresponds to the $\ell$ coordinates which
  are equal to 0, for some $0\leq \ell\leq n$. In the other blocks of size $>1$,
  the indices may be split into two parts as in $z_1=-z_2=-z_4$. However, since
  $W_n$ not only can permute coordinates but can also negate them, this
  internal division is not preserved by $W_n$. The only invariants preserved by the action of $W_n$ is the number $\ell$ of coordinates equal to 0, and the partition $\lambda$ recording the sizes of the remaining nontrivial blocks (those of size $>1$).

  The orbits of $W_n$
  acting on $\mathcal{J}$  are thus in bijection with pairs $(\lambda,\ell)$ where
  $\lambda$ is a partition not involving 1 and $|\lambda|+\ell\leq
  n$. We let $m=m(J)$ denote $|\lambda|+\ell$, the number of ``restricted'' coordinates. The codimension of any such subspace is \[\codim(J)=|\lambda|-\ell(\lambda)+\ell=m(J)-\ell(\lambda).\] For example, the
  subspace above corresponds to $(\lambda,\ell)=((3,3),2)$, with $m(J)=8$ and $\codim(J)=6$. This orbit has standard representative
  \[J=J_{((3,3),2)}\colon\qquad z_1=z_2=z_3,\quad z_4=z_5=z_6,\quad
  z_7=z_8=0.\] The stabilizer $\Stab_{W_n}(J)$ of this subspace
  is $(S_3\wr W_2)\times W_2\times W_{n-8}$. All we will need is that
  in general, the stabilizer $\Stab_{W_n}(J)$ splits as a product
  $G_J\times W_{n-m(J)}$, where $n-m(J)$ is the number of unrestricted
  coordinates and $G_J$ is a group depending only on $J$.

  The splitting \[H^k(X'_n;\Q)=\bigoplus_{\substack{J\in\mathcal{J}\\\codim(J)=k}}H^J\] can be
  rewritten as a sum over $W_n$--orbit representatives of induced
  representations \[H^k(X'_n;\Q)=\bigoplus_{\substack{J=J_{(\lambda,\ell)}\\m(J)-\ell(\lambda)=k}}
  \Ind_{\Stab_{W_n}(J)}^{W_n}H^J.\]

  For fixed $k$, the set of pairs $(\lambda,\ell)$ with
  $|\lambda|-\ell(\lambda)+\ell=k$ is finite and eventually
  independent of $n$; that is, the collection of orbit representatives
  $J_{(\lambda,\ell)}$   does not depend on $n$. Thus it  suffices to prove that for each
  such $(\lambda,\ell)$, the sequence of representations
  $\Ind_{\Stab_{W_n}(J_{(\lambda,\ell)})}^{W_n}H^{J_{(\lambda,\ell)}}$ is uniformly representation stable.

  We now mimic Hemmer's proof of Theorem~\ref{thm:hemmer} to finish
  the proof. Fix $J=J_{(\lambda,\ell)}$, set $m=m(J)$ and take $n>m(J)$. Recall that $\Stab_{W_n}(J)$ splits as
  $G_J\times W_{n-m}$, where $n-m=n-m(J)$ is the number of unrestricted
  coordinates. Since $H^{J}$ is spanned by monomials whose support is $J$, no
  unrestricted coordinate appears in any element of $H^{J}$, so
the $W_{n-m}$ factor above acts trivially on
  $H^{J}$. Thus we may consider $H^{J}$ as the representation
  $H^{J}\boxtimes \Q$ of $G_J\times W_{n-m}$. Factor the desired
  induction as
  \begin{align*}
    \Ind_{\Stab_{W_n}(J)}^{W_n}H^{J}&=
    \Ind_{W_m\times W_{n-m}}^{W_n}
    \Ind_{G_J\times W_{n-m}}^{W_m\times W_{n-m}}
    H^{J}\boxtimes \Q\\
    &=\Ind_{W_m\times W_{n-m}}^{W_n}
    \left(\big(\Ind_{G_J}^{W_m}H^{J}\big)
    \boxtimes \Q\right)
  \end{align*}

  Let $V_{J}$ be the representation $\Ind_{G_J}^{W_m}H^{J}$ of
  $W_m$; note that this makes no reference to $n$. Consider the
  decomposition of $V_{J}$ into irreducible representations
  $V_{(\lambda^+,\lambda^-)}$ of $W_m$. Since only finitely many
  irreducibles $V_{(\lambda^+,\lambda^-)}$ occur in $V_{J}$, it suffices
  to prove uniform stability for each factor $\Ind_{W_m\times
    W_{n-m}}^{W_n}\big(V_{(\lambda^+,\lambda^-)} \boxtimes
  \Q\big)$. The Littlewood--Richardson rule generalizes to
  hyperoctahedral groups as \cite[Lemma 6.1.3]{GP}, giving:
  \[\Ind_{W_m\times W_{n-m}}^{W_n}\big(V_{(\lambda^+,\lambda^-)}
  \boxtimes
  V_{(\mu^+,\mu^-)}\big)=\bigoplus_{\nu^+,\nu^-}
  C_{\lambda^+,\mu^+}^{\nu^+}
  C_{\lambda^-,\mu^-}^{\nu^-}
  V_{(\nu^+,\nu^-)}.\] Applying this to the trivial
  representation $\Q=V_{((n-m),(0))}$ yields
  \[\Ind_{W_m\times W_{n-m}}^{W_n}\big(V_{(\lambda^+,\lambda^-)}
  \boxtimes \Q\big)=\bigoplus_{\nu} C_{\lambda^+,(n-m)}^{\nu}
  V_{(\nu,\lambda^-)}=\bigoplus_{\nu} V_{(\nu,\lambda^-)}\] where the
  last sum is over those partitions $\mu$ obtained from $\lambda^+$ by
  adding $n-m$ boxes, no two in the same column. For fixed $\lambda^+$
  and large enough $n$, say $n-m>|\lambda^+|$, any such $\nu$ must
  have multiple boxes added to the first row. This yields a bijection
  between the partitions $\nu$ of $j\coloneq n-|\lambda^-|$ appearing
  in this decomposition and their stabilizations $\nu[j+1]$ appearing
  in the decomposition \[\Ind_{W_m\times
    W_{n+1-m}}^{W_{n+1}}\big(V_{(\lambda^+,\lambda^-)} \boxtimes
  \Q\big)=\bigoplus_{\nu} V_{(\nu[j+1],\lambda^-)},\] implying that
  this sequence of induced representations is uniformly multiplicity
  stable. This completes the proof that $H^i(WP_n;\Q)=H^i(X'_n;\Q)$ is
  uniformly representation stable.
\end{proof}

This gives the following corollary, by the same argument as Corollary~\ref{corollary:twistedstability}. An explicit stability range for Theorem~\ref{thm:genpure} and Corollary~\ref{cor:gen} can be extracted from the proof of Theorem~\ref{thm:genpure}.
\begin{corollary}
\label{cor:gen}
  For any double partition $\lambda=(\lambda^+,\lambda^-)$ the sequence $\{H_*(B_{1,n};V(\lambda)_n)=H_*(WB_n;V(\lambda)_n)\}$
  of twisted homology groups satisfies classical homological
  stability: for each fixed $i\geq 0$ and sufficiently large $n$ (depending on $i$), there is an isomorphism
    \[H_i(B_{1,n};V(\lambda)_n) \approx H_i(B_{1,n+1}; V(\lambda)_{n+1}).\]
\end{corollary}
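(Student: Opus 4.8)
The plan is to repeat the proof of Corollary~\ref{corollary:twistedstability} verbatim, replacing the pair $(P_n,B_n)$ by $(WP_n,WB_n)$, the symmetric group $S_n$ by the hyperoctahedral group $W_n$, and Theorem~\ref{thm:pure} by Theorem~\ref{thm:genpure}. As in that corollary, there are no natural maps connecting the twisted homology groups $H_i(B_{1,n};V(\lambda)_n)$ for varying $n$, so instead I would show that their dimensions are eventually constant. The starting point is that $WP_n$ is a normal subgroup of $WB_n=B_{1,n}$ with quotient $W_n$ (since $Y'_n=X'_n/W_n$ and $W_n$ acts freely on $X'_n$), hence of finite index $|W_n|=2^n n!$, and that the $WB_n$--action on the $W_n$--representation $V(\lambda)_n$ factors through this quotient.

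Next I would apply the transfer. Since $WP_n$ has finite index in $WB_n$ and we work over $\Q$, the transfer map gives an isomorphism
\[H_i(WB_n;V(\lambda)_n)\;\approx\;H_i(WP_n;V(\lambda)_n)^{W_n}.\]
Because $V(\lambda)_n$ is trivial as a $WP_n$--module, this is identified with $\big(H_i(WP_n;\Q)\otimes V(\lambda)_n\big)^{W_n}$, exactly as in Corollary~\ref{corollary:twistedstability}.

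The final step is to invoke the classical fact that every irreducible representation of the hyperoctahedral group $W_n$ is defined over $\Q$, and hence self-dual (this is the analogue for $W_n$ of the statement about $S_n$ recalled in Section~\ref{section:representationstability}). Decomposing $H_i(WP_n;\Q)$ into $W_n$--irreducibles and applying Schur's lemma, the trivial representation occurs in $V_{(\mu^+,\mu^-)}\otimes V(\lambda)_n$ precisely when $(\mu^+,\mu^-)=\lambda$, and then with multiplicity one; thus $\dim H_i(WB_n;V(\lambda)_n)$ equals the multiplicity of $V(\lambda)_n$ in $H_i(WP_n;\Q)$, which (again using self-duality of $W_n$--representations) coincides with its multiplicity in $H^i(WP_n;\Q)=H^i(X'_n;\Q)$. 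By Theorem~\ref{thm:genpure} the sequence $\{H^i(WP_n;\Q)\}$ is uniformly representation stable, so this multiplicity is independent of $n$ for $n$ large enough (depending on $i$), yielding the claimed isomorphism $H_i(B_{1,n};V(\lambda)_n)\approx H_i(B_{1,n+1};V(\lambda)_{n+1})$.

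There is no serious obstacle: the only point needing a little care is the self-duality of $W_n$--representations (classical, e.g.\ because $W_n$ is a finite Coxeter group), and of course the conclusion is a statement about dimensions rather than a canonical isomorphism. An explicit stability range can be extracted from the stability range in Theorem~\ref{thm:genpure}, just as the range $n\geq 4i$ in Corollary~\ref{corollary:twistedstability} came from the range in Theorem~\ref{thm:pure}.
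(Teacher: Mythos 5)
Your proposal is correct and is exactly the argument the paper intends: the paper states Corollary~\ref{cor:gen} follows ``by the same argument as Corollary~\ref{corollary:twistedstability},'' and you have carried out that argument faithfully (transfer over $\Q$, triviality of $V(\lambda)_n$ on $WP_n$, self-duality of $W_n$--representations plus Schur's lemma to identify the dimension with the multiplicity of $V(\lambda)_n$ in $H^i(WP_n;\Q)$, and then Theorem~\ref{thm:genpure}). The one point you flag as needing care, self-duality of $W_n$--representations, is indeed the only substitution required and holds for the reason you give.
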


\subsection{Groups of string motions}
A 3--dimensional analogue of the pure braid group is $P\Sigma_n$, the
group of pure string motions. Let $X''_n$ be the space of embeddings
of $n$ disjoint unlinked loops into 3--space, and define
$P\Sigma_n\coloneq \pi_1(X''_n)$. The hyperoctahedral group $W_n$ acts 
on $X''_n$ by permuting the labels and reversing the orientations,
inducing an $W_n$--action on $H^*(P\Sigma_n;\Q)$. We remark that
$P\Sigma_n$ can also be identified with McCool's \emph{pure symmetric
  automorphism group}, consisting of those automorphisms of the free
group $F_n$ sending each generator to a conjugate of itself. The
quotient $Y''_n\coloneq X''_n/W_n$ is the space of $n$ unordered
unoriented unlinked loops, and its fundamental group
$B\Sigma_n\coloneq \pi_1(Y''_n)$ is the group of \emph{string
  motions}.

The cohomology ring of $P\Sigma_n$ has been computed by
Jensen--McCammond--Meier \cite{JMM}, who prove that $H^*(P\Sigma_n;\Q)$
is generated by classes $\alpha_{ij}\in H^1(P\Sigma_n;\Q)$ for all $i\neq
j$, $1\leq i,j\leq n$, subject to the relations
\[\alpha_{ij}\wedge \alpha_{ji}=0\qquad\text{and}\qquad
\alpha_{ij}\wedge \alpha_{jk}+\alpha_{kj}\wedge
\alpha_{ik}+\alpha_{ik}\wedge \alpha_{ij}.\] The action of $W_n$ is as
follows: $S_n$ acts by permuting the indices, while negating the $j$th
coordinate negates generators of the form $\alpha_{ij}$ and fixes all
other generators. There is a natural embedding $P_n\hookrightarrow
P\Sigma_n$, and the induced surjection $H^*(P\Sigma_n;\Q)\to
H^*(P_n;\Q)$ maps $\alpha_{ij}\mapsto w_{ij}$. Based on the results of
the previous sections, it is natural to make the following conjecture.

\begin{conjecture}
\label{conjecture:string}
  For each fixed $i\geq 0$, the sequence of $W_n$--representations
  $\{H^i(P\Sigma_n;\Q)\}$ is uniformly representation
  stable.\footnote{Since this manuscript was originally posted, this conjecture has been proved by J.\ Wilson \cite{Wi1}.}
\end{conjecture}

Note that some element of $W_n$ negates $\alpha_{ij}$ and preserves
$\alpha_{ji}$, but both are mapped to $w_{ij}=w_{ji}\in H^1(P_n;\Q)$. Thus the action
of $W_n$ on $H^*(P\Sigma_n;\Q)$ does not descend to the action of
$S_n$ on $H^*(P_n;\Q)$, though of course the restricted action of
$S_n$ on $H^*(P\Sigma_n;\Q)$ does.

\section{Lie algebras and their homology}
\label{section:liealg}

In this section we show how the phenomenon of representation stability
occurs in the theory of Lie algebras.  Our main result,
Theorem~\ref{thm:equivhomLie} below, relates representation stability
for a sequence of Lie algebras to representation stability for their
homology groups. We then give a number of applications, some of which
were already known by other methods.

\subsection{Graded Lie algebras and Lie algebra homology}
\para{Lie algebra homology} Given a Lie algebra $\L$ over $\Q$, its
\emph{Lie algebra homology} $H_*(\L;\Q)$ is computed by the chain
complex

\begin{equation}
  \label{eq:HL}
  \cdots\longrightarrow \bwedge^3\L
  \overset{\partial_3}{\longrightarrow} \bwedge^2\L
  \overset{\partial_2}{\longrightarrow} \L
  \overset{\partial_1}{\longrightarrow} \Q,
\end{equation}
where the differential is given by
\[\partial_i(x_1\wedge\cdots\wedge x_i)=
\sum_{j<k}(-1)^{j+k+1}[x_j,x_k]\wedge x_1 \wedge\cdots\wedge
\widehat{x}_j\wedge\cdots \wedge \widehat{x}_k\wedge\cdots\wedge
x_i.\] Note that if $\GL(\L)$ denotes the group of Lie algebra
automorphisms of $\L$, the induced action of $\GL(\L)$ on $\bwedge^i
\L_n$ commutes with $\partial$. Thus an action of any group
$G$ on $\L$ by automorphisms induces an action of $G$ on
$H_i(\L;\Q)$ for each $i$.

\para{Homology with coefficients} If $M$ is an $\L$--module, the
\emph{homology} $H_*(\L;M)$ \emph{with coefficients in $M$} is the
homology of the complex
\[\cdots\to\bwedge^3\L\otimes M\to \bwedge^2\L\otimes M
\to\L\otimes M\to M\to 0,\] where the differential is the sum of the
previous differential on $\bwedge^*\L$, extended by the identity to
$\bwedge^*\L\otimes M$, plus $\partial'_i\colon
\bwedge^i\L\otimes M\to \bwedge^{i-1}\L\otimes M$ defined by
\[\partial'_i(x_1\wedge\cdots\wedge x_i\otimes m)=
\sum(-1)^{j+1}x_1\wedge\cdots\wedge\widehat{x}_j \wedge\cdots\wedge
x_i\otimes x_j\cdot m .\] A common example is the \emph{adjoint
  homology} $H_*(\L;\L)$. If $G$ acts on $\L$ by automorphisms and
acts $\L$--equivariantly on $M$, meaning that $(g\cdot x)\cdot (g\cdot
m)=g\cdot(x\cdot m)$, then $\partial'$ commutes with the action of
$G$, inducing an action of $G$ on $H_i(\L;M)$ for each $i$.

\para{Graded Lie algebras} A Lie algebra $\L$ is called a \emph{graded
  Lie algebra} if it decomposes into homogeneous components
$\L=\bigoplus_{j\geq 1} \L^j$ so that $[\L^j,\L^k]\subset
\L^{j+k}$. This induces a grading $\bwedge^i \L=\bigoplus_j
(\bwedge^i \L)^j$ under which, for example, the subspace $\bwedge^3
\L^2\subset\bwedge^3 \L$ has degree 6. From the definition above we
see that the differential $\partial$ preserves this grading. Thus it
descends to a grading $H_i(\L;\Q)=\bigoplus_j H_i(\L;\Q)^j$ of the Lie
algebra homology. If $M=\bigoplus_{j\geq 0} M^j$ is a graded
$\L$--module, meaning that $\L^j\cdot M^k\subset M^{j+k}$, then we
similarly obtain a grading $H_i(\L;M)=\bigoplus H_i(\L;M)^j$.

\begin{definition}[Consistent sequence of Lie algebras]
  Let $G_n$ be $\SL_n\Q$, $\GL_n\Q$, or $\Sp_{2n}\Q$. Consider a
  sequence of Lie algebras $\{\L_n\}$ with injections
  $\L_n\hookrightarrow \L_{n+1}$ and with each $\L_n$ equipped with an
  action of $G_n$ by Lie algebra automorphisms. We call the sequence
  $\{\L_n\}$ \emph{consistent} if each of the following holds:
\begin{enumerate}
\item $\L_n$ is consistent when considered as a sequence of
  $G_n$--representations.
\item Each $\L_n$ is graded, and both the maps $\L_n\to \L_{n+1}$ and
  the action of $G_n$ preserve the grading.
\item The graded components $\L_n^j$ are finite-dimensional.
\end{enumerate}
\end{definition}
\noindent
It will also be useful to allow our coefficient modules to vary with $n$.

\begin{definition}[Admissible coefficients]
A sequence $\{M_n\}$ of nonzero graded $\L_n$--modules with maps $M_n\to M_{n+1}$ and
  equivariant $G_n$--actions is \emph{admissible} if the following conditions hold:  
  for each $j\geq 0$  the sequence $\{M_n^j\}$ is strongly stable; each $M_n^j$ is 
  finite-dimensional; and $M_n^j$ is eventually nonzero for at least one $j\geq 0$.
  \end{definition}

Our main result in this section is the following.  It proves among
other things that strong representation stability for a sequence of
Lie algebras is actually equivalent to strong stability for its 
homology.  Each direction of this equivalence has applications.

\begin{theorem}[Stability of Lie algebras and their homology]
\label{thm:equivhomLie}
Let $G_n=\SL_n\Q$ or $\GL_n\Q$, and let $\{\L_n\}$ be a consistent 
sequence of graded Lie algebras with $G_n$--actions which is type-preserving
(satisfies Condition IV). The following are equivalent:
  \begin{enumerate}
  \item For each fixed $j\geq 0$ the sequence $\{\L_n^j\}$ is strongly
    stable.
  \item For each fixed $i,j\geq 0$ the sequence $\{H_i(\L_n;\Q)^j\}$
    is strongly stable.
  \item For each fixed $i,j\geq 0$ the sequence $\{H_i(\L_n;\L_n)^j\}$
    of graded adjoint homology groups is strongly stable.
  \item For one admissible sequence of coefficients $\{M_n\}$, the
    sequence $\{H_i(\L_n;M_n)^j\}$ is strongly stable for each fixed
    $i,j\geq 0$.
  \item For every admissible sequence of coefficients $\{M_n\}$, the
    sequence $\{H_i(\L_n;M_n)^j\}$ is strongly stable for each fixed
    $i,j\geq 0$.
  \end{enumerate}
\end{theorem}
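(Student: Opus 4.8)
The strategy is to separate the homological--algebra content from the representation--theoretic input. I would first record the following \emph{Homology Lemma}: if $\{A_{n}^{\bullet}\}$ is a sequence of chain complexes of finite--dimensional $G_{n}$--representations ($G_{n}=\GL_{n}\Q$ or $\SL_{n}\Q$) whose differentials are $G_{n}$--equivariant and commute with the structure maps $A_{n}^{\bullet}\to A_{n+1}^{\bullet}$, and if each term--sequence $\{A_{n}^{i}\}$ is strongly stable, then each homology sequence $\{H_{i}(A_{n}^{\bullet})\}$ is strongly stable. The proof passes to highest weight vectors: for a dominant weight $\lambda$, the functor $V\mapsto V^{\fn^{+}}_{\lambda}$ sending $V$ to the $\lambda$--weight space of its $\fn^{+}$--invariants has dimension equal to the multiplicity of $V(\lambda)_{n}$ in $V$, is exact (the category of finite--dimensional representations is semisimple), is natural in $G_{n}$--equivariant maps, and --- since the differentials are $G_{n}$--equivariant, hence commute with $\fn^{+}$ and $\fh$ --- carries $A_{n}^{\bullet}$ to a subcomplex $(A_{n}^{\bullet})^{\fn^{+}}_{\lambda}$. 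Strong stability of the terms says precisely that for each fixed $\lambda$ the maps $(A_{n}^{i})^{\fn^{+}}_{\lambda}\to(A_{n+1}^{i})^{\fn^{+}}_{\lambda}$ are injective between eventually equal finite dimensions, hence eventually isomorphisms (here I use Proposition~\ref{prop:SLstrong} to identify ``strongly stable'' with ``uniformly multiplicity stable plus type--preserving'', and the multiplicity--one branching fact from Remark~\ref{remark:strong}). Thus $\{(A_{n}^{\bullet})^{\fn^{+}}_{\lambda}\}$ is an eventually constant complex of vector spaces, so by exactness $\{(H_{i}(A_{n}^{\bullet}))^{\fn^{+}}_{\lambda}\}=\{H_{i}((A_{n}^{\bullet})^{\fn^{+}}_{\lambda})\}$ is eventually constant in dimension with eventually isomorphic structure maps; this gives injectivity (Condition~I) and multiplicity stability for $\{H_{i}(A_{n}^{\bullet})\}$. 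Uniformity (Condition~III$'$) holds because for fixed $i$ only finitely many $\lambda$ appear in $A_{n}^{i}$ for all large $n$ --- the terms are fixed finite combinations of Schur functors of the finitely many stable constituents of the inputs. Finally the type--preserving property (Condition~IV, equivalently IV$'$) is inherited, since $P_{n+1}$ acts trivially on $\phi_{n}(A_{n}^{i})$ and the homology maps factor through those images; Condition~II then follows from III$'$ and IV as in Remark~\ref{remark:strong}.

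Granting the lemma, the implication (1)$\Rightarrow$(5) is immediate. Fix an admissible $\{M_{n}\}$ and an internal degree $j$, and apply the Homology Lemma to the Chevalley--Eilenberg complex $C_{\bullet,n}=\bwedge^{\bullet}\L_{n}\otimes M_{n}$, whose differentials are $G_{n}$--equivariant and compatible with the inclusions since they are built from the bracket and the $\L_{n}$--action. The graded piece $(\bwedge^{i}\L_{n}\otimes M_{n})^{j}$ is a finite direct sum of tensor products $\bwedge^{i_{1}}(\L_{n}^{k_{1}})\otimes\cdots\otimes\bwedge^{i_{r}}(\L_{n}^{k_{r}})\otimes M_{n}^{b}$, so Theorem~\ref{thm:classicalstability} (strong stability is preserved by Schur functors, tensor products and direct sums) shows it is strongly stable; the lemma then yields strong stability of $\{H_{i}(\L_{n};M_{n})^{j}\}$. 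This is (5); then (5)$\Rightarrow$(4) is trivial, and (1)$\Rightarrow$(2) and (1)$\Rightarrow$(3) are the special cases $M_{n}=\Q$ (concentrated in degree $0$) and $M_{n}=\L_{n}$, both admissible under hypothesis~(1).

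For the converses (2)$\Rightarrow$(1), (3)$\Rightarrow$(1), (4)$\Rightarrow$(1) I would induct on the internal degree $j$, after first recording (again via the highest--weight--vector functor) that in a compatible map of strongly stable sequences the kernel, image and cokernel are strongly stable, and that an extension of one strongly stable sequence by another is strongly stable. For the inductive step in case~(4): in internal degree $j$ each summand of $(\bwedge^{i}\L_{n})^{j}$ and of $(\bwedge^{i}\L_{n}\otimes M_{n})^{j}$ with $i\geq 2$ is built from graded pieces $\L_{n}^{k}$ with $k<j$ (each wedge of degree $\geq 2$ decomposes this way), hence is strongly stable by induction and admissibility; chasing the low--degree complex $(\bwedge^{2}\L_{n}\otimes M_{n})^{j}\xrightarrow{d_{2}}(\L_{n}\otimes M_{n})^{j}\xrightarrow{d_{1}}M_{n}^{j}$ --- using that $\im d_{2}$, $\ker d_{1}$ (an extension of $H_{1}(\L_{n};M_{n})^{j}$ by $\im d_{2}$) and $\im d_{1}$ (a subsequence of $M_{n}^{j}$ with quotient $H_{0}(\L_{n};M_{n})^{j}$) are all strongly stable by the hypothesis --- shows $(\L_{n}\otimes M_{n})^{j}$ is strongly stable. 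Since $(\L_{n}\otimes M_{n})^{j}=(\L_{n}^{j}\otimes M_{n}^{0})\oplus\bigoplus_{1\leq a<j}(\L_{n}^{a}\otimes M_{n}^{j-a})$ with the second summand already strongly stable, the sequence $\{\L_{n}^{j}\otimes M_{n}^{0}\}$ is strongly stable, and Theorem~\ref{thm:LRinvert} (in its uniform form) divides out the nonzero stable factor $\{M_{n}^{0}\}$ to give that $\{\L_{n}^{j}\}$ is uniformly multiplicity stable; it is type--preserving by the standing hypothesis on $\{\L_{n}\}$ (the grading is $G_{n}$--invariant, so Condition~IV$'$ for $\L_{n}$ descends to each $\L_{n}^{j}$), hence strongly stable. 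Cases (2) and (3) are the same argument with $M_{n}^{0}$ trivial: there $H_{1}(\L_{n};\Q)^{j}=\L_{n}^{j}/[\L_{n},\L_{n}]^{j}$ and $H_{0}(\L_{n};\L_{n})^{j}=\L_{n}^{j}/[\L_{n},\L_{n}]^{j}$, with $[\L_{n},\L_{n}]^{j}$ strongly stable by induction, exhibiting $\L_{n}^{j}$ as an extension of two strongly stable sequences (so Theorem~\ref{thm:LRinvert} is not needed).

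The main obstacle I anticipate is making the highest--weight bookkeeping genuinely \emph{uniform}: one must control all weights $\lambda$ at once, so that the stabilization range for the homology depends only on $i$ and $j$ and so that the maps on homology are injective, which is exactly where the ``finitely many relevant $\lambda$'' observation and the uniform (rather than merely multiplicity--stable) hypotheses are used. The secondary points requiring care are the verification that Theorem~\ref{thm:LRinvert} applies in the uniform setting and that the splitting of $\{\L_{n}^{j}\otimes M_{n}^{0}\}$ off $(\L_{n}\otimes M_{n})^{j}$ respects the structure maps, together with the routine but grading--sensitive check that every term appearing in internal degree $j$ of the relevant complexes, apart from the single term $(\L_{n}\otimes M_{n})^{j}$ itself, involves only strictly lower graded pieces of $\L_{n}$.
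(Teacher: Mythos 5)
Your proposal follows essentially the same route as the paper's proof: the converses go by induction on the internal grading, using exactly the paper's key observation that $\L_n^j$ enters the degree-$j$ slice of the Chevalley--Eilenberg complex only through the single term $\L_n^j\otimes M_n^0$, followed by a zigzag chase along the complex and an appeal to Theorem~\ref{thm:LRinvert}; and (1)$\Rightarrow$(5) is the paper's term-by-term application of Theorem~\ref{thm:classicalstability} followed by a passage to homology. The one step you genuinely repackage is your Homology Lemma: you apply the exact functor of $\lambda$-highest-weight spaces and observe that the resulting subcomplexes are eventually constant, whereas the paper argues that Condition IV forces the multiplicities of $V(\lambda)_n$ in $\ker(\partial_i^n)^j$ and $\im(\partial_i^n)^j$ to be nondecreasing and then uses stabilization of their sum; these are equivalent, and your formulation is arguably cleaner, though both rest on the same inputs (Proposition~\ref{prop:SLstrong}, Remark~\ref{remark:strong}, and the ``finitely many relevant $\lambda$'' observation for uniformity). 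Two small points. First, admissibility only requires $M_n^j$ to be eventually nonzero for \emph{some} $j$, so $M_n^0$ may vanish identically; then $\L_n^j\otimes M_n^0=0$ and internal degree $j$ tells you nothing about $\L_n^j$ --- one must work in degree $j+b$, where $b$ is the least degree with $M_n^b\neq 0$ eventually nonzero, and there $\L_n^j$ appears only in $\L_n^j\otimes M_n^b$. You handle the instance $M_n=\L_n$ but not the general case of (4)$\Rightarrow$(1); the paper sketches it. Second, your claim that $\im d_2$ is strongly stable ``by the hypothesis'' compresses the full chase from the top of the complex (it needs $\ker d_2$, hence $\im d_3$, and so on), but you clearly have that chase in hand.
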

\begin{proof}
  We will prove the equivalence for $G_n=\SL_n\Q$ and $\GL_n\Q$
  simultaneously. Note that by taking coefficients $M=\Q$ concentrated
  in grading 0 with trivial $\L$--action, (2) is a special case of (4), so 
  (2) $\implies$ (4). We will begin by proving that
  {(4) $\implies$ (1)}. We will then modify this argument slightly to
  prove that (3) $\implies$ (1). Note that under the assumption of
  (1), $\{\L_n\}$ is an admissible sequence of coefficients, so that
  under this assumption (3) follows from (5). Thus once we have proved
  that (1) $\implies$ (5), it immediately follows that (1) $\implies$
  (3). Since (5) also trivially implies (2) and (4), this will
  complete the proof of equivalence.
  
  \bigskip We first describe the complex computing graded
  homology. Let $\L=\bigoplus_{j\geq 1} \L^j$ be a graded Lie algebra,
  and $M=\bigoplus_{j\geq 0}M^j$ a graded $\L$--module. Since the
  differential preserves the grading, we can decompose the complex
  \eqref{eq:HL} computing $H_*(\L;M)$ into its graded pieces. The
  slice of this complex in grading $k$, which computes
  $H_*(\L;M)^k$, has the form:
  \begin{equation}
    \label{eq:gradedLhom}
    \begin{split}
      0\longrightarrow \bwedge^k\L^1\otimes
      M^0\longrightarrow\big(\bwedge^{k-2}\L^1\otimes \L^2\otimes
      M^0\big)\oplus\big(\bwedge^{k-1}\L^1\otimes
      M^1\big)\longrightarrow\cdots \\\cdots\longrightarrow
      \bigoplus_{1\leq j,j'<k}\L^j\otimes \L^{j'}\otimes
      M^{k-j-j'}\longrightarrow \bigoplus_{1\leq j\leq k}\L^j\otimes
      M^{k-j}\longrightarrow M^k\to 0
    \end{split}
  \end{equation}
Here the $\L^j\otimes \L^{j'}\otimes M^{k-j-j'}$ term is actually
  $\bwedge^2 \L^j\otimes M^{k-2j}$ when $j=j'$. Note the following key
  property: the graded piece $\L^k$ only appears in the second-to-last
  term, in the term $\L^k\otimes M^0$; all other terms involve $\L^j$
  only for smaller $j$ (for $j<k$).

  \bigskip \textbf{(4) $\implies$ (1).} Assume that $\{\L_n\}$ is a
  consistent $G_n$--sequence of graded Lie algebras, that $\{M_n\}$ is
  an admissible $G_n$--sequence of graded $\L_n$--modules, and that
  $\{H_i(\L_n;M_n)^j\}$ is strongly stable for each fixed $i,j\geq
  0$.   We first prove the result in the special case that $M_n^0$ is eventually nonzero; assume this for now.  
  
  We prove that
  $\{\L_n^j\}$ is strongly stable by induction. Since we have assumed
  that $\L_n\hookrightarrow \L_{n+1}$ is type-preserving, it suffices
  to prove that $\{\L_n^j\}$ is uniformly multiplicity stable.  In
  the next two sections of the proof only, we will
  abbreviate ``uniformly multiplicity stable'' to
  ``stable''. Furthermore, since stability is always taken over
  sequences with respect to $n$, we suppress the subscript $n$ for
  readability. To sum up, within the next two sections
  ``$\{H_1(\L;M)^1\}$ is stable'' means ``the sequence of
  $G_n$--representations $\{H_1(\L_n;M_n)^1\}$ is uniformly
  multiplicity stable''.

  We first prove that $\{\L^1\}$ is stable. The following sequence
  computes $H_*(\L;M)^1$:
  \[0\to \L^1\otimes M^0\overset{\partial}{\longrightarrow} M^1\to 0\]
  By assumption $\{M^1\}$ is stable, as are $\{\ker
  \partial=H_1(\L;M)^1\}$ and $\{M^1/\im \partial=H_0(\L;M)^1\}$. We see
  that $\{\im \partial\}$ is also stable, and thus $\{\L^1\otimes
  M^0=\ker\partial \oplus\im\partial\}$ is stable as well. We now appeal
  to Theorem~\ref{thm:LRinvert}, which states that if $\{M^0\}$ and
  $\{\L^1\otimes M^0\}$ are stable, then $\{\L^1\}$ is stable as well.

  The argument in the inductive step is similar. Assume that
  $\{\L^j\}$ is stable for each $j<k$. Consider the sequence
  \eqref{eq:gradedLhom} computing $H_*(\L;M)^k$.  Since $\{M^j\}$ is
  stable for each fixed $j\geq 0$, by repeatedly applying
  Theorems~\ref{thm:classicalstability}(1) and
  \ref{thm:classicalstability}(2) we conclude that each term of
  \eqref{eq:gradedLhom} is stable except possibly the term
  $\{\L^k\otimes M^0\}$. We now proceed along this complex from left
  to right, comparing the complex itself with its homology. Start with
  $\partial_k$, whose domain is $\bwedge^k\L^1\otimes M^0$.  Since
  $\{\bwedge^k\L^1\otimes M^0\}$ and $\{\ker\partial_k=H_k(\L;M)^k\}$
  are stable, so is $\{\im\partial_k\}$.  Since $\{\im\partial_k\}$
  and $\{\ker\partial_{k-1}/\im\partial_k=H_{k-1}(\L;M)^k\}$ are
  stable, so is $\{\ker\partial_{k-1}\}$.  Since
  $\{\ker\partial_{k-1}\}$ and the domain of $\partial_{k-1}$ are
  stable, so is $\{\im\partial_{k-1}\}$. Continuing along the complex, we have by induction that
  $\{\im\partial_2\}$ is stable, as is
  $\{\ker\partial_1/\im\partial_2=H_1(\L;M)^k\}$, so
  $\{\ker\partial_1\}$ is stable. Now moving to the right side,
  $\{M^k\}$ and $\{M^k/\im\partial_1=H_0(\L;M)^k\}$ are stable, so
  $\{\im\partial_1\}$ is stable. Combining these claims, we see that
  $\{\ker\partial_1\oplus\im\partial_1=\bigoplus_{1\leq j\leq
    k}\L^j\otimes M^{k-j}\}$ is stable. Since all but one term in this
  sum is stable, the remaining term $\{\L^k\otimes M^0\}$ is stable as
  well.  Applying Theorem~\ref{thm:LRinvert}, we conclude that
  $\{\L^k\}$ is stable.

  In the previous two paragraphs we assumed that $M^0_n$ was
  eventually nonzero, but a similar argument applies in general. For
  example, consider the case when $M^0_n$ is zero for all $n$, but
  $M^1_n$ is eventually nonzero. Then every term containing $M^0$
  vanishes in the complex \eqref{eq:gradedLhom} computing
  $H_*(\L;M)^k$. Among the remaining terms, $\L^k$ no longer appears,
  and $\L^{k-1}$ appears only in the term $\L^{k-1}\otimes M^1$. Thus
  assuming that $\{L^j\}$ is stable for $j<k-1$, an argument like the
  one above shows that $\{\L^{k-1}\}$ is stable. This completes the
  proof that (4) $\implies$ (1).

  \bigskip \textbf{(3) $\implies$ (1).} This proof is exactly like the
  proof that (4)$\implies$(1), except that we do not know at the
  beginning that $\L_n$ is an admissible sequence of coefficients. To
  prove this, first note that the complex computing $H_1(\L;\L)^1$ is
  just $0\to \L^1\to 0$, so $\L^1$ must be stable. Since $\L$ has
  positive grading, the complex computing $H_k(\L;\L)^k$ has the form
  \begin{equation*}
    0\longrightarrow
    {\bwedge^{k-1}\L^1}\otimes\L^1\to\cdots\to
    \bigoplus_{0<j<k}\L^j\otimes \L^{k-j}\longrightarrow
    \L^k\longrightarrow 0
  \end{equation*} In particular, $\L^k$
  appears only in the last term. By induction, every term except
  possibly the last is stable, and the homology in each dimension is
  stable, so as above we can conclude that $\L^k$ is stable, as
  desired. Note that we do not need a separate argument for the case
  when $\L^1$ is trivial.

  \bigskip \textbf{(1) $\implies$ (5).} Assume that $\{\L_n^j\}$ is
  strongly stable for each $j\geq 0$. Let $N_n^{i,j}$ be the piece of
  $\bwedge^i\L_n\otimes M_n$ in grading $j$, so that the complex
  \eqref{eq:gradedLhom} computing $H_*(\L_n;M_n)^k$ has the form
  \[0\to N_n^{k,k}\to N_n^{k-1,k}\to \cdots\to N_n^{2,k}\to
  N_n^{1,k}\to N_n^{0,k}\to 0.\] We have already encountered these
  subspaces; for example, $N_n^{k,k}=\bwedge^k\L_n^1\otimes M_n^0$ and
  $N_n^{1,k}=\bigoplus_{1\leq j\leq k}\L_n^j\otimes M_n^{k-j}$. We
  have already assumed that $\{\L_n\}$ and $\{M_n\}$ are strongly
  stable. If both are finite-dimensional, then by
  Theorems~\ref{thm:classicalstability}(1) and
  \ref{thm:classicalstability}(2), the sequence
  $\{\bwedge^i\L_n\otimes M_n\}$ is strongly stable for each fixed
  $i\geq 0$. Even if $\L_n$ is not finite-dimensional, for fixed $i,j$
  the term $N_n^{i,j}$ only involves finitely many graded pieces
  $\L_n^\bullet$ and $M_n^\bullet$, as in the example
  $N_n^{1,k}=\bigoplus_{1\leq j\leq k}\L_n^j\otimes M_n^{k-j}$
  above. Since each graded piece is assumed finite-dimensional, we may
  repeatedly apply Theorem~\ref{thm:classicalstability} to conclude
  that $\{N_n^{i,j}\}$ is strongly stable, and in particular satisfies
  Condition IV.

  Let $\partial_i^n$ be the differential $\bwedge^i\L_n\otimes
  M_n\to \bwedge^{i-1}\L_n\otimes M_n$, and let $(\partial_i^n)^j\colon N_n^{i,j}\to N_n^{i-1,j}$ be the restriction to $N_n^{i,j}$. The
  commutativity of
  \begin{equation}
    \label{eq:wedgeLnsquare}
    \xymatrix{
      \bwedge^i\L_n\otimes M_n\ar^{\partial_i^n}[r]\ar[d]&\bwedge^{i-1}\L_n\otimes M_n\ar[d]\\
      \bwedge^i\L_{n+1}\otimes M_{n+1}\ar_{\partial_i^{n+1}}[r]&\bwedge^{i-1}\L_{n+1}\otimes M_{n+1}
    }
  \end{equation}
  implies that under the vertical inclusions, $\ker \partial_i^n$ maps to $\ker
  \partial_i^{n+1}$, and that $\im\partial_i^n$ maps to $\im \partial_i^{n+1}$. Restricting to grading $j$, we similarly conclude that $\ker (\partial_i^n)^j$ maps to $\ker
  (\partial_i^{n+1})^j$ and that $\im(\partial_i^n)^j$ maps to $\im(\partial_i^{n+1})^j$ under the inclusions $N_n^{i,j}\hookrightarrow N_{n+1}^{i,j}$.
  
  Recall that Condition IV for $\{N_n^{i,j}\}$ says that for any
  subspace isomorphic to $V(\lambda)_n^k$ in $N_n^{i,j}$, its
  $G_{n+1}$--span in $N_{n+1}^{i,j}$ is isomorphic to
  $V(\lambda)_{n+1}^k$. Applying this to $\ker(\partial_i^n)^j$ and $\im(\partial_i^n)^j$, the observation above implies that for fixed $i$, $j$ and
  $\lambda$, the multiplicity of $V(\lambda)_n$ in
  $\ker(\partial_i^n)^j$ and in $\im(\partial_i^n)^j$ is nondecreasing in $n$. The sum of these representations is $N_n^{i,j}$, whose decomposition is eventually constant by uniform multiplicity stability. Once the decomposition of $N_n^{i,j}$ has stabilized, an increase in $\ker(\partial_i^n)^j$ would necessitate a corresponding decrease in $\im(\partial_i^n)^j$, contradicting the observation for $\im(\partial_i^n)^j$, and vice versa. We conclude that $\{\ker(\partial_i^n)^j\}$ and $\{\im(\partial_i^n)^j\}$ are uniformly multiplicity stable for each $i$ and $j$, stabilizing once $N_n^{i,j}$ does. Thus for each $i$ and $j$ the
  quotient $\{H_i(\L_n;M_n)^j=\ker
  (\partial_{i-1}^n)^j/\im(\partial_i^n)^j\}$ is uniformly multiplicity stable, as desired.
  
  Since $\{N_n^{i,j}\}$ is uniformly
  multiplicity stable, for fixed $i,j\geq 0$ and sufficiently large
  $n$ we have the following property: only finitely many partitions
  $\lambda$ occur in $N_n^{i,j}$ (meaning the multiplicity of
  $V(\lambda)_n$ in $N_n^{i,j}$ is nonzero). This property passes to
  the subquotient $H_i(\L_n;M_n)^j$. But a sequence
  $\{H_i(\L_n;M_n)^j\}$ which is multiplicity stable yet involves only
  finitely many irreducibles is necessarily uniformly multiplicity
  stable, and so we can promote Condition III to Condition III$'$.

  By assumption $\L_n$ and $M_n$ are strongly stable. By
  Proposition~\ref{prop:SLstrong}, this implies that $P_{n+1}$ acts
  trivially on the image of $\L_n$ in $\L_{n+1}$ and of $M_n$ in
  $M_{n+1}$. As noted above, this implies that $P_{n+1}$ acts
  trivially on the image of $\bwedge^i\L_n\otimes M_n$ in
  $\bwedge^i\L_{n+1}\otimes M_{n+1}$ for each $i$. But this condition
  passes to subquotients, so $P_{n+1}$ acts trivially on the image of
  $H_i(\L_n;M_n)$ in $H_i(\L_{n+1};M_{n+1})$, verifying Condition IV
  for the sequences $\{H_i(\L_n;M_n)\}$ and
  $\{H_i(\L_n;M_n)^j\}$. Since the $N_n^{i,j}$ are finite-dimensional,
  the same is true of their subquotients $H_i(\L_n;M_n)^j$. By
  Remark~\ref{remark:strong}, for a finite-dimensional sequence
  Conditions III$'$ and IV together imply Conditions I and II.
This concludes the proof of strong stability of $\{H_i(\L_n;M_n)^j\}$.
\end{proof}

\para{Symplectic Lie algebras}
In the proof of (1) $\implies$ (5) of Theorem~\ref{thm:equivhomLie} 
we used the assumption that
$\{\L_n\}$ is type-preserving. For representations of $\Sp_{2n}\Q$ we
do not have the appropriate analogue of
Proposition~\ref{prop:SLstrong}, so the argument does not work in this
case. But examining the proof above, we did not use that $\{\L_n\}$ is
type-preserving in the implications (4) $\implies$ (1) or (3)
$\implies (1)$. We needed only Theorem~\ref{thm:LRinvert} and Theorem~\ref{thm:classicalstability} for uniform multiplicity stable
sequences, and these theorems apply to $\Sp_{2n}\Q$--representations
as well. Thus we deduce the following from the proof of
Theorem~\ref{thm:equivhomLie}.

\begin{theorem}
\label{thm:equivSpLie}
Let $\{\L_n\}$ be a consistent $\Sp_{2n}\Q$--sequence of graded Lie
algebras. If the sequence $\{H_i(\L_n;\Q)^j\}$ is uniformly
multiplicity stable for each fixed $i,j\geq 0$, then the sequence
$\{\L_n^j\}$ is uniformly multiplicity stable for each fixed $j\geq
0$.
\end{theorem}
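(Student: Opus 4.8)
The plan is to recognize this statement as precisely the implication $(4)\implies(1)$ of Theorem~\ref{thm:equivhomLie}, specialized to the coefficient system $M_n=\Q$ concentrated in grading $0$ with trivial $\L_n$-action (so that $(4)$ degenerates to $(2)$), but now carried out over $G_n=\Sp_{2n}\Q$ rather than over $\SL_n\Q$ or $\GL_n\Q$. The key point is that the proof of $(4)\implies(1)$ given above invokes the type-preserving hypothesis on $\{\L_n\}$ only at its very last step, in order to promote ``$\{\L_n^j\}$ is uniformly multiplicity stable'' to ``$\{\L_n^j\}$ is strongly stable'' via Proposition~\ref{prop:SLstrong}; since we claim here only uniform multiplicity stability, that step is simply omitted. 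The remaining ingredients of the argument --- Theorem~\ref{thm:classicalstability}(1) and (2) for uniformly multiplicity stable sequences, and Theorem~\ref{thm:LRinvert} --- are available verbatim for $\Sp_{2n}\Q$-representations (both theorems are stated for this family), so the argument transfers without change.

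In detail, I would proceed as follows. Fix $k\geq 1$. Slicing the Chevalley--Eilenberg complex \eqref{eq:HL} by grading as in \eqref{eq:gradedLhom}, and using $M_n=\Q$ concentrated in grading $0$, the grading-$k$ slice computing $H_*(\L_n;\Q)^k$ takes the form
\[0\longrightarrow \big(\bwedge^k\L_n\big)^k \longrightarrow \big(\bwedge^{k-1}\L_n\big)^k \longrightarrow \cdots \longrightarrow \big(\bwedge^2\L_n\big)^k \longrightarrow \big(\bwedge^1\L_n\big)^k \longrightarrow 0 .\]
Since $\L_n$ has strictly positive grading, $\big(\bwedge^1\L_n\big)^k=\L_n^k$, while for $i\geq 2$ every summand of $\big(\bwedge^i\L_n\big)^k$ is a tensor product of exterior powers $\bwedge^{a}\L_n^{j}$ with $j\le k-i+1<k$; in particular each $\big(\bwedge^i\L_n\big)^k$ is finite-dimensional and is built from the pieces $\L_n^j$ with $j<k$ only. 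Induct on $k$. For $k=1$ the complex is $0\to \L_n^1\to 0$, so $\{\L_n^1\}=\{H_1(\L_n;\Q)^1\}$ is uniformly multiplicity stable by hypothesis. For the inductive step, assume $\{\L_n^j\}$ is uniformly multiplicity stable for all $j<k$; then by repeated application of Theorems~\ref{thm:classicalstability}(1) and \ref{thm:classicalstability}(2), every term of the complex above except $\{\L_n^k\}$ is uniformly multiplicity stable.

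Now compare the complex with its homology, moving from left to right exactly as in the proof of $(4)\implies(1)$. Writing $\partial_i$ for the differential out of $\big(\bwedge^i\L_n\big)^k$: the domain $\big(\bwedge^k\L_n\big)^k$ is uniformly multiplicity stable and $\ker\partial_k=H_k(\L_n;\Q)^k$ is uniformly multiplicity stable by hypothesis, so $\{\im\partial_k\}$ is as well (complete reducibility makes multiplicities additive in short exact sequences). Then $\{\ker\partial_{k-1}\}$ is uniformly multiplicity stable, since $H_{k-1}(\L_n;\Q)^k=\ker\partial_{k-1}/\im\partial_k$; hence $\{\im\partial_{k-1}\}$ is, since its ambient space $\big(\bwedge^{k-1}\L_n\big)^k$ is. Continuing down the complex yields that $\{\im\partial_2\}$ is uniformly multiplicity stable, and then --- using $\partial_1=0$ in grading $k$ for $k\geq 1$, so that $H_1(\L_n;\Q)^k=\L_n^k/\im\partial_2$ --- the short exact sequence $0\to\im\partial_2\to\L_n^k\to H_1(\L_n;\Q)^k\to 0$ forces $\{\L_n^k\}$ to be uniformly multiplicity stable. (Equivalently, apply Theorem~\ref{thm:LRinvert} with $W_n=\Q$ and $V_n=\L_n^k$, exactly as in the general proof.) This closes the induction.

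I do not expect a serious obstacle: the entire content is the bookkeeping observation --- already made in the remarks preceding the statement --- that $(4)\implies(1)$ and $(3)\implies(1)$ never used type-preservation. The one point that warrants care is verifying that the terms $\big(\bwedge^i\L_n\big)^k$ with $i\geq 2$ really do involve only the lower pieces $\L_n^j$, $j<k$ (so that the inductive hypothesis applies), which holds precisely because $\L_n^0=0$. Note also that this argument cannot be pushed to strong stability: for $\Sp_{2n}\Q$ there is no analogue of Proposition~\ref{prop:SLstrong}, and indeed strong stability genuinely fails in general --- e.g.\ $\{\bwedge^2\Q^{2n}\}$ is uniformly representation stable but not strongly stable (see \S\ref{section:strong}) --- so the weakened conclusion of the statement is the best one can expect.
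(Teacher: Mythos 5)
Your proposal is correct and follows the paper's own route: the paper proves this theorem exactly by observing that the implications (4)$\implies$(1) and (3)$\implies$(1) of Theorem~\ref{thm:equivhomLie} never use the type-preserving hypothesis, relying only on Theorem~\ref{thm:LRinvert} and Theorem~\ref{thm:classicalstability} for uniformly multiplicity stable sequences, both of which hold for $\Sp_{2n}\Q$. Your write-up just makes explicit the specialization to trivial coefficients and the induction on the grading, which the paper leaves implicit.
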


\subsection{Simple representation stability}
Certain classical families of representations satisfy a stronger form
of stability, which is in some sense as close to actual stability as a
sequence of $\GL_n\Q$--representations can be. Consider a partition
$\lambda$ with $\ell=\ell(\lambda)$ rows. As noted above, $\Schur_\lambda(\Q^n)$ is
trivial for $n<\ell$, and for such $n$ there is no irreducible
representation which could be called $V(\lambda)_n$. A sequence is
called simply representation stable if this is the only obstruction to
having constant multiplicities.
\begin{definition}[Simple representation stability] A consistent
  sequence $\{V_n\}$ of $\GL_n\Q$--representations is called
  \emph{simply representation stable} if for all $n\geq 1$ it
  satisfies Conditions I and II, and if in addition it satisfies the
  following:
  \begin{enumerate}
  \item[{\bf SIII.}] For each partition $\lambda$ with $\ell=\ell(\lambda)$ nonzero
    rows, the multiplicity of the irreducible representation
    $\Schur_\lambda(\Q^n)$ in $V_n$ is constant for all $n\geq \ell$. For
    any pseudo-partition $\lambda=(\lambda_1\geq \cdots\geq
    \lambda_\ell)$ which is not a partition (meaning $\lambda_\ell<0$), the
    multiplicity of $V(\lambda)_n$ in $V_n$ is 0.
  \item[{\bf SIV.}] For any subrepresentation $W\subset V_n$ so that
    $W\approx \Schur_\lambda(\Q^n)$, the span of the $\GL_{n+1}\Q$--orbit of $\phi_n(W)$ is
    isomorphic to $\Schur_\lambda(\Q^{n+1})$.
  \end{enumerate}
\end{definition}
If we interpret $V(\lambda)_n$ as being trivial when $n$ is less than
the number of rows of $\lambda$, then simple representation stability
says there is a decomposition
\[V_n=\bigoplus c_{\lambda}\Schur_\lambda(\Q^n)=\bigoplus c_{\lambda}V(\lambda,0)_n\] over partitions $\lambda$
which is totally independent of $n$ and preserved by the maps
$V_n\hookrightarrow V_{n+1}$. Then Theorem~\ref{thm:equivhomLie} has
the following strengthening.
\begin{theorem}
  \label{thm:simplehomLie}
  Let $\{\L_n\}$ be a consistent $\GL_n\Q$--sequence of graded Lie
  algebras which is type-preserving, 
  and $\{M_n\}$ an admissible sequence of coefficients which is simply stable. Then
  Theorem~\ref{thm:equivhomLie} remains true if ``strongly stable'' is
  replaced everywhere by ``simply stable''.
\end{theorem}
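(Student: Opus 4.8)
The plan is to revisit the proof of Theorem~\ref{thm:equivhomLie} and observe that it goes through verbatim once we replace "strongly stable" with "simply stable" everywhere, provided we have the analogues of the foundational inputs. The key inputs used in that proof are: (a) Theorem~\ref{thm:classicalstability}, that tensor products and Schur functors (and direct sums, compositions, etc.) preserve strong stability; (b) Theorem~\ref{thm:LRinvert}, that one can divide out a tensor factor; and (c) Remark~\ref{remark:strong} together with Proposition~\ref{prop:SLstrong}, that Conditions III$'$ and IV together imply Conditions I and II for finite-dimensional sequences, via the fact that $P_{n+1}$ acts trivially on highest weight vectors. So the first step is to record that each of these has a "simple" version. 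For (a): simple stability is just strong stability together with the extra requirement that the partitions $\lambda$ indexing the multiplicities are exactly those for which $V(\lambda,0)_n$ is eventually defined, with multiplicity $0$ assigned to pseudo-partitions having $\lambda_\ell<0$, and with the multiplicities constant for \emph{all} $n\geq\ell(\lambda)$ (not merely eventually). The constructions in Parts 1--5 of Theorem~\ref{thm:classicalstability} are given by formulas (Littlewood--Richardson, plethysm, $\Schur_\lambda$ of sums and tensors) whose coefficients are manifestly independent of $n$, and which never produce a pseudo-partition with a negative entry from inputs with no negative entries; moreover the number of rows only increases in a controlled way, so the "constant for all $n\geq \ell(\lambda)$" clause is preserved. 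Thus if $\{V_n\}$ and $\{U_n\}$ are simply stable, so are $\{V_n\otimes U_n\}$, $\{\Schur_\lambda V_n\}$, etc.

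Next I would check the "division" step. Theorem~\ref{thm:LRinvert} already shows that if $\{W_n\}$ and $\{V_n\otimes W_n\}$ are uniformly multiplicity stable then $\{V_n\}$ is. For the simple version one needs the stronger conclusion that the multiplicities in $\{V_n\}$ are constant all the way down to $n=\ell(\lambda)$ and that no negative-entry pseudo-partition appears. This follows from the same representation-ring argument: in the identification $R_n=\Z[\{\lambda\mid \ell(\lambda)<n\}]$ with multiplication by Littlewood--Richardson coefficients, the unique solution $[V_n]$ to $x\cdot[W_n]=[V_n\otimes W_n]$ is determined by a formula that does not reference $n$ at all once one knows that the sequence is genuinely stable — and simple stability of $\{W_n\}$ and $\{V_n\otimes W_n\}$ forces the relevant Littlewood--Richardson computations to already be valid in the bottom range. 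One must also note that a simply stable sequence contains no representations $V(\lambda;\mu)_n$ with $\mu\neq 0$, and tensoring/dividing by a simply stable sequence cannot introduce one; this handles Condition SIII.

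Then the argument of Theorem~\ref{thm:equivhomLie} runs unchanged. For (4)$\implies$(1) and (3)$\implies$(1): decompose the graded homology complex \eqref{eq:gradedLhom}; each term other than $\L^k\otimes M^0$ is built from the $\L^j$ with $j<k$ (inductively simply stable) and the $M^j$ (simply stable by hypothesis) via tensor and wedge, so by the simple version of Theorem~\ref{thm:classicalstability} each such term is simply stable; chasing kernels and images along the complex using that the homology groups are simply stable shows $\{\L^k\otimes M^0\}$ is simply stable; then the simple version of Theorem~\ref{thm:LRinvert} gives that $\{\L^k\}$ is simply stable. The degenerate cases ($M^0_n$ or $\L^1_n$ eventually zero) are handled exactly as before. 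For (1)$\implies$(5): the terms $N_n^{i,j}$ of the complex computing $H_*(\L_n;M_n)^j$ are finite-dimensional and simply stable by the simple Theorem~\ref{thm:classicalstability}; since $\{\L_n\}$ and $\{M_n\}$ are type-preserving, Proposition~\ref{prop:SLstrong} shows $P_{n+1}$ acts trivially on the images of $\L_n$ and $M_n$, hence on the image of each $N_n^{i,j}$, hence on the image of the subquotient $H_i(\L_n;M_n)^j$ — giving Condition SIV; the kernel/image multiplicity-monotonicity argument of the original proof gives that $\{\ker(\partial_i^n)^j\}$ and $\{\im(\partial_i^n)^j\}$ stabilize as soon as $\{N_n^{i,j}\}$ does, which in the simple setting is at $n=\ell$ for all $\lambda$ involved, yielding Condition SIII for the homology; finiteness then upgrades to Conditions I and II as in Remark~\ref{remark:strong}. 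The one point requiring a little care — and I expect this to be the main obstacle — is verifying that the bottom-of-the-range clause in the definition of simple stability (multiplicities constant for \emph{all} $n\geq\ell(\lambda)$, not just eventually) is genuinely propagated through the homology complex and the application of Theorem~\ref{thm:LRinvert}; this amounts to checking that none of the classical decomposition formulas has an exceptional coincidence in low rank beyond the one already accounted for by the $\Schur_\lambda(\Q^n)$-vanishing convention, which is exactly the content of the "important notational convention" and follows from the fact that all the relevant combinatorial coefficients are rank-independent.
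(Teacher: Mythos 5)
Your proposal is correct and follows essentially the same route as the paper's own (sketched) proof: establish simple-stability versions of Theorem~\ref{thm:classicalstability}, Theorem~\ref{thm:LRinvert}, and the Condition IV$'$ characterization from Proposition~\ref{prop:SLstrong} by noting that the Littlewood--Richardson and plethysm formulas are rank-independent, and then rerun the argument of Theorem~\ref{thm:equivhomLie} verbatim, using the kernel/image monotonicity for the $N_n^{i,j}$ in the direction (1)$\implies$(5). The point you flag as the main potential obstacle (propagating the constant-for-all-$n\geq\ell(\lambda)$ clause) is handled in the paper exactly as you suggest, by the $n$-independence of the combinatorial coefficients.
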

\begin{proof} We sketch the proof. The characterization of
  Proposition~\ref{prop:SLstrong} still holds: given Conditions I, II,
  and SIII, Condition SIV is equivalent to Condition IV$'$. Examining
  the proof of Theorem~\ref{thm:classicalstability}, and in particular
  that the formulas \eqref{eq:LR} and \eqref{eq:SLpleth} are
  independent of $n$, we conclude that if $\{V_n\}$ and $\{U_n\}$ are
  simply stable, the same is true of $\{V_n\otimes U_n\}$ and
  $\{\Schur_\lambda(V_n)\}$. Similarly, from the proof of
  Theorem~\ref{thm:LRinvert}, we conclude that if $\{U_n\}$ and
  $\{V_n\otimes U_n\}$ satisfy Condition SIII, the same is true of
  $\{V_n\}$. This has the folllowing implications for the proof of
  Theorem~\ref{thm:equivhomLie}.

  In the proofs of (4) $\implies$ (1) and of (3) $\implies$ (1) we can
  replace ``stable'' with ``simply stable'' everywhere, and the
  argument remains valid. For (1) $\implies$ (5), if the sequence
  $\{\L_n\}$ is simply stable, the same is true of
  $\{\bwedge^i\L_n\otimes M_n\}$ and of $\{N_n^{i,j}\}$. As before,
  the multiplicity of $\Schur_\lambda(\Q^n)$ in $\ker
  (\partial_i^n)^j$ and $\im(\partial_i^n)^j$ is nondecreasing. Their sum is the multiplicity of
  $\Schur_\lambda(\Q^n)$ in $N_n^{i,j}$, which by simple stability of
  $\{N_n^{i,j}\}$ is finite and constant, so the same is true for
  $\ker\partial_i^n$ and $\im\partial_i^n$. Since this holds for each
  $i$, we conclude that the multiplicity of $\Schur_\lambda(\Q^n)$ in
  $H_i(\L_n;M_n)^j$ is constant, as desired. Condition SIV follows as
  before, and we conclude that $\{H_i(\L_n;M_n)^j\}$ is simply stable.
\end{proof}

\subsection{Applications and examples}
\label{section:applications}
In this subsection we give a number of applications of
Theorem~\ref{thm:equivhomLie}, Theorem~\ref{thm:equivSpLie}, and
Theorem~\ref{thm:simplehomLie}.

\para{Free Lie algebras} Let $V_n$ be a
$\Q$--vector space with basis $x_1,\ldots,x_n$.  Let
$\L(V_n)=\L(x_1,\ldots,x_n)$ be the free Lie algebra on $V_n$. The
action of $\GL(V_n)\approx \GL_n\Q$ on $V_n$ induces an action of
$\GL_n\Q$ on $\L(V_n)$. The Lie algebra $\L(V_n)$ has a natural
grading
\[\L(V_n)=\bigoplus_{i\geq 1} \L_i(V_n)\] which is preserved by the
action of $\GL_n\Q$. The obvious inclusion of $V_n \hookrightarrow
V_{n+1}$ induces natural maps $\L(V_n)\hookrightarrow \L(V_{n+1})$ and $\L_i(V_n)\hookrightarrow
\L_i(V_{n+1})$.  These inclusions are respected by the inclusion of $\GL_n\Q \hookrightarrow \GL_{n+1}\Q$.

The free Lie algebra $\L(V_n)$ has the following homology groups for
all $n\geq 1$:
\[H_i(\L(V_n);\Q)=
\begin{cases}
 \Q &i=0\\
 V_n &i=1\\
 0 & i\geq 2
\end{cases}\] This follows from \eqref{eq:freepn} below, and can also
be checked directly. As $\GL_n\Q$--representations, these are
$\Q=\Schur_{(0)}(\Q^n)$, with grading 0, and $V_n=\Schur_{(1)}(\Q^n)$,
with grading 1. Thus $\{H_*(\L(V_n);\Q)\}$ is simply
stable, so Theorem~\ref{thm:simplehomLie} gives the following
corollary.

\begin{corollary}\label{cor:freeLie}
  For each fixed $m\geq 1$, the sequence of $\GL_n\Q$--representations
  $\{\L_m(V_n)\}$ of degree $m$ components of the free Lie algebras
  $\L(V_n)$ is simply representation stable.
\end{corollary}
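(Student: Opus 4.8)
The plan is to deduce this from Theorem~\ref{thm:simplehomLie}, applied in the direction ``stability of the homology forces stability of the Lie algebra itself'', taking the coefficient sequence to be the trivial module $M_n=\Q$ concentrated in grading $0$. First I would record that $\{\L(V_n)\}$ is a consistent $\GL_n\Q$--sequence of graded Lie algebras: the inclusion $V_n\hookrightarrow V_{n+1}$ induces a grading-preserving injection $\L(V_n)\hookrightarrow \L(V_{n+1})$ compatible with $\GL_n\Q\hookrightarrow \GL_{n+1}\Q$, and each graded piece $\L_m(V_n)$ is finite-dimensional so that every vector lies in a finite-dimensional subrepresentation. The coefficient sequence $\{M_n=\Q\}$ is admissible and simply stable, since $M_n^0=\Schur_{(0)}(\Q^n)=\Q$ is constant and finite-dimensional and all other graded pieces vanish.

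The only point requiring a genuine argument is that $\{\L(V_n)\}$ is type-preserving. For $\GL_n\Q$ this is exactly Condition IV$'$ of Proposition~\ref{prop:SLstrong}: I must show that $P_{n+1}$, the subgroup of $\GL_{n+1}\Q$ agreeing with the identity outside the rightmost column, acts trivially on the image of $\L(V_n)$ in $\L(V_{n+1})$. Any $g\in P_{n+1}$ fixes each basis vector $x_1,\dots,x_n$ of $V_n$ (it alters only $x_{n+1}$), and $g$ acts on $\L(V_{n+1})$ by a Lie algebra automorphism; hence $g$ fixes the whole Lie subalgebra generated by $x_1,\dots,x_n$, which is precisely the image of $\L(V_n)$. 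So Condition IV$'$, and therefore the type-preserving hypothesis of Theorem~\ref{thm:simplehomLie}, holds for $\{\L(V_n)\}$.

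It then remains to observe that $\{H_i(\L(V_n);\Q)^j\}$ is simply stable for all $i,j\geq 0$, which is immediate from the homology computation recalled just above: $H_0(\L(V_n);\Q)=\Schur_{(0)}(\Q^n)$ in grading $0$, $H_1(\L(V_n);\Q)=V_n=\Schur_{(1)}(\Q^n)$ in grading $1$, and all other graded pieces of all homology groups are zero, so these Schur-functor decompositions are literally independent of $n$ and respected by the structure maps. Feeding this into Theorem~\ref{thm:simplehomLie} via the implication $(2)\Rightarrow(1)$ gives that $\{\L(V_n)^j=\L_j(V_n)\}$ is simply representation stable for every $j\geq 0$; taking $j=m$ yields the corollary. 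I do not expect a serious obstacle here: the type-preserving verification and the consistency/admissibility checks are routine, and the substance of the argument has been packaged into Theorem~\ref{thm:simplehomLie} (which in turn rests on the invertibility of the Clebsch--Gordan operation, Theorem~\ref{thm:LRinvert}) together with the standard fact that the universal enveloping algebra of $\L(V_n)$ is the free associative algebra on $V_n$, which forces the stated homology.
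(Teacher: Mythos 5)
Your proposal is correct and follows essentially the same route as the paper: the paper likewise computes $H_*(\L(V_n);\Q)=\Q\oplus V_n$ (concentrated in gradings $0$ and $1$), observes that this is simply stable, and invokes Theorem~\ref{thm:simplehomLie} in the homology-to-Lie-algebra direction. Your explicit check of Condition IV$'$ for the free Lie algebra is a detail the paper leaves implicit, but it is the right verification of the type-preserving hypothesis.
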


In fact, the multiplicities of $V(\lambda)_n$ in $\L_m(V_n)$ are
known, at least in some sense. For the irreducible representation
$V(\lambda)_n$ to appear in $\L_m(V_n)$ it is necessary that $\lambda$
be a partition of $m$. For such $\lambda$, Bakhturin \cite[Proposition
3, \S3.4.5]{Ba} gives the following formula for the multiplicity.  Let
$\chi_{\lambda}$ denote the character of the irreducible
representation of $S_m$ associated to $\lambda$, and let $\tau$ be the
$m$--cycle $(1\,2\,\ldots\, m)$. Then the multiplicity of
$V(\lambda)_n$ in $\L_m(V_n)$ is
\[c_\lambda\coloneq \frac{1}{m}\sum_{d|m}\mu(d)\chi_\lambda(\tau^{m/d}).\]
Despite this formula, due to the dependence on the values of irreducible
characters $\chi_\lambda$ of the symmetric group, explicitly
calculating these multiplicities remains an active area of research.

\para{Free nilpotent Lie algebras}
Let $\mathcal{N}_k(n)$ be the level $k$ truncation of the free Lie
algebra of rank $n$, meaning:
\[\mathcal{N}_k(n)=\L(V_n)/\L_{k+1}(V_n)=\bigoplus_{i\leq k} \L_i(V_n).\]
This is the free $k$--step nilpotent Lie algebra on $V_n$. Since
$\mathcal{N}_k(n)$ is a truncation of $\L(V_n)$,
Corollary~\ref{cor:freeLie} tells us that for each fixed $i$ the
sequence of $i^{\text{th}}$ graded pieces
$\{\mathcal{N}_k(n)^i=\L_i(V_n)\}$ is simply stable. Thus as a
corollary of Theorem~\ref{thm:simplehomLie}, we obtain the following
theorem of Tirao \cite[Theorem 2.9]{Ti}.

\begin{corollary}[Tirao]
  \label{corollary:nilp}
  Fix any $k\geq 1$.  Then for each fixed $i\geq 0$ the sequence of
  $\GL_n(\Q)$--representations $\{H_i(\mathcal{N}_k(n);\Q)\}$ is
  simply stable.
\end{corollary}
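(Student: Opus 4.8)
The plan is to deduce this from the Lie-algebra-to-homology direction of Theorem~\ref{thm:simplehomLie}, applied to the sequence $\L_n\coloneq \mathcal{N}_k(n)$, using Corollary~\ref{cor:freeLie} to supply the needed input on graded pieces. First I would record that $\{\mathcal{N}_k(n)\}$ is a consistent $\GL_n\Q$-sequence of graded Lie algebras in the sense of the definition in this section: the inclusions $V_n\hookrightarrow V_{n+1}$ induce $\GL_n\Q$-equivariant Lie-algebra inclusions $\mathcal{N}_k(n)\hookrightarrow \mathcal{N}_k(n+1)$ respecting the grading $\mathcal{N}_k(n)=\bigoplus_{1\le m\le k}\L_m(V_n)$, each graded component $\L_m(V_n)$ is finite-dimensional, and the underlying sequence of $\GL_n\Q$-representations is consistent. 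Since $\mathcal{N}_k(n)^m=\L_m(V_n)$ for $m\le k$ and is $0$ for $m>k$, and each $\{\L_m(V_n)\}$ is simply representation stable by Corollary~\ref{cor:freeLie} (the zero sequences in gradings $m>k$ being trivially simply stable), condition~(1) of Theorem~\ref{thm:equivhomLie} (with ``strongly'' replaced by ``simply'') holds for $\{\mathcal{N}_k(n)\}$.

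Next I would check the remaining hypothesis of Theorem~\ref{thm:simplehomLie}, namely that $\{\mathcal{N}_k(n)\}$ is type-preserving. By Corollary~\ref{cor:freeLie} each graded piece $\{\L_m(V_n)\}$ satisfies Condition~SIV, hence the equivalent Condition~IV$'$: the subgroup $P_{n+1}$ acts trivially on the image of $\L_m(V_n)$ in $\L_m(V_{n+1})$. Summing over $1\le m\le k$, $P_{n+1}$ acts trivially on the image of $\mathcal{N}_k(n)$ in $\mathcal{N}_k(n+1)$; and $\{\mathcal{N}_k(n)\}$ is uniformly multiplicity stable, being a finite direct sum of uniformly multiplicity stable sequences, so Proposition~\ref{prop:SLstrong} upgrades this to Condition~IV for $\{\mathcal{N}_k(n)\}$. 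Taking $M_n=\Q$ concentrated in grading $0$ with trivial $\mathcal{N}_k(n)$-action---an admissible, simply stable sequence of coefficients, being the constant sequence $\Schur_{(0)}(\Q^n)$---Theorem~\ref{thm:simplehomLie} applies, and since condition~(1) holds we obtain condition~(2): the sequence $\{H_i(\mathcal{N}_k(n);\Q)^j\}$ is simply stable for each fixed $i,j\ge 0$.

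Finally I would assemble the ungraded statement from the graded one. Because $\mathcal{N}_k(n)$ is supported in gradings $1,\dots,k$, the term $\bwedge^i\mathcal{N}_k(n)$ of the complex computing $H_*(\mathcal{N}_k(n);\Q)$ is supported in gradings $i,\dots,ik$, so $H_i(\mathcal{N}_k(n);\Q)^j$, being a subquotient of $(\bwedge^i\mathcal{N}_k(n))^j$, vanishes unless $i\le j\le ik$, a band of gradings independent of $n$. Hence $H_i(\mathcal{N}_k(n);\Q)=\bigoplus_{j=i}^{ik}H_i(\mathcal{N}_k(n);\Q)^j$ is a finite direct sum of simply stable sequences, compatibly with the maps; Conditions~I, II and SIII are additive over such sums, and since the grading decomposition is $\GL_n\Q$-equivariant every irreducible subrepresentation of $H_i(\mathcal{N}_k(n);\Q)$ lies in a single graded piece, so Condition~SIV passes up as well. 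Therefore $\{H_i(\mathcal{N}_k(n);\Q)\}$ is simply stable, proving the corollary. The content is essentially all carried by the machinery already in place, so there is no serious obstacle; the one thing to be careful with is the bookkeeping---that Corollary~\ref{cor:freeLie} is exactly condition~(1) after truncation, that truncating to $\mathcal{N}_k(n)$ does not disturb the graded pieces in degrees $\le k$, and that per-grading simple stability assembles into simple stability of $H_i(\mathcal{N}_k(n);\Q)$ precisely because the homology sits in a bounded band of gradings. This also realizes the ``both directions'' remark from the introduction: Corollary~\ref{cor:freeLie} uses the homology-to-Lie-algebra direction of Theorem~\ref{thm:equivhomLie}, and here we use the Lie-algebra-to-homology direction.
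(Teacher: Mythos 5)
Your proposal is correct and follows exactly the paper's (much terser) route: simple stability of the graded pieces $\mathcal{N}_k(n)^m=\L_m(V_n)$ from Corollary~\ref{cor:freeLie}, then the implication (1)$\implies$(2) of Theorem~\ref{thm:simplehomLie}, with the type-preserving hypothesis and the assembly of the graded statements into the ungraded one filled in explicitly. The only small imprecision is the claim that every irreducible subrepresentation of $H_i(\mathcal{N}_k(n);\Q)$ lies in a single graded piece --- this fails when the same $\lambda$ occurs in two gradings --- but Condition SIV still passes to the finite direct sum via Condition IV$'$, which you had already established for $\mathcal{N}_k(n)$ and which is inherited by subquotients.
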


The novelty of our deduction of Corollary~\ref{corollary:nilp} is that we start with the homology of the free 
Lie algebra, which is quite easy to compute, then we move to the free Lie algebra itself using one direction of Theorem~\ref{thm:equivhomLie},  
then to its nilpotent truncation, then to the homology of that truncation using the reverse implication in Theorem~\ref{thm:equivhomLie}.

We note that, while the $\mathcal{N}_k(n)$ are not themselves
complicated, their homology is quite complicated.  For $k=2$, it
follows from work of Kostant (see \cite[Theorem 3.1]{CT1}) that the
multiplicity of $V(\lambda)_n$ in $H_i(\mathcal{N}_2(n);\Q)$ is 0
unless $\lambda$ is self-conjugate (i.e., its Young diagram is
symmetric under reflection across the diagonal) and has exactly $i$
boxes above the diagonal, in which case the multiplicity is 1. No
formula is known in general, but for some small values of $i$ and $k$,
Tirao \cite{Ti} computes the decomposition of this homology
explicitly. For example, he proves that (in our terminology):
\begin{align*}
  H_3(\mathcal{N}_3(2);\Q)&=V(4,2)\\
  H_3(\mathcal{N}_3(3);\Q)&=V(4,2)\oplus V(2,2,2)\oplus V(3,1,1)\oplus
  V(3,3,1)\oplus V(4,2,1)\oplus V(5,1,1)\\
  H_3(\mathcal{N}_3(4);\Q)&=V(4,2)\oplus V(2,2,2)\oplus V(3,1,1)\oplus
  V(3,3,1)\oplus V(4,2,1)\oplus
  V(5,1,1)\\&\quad\qquad\oplus V(3,1,1,1)\oplus V(3,2,1,1)\\
  H_3(\mathcal{N}_3(n);\Q)&=V(4,2)\oplus V(2,2,2)\oplus V(3,1,1)\oplus
  V(3,3,1)\oplus V(4,2,1)\oplus V(5,1,1)\\&\quad\qquad\oplus
  V(3,1,1,1)\oplus V(3,2,1,1)\oplus V(2,2,1,1,1) \qquad\quad\text{for
  }n\geq 5
\end{align*}
Note that, as guaranteed by simple stability, each representation
$V(\lambda)_n$ first appears in $H_3(\mathcal{N}_3(n))$ when $n$ is the
number of rows of $\lambda$, and persists with the same multiplicity
thereafter.

Also from Theorem~\ref{thm:simplehomLie}, we obtain the following
corollary on the homology of $\mathcal{N}_k(n)$ with coefficients
in the adjoint representation.

\begin{corollary}\label{cor:adjnil}
  Fix $k\geq 1$.  Then for each fixed $i\geq 0$ the sequence of
  $\GL_n(\Q)$--representations
  $\{H_i(\mathcal{N}_k(n);\mathcal{N}_k(n))\}$ is simply stable.
\end{corollary}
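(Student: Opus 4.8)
The plan is to apply Theorem~\ref{thm:simplehomLie} with $\L_n=\mathcal{N}_k(n)$, exactly as Corollary~\ref{corollary:nilp} was obtained, but using statement (3) of Theorem~\ref{thm:equivhomLie} (graded adjoint homology) in place of statement (2). Concretely, the implication ``(1) $\implies$ (3)'' in the simply-stable form of Theorem~\ref{thm:equivhomLie} will give simple stability of the graded pieces $\{H_i(\mathcal{N}_k(n);\mathcal{N}_k(n))^j\}$, and a finite-direct-sum argument will then produce the ungraded statement.

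First I would verify the hypotheses of Theorem~\ref{thm:simplehomLie}. The sequence $\{\mathcal{N}_k(n)\}$ is a consistent $\GL_n\Q$-sequence of graded Lie algebras: the bracket, the grading $\mathcal{N}_k(n)=\bigoplus_{j\le k}\L_j(V_n)$, and the $\GL_n\Q$-action are all inherited from the free Lie algebra $\L(V_n)$ and are compatible with the maps $\mathcal{N}_k(n)\hookrightarrow\mathcal{N}_k(n+1)$ induced by $V_n\hookrightarrow V_{n+1}$. It is type-preserving: by Corollary~\ref{cor:freeLie} each graded piece $\mathcal{N}_k(n)^j=\L_j(V_n)$ is simply representation stable, hence satisfies Condition SIV, hence (via the equivalence of SIV with Condition IV$'$ noted in the proof of Theorem~\ref{thm:simplehomLie}, the simply-stable analogue of Proposition~\ref{prop:SLstrong}) $P_{n+1}$ acts trivially on the image of $\mathcal{N}_k(n)^j$ in $\mathcal{N}_k(n+1)^j$; summing over $j$, the sequence $\{\mathcal{N}_k(n)\}$ is type-preserving. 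Finally, $\{\mathcal{N}_k(n)\}$ is itself an admissible, simply stable sequence of coefficients: each $\mathcal{N}_k(n)^j$ is finite-dimensional and simply stable by Corollary~\ref{cor:freeLie}, and $\mathcal{N}_k(n)^1=V_n$ is nonzero for all $n\ge 1$.

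Next, Corollary~\ref{cor:freeLie} is precisely the assertion that statement (1) of Theorem~\ref{thm:equivhomLie}, in its simply-stable form, holds for $\{\mathcal{N}_k(n)\}$. Theorem~\ref{thm:simplehomLie} then delivers statement (3) in simply-stable form: for each fixed $i,j\ge 0$, the sequence $\{H_i(\mathcal{N}_k(n);\mathcal{N}_k(n))^j\}$ is simply stable. To pass to the ungraded statement, observe that since $\mathcal{N}_k(n)$ is concentrated in degrees $1,\dots,k$, the Chevalley--Eilenberg complex $\bwedge^\bullet\mathcal{N}_k(n)\otimes\mathcal{N}_k(n)$ has its homological-degree-$i$ term supported in gradings $j$ with $i+1\le j\le (i+1)k$; hence for fixed $i$ the homology $H_i(\mathcal{N}_k(n);\mathcal{N}_k(n))=\bigoplus_j H_i(\mathcal{N}_k(n);\mathcal{N}_k(n))^j$ is a direct sum over an index set that is finite and independent of $n$. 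Since simple representation stability is preserved by finite direct sums (the multiplicity of each $V(\lambda)_n$ adds termwise, and Conditions I, II, SIII, SIV all pass to finite sums), we conclude that $\{H_i(\mathcal{N}_k(n);\mathcal{N}_k(n))\}$ is simply stable.

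I do not expect a genuine obstacle: all the substantive content is already packaged in Theorem~\ref{thm:simplehomLie} and Corollary~\ref{cor:freeLie}. The only points requiring care are checking that $\{\mathcal{N}_k(n)\}$ legitimately serves as an admissible, simply stable, type-preserving coefficient sequence for itself, and the elementary bookkeeping that collapses the grading into a finite direct sum.
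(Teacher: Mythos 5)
Your proof is correct and takes essentially the same route as the paper: Corollary~\ref{cor:adjnil} is obtained there by applying the implication (1) $\implies$ (3) of Theorem~\ref{thm:equivhomLie} in its simply-stable form (Theorem~\ref{thm:simplehomLie}), with condition (1) supplied by Corollary~\ref{cor:freeLie} since each graded piece $\mathcal{N}_k(n)^j=\L_j(V_n)$ is a truncation of the free Lie algebra. You have merely made explicit the hypothesis checks (type-preservation via Condition SIV, admissibility of $\{\mathcal{N}_k(n)\}$ as its own coefficient sequence) and the finite-direct-sum bookkeeping over gradings $i+1\leq j\leq (i+1)k$ that the paper leaves implicit.
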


The adjoint homology of $\mathcal{N}_2(n)$ was studied in
Cagliero--Tirao \cite{CT1}, and in this case
Corollary~\ref{cor:adjnil} can be deduced from \cite[Theorem 4.4]{CT1}
combined with the description of $H_i(\mathcal{N}_2(n);\Q)$ above. For
$k\geq 3$, to the best of our knowledge this result was not previously
known.

\para{Continuous cohomology and pseudo-nilpotent groups}  The
\emph{continuous cohomology} of a group $\Gamma$ is the direct limit
\[H^*_{\text{cts}}(\Gamma;\Q)=\lim_{\longrightarrow} H^*(\Gamma/K;\Q)\]
of the cohomology of all its finitely generated nilpotent quotients
$\Gamma/K$. The basic properties of continuous cohomology are
established in Hain \cite{Ha2}. There is an obvious comparison map
$H^*_{\text{cts}}(\Gamma;\Q)\to H^*(\Gamma;\Q)$, which is always an
isomorphism on $H^0$ and $H^1$, and is always injective on $H^2$. A
finitely generated group $\Gamma$ is called \emph{pseudo-nilpotent} if
this map is an isomorphism in every degree.

Nomizu's theorem \cite{No} implies that for finitely generated groups,
$H^*_{\text{cts}}(\Gamma;\Q)$ coincides with the continuous cohomology
$H^*_{\text{cts}}(\mathfrak{g};\Q)$ of the Malcev Lie algebra
$\mathfrak{g}$ of $\Gamma$. The Malcev Lie algebra is a certain
pronilpotent $\Q$--Lie algebra associated to $\Gamma$.  We will only
need the following property.  Recall that the \emph{lower central
  series}
\[\Gamma=\Gamma_1 >\Gamma_2 > \cdots\]
of a group $\Gamma$ is defined inductively by $\Gamma_1\coloneq
\Gamma$ and $\Gamma_{n+1}\coloneq [\Gamma,\Gamma_n]$.  The
\emph{associated graded (rational) Lie algebra} $\gr(\Gamma)$ is the
$\Q$--Lie algebra defined by \[\gr(\Gamma)\coloneq
\bigoplus_{n=1}^\infty (\Gamma_n/\Gamma_{n+1})\otimes\Q\] where the
Lie bracket is induced by the group commutator.  The Malcev Lie
algebra $\mathfrak{g}$ of $\Gamma$ has the property that the graded
Lie algebra associated to its lower central series is isomorphic to
$\gr(\Gamma)$. For the groups we consider, we have an isomorphism
$H^*_{\text{cts}}(\mathfrak{g};\Q)\approx H^*(\gr(\Gamma);\Q)$. Note
that the automorphism group $\Aut(\Gamma)$ acts on $\Gamma$, preserves
each $\Gamma_i$, and so acts on $\gr(\Gamma)$.  It is well known that this action 
factors through the representation
\begin{equation}
  \label{eq:autrep}
  \Aut(\Gamma)\to \Aut(\Gamma/[\Gamma,\Gamma]).
\end{equation}

For the free group $F_n$ it is well known that
$\gr(F_n)=\L(H_1(F_n;\Q))=\L(V_n)$. Since free groups are
pseudo-nilpotent (see \cite[Corollary 5.10]{Ha2}), we conclude that
\begin{equation}
  \label{eq:freepn}
  H^*(\gr(F_n);\Q)=H^*(F_n;\Q)=\Q\oplus
  \Q^n.
\end{equation}
In this case the representation \eqref{eq:autrep} gives the natural
action of $\GL_n\Z$ on $\L(V_n)$, which extends to a representation of
$\GL_n\Q$. In Corollary~\ref{cor:freeLie}, we noted that the
isomorphism $H^*(\gr(F_n);\Q)=H^*(F_n;\Q)$ implies that
$\{H^i(\gr(F_n);\Q)\}$ is simply stable for each $i$, and so we concluded
that the sequence of graded components $\{\gr(F_n)^j\}$ is simply
stable for each $j\geq 0$ as well.

We would like to mimic this argument for surface groups. Let
$\pi_g=\pi_1(S_g)$ be the fundamental group of the closed, connected, 
orientable surface of genus
$g\geq 2$. Labute \cite{La} proved that $\gr(\pi_g)$ is the quotient of the
free Lie algebra $\L(H_1(S_g;\Q))$ by the ideal generated by the
symplectic form:
\[\gr(\pi_g)\approx \L(H_1(S_g;\Q))/([a_1,b_1]+\cdots+[a_g,b_g])\]
Further, in this case the representation \eqref{eq:autrep} is known to
factor through the integral symplectic group $\Sp_{2g}\Z$.  Hain
proved \cite[Proposition 5.11]{Ha2} that $\pi_g$ is pseudo-nilpotent,
and that the continuous cohomology of its Malcev Lie algebra coincides
with $H^*(\gr(\pi_g);\Q)$. Thus $H^*(\gr(\pi_g);\Q)\approx
H^*(S_g;\Q)$, which as an $\Sp_{2g}\Q$--representation decomposes as
follows for all $g\geq 1$:
\[H^i(S_g;\Q)=
\begin{cases}
  V(0) &i=0,2\\
  V(1) &i=1\\
  0 & i\geq 3
\end{cases}\]

We do not have maps $\pi_g\to \pi_{g+1}$, but we do have surjections
$\pi_{g+1}\to \pi_g$ inducing surjections $\gr(\pi_{g+1})\to
\gr(\pi_{g})$, which induce maps $H^*(\gr(\pi_g))\to
H^*(\gr(\pi_{g+1}))$. For each $i$, this makes the cohomology
groups $\{H^i(\gr(\pi_g);\Q)=H^i(S_g;\Q)\}$ into a uniformly stable
sequence of
$\Sp_{2g}\Q$--representations. Theorem~\ref{thm:equivSpLie}  
works just as well for cohomology, so we obtain as a corollary the
following result of Hain \cite[Corollary 8.5]{Ha}.
\begin{corollary}[Hain]\label{cor:hain}
  For each fixed $i\geq 0$, the sequence of
  $\Sp_{2g}\Q$--representations given by the graded components
  $\{\gr(\pi_g)^i\}$ are uniformly representation stable for all $j$.
\end{corollary}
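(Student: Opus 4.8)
The plan is to deduce Corollary~\ref{cor:hain} from the $\Sp_{2g}\Q$--version of the Lie-algebra stability machinery, Theorem~\ref{thm:equivSpLie}, run in the direction ``stability of (co)homology implies stability of the graded Lie algebra''. First I would fix the structure maps. There is no natural homomorphism $\pi_g\to\pi_{g+1}$, so instead I use the handle-collapsing surjections $\pi_{g+1}\twoheadrightarrow\pi_g$ (kill the last generator pair $a_{g+1},b_{g+1}$); these preserve the lower central series, hence induce grading-preserving surjections $\gr(\pi_{g+1})\twoheadrightarrow\gr(\pi_g)$ equivariant for $\Sp_{2g}\Q\hookrightarrow\Sp_{2g+2}\Q$, and dualizing gives the maps $H^i(\gr(\pi_g);\Q)\to H^i(\gr(\pi_{g+1});\Q)$ mentioned in the text. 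By Labute, $\gr(\pi_g)$ is the quotient of $\L(H_1(S_g;\Q))$ by the ideal generated by $[a_1,b_1]+\cdots+[a_g,b_g]$, so each graded piece $\gr(\pi_g)^j$ is finite-dimensional; the only departure from the setup of Theorem~\ref{thm:equivSpLie} is that the connecting maps run $\gr(\pi_{g+1})\to\gr(\pi_g)$ rather than the other way, which (as noted below) does not affect the argument we need.

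Next I would assemble the cohomology input. Hain's pseudo-nilpotence of surface groups, together with Nomizu's theorem, gives $H^*(\gr(\pi_g);\Q)\cong H^*_{\text{cts}}(\mathfrak g;\Q)\cong H^*(\pi_g;\Q)=H^*(S_g;\Q)$, which as an $\Sp_{2g}\Q$--representation is $V(0)$ for $i=0,2$, is $V(1)$ for $i=1$, and vanishes for $i\geq 3$. In particular the irreducible decomposition of $H^i(\gr(\pi_g);\Q)$ is literally independent of $g$ for all $g\geq 1$, so for each fixed $i$ the sequence $\{H^i(\gr(\pi_g);\Q)\}$ is uniformly multiplicity stable; moreover the induced maps are the evident inclusions $V(0)\to V(0)$ and $V(1)_g\hookrightarrow V(1)_{g+1}$ (whose $\Sp_{2g+2}\Q$--span is everything, the standard representation being irreducible) and the isomorphism on $H^2$, so the sequence is in fact uniformly representation stable.

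I would then apply Theorem~\ref{thm:equivSpLie} in its cohomological form. The step I expect to require the most care is justifying that the relevant implication goes through for the \emph{cochain} complex $\bwedge^\bullet\bigl(\gr(\pi_g)^\ast\bigr)$ with the connecting maps running backwards. This is fine because the proof of that direction of Theorem~\ref{thm:equivhomLie}, reused for $\Sp$ in Theorem~\ref{thm:equivSpLie}, never uses the maps $\L_n\to\L_{n+1}$ at all: it works inside a single graded complex of the shape~\eqref{eq:gradedLhom}, invokes only Theorem~\ref{thm:classicalstability}(1) and (2) (tensors and Schur functors of uniformly multiplicity stable sequences are uniformly multiplicity stable) and the inversion of Clebsch--Gordan in Theorem~\ref{thm:LRinvert}, and exploits the structural fact that the top graded piece $\gr(\pi_g)^k$ enters the degree-$k$ slice of the complex in exactly one term. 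Over $\Q$ with finite-dimensional graded pieces the cochain complex is the termwise dual of the chain complex, so this structural feature survives; peeling off $\gr(\pi_g)^k$ by induction on $k$ shows that each $\{\gr(\pi_g)^j\}$ is uniformly multiplicity stable.

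Finally, uniform \emph{representation} stability in the reversed-maps sense of Definition~\ref{definition:repstabrev} follows: surjectivity (Condition~I$'$) of $\gr(\pi_{g+1})^j\twoheadrightarrow\gr(\pi_g)^j$ is immediate since the defining surjection preserves grading, and the injectivity-type Condition~II$'$ is checked for large $g$ much as surjectivity was verified in the proof of Theorem~\ref{thm:equivhomLie} (use complete reducibility to split off an $\Sp_{2g}\Q$--invariant complement to the kernel, which then maps isomorphically to $\gr(\pi_g)^j$, and use Labute's presentation of $\gr(\pi_{g+1})$ to see its $\Sp_{2g+2}\Q$--span exhausts $\gr(\pi_{g+1})^j$). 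This gives Corollary~\ref{cor:hain}; the external inputs --- Labute's presentation, Hain's pseudo-nilpotence of $\pi_g$, and Nomizu's theorem --- are quoted as black boxes, exactly as in the statement.
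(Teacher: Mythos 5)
Your proposal is correct and follows essentially the same route as the paper: Labute's presentation, Hain's pseudo-nilpotence plus Nomizu to identify $H^*(\gr(\pi_g);\Q)$ with $H^*(S_g;\Q)$, the surjections $\pi_{g+1}\twoheadrightarrow\pi_g$ as structure maps, and Theorem~\ref{thm:equivSpLie} run from (co)homology back to the graded Lie algebra. The only difference is that you spell out what the paper asserts in one line (that the $\Sp$ implication works for cohomology and with reversed maps, since that direction of the argument is purely a multiplicity computation within each degree-$k$ slice) and you additionally verify Conditions I$'$/II$'$, which the paper leaves implicit.
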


\para{Homology of symplectic Lie algebras}
Many sequences of Lie algebras $\L_n$ are naturally
$\Sp_{2n}\Q$--representations and in fact are uniformly stable, such
as the Heisenberg Lie algebras considered below. We would like to
conclude stability for the homology groups of this sequence of Lie
algebras $\{H_i(\L_n;\Q)\}$ as we did in
Corollary~\ref{corollary:nilp} above. But without a good notion of
strong stability for $\Sp_{2n}\Q$--representations, the proof of the
necessary implication in Theorem~\ref{thm:equivhomLie}, namely (1)
$\implies$ (5), does not work. However, in specific cases the argument
can be successfully modified.

We will give a concrete example of such a modification, but first we
extract from the proof of (1) $\implies$ (5) in
Theorem~\ref{thm:equivhomLie} exactly where strong stability was
used. Ignoring the grading for the moment, the homology $H_*(\L_n;\Q)$
is computed by the rows of the complex
\begin{equation}\label{eq:Heis}
  \xymatrix{
    \cdots\ar[r]&\bwedge^3\L_n\ar^{\partial_3}[r]\ar[d]&
    \bwedge^2\L_n\ar^{\partial_2}[r]\ar[d]&
    \L_n\ar^{\partial_1}[r]\ar[d]&
    \Q\\
    \cdots\ar[r]&\bwedge^3\L_{n+1}\ar^{\partial_3}[r]&
    \bwedge^2\L_{n+1}\ar^{\partial_2}[r]&
    \L_{n+1}\ar^{\partial_1}[r]&
    \Q
  }
\end{equation}
We know stability holds for each term $\{\bwedge^i\L_n\}$, but the
differentials between them introduce a possible source of
instability. To draw conclusions about stability for
$H_i(\L_n;\Q)=\ker\partial_i/\im\partial_{i+1}$, we need control over
how the differentials $\partial_i$ interact with the vertical maps
$\bwedge^i\L_n\hookrightarrow \bwedge^i\L_{n+1}$. For example, we
do not know that $\{\ker\partial_i\}$ is stable.

In Theorem~\ref{thm:equivhomLie}, the type-preserving assumption
guaranteed that the vertical maps preserved isotypic components. Then
since the vertical maps are injective, the commutativity of
\eqref{eq:Heis} implied that even if $\{\ker\partial_i\}$ and $\{\im\partial_i\}$ did not
stabilize immediately, their decompositions were nondecreasing in $n$. Thus once the terms, which here would be $\{\bwedge^i\L_n=\ker \partial_i\oplus \im\partial_i\}$, stabilized, the summands $\{\ker\partial_i\}$ and $\{\im\partial_i\}$ were forced to stabilize as well. However, for
$\Sp_{2n}\Q$--representations the vertical maps for $\bwedge^i\L_n$
are almost never type-preserving, as we will see in detail below. Thus
a new idea is needed.

Let $H_n\coloneq V(\lambda_1)_n=\Q^{2n}$ be the standard
representation of $\Sp_{2n}\Q$. For $i\leq n$, we have the
decomposition into irreducibles
\[\bwedge^i H_n=V(\lambda_i)_n\oplus V(\lambda_{i-2})_n\oplus \cdots
V(\lambda_\epsilon)_n\] where $\epsilon=0$ or $1$ if $i$ is even or
odd respectively. The inclusion $\bwedge^i H_n\hookrightarrow
\bwedge^i H_{n+1}$ does \emph{not} respect this decomposition. In
fact, we have the following:

\begin{lemma}
  \label{lem:wedgeH}
  For $n>i$ and any irreducible representation
  $V(\lambda_k)_n\subset\bwedge^i H_n$ with $i>k$, the
  $\Sp_{2n+2}\Q$--span of $V(\lambda_k)_n$, considered as a subspace
  of $\bwedge^i H_{n+1}$, is isomorphic to
  \[V(\lambda_{k+2})_{n+1}\oplus
  V(\lambda_k)_{n+1}.\]
\end{lemma}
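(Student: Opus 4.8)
The plan is to work inside the Lefschetz ($\fsl_2$) structure on the exterior algebra determined by the symplectic form, and then to reduce the statement to the (non)vanishing of a handful of explicit projections. Fix a symplectic basis $a_1,b_1,\dots,a_{n+1},b_{n+1}$ of $H_{n+1}$ with $H_n=\langle a_1,b_1,\dots,a_n,b_n\rangle$, and set $\eta_m\coloneq\sum_{l=1}^{m}a_l\wedge b_l\in\bwedge^2 H_m$, the $\Sp_{2m}\Q$--invariant bivector. Recall that $\bwedge^\bullet H_m$ is an $\fsl_2$--module with raising operator $e=\eta_m\wedge(-)$, lowering operator $f=\Lambda_m$ (symplectic contraction), and $h$ acting on $\bwedge^d H_m$ by $d-m$; that for $d\le m$ the Lefschetz decomposition gives $\bwedge^{d}H_m=\bigoplus_{s\ge 0}\eta_m^{\,s}\wedge P_{d-2s}$ with $P_d\coloneq\ker(\Lambda_m\colon\bwedge^{d}H_m\to\bwedge^{d-2}H_m)=V(\lambda_d)_m$; and that all these summands occur with multiplicity one. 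The two identities that drive the proof are $\Lambda_{n+1}|_{\bwedge^\bullet H_n}=\Lambda_n$ and $\eta_{n+1}=\eta_n+a_{n+1}\wedge b_{n+1}$.

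First I identify the subspace. Since $i>k$ and $i\equiv k\pmod 2$, the integer $j\coloneq(i-k)/2$ is $\ge 1$, the unique copy of $V(\lambda_k)_n$ in $\bwedge^{i}H_n$ is $\eta_n^{\,j}\wedge P_k$, and it is the $\Sp_{2n}\Q$--span of its highest weight vector $v\coloneq\eta_n^{\,j}\wedge(a_1\wedge\cdots\wedge a_k)=e^{j}w$, where $w\coloneq a_1\wedge\cdots\wedge a_k\in P_k$ is primitive. Hence the $\Sp_{2n+2}\Q$--span $W$ of $V(\lambda_k)_n$ inside $\bwedge^{i}H_{n+1}$ is just the $\Sp_{2n+2}\Q$--span of $v$. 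Writing $v=\sum_m v_m$ with $v_m$ the component of $v$ in the multiplicity--one summand $V(\lambda_m)_{n+1}=\eta_{n+1}^{\,(i-m)/2}\wedge P_m$ of $\bwedge^{i}H_{n+1}$, multiplicity--freeness forces $W=\bigoplus_{\{m\,:\,v_m\ne 0\}}V(\lambda_m)_{n+1}$. So the lemma reduces to: $v_m\ne 0\iff m\in\{k,k+2\}$. To detect $v_m$, note that $\Lambda_{n+1}^{\,r}$ carries $\eta_{n+1}^{\,s}\wedge P_m$ isomorphically onto $\eta_{n+1}^{\,s-r}\wedge P_m$ for $r\le s$ and to $0$ for $r>s$; consequently the primitive part (in $\bwedge^{\,i-2r}H_{n+1}$) of $\Lambda_{n+1}^{\,r}(v)$ is a nonzero multiple of $\Lambda_{n+1}^{\,r}(v_{i-2r})$, so $v_m\ne 0$ exactly when the primitive part of $\Lambda_{n+1}^{\,(i-m)/2}(v)$ is nonzero.

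Now I carry out the computation. Because $v=e^{j}w$ lies in $\bwedge^\bullet H_n$ and $w$ is primitive there, $\fsl_2$--theory gives $\Lambda_{n+1}^{\,r}(v)=\Lambda_n^{\,r}(v)=c_r\,\eta_n^{\,j-r}\wedge w$ with $c_r\ne 0$ for $0\le r\le j$, and $\Lambda_{n+1}^{\,r}(v)=0$ for $r>j$. This immediately handles $m<k$ (then $r>j$, so $v_m=0$) and $m=k$ (then $r=j$ and $\Lambda_{n+1}^{\,j}(v)=c_j\,w$, already primitive in $\bwedge^{k}H_{n+1}$, so $v_k\ne 0$). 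For $m\ge k+4$, i.e.\ $t\coloneq j-r\ge 2$, use $(a_{n+1}\wedge b_{n+1})^{2}=0$ to expand $\eta_n^{\,t}=\eta_{n+1}\wedge(\eta_{n+1}^{\,t-1}-t\,\eta_{n+1}^{\,t-2}\wedge a_{n+1}\wedge b_{n+1})$, so $\eta_n^{\,t}\wedge w\in\eta_{n+1}\wedge\bwedge^\bullet H_{n+1}$ has vanishing primitive part, whence $v_m=0$. The case $m=k+2$ (i.e.\ $r=j-1$) is the crux: there $\Lambda_{n+1}^{\,j-1}(v)=c_{j-1}\,\eta_n\wedge w$ and $\eta_n\wedge w=\eta_{n+1}\wedge w-u$ with $u\coloneq a_{n+1}\wedge b_{n+1}\wedge w$. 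One computes $\Lambda_{n+1}(u)=\pm w$ (only the $(a_{n+1},b_{n+1})$--pairing contributes) and $\Lambda_{n+1}(\eta_{n+1}\wedge w)=(n+1-k)w$ (the $\fsl_2$--relation $fe=ef-h$ applied to the primitive $w$ of degree $k$); hence $u=\frac{\pm 1}{\,n+1-k\,}\,\eta_{n+1}\wedge w+\widetilde u$ with $\widetilde u$ primitive, and $\widetilde u\ne 0$ because $n+1-k\ge 2$ (as $k<i<n$) while $u=a_{n+1}\wedge b_{n+1}\wedge w$ is not a scalar multiple of $\eta_{n+1}\wedge w=\sum_{l=k+1}^{n+1}a_l\wedge b_l\wedge w$ (the summands $a_l\wedge b_l\wedge w$, $k<l\le n+1$, are linearly independent and the range $k<l\le n$ is nonempty since $n\ge k+2$). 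Thus the primitive part of $\eta_n\wedge w$ equals $-\widetilde u\ne 0$, so $v_{k+2}\ne 0$. Collecting the four cases yields $W=V(\lambda_k)_{n+1}\oplus V(\lambda_{k+2})_{n+1}$. The single real obstacle is this last ($m=k+2$) computation, which is precisely where the failure of type--preservation for $\Sp_{2n}\Q$ is located; the remaining cases are bookkeeping with the Lefschetz decomposition once the two identities above are in hand.
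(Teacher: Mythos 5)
Your proof is correct. It rests on the same two facts as the paper's argument: the symplectic contraction commutes with the inclusion $\bwedge^i H_n\hookrightarrow\bwedge^i H_{n+1}$, and $\omega_n=\omega_{n+1}-a_{n+1}\wedge b_{n+1}$, combined with the multiplicity-one decomposition of $\bwedge^i H$. The difference is organizational and lies in the crux step. You compute all components $v_m$ of the generating vector directly, and in particular you establish $v_{k+2}\neq 0$ by an explicit Lefschetz computation: using $fe-ef=-h$ on the primitive $w$ to extract the primitive part of $a_{n+1}\wedge b_{n+1}\wedge w$ and checking it is nonzero by inspecting monomials. The paper instead sandwiches the span $W$ between two bounds coming from the filtration by $\ker C^j$ and the image of $\cdot\wedge(\omega_{n+1})^{j-1}$, and then avoids any explicit primitive-part computation by a soft trick: applying a symplectic group element $A$ that fixes $a_1\wedge\cdots\wedge a_k$ but moves $v_{k,n}$, so that $v_{k,n}-A\cdot v_{k,n}$ is a nonzero element of $W\cap\ker C^j$, forcing $W\not\subseteq V(\lambda_k)_{n+1}$. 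Your route is more computational but entirely self-contained, makes the failure of type-preservation visible in the coefficient $\pm 1/(n+1-k)$, and has the side benefit that the same bookkeeping determines the full decomposition of the span of an arbitrary vector of $\bwedge^i H_n$ inside $\bwedge^i H_{n+1}$, not just of a summand $V(\lambda_k)_n$; the paper's route is shorter precisely because it sidesteps that computation. One small point worth stating explicitly if you write this up: the identity $\Lambda_{n+1}^{\,r}(v)=c_r\,\eta_n^{\,j-r}\wedge w$ uses hard Lefschetz for $H_n$, which requires $j\leq n-k$; this follows from $k<i<n$, the same hypothesis you invoke later for the linear-independence step.
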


\begin{proof}
  For an overview of the symplectic representation theory used here,
  see \cite[\S 17]{FH}.  For each $i\leq n$ there is a unique
  contraction $C\colon \bwedge^iH\to \bwedge^{i-2}H$, with $\ker
  C\approx V(\lambda_i)$ generated by $a_1\wedge \cdots \wedge
  a_i$. This induces a filtration of $\bwedge^i H$ by \[\ker
  C^j\approx V(\lambda_i)\oplus \cdots\oplus V(\lambda_{i-2j+2}).\] A
  complement to $\ker C$ is given by the image of $\cdot\wedge
  \omega_n\colon \bwedge^{i-2}H\to \bwedge^i H$, where
  \[\omega_n=a_1\wedge b_1+\cdots+a_n\wedge b_n\] spans the trivial
  subrepresentation of $\bwedge^2 H$. It follows that
  $V(\lambda_k)_n\subset \bwedge^iH_n$ is the $\Sp_{2n}\Q$--span of
  $v_{k,n}\coloneq a_1\wedge\cdots\wedge a_k\wedge (\omega_n)^j$,
  where $j=(i-k)/2$. Let $W\subset \bwedge^i H_{n+1}$ be the desired
  representation, the $\Sp_{2n+2}\Q$--span of $v_{k,n}$. The
  contractions $C$ commute with the inclusion $\bwedge^i
  H_n\hookrightarrow \bwedge^i H_{n+1}$. Thus since $v_{k,n}$ is
  contained in $\ker C^{j+1}$ but not $\ker C^j$, we know that $W$ is
  contained in $V(\lambda_i)_{n+1}\oplus \cdots\oplus
  V(\lambda_k)_{n+1}$ but not in $V(\lambda_i)_{n+1}\oplus
  \cdots\oplus V(\lambda_{k+2})_{n+1}$.

  Under the inclusion $\bwedge^2 H_n\hookrightarrow \bwedge^2
  H_{n+1}$, $\omega_n$ is not mapped to $\omega_{n+1}$. Instead we
  have $\omega_n=\omega_{n+1}-a_{n+1}\wedge b_{n+1}$, and so
  $(\omega_n)^j=(\omega_{n+1})^j-(\omega_{n+1})^{j-1}\wedge
  a_{n+1}\wedge b_{n+1}.$ Writing \[v_{k,n}=a_1\wedge \cdots\wedge
  a_k\wedge (\omega_{n+1}-a_{n+1}\wedge b_{n+1})\wedge
  (\omega_{n+1})^{j-1},\] we see that $v_{k,n}$ is in the image of
  $\cdot\wedge (\omega_{n+1})^{j-1}\colon \bwedge^{k+2}H_{n+1}\to
  \bwedge^iH_{n+1}$. Combined with the above bound on $W$, this
  implies that $W$ is contained in $V(\lambda_{k+2})_{n+1}\oplus
  V(\lambda_k)_{n+1}$.

  Since we know that $W$ is not contained in $V(\lambda_{k+2})_{n+1}$,
  it remains to show that $W$ is not contained in
  $V(\lambda_k)_{n+1}$. Note that
  $C^j(v_{k,n})=\frac{(n-k)!}{(n-k-j)!}a_1\wedge \cdots\wedge
  a_k$. Then if $A\in \Sp_{2n+2}\Q$ is any element fixing
  $a_1\wedge\cdots\wedge a_k$ but not $v_{n,k}$ (for example, the
  permutation matrix exchanging $a_n$ with $a_{n+1}$ and $b_n$ with
  $b_{n+1}$), we have $C^j(A\cdot v_{k,n})=A\cdot
  C^j(v_{k,n})=C^j(v_{k,n})$. In particular, the vector
  $v_{k,n}-A\cdot v_{k,n}$ lies in $\ker C^j$. As explained above,
  this shows that $W\approx V(\lambda_{k+2})_{n+1}\oplus
  V(\lambda_k)_{n+1}$.
\end{proof}

\begin{remark}
  \label{remark:substitute}
  Lemma~\ref{lem:wedgeH} can serve as a substitute for the
  type-preserving assumption in the argument outlined before the
  lemma.  Take any sequence of maps $f_n\colon
  \bwedge^i H_n\to V_n$ commuting with the inclusions $\bwedge^i
  H_n\hookrightarrow \bwedge^i H_{n+1}$ and $V_n\hookrightarrow
  V_{n+1}$. If $V(\lambda_k)_n\subset \ker f_n$,
  Lemma~\ref{lem:wedgeH} implies that $V(\lambda_{k+2})_{n+1}\oplus
  V(\lambda_k)_{n+1}\subset \ker f_{n+1}$. Thus the multiplicity of
  $V(\lambda_k)_n$ in $\ker f_n$ is nondecreasing. (In
  fact, by induction we see there is some $k\leq i$ so that $\ker f_n$ is
  always exactly $V(\lambda_i)_n\oplus \cdots\oplus V(\lambda_k)_n$
  for sufficiently large $n$.) The same applies to $\im f_n$.
\end{remark}

\para{Heisenberg Lie algebras} As an explicit example to which Remark~\ref{remark:substitute} applies,
we consider the Heisenberg Lie algebras $\H_{2n+1}$, defined as
follows. Given a symplectic form $\omega$ on $\Q^{2n}$, this is the
central extension
\[0\to \Q\to \H_{2n+1}\to \Q^{2n}\to 0\] classified by $\omega\in
H^2(\Q^{2n};\Q)\approx \bwedge^2\Q^{2n}$.  Since the natural action of
$\Sp_{2g}\Q$ on $\Q^{2n}$ preserves $\omega$, it extends to an action
of $\Sp_{2g}\Q$ on $\H_{2n+1}$.  There is an obvious inclusion
$\H_{2n+1}\hookrightarrow\H_{2n+3}$ which makes $\{\H_{2n+1}\}$ into a
consistent sequence of Lie algebras. As an
$\Sp_{2n}\Q$--representation $\H_{2n+1}\approx V(0)\oplus
V(\lambda_1)$, so the sequence $\{\H_{2n+1}\}$ is uniformly representation 
stable.

Note that \[\bwedge^i \H_{2n+1}\approx \bwedge^i(H_n\oplus \Q)\approx
\bwedge^i H_n\oplus \bwedge^{i-1}H_n\] and that this decomposition is
respected by the maps
$\bwedge^i\H_{2n+1}\hookrightarrow\bwedge^i\H_{2n+3}$. So $\{\bwedge^i \H_{2n+1}\}$
is uniformly stable for each $i\geq 0$ and we can apply
Remark~\ref{remark:substitute} to the complex
\[ \cdots\longrightarrow \bwedge^3\H_{2n+1}
  \overset{\partial_3}{\longrightarrow} \bwedge^2\H_{2n+1}
  \overset{\partial_2}{\longrightarrow} \H_{2n+1}
  \overset{\partial_1}{\longrightarrow} \Q,
\]
to conclude that $\{\ker \partial_i\}$ and $\{\im\partial_i\}$ are uniformly stable for each
$i$. This shows that the homology of $\H_{2n+1}$ is
uniformly stable. (It turns out that in the complex above at most one
differential is nonzero in each grading, so it is easy to compute by
hand that $H_i(\H_{2n+1};\Q)=V(\omega_i)$ for $i\leq n$ and see
uniform stability directly; see, e.g., \cite[Theorem 4.2]{CT2}.)  We
therefore have the following.
\begin{xample}
\label{example:Heis}
  For each $i\geq 0$ the sequence of
  $\Sp_{2n}\Q$--representations $\{H_i(\H_{2n+1};\Q)\}$ is uniformly
  representation stable.
\end{xample}
To extend this argument to adjoint and exterior coefficients, all that
would be needed is to duplicate Lemma~\ref{lem:wedgeH} for
$\bwedge^iH_n\otimes H_n$ and $\bwedge^k H_n\otimes \bwedge^\ell
H_n$. However, we do not do this here, since Cagliero--Tirao have
already computed these homology groups. The adjoint homology
$H_i(\H_{2n+1};\H_{2n+1})$ is $V(\omega_1+\omega_i)\oplus
V(\omega_{i+1})$ for $i<n$ \cite[Corollary 4.15]{CT2}. For the
exterior homology, the irreducibles appearing in
$H_i(\H_{2n+1};\bwedge^k \H_{2n+1})$ always correspond to the sum of
two fundamental weights $V(\omega_j+\omega_\ell)$, with multiplicity
either 1 or 0, independent of $n$ for $i\leq n$ \cite[Theorem
4.13]{CT2}. In both cases we have uniform stability.
\begin{xample}[Cagliero--Tirao]
\label{example:adjHeis}
  For each fixed $i\geq 0$ and $k\geq 0$ the sequences of
  $\Sp_{2n}\Q$--representations $\{H_i(\H_{2n+1};\H_{2n+1})\}$ and
  $\{H_i(\H_{2n+1};\bwedge^k \H_{2n+1})\}$ are uniformly
  representation stable.
\end{xample}

\subsection{The Malcev Lie algebra of the pure braid group}
\label{section:malcev:braid}

In this subsection we describe a conjecture which can be thought of as
a ``infinitesimal'' version of Theorem~\ref{thm:pure}. Let $\Gamma=P_n$,
the pure braid group on $n$ strands, and let $\fp_n\coloneq\gr(P_n)$.
The Lie algebra $\fp_n$ occurs, among other places, in the theory of
Vassiliev invariants.  Drinfeld and Kohno (see \cite{Ko}) gave a
finite presentation for $\fp_n$, as follows.  Let
$\L(\{X_{ij}\})$ denote the free Lie
algebra on the set of formal symbols $\{X_{ij}: 1\leq i,j\leq n, i\neq
j\}$.  Then for $n\geq 4$,
\[\fp_n=\L(\{X_{ij}\})/R\]
where $R$ is the ideal generated by the quadratic relations:
\[[X_{ij},X_{kl}] \ \ \text{with $i,j,k,l$ distinct}\]
\[[X_{ij},X_{ik}+X_{jk}] \ \ \text{with $i,j,k$ distinct}\] Consider
the action of $S_n$ on $\fp_n$. Since the relations above are
homogeneous, the grading on $\L(\{X_{ij}\})$ descends to a grading on
$\fp_n$, which is clearly preserved by the action of $S_n$. Let
$\fp_n^i$ denote the $i^{\text{th}}$ graded component of $\fp_n$.

\begin{conjecture}[Representation stability for $\fp_n$]
\label{conjecture:malcevpn}
For each fixed $i\geq 1$, the sequence $\{\fp_n^i\}$ is a
uniformly representation stable sequence of $S_n$--representations.\footnote{Since this manuscript was originally posted, this conjecture has been proved by the authors and Ellenberg in \cite[Theorem~5.8]{CEF1}.}
\end{conjecture}

As evidence for this conjecture, we point out that $P_n$ is
pseudo-nilpotent \cite[Example 5.12]{Ha2}. Furthermore, by
Theorem~\ref{thm:pure}, for each fixed $i\geq 0$ the cohomology
$H^i(\fp_n;\Q)=H^i(P_n;\Q)$ is a uniformly stable sequence of
$S_n$--representations. Thus Conjecture~\ref{conjecture:malcevpn}
would follow as in Corollary~\ref{cor:hain} if we had a version of
Theorem~\ref{thm:equivhomLie} for representations of $S_n$. We remark
that not all aspherical hyperplane complements have pseudo-nilpotent
fundamental group (see e.g.\ Falk \cite[Proposition 5.1, Example
5.3]{Fa}), so we do not expect Conjecture~\ref{conjecture:malcevpn} to
extend to all such groups.

\section{Homology of the Torelli subgroups of $\Mod(S)$ and
  $\Aut(F_n)$}
\label{section:torelli}

In this section we discuss representation stability in the context of
the homology of the Torelli groups associated with mapping class
groups and automorphism groups of free groups.  Most of the picture
here is conjectural.  However, before the idea of representation
stability, even a conjectural picture of these homology groups was
lacking.

\subsection{Homology of the Torelli group}  
Let $S_{g,1}$ be a connected, compact, oriented surface of genus
$g\geq 2$ with one boundary component.  Let $H\coloneq
H_1(S_{g,1};\Q)$ and let $H_\Z\coloneq H_1(S_{g,1},\Z)$.  The
\emph{mapping class group} $\Mod_{g,1}$ is the group of homotopy
classes of homeomorphisms of $S_{g,1}$, where both the homeomorphisms
and the homotopies fix $\partial S_{g,1}$ pointwise.  The action of
$\Mod_{g,1}$ on $H_\Z$ preserves algebraic intersection number, which
is a symplectic form on $H_\Z$, yielding a symplectic representation
which fits into the exact sequence
\[1\to\I_{g,1}\to\Mod_{g,1}\to\Sp_{2g}\Z\to 1,\] where $\I_{g,1}$ is
the \emph{Torelli group}, consisting of those $f\in\Mod_{g,1}$ acting
trivially on $H_\Z$.  The conjugation action of $\Mod_{g,1}$ on
$\I_{g,1}$ descends to an action of $\Sp_{2g}\Z$ by outer
automorphisms, which gives each $H_i(\I_{g,1};\Q)$ the structure of an
$\Sp_{2g}\Z$--module.  The natural inclusion of surfaces
$S_{g,1}\hookrightarrow S_{g+1,1}$ induces an inclusion
$\I_{g,1}\hookrightarrow \I_{g+1,1}$, by extending by the
identity. For each $i\geq 0$ the induced homomorphism
$H_i(\I_{g,1};\Q)\to H_i(\I_{g+1};\Q)$ respects the action of
$\Sp_{2g}\Z$.

\bigskip If $G$ is any group and $V$ is any (perhaps infinite
dimensional) $G$--representation, we define the
\emph{finite-dimensional part} of $V$, denoted $V^{\fd}$, to be the
subspace of $V$ consisting of those vectors whose $G$--orbit spans a
finite-dimensional subspace of $V$.  Note that $V^{\fd}$ may itself be
infinite dimensional.  Our first conjecture about $H_i(\I_{g,1};\Q)$
makes a prediction about its finite-dimensional part.  It is a slight
refinement of a conjecture we first stated in \cite{CF}.
\begin{conjecture}[Homology of the Torelli group]
  \label{conjecture:torelli}
  For each fixed $i\geq 1$, each of the following statements holds.

  \bigskip
  \noindent \textbf{Preservation of finite-dimensionality: }The
  natural map \[H_i(\I_{g,1};\Q)^{\fd}\to H_i(\I_{g+1,1};\Q)\] induced
  by the inclusion $\I_{g,1}\hookrightarrow \I_{g+1,1}$ has image
  contained in $H_i(\I_{g+1,1};\Q)^{\fd}$.

  \medskip
  \noindent \textbf{Rationality:} Every irreducible
  $\Sp_{2g}\Z$--subrepresentation in $H_i(\I_{g,1};\Q)^{\fd}$ is the
  restriction of an irreducible $\Sp_{2g}\Q$--representation.

  \medskip
  \noindent \textbf{Stability: }The sequence of
  $\Sp_{2g}\Q$--representations $\{H_i(\I_{g,1};\Q )^{\fd}\}$ is
  uniformly representation stable.
\end{conjecture}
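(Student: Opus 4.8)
Since the stability clause is the hardest of the three assertions in Conjecture~\ref{conjecture:torelli}, what follows is a plan. The base case $i=1$ is unconditional: by Johnson's theorem the Johnson homomorphism identifies $H_1(\I_{g,1};\Q)$ with $\bwedge^3 H$ as $\Sp_{2g}\Z$--modules, compatibly with the stabilization maps $\I_{g,1}\hookrightarrow\I_{g+1,1}$ and with $H\hookrightarrow H'$. The $\Sp_{2g}\Z$--action extends to $\Sp_{2g}\Q$ (so the $\fd$ decoration is vacuous, $H_1(\I_{g,1};\Q)^{\fd}=\bwedge^3 H$), and $\{\bwedge^3 H\}$ is uniformly representation stable by Theorem~\ref{thm:classicalstability}: it is the restriction to $\Sp_{2g}\Q$ of $\Schur_{(1,1,1)}$ applied to the standard $\GL_{2g}\Q$--representation, so parts (2) and (7) apply; explicitly it is $V(1,1,1)\oplus V(1)$ for all $g\geq 3$.

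The natural route to higher $i$ is to pass to the associated graded Lie algebra. Set $\mathfrak{t}_g\coloneq\gr(\I_{g,1})\otimes\Q$, a graded Lie algebra with a grading-preserving $\Sp_{2g}\Z$--action (the $\Mod_{g,1}$--conjugation action factors through $\Sp_{2g}\Z$ acting on $\bwedge^3 H$), and with maps $\mathfrak{t}_g\to\mathfrak{t}_{g+1}$ coming from the surface inclusions. By Hain's infinitesimal presentation theorem, for $g$ large $\mathfrak{t}_g=\L(\bwedge^3 H)/(R_g)$ where $R_g\subset\bwedge^2(\bwedge^3 H)$ is an explicit $\Sp_{2g}\Q$--subrepresentation and $\{R_g\}$ is a representation-stable family; in particular $\{\mathfrak{t}_g\}$ is a consistent $\Sp_{2g}\Q$--sequence of graded Lie algebras in the sense of Section~\ref{section:liealg}, whose graded pieces are genuine $\Sp_{2g}\Q$--representations. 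First I would try to promote the stability of the generators $\{\bwedge^3 H\}$ and relations $\{R_g\}$ to uniform representation stability of each graded piece $\{\mathfrak{t}_g^j\}$; with this in hand, the implication (1)$\implies$(5)-type argument underlying Theorem~\ref{thm:equivhomLie}, applied in the $\Sp$ setting (cf.\ Theorem~\ref{thm:equivSpLie}), would give uniform representation stability of $\{H_i(\mathfrak{t}_g;\Q)^j\}$, hence of the continuous cohomology $\{H^*_{\mathrm{cts}}(\I_{g,1};\Q)\}$.

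\textbf{The main obstacle is that neither available stability machine has a usable input here.} Theorem~\ref{thm:equivSpLie} requires stability of $\{H_i(\mathfrak{t}_g;\Q)^j\}$ in \emph{all} homological degrees to conclude stability of $\{\mathfrak{t}_g^j\}$, whereas Hain's presentation controls only $H_0$, $H_1$, and the relation module inside $H_2$; the converse implication needs a $\Sp$--analogue of Proposition~\ref{prop:SLstrong}, which does not exist. So one genuinely needs a new intrinsic statement: a Lie algebra with a representation-stable quadratic presentation has representation-stable graded pieces (a ``Koszulity-free'' stability theorem). Even granting this, the comparison map $H^*_{\mathrm{cts}}(\I_{g,1};\Q)\to H^*(\I_{g,1};\Q)$ is only known to be an isomorphism in degrees $0,1$ and injective in degree $2$, and Torelli groups are essentially certainly not pseudo-nilpotent (Hain's relative-completion picture shows the weight filtration on $H_i(\I_{g,1};\Q)$ is strictly finer than the lower-central-series filtration for $i\geq 2$). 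Thus the Lie-theoretic route controls at best a stable associated-graded quotient of $H_i(\I_{g,1};\Q)^{\fd}$.

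To get the full statement one would instead work with the relative Malcev completion of $\Mod_{g,1}$ along $\Sp_{2g}\Z$, whose prounipotent radical $\mathfrak{u}_g$ has a representation-stable presentation by Hain, and whose Hochschild--Serre spectral sequence $H_p(\Sp_{2g}\Q;H_q(\mathfrak{u}_g;\Q))\Rightarrow H_{p+q}(\Mod_{g,1};\Q)$ — combined with Harer's classical homological stability for $\{\Mod_{g,1}\}$ and the fact (Theorem~\ref{thm:classicalstability}, parts (1),(2),(6),(7)) that induction, restriction, tensor products and Schur functors preserve representation stability — should force uniform representation stability of $\{H_q(\mathfrak{u}_g;\Q)\}$ and hence of $\{H_i(\I_{g,1};\Q)^{\fd}\}$; making the operation $(-)^{\fd}$ and the ``Rationality'' clause of Conjecture~\ref{conjecture:torelli} interact correctly with this spectral sequence is exactly the delicate point. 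A purely geometric alternative — build a highly connected $\I_{g,1}$--complex with Torelli-type simplex stabilizers and induct on $i$ through the equivariant spectral sequence, à la Harer and the FI-module framework for $S_n$ — founders because no such complex is known: the arc and curve complexes have stabilizers that are not Torelli groups of subsurfaces, and $\I_{g,1}$ is not even known to be finitely presented. Supplying either the missing Lie-algebra stability theorem of the second route or the missing complex of the last is where the real work lies.
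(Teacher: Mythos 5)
This statement is a \emph{conjecture} in the paper, not a theorem: the authors offer no proof, only evidence, namely Johnson's computation $H_1(\I_{g,1};\Q)\approx V(\omega_3)\oplus V(\omega_1)=\bwedge^3 H/H\oplus H$ for $g\geq 3$ (so the conjecture holds for $i=1$), the analogy with the pair $(P_n,S_n)$ where Theorem~\ref{thm:pure} is proved, and further evidence deferred to \cite{CF}. Your base case $i=1$ is exactly the paper's evidence, and your identification of $\bwedge^3 H$ as uniformly stable via Theorem~\ref{thm:classicalstability}(2),(7) is correct. So there is no proof in the paper to compare yours against, and your proposal — which is candid that it is a plan with unfilled gaps — cannot be faulted for failing to close an open problem.

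Your diagnosis of the obstacles is accurate and consistent with the paper's own structure. The paper states stability of $\{\gr(\I_{g,1})^i\}$ as a \emph{separate} conjecture (Conjecture~\ref{conjecture:malcev:torelli}) rather than deducing either conjecture from the other, precisely because the two implications of Theorem~\ref{thm:equivhomLie} are unavailable here: the direction ``Lie algebra stable $\implies$ homology stable'' needs the type-preserving condition, for which no $\Sp_{2n}\Q$--analogue of Proposition~\ref{prop:SLstrong} exists (the paper only salvages the reverse direction as Theorem~\ref{thm:equivSpLie}); and even with both directions, the comparison $H^*_{\mathrm{cts}}\to H^*$ is not known to be an isomorphism for $\I_{g,1}$ beyond degree $1$, in contrast to $F_n$ and $\pi_g$ where pseudo-nilpotence is used. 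You are also right that the ``finite-dimensional part'' decoration is essential: the paper records that $H_1(\I_{2,1};\Q)$, $H_3(\I_3;\Q)$ and $H_{3g-2}(\I_{g,1};\Q)$ contain infinite-dimensional $\Sp$--submodules on which the action cannot extend to $\Sp_{2g}\Q$, so any spectral-sequence argument (such as your relative-completion route) must first isolate $H_i(\I_{g,1};\Q)^{\fd}$, which is exactly the ``delicate point'' you flag. One small caution: in your first paragraph you assert that the $\fd$ decoration is vacuous for $i=1$ and all $g$; by Mess's theorem $H_1(\I_{2,1};\Q)^{\fd}\neq H_1(\I_{2,1};\Q)$, so this is only true for $g\geq 3$, which is the range Johnson's theorem covers anyway.
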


\para{Remarks}
\begin{enumerate}
\item Along with Conjecture~\ref{conjecture:torelli} for
  $H_i(\I_{g,1};\Q)$, we have a corresponding, equivalent conjecture
  for the cohomology $H^i(\I_{g,1};\Q)$, with stability in the sense of
  Definition~\ref{definition:repstabrev}. In an unpublished 1991 manuscript, Hain made a conjecture similar to the stability part of Conjecture 6.1 for 
$H^i(\I_{g,1};\Q)$.

\item A form of the Margulis Superrigidity Theorem (see \cite[Theorem
  VIII.B]{Ma}) gives that any finite-dimensional representation (over
  $\C$) of $\Sp_{2g}\Z$ virtually extends to a rational representation
  of $\Sp_{2g}\Q$.\footnote{One can also use the solution to the
    congruence subgroup property for $\Sp_{2g}\Z$, $g>1$ here; see
    \cite[Theorem~16.2]{BMS}.}  Thus the Rationality part of
  Conjecture~\ref{conjecture:torelli} is meant to ensure that we can extend
  to $\Sp_{2g}\Q$ without passing to a finite index subgroup.  We also
  remark that, by a statement close to the Borel Density Theorem
  (namely Proposition 3.2 of \cite{Bo2}), a representation of
  $\Sp_{2g}\Q$ is irreducible if and only if its restriction to
  $\Sp_{2g}\Z$ is irreducible, so we can (and will) ignore this
  distinction.  Similar statements apply to $\GL_n\Q$ as well.

\item It is known that $H_i(\I_{g,1};\Q )^{\fd}$ is not all of $H_i(\I_{g,1};\Q )$, but it 
is possible that they do coincide for $g\gg i$;  see the examples discussed after Conjecture~\ref{conjecture:finitegen} below.
  
\item The Torelli group is often defined for closed surfaces or for
  surfaces with punctures. In this case there are no maps connecting
  the Torelli groups for different $g$, so the strongest statement one
  could hope for is multiplicity stability for the homology of the corresponding
  Torelli groups.  We conjecture this to be true.
\end{enumerate}

Some evidence for Conjecture~\ref{conjecture:torelli} in each
dimension $i\geq 1$ is given and discussed in detail in \cite{CF}.
Further, we note that Conjecture~\ref{conjecture:torelli} is true for
$i=1$, by Johnson's computation that \[H_1(\I_{g,1})\approx
H_1(\I_{g,\ast})\approx V(\omega_3)\oplus V(\omega_1) \ \ \mbox{for
  each $g\geq 3$}.\]

\noindent
Since this paper was originally posted, Boldsen-Dollerup \cite{BD} have proved the surjectivity part of stability in Conjecture~\ref{conjecture:torelli} for $i=2$.  Finally, we note the well-known analogy of $\I_{g,1}$ and $\Sp_{2g}\Z$
with $P_n$ and $S_n$.  Since representation stability holds for the
latter example (Theorem~\ref{thm:pure}), one is led to believe it
holds for the former.

\para{Malcev Lie algebra of $\I_{g,1}$} There is a kind of
``infinitesimal'' version of Conjecture~\ref{conjecture:torelli},
parallel to Conjecture~\ref{conjecture:malcevpn} for the pure braid
group.  Let $\gr(\I_{g,1})$ denote the graded rational Lie algebra
associated to the lower central series of $\I_{g,1}$ (see
\S\ref{section:applications}), and let $\gr(\I_{g,1})^i$ denote its
$i^{\text{th}}$ graded piece. Hain computed $\gr(\I_g)$ in \cite{Ha}, and this was extended by Habegger--Sorger \cite{HS} to the case of
surfaces with boundary.  To state their result, let
$H=H_1(S_{g,1};\Q)$ as usual, and for any vector space $V$ let $\L(V)$
denote the free Lie algebra on $V$ as in \S\ref{section:applications}.
The extension by Habegger--Sorger of Hain's theorem states that, for all
$g\geq 6$, the rational Lie algebra $\gr(\I_{g,1})$ has a
presentation:
 
 \[\gr(\I_{g,1})=\L(\bwedge^3H)/(R_1,R_2)\]
 where $(R_1,R_2)$ denotes the ideal generated by the
 $\Sp_{2g}(\Q)$--span of the two elements
\begin{align*}
  R_1&=(a_1\wedge a_2\wedge b_2)\wedge (a_3\wedge a_4\wedge b_4)\\
  R_2&=(a_1\wedge a_2\wedge b_2)\wedge (a_g\wedge \omega)
\end{align*}
where $w\coloneq\sum_{i=1}^ga_i\wedge b_i$.  One nontrivial consequence of Hain's theorem is that the natural $\Sp_{2g}\Z$--action on $\gr(\I_{g,1})$ extends to an
$\Sp_{2g}\Q$--action.  As previously mentioned, $\gr(\I_{g,1})$ is the
associated graded associated to the Malcev Lie algebra of $\I_{g,1}$,
and $\gr(\I_{g,1})^i$ denotes the $i^{\text{th}}$ graded component of
$\gr(\I_{g,1})$.
 
\begin{conjecture}[Stability of the Malcev Lie algebra
  of $\I_{g,1}$]
  \label{conjecture:malcev:torelli}
    For each fixed $i\geq 1$ the sequence
  $\{\gr(\I_{g,1})^i\}$ of $\Sp_{2g}\Q$-representations is uniformly representation
  stable.
\end{conjecture}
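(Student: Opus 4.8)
The approach is to prove Conjecture~\ref{conjecture:malcev:torelli} directly from the Hain--Habegger--Sorger presentation $\gr(\I_{g,1})=\L(\bwedge^3H)/(R_1,R_2)$, by induction on the grading, in the same circle of ideas as the other Lie-algebra examples of Section~\ref{section:liealg} but now over $\Sp_{2g}\Q$. The relevant inputs are: the free Lie algebra $\L(\bwedge^3H)$ has uniformly multiplicity stable graded pieces $\{\L_i(\bwedge^3H)\}$ for each $i$, by Theorem~\ref{thm:classicalstability}(2) applied to $\bwedge^3H=V(\omega_3)\oplus V(\omega_1)$ together with the fact that each $\L_i$ is a fixed finite sum of Schur functors; and $\gr(\I_{g,1})^i=\L_i(\bwedge^3H)/\mathfrak{r}^i$, where $\mathfrak{r}^2=W_g:=\Sp_{2g}\Q\cdot\{R_1,R_2\}$ is uniformly representation stable (by Hain's explicit decomposition \cite{Ha}) and $\mathfrak{r}^i=[\bwedge^3H,\mathfrak{r}^{i-1}]$ for $i\geq 3$. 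One should also check that $\{\gr(\I_{g,1})\}$ is a consistent $\Sp_{2g}\Q$--sequence --- in particular that $\I_{g,1}\hookrightarrow\I_{g+1,1}$ induces injections on associated graded, i.e.\ $(R_1,R_2)_{g+1}\cap\L(\bwedge^3H_g)=(R_1,R_2)_g$ --- and that Conditions~I and~II hold; the latter are routine once $\gr(\I_{g,1})$ is known to be generated in degree $1$ by the uniformly stable sequence $\{\bwedge^3H\}$.

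By the Grothendieck-group reasoning in the proof of Theorem~\ref{thm:LRinvert} (the representation ring is free abelian on the irreducibles, and the inclusion $R_g\hookrightarrow R_{g+1}$ is additive), a quotient of a uniformly multiplicity stable sequence by a uniformly multiplicity stable subsequence is again uniformly multiplicity stable. Hence, granting the consistency checks above, the whole conjecture reduces to the claim: \emph{for each $i$ the image $\mathfrak{r}^i$ of the bracketing map $\bwedge^3H\otimes\mathfrak{r}^{i-1}\to\L_i(\bwedge^3H)$ is uniformly multiplicity stable} --- to be proved by induction on $i$, the source being uniformly stable by Theorem~\ref{thm:classicalstability}(1). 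An equivalent reformulation, via the Johnson homomorphism, is to realize $\gr(\I_{g,1})^i$ as the image of $\tau_i\colon\gr(\I_{g,1})^i\to\mathrm{Der}_i(\L(H))\cong H\otimes\L_{i+1}(H)$ inside that explicit uniformly stable $\Sp_{2g}\Q$--sequence. (A second, weaker entry point: via Theorem~\ref{thm:equivSpLie} it would suffice to show $\{H_i(\gr(\I_{g,1});\Q)^j\}$ is uniformly multiplicity stable, which holds for $i\leq 2$ where the presentation gives $H_0=\Q$, $H_1=\bwedge^3H$, $H_2=W_g$ --- but the higher homology of the Torelli Lie algebra is not known.)

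The main obstacle is exactly the one flagged before Lemma~\ref{lem:wedgeH} and in the remark following Proposition~\ref{prop:SLstrong}: for $\Sp_{2n}\Q$--representations there is no analogue of the type-preserving condition (Condition~IV), so knowing that the source and target of the bracketing map $\bwedge^3H_g\otimes\mathfrak{r}^{i-1}_g\to\L_i(\bwedge^3H_{g+1})$ are uniformly stable does \emph{not} force its image $\mathfrak{r}^i_g$ to stabilize, and this map will not be type-preserving. For the sequences $\{\bwedge^iH_n\}$ this gap was bridged by the explicit Lemma~\ref{lem:wedgeH}, which describes the $\Sp_{2n+2}\Q$--span of each irreducible $V(\lambda_k)_n\subset\bwedge^iH_n$ and feeds into the monotonicity argument of Remark~\ref{remark:substitute}. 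Executing the plan above requires the analogous control over the $\Sp_{2g+2}\Q$--span of an arbitrary irreducible $V(\lambda)_g$ sitting inside $\L_i(\bwedge^3H_{g+1})$ (and inside the $\bwedge^i$ of these); this looks substantially harder than Lemma~\ref{lem:wedgeH}, since $\L_i(\bwedge^3H)$ involves many families of irreducibles rather than just $\{V(\omega_k)\}$. I would expect the crux of any proof to be the development of such a symplectic substitute for the type-preserving machinery, adapted to the Johnson--Torelli Lie algebra; the parallel statement for $\gr(\IA(F_n))$ via the $\Aut(F_n)$ Johnson homomorphism, where the $\GL_n\Q$ tools of Proposition~\ref{prop:SLstrong} and Theorem~\ref{thm:equivhomLie} do apply, would be a natural and more accessible warm-up.
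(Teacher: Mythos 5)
First, a point of order: the statement you are addressing is stated in the paper as a \emph{conjecture}, and the paper offers no proof of it. The only thing the authors provide is evidence in low degrees: the conjecture holds for $i=1$ by Johnson's computation $\gr(\I_{g,1})^1\approx \bwedge^3 H\approx V(1,1,1)\oplus V(1)$, and for $i=2$ by Habegger--Sorger's computation $\gr(\I_{g,1})^2\approx V(2,2)\oplus V(1,1)\oplus V(0)^{\oplus 2}$ deduced from Hain's presentation. So there is no ``paper proof'' to compare against, and your proposal --- which you candidly present as a strategy with an unfilled step --- does not close the gap either. It should not be spliced in as a proof.

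That said, your analysis of \emph{why} the statement is hard is accurate and matches the paper's own framing. The reduction you describe is sound as far as it goes: $\{\L_i(\bwedge^3 H)\}$ is uniformly multiplicity stable by Theorem~\ref{thm:classicalstability}, and by complete reducibility a quotient of a uniformly multiplicity stable sequence by a uniformly multiplicity stable consistent subsequence is again uniformly multiplicity stable, so everything hinges on the graded pieces $\mathfrak{r}^i$ of the relation ideal (together with the consistency check $(R_1,R_2)_{g+1}\cap\L(\bwedge^3H_g)=(R_1,R_2)_g$, which you rightly flag as nontrivial). The obstruction you identify --- that for $\Sp_{2g}\Q$ there is no analogue of Condition~IV or Proposition~\ref{prop:SLstrong}, so stability of the source and target of the bracketing map does not control its image --- is exactly the failure the paper points out in the remark following Proposition~\ref{prop:SLstrong} and before Lemma~\ref{lem:wedgeH}, and it is why only the one-directional Theorem~\ref{thm:equivSpLie} is available in the symplectic case. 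Your observation that the homological route via Theorem~\ref{thm:equivSpLie} is blocked is also correct: establishing stability of $\gr(\I_{g,1})^k$ that way would require stability of $H_i(\gr(\I_{g,1});\Q)^k$ for all $i\leq k$, and beyond $H_2$ (where quadratic presentability gives the answer) these homology groups are unknown. In short: the strategy is the natural one, the missing ingredient is a symplectic substitute for the type-preserving machinery (a Lemma~\ref{lem:wedgeH}--type statement for the irreducibles occurring in $\L_i(\bwedge^3 H)$), and without it the statement remains, as in the paper, a conjecture.
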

  
As evidence for Conjecture~\ref{conjecture:malcev:torelli}, we remark
that the conjecture is true when $i=1$ and when $i=2$, as follows.
Johnson proved \cite{Jo2} that \[\gr(\I_{g,1})^1\approx
\bwedge^3H\approx V(1,1,1)\oplus V(1)\] as $\Sp_{2g}\Z$--modules, and Habegger--Sorger
\cite[Theorem 2.2]{HS} use the work of Hain \cite{Ha} to deduce that 
(in our terminology) :
 \[\gr(\I_{g,1})^2\approx V(2,2)\oplus V(1,1)\oplus V(0)^{\oplus 2}\] 
 as $\Sp_{2g}\Z$--modules.  

\subsection{Homology of $\IA_n$}
The above discussion has an analogy in the case of free groups and
their automorphisms.  Let $F_n$ denote the free group of rank $n$, and
let $\Aut(F_n)$ denote its automorphism group. The action of
$\Aut(F_n)$ on $H_1(F_n;\Z)$ gives the well-known exact
sequence
\[1\to \IA_n\to\Aut(F_n)\to\GL_n\Z\to1.\] The conjugation action of
$\Aut(F_n)$ on $\IA_n$ descends to an outer action of $\GL_n\Z$,
giving each $H_i(\IA_n;\Q)$ the structure of a $\GL_n\Z$--module. The
standard inclusion $F_n\hookrightarrow F_{n+1}$ induces an inclusion
$\IA_n\hookrightarrow\IA_{n+1}$ by extending by the identity.  Thus
for each $i\geq 0$ we have an induced homomorphism $H_i(\IA_n;\Q)\to
H_i(\IA_{n+1};\Q)$ of $\GL_n\Q$--representations.

It is natural to conjecture the analogue of Conjecture
\ref{conjecture:torelli} for $\IA_n$, and in particular that
$\{H_i(\IA_n;\Q)\}$ is representation stable.  However such a conjecture would not capture 
what is going on, even in dimension 1: a computation of Andreadakis,
Farb, Kawazumi, and Cohen-Pakianathan (see e.g.\ \cite{Ka}) gives:
\begin{equation}
\label{eq:ia1}
H_1(\IA_n;\Q)\approx \bwedge^2\Q^n\otimes (\Q^n)^\ast\approx
V(L_1+L_2-L_n)\oplus V(L_1)
\end{equation}
from which we see that the decomposition of the sequence
$\{H_1(\IA_n;\Q)\}$ does not stabilize (except in the trivial sense, observing that \emph{no} representation ever appears twice as the first summand). However, the notion of
\emph{mixed} tensor stability, defined in \S\ref{section:strong}, suffices to capture 
the stability here. Indeed, the computation in
\eqref{eq:ia1} shows that the sequence $\{H_1(\IA_n;\Q)\}$ is mixed
representation stable, since
\[H_1(\IA_n;\Q)=V(1,1;1)\oplus V(1)\]
for sufficiently
large $n$. With this alteration, we give the analogue of Conjecture
\ref{conjecture:torelli} for $\IA_n$.
\begin{conjecture}[Homology of $\IA_n$]
\label{conjecture:ian}
For each fixed $i\geq 1$, each of the
  following statements hold.

  \bigskip
  \noindent \textbf{Preservation of finite-dimensionality: }The
  natural map \[H_i(\IA_n;\Q)^{\fd}\to H_i(\IA_{n+1};\Q)\] induced by
  the inclusion $\IA_n\hookrightarrow \IA_{n+1}$ has image contained
  in $H_i(\IA_{n+1};\Q)^{\fd}$.

  \medskip
  \noindent \textbf{Rationality:} Every irreducible
  $\GL_n\Z$--subrepresentation in $H_i(\IA_n;\Q)^{\fd}$ is the
  restriction of an irreducible $\GL_n\Q$--representation.

  \medskip
  \noindent \textbf{Stability: }The sequence of
  $\GL_n\Q$--representations $\{H_i(\IA_n;\Q )^{\fd}\}$ is 
  uniformly \emph{mixed} representation stable.
\end{conjecture}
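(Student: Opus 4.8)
The plan is to induct on the homological degree $i$, the base case $i=1$ being the computation \eqref{eq:ia1}, and to access $H_i(\IA_n;\Q)$ through the Lyndon--Hochschild--Serre spectral sequence of the extension $1\to\IA_n\to\Aut(F_n)\to\GL_n\Z\to1$. Since the $\Aut(F_n)$--action on any algebraic $\GL_n\Q$--representation $W_n$ restricts trivially to $\IA_n$, this spectral sequence takes the form
\[
E^2_{p,q}=H_p\big(\GL_n\Z;\,H_q(\IA_n;\Q)\otimes W_n\big)\Longrightarrow H_{p+q}(\Aut(F_n);\,W_n),
\]
with the untwisted case recovered by $W_n=\Q$. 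The external inputs I would use are: (i) homological stability for $\Aut(F_n)$ with polynomial, and in particular mixed-tensor, twisted coefficients, together with the Galatius computation of the stable homology of $\Aut(F_n)$ and the Hatcher--Vogtmann stability range, which make the abutment explicit (in particular rationally trivial in positive degrees in the stable range); (ii) Borel-type stability and vanishing theorems for the rational cohomology of $\GL_n\Z$ with algebraic coefficients; and (iii) the closure of (uniform, mixed) representation stability under tensor products, Schur functors, and restriction, i.e.\ Theorem~\ref{thm:classicalstability}.

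Granting the inductive hypothesis that $\{H_q(\IA_n;\Q)^{\fd}\}$ is uniformly mixed representation stable for all $q<i$, I would feed these sequences, tensored with an arbitrary fixed mixed-tensor representation $V(\lambda;\mu)_n$, into the rows $q<i$ of the spectral sequence: by Theorem~\ref{thm:classicalstability} the coefficient systems $H_q(\IA_n;\Q)^{\fd}\otimes V(\lambda;\mu)_n$ are stable, and by input (ii) so are the resulting groups $E^2_{p,q}$. Using input (i) to pin down the abutment and chasing the differentials into and out of the bottom row $E^{\bullet}_{0,i}=\big(H_i(\IA_n;\Q)\otimes V(\lambda;\mu)_n\big)_{\GL_n\Z}$ isolates this coinvariant space. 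Here one invokes the Rationality statement of the conjecture together with superrigidity / the congruence subgroup property for $\GL_n\Z$ (as in the Remarks), so that on the finite-dimensional part coinvariants agree with invariants and, by Schur's lemma over $\GL_n\Q$, compute the multiplicity of $V(\lambda;\mu)_n^{\ast}=V(\mu;\lambda)_n$ in $H_i(\IA_n;\Q)^{\fd}$. Running this over all pairs $(\lambda,\mu)$, and taking $n$ large enough that the finitely many relevant inputs have stabilized, yields eventual constancy of all these multiplicities, which is exactly uniform mixed representation stability for $\{H_i(\IA_n;\Q)^{\fd}\}$. The Rationality statement itself should then be bootstrapped from the same picture: any finite-dimensional $\GL_n\Z$--representation virtually extends to an algebraic one, and the content is that no finite-index passage is needed, which one expects because these classes arise from the $\Aut(F_n)$--action via the Johnson homomorphisms and are manifestly algebraic --- concretely, $H_i(\IA_n;\Q)^{\fd}$ should be spanned by external products $H_\ast(\IA_{n_1};\Q)\otimes\cdots\otimes H_\ast(\IA_{n_r};\Q)\to H_\ast(\IA_n;\Q)$ of classes ultimately coming from degree $1$, where \eqref{eq:ia1} already exhibits an honest $\GL_n\Q$--representation. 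Preservation of finite-dimensionality follows from the same product description, since the stabilization $\IA_n\hookrightarrow\IA_{n+1}$ is compatible with these external products.

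The hard part is the interface between $H_i(\IA_n;\Q)^{\fd}$ and the rest of $H_i(\IA_n;\Q)$. The homology of $\IA_n$ is genuinely badly behaved --- for instance $H_\ast(\IA_n;\Q)$ need not be finite-dimensional (Bestvina--Bux--Margalit) --- so one must show that this infinite-dimensional part is invisible to $H_\ast(\GL_n\Z;-)$, contributing nothing to the $\GL_n\Z$--coinvariants that the spectral sequence computes, and one must give an intrinsic description of $H_i(\IA_n;\Q)^{\fd}$ (e.g.\ as the subspace generated by products of degree-$1$ classes, or as the maximal subquotient on which $\GL_n\Z$ acts through an algebraic representation). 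Equivalently, the obstacle is upgrading ``$\Aut(F_n)$ has stable twisted homology, so the coinvariants of $H_i(\IA_n;\Q)^{\fd}$ are stable'' to ``the entire finite-dimensional $\GL_n\Q$--representation $H_i(\IA_n;\Q)^{\fd}$ is stable.'' This requires the presently missing structural input: a free-group analogue of the Hain and Habegger--Sorger presentations of $\gr(\I_{g,1})$, i.e.\ a usable description of $H_\ast(\IA_n;\Q)$ beyond degree $1$, without which the differential chase above cannot be carried out unconditionally and the argument remains a reduction rather than a proof.
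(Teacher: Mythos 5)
This statement is a \emph{conjecture} in the paper, not a theorem: the authors offer no proof, and the only case they verify is $i=1$ via the computation \eqref{eq:ia1} of Andreadakis, Farb, Kawazumi, and Cohen--Pakianathan. So there is no argument in the paper to compare yours against, and your proposal must be judged as a purported proof of an open problem. It is not one, and to your credit you say so in your final sentence; but it is worth being precise about where the strategy breaks, beyond the gap you name.

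First, the analogy with the braid group case is misleading at the key step. For $P_n\trianglelefteq B_n$ the paper passes between $H_i(B_n;V(\lambda)_n)$ and the multiplicity of $V(\lambda)_n$ in $H_i(P_n;\Q)$ by the \emph{transfer}, which requires $P_n$ to have finite index. Here $\IA_n$ has infinite index in $\Aut(F_n)$, so there is no isomorphism $H_i(\Aut(F_n);W_n)\cong\big(H_i(\IA_n;\Q)\otimes W_n\big)^{\GL_n\Z}$; you are forced into the full Lyndon--Hochschild--Serre spectral sequence, and its rows $E^2_{p,q}=H_p(\GL_n\Z;H_q(\IA_n;\Q)\otimes W_n)$ involve \emph{all} of $H_q(\IA_n;\Q)$, not just the finite-dimensional part covered by your inductive hypothesis. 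The infinite-dimensional part is known to be nonzero (the Bestvina--Bux--Margalit classes in $H_{2n-3}(\IA_n;\Q)$ you cite), it does not extend to a $\GL_n\Q$--representation, and Borel-type vanishing says nothing about $H_p(\GL_n\Z;-)$ with such non-algebraic, infinite-dimensional coefficients; so the rows $q<i$ of your spectral sequence are not under control, and the differential chase into $E^\bullet_{0,i}$ cannot be run. Second, even granting all of that, coinvariants only see a quotient of $H_i(\IA_n;\Q)$, so the output would be stability of certain multiplicity data, not stability of $H_i(\IA_n;\Q)^{\fd}$ as a sequence of representations (your Conditions I and II are never addressed). Third, your proposed fixes for Rationality and Preservation of finite-dimensionality rest on the assertion that $H_i(\IA_n;\Q)^{\fd}$ is spanned by products of degree-one classes; this is itself an open structural claim --- the paper points out that even $\gr(\IA_n)$ is unknown --- and is essentially equivalent in difficulty to the conjecture. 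What you have written is a plausible reduction of the Stability clause to several other open problems, which is a legitimate research plan but not a proof.
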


As for the ``infinitesimal'' version of Conjecture~\ref{conjecture:ian}, we
conjecture that each of the $\GL_n\Z$--representations $\gr(\IA_n)^i$
extend to $\GL_n\Q$--representations, and that these form a uniformly
stable sequence.  However, we would like to point out
that the Lie algebra $\gr(\IA_n)$ is still not known.

\subsection{Vanishing and finiteness conjectures for the (co)homology
  of $\I_{g,1}$ and $\IA_n$}

We now make a few other natural conjectures concerning the
(co)homology of $\I_{g,1}$ and $\IA_n$.  Our goal is to give as much
of a conjectural picture as possible where there was none before.

\para{A Morita-type conjecture for $\IA_n$} Let $e_i\in
H^i(\I_{g,1};\Q)$ denote the $i^{\text{th}}$ Morita--Mumford--Miller
class restricted to $\I_{g,1}$.  The following is Conjecture 3.4 of
\cite{Mo1}.

\begin{conjecture}[Morita's Conjecture]
  The $\Sp_{2g}\Z$--invariant stable rational cohomology of $\I_{g,1}$ is
  generated as a $\Q$--algebra by $\{e_2,e_4,e_6,\ldots \}$.
\end{conjecture}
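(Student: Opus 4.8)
The natural strategy is the mapping-class-group analogue of the braid-group identity $H^\ast(B_n;\Q)=H^\ast(P_n;\Q)^{S_n}$: one wants to access the $\Sp_{2g}\Z$--invariant cohomology of $\I_{g,1}$ through the cohomology of $\Mod_{g,1}$. Since $\Sp_{2g}\Z$ is infinite there is no transfer, so the tool is the Hochschild--Serre spectral sequence of $1\to\I_{g,1}\to\Mod_{g,1}\to\Sp_{2g}\Z\to1$,
\[
E_2^{p,q}=H^p\bigl(\Sp_{2g}\Z;H^q(\I_{g,1};\Q)\bigr)\ \Longrightarrow\ H^{p+q}(\Mod_{g,1};\Q),
\]
with $\Sp_{2g}\Z$ acting on the coefficients by conjugation inside $\Mod_{g,1}$. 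Work in a stable range for $g$ (depending on total degree). There the abutment and the base are classically understood: the Madsen--Weiss theorem gives $H^\ast(\Mod_{g,1};\Q)=\Q[\kappa_1,\kappa_2,\ldots]$ with $\kappa_i$ in degree $2i$, and Borel's theorem gives $H^\ast(\Sp_{2g}\Z;\Q)=\Q[x_2,x_6,x_{10},\ldots]$, one generator in each degree $\equiv2\pmod4$. As each $e_i$ is the restriction of $\kappa_i$ and $H^\ast(\Mod_{g,1};\Q)$ is fixed by conjugation, the $e_i$ lie in $A^\ast\coloneq H^\ast(\I_{g,1};\Q)^{\Sp_{2g}\Z}=E_2^{0,\ast}$, the object named in the conjecture.

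The plan has three steps. (1) \emph{Identify $E_2$.} Input representation stability for the Torelli group, Conjecture~\ref{conjecture:torelli}: in a stable range each $H^q(\I_{g,1};\Q)^{\fd}$ is the restriction of a fixed algebraic $\Sp_{2g}\Q$--representation, and (since an invariant vector spans a trivial subrepresentation) the non-finite-dimensional part is invisible to $H^0$ and, one expects, to stable $H^\ast(\Sp_{2g}\Z;-)$ entirely. Combined with Borel's stable vanishing theorem for twisted coefficients --- $H^p(\Sp_{2g}\Z;V(\lambda))=0$ stably for a fixed nontrivial irreducible algebraic $V(\lambda)$, and $=H^p(\Sp_{2g}\Z;\Q)$ for $\lambda=0$ --- this collapses the $E_2$--page to $E_2^{p,q}=H^p(\Sp_{2g}\Z;\Q)\otimes A^q$. (2) \emph{Show the spectral sequence degenerates at $E_2$.} The cleanest route is to prove $A^\ast$ is concentrated in even degrees; then, $H^\ast(\Sp_{2g}\Z;\Q)$ being even too, $E_2$ lives in even total degree, every $d_r$ shifts total-degree parity, and the sequence collapses. (3) \emph{Read off $A^\ast$.} Granting collapse, $\prod_{i\geq1}(1-t^{2i})^{-1}=\prod_{j\geq0}(1-t^{4j+2})^{-1}\cdot P_{A^\ast}(t)$, so $P_{A^\ast}(t)=\prod_{k\geq1}(1-t^{4k})^{-1}$, the Poincaré series of a polynomial ring on one generator in each degree $4,8,12,\ldots$. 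Hence $A^q=0$ unless $4\mid q$, so $e_{2i+1}\in H^{4i+2}(\I_{g,1};\Q)^{\Sp}=A^{4i+2}=0$ stably; and the edge homomorphism $H^\ast(\Mod_{g,1};\Q)\twoheadrightarrow E_\infty^{0,\ast}=E_2^{0,\ast}=A^\ast$ (surjective by collapse) sends $\kappa_i\mapsto e_i$, so $A^\ast$ is generated by $\{e_i\}$, hence by $e_2,e_4,e_6,\ldots$. A dimension count against $P_{A^\ast}(t)$ forces these to be algebraically independent, so in fact $A^\ast=\Q[e_2,e_4,e_6,\ldots]$ --- slightly stronger than the stated conjecture.

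The difficulty is entirely in steps (1) and (2). Step (1) presupposes Conjecture~\ref{conjecture:torelli}, which is open; so this argument reduces Morita's conjecture to representation stability for $H^\ast(\I_{g,1};\Q)$ together with step (2). Step (2) --- the stable vanishing $H^{\mathrm{odd}}(\I_{g,1};\Q)^{\Sp}=0$, equivalently the collapse of the spectral sequence, equivalently the vanishing of the odd Morita--Mumford--Miller classes on $\I_{g,1}$ --- is the real crux, and it is morally as deep as the conjecture itself: it cannot follow from formal properties of the spectral sequence, since there is no a priori reason the transgressions $d_r$ on the bottom row vanish, so one must exhibit a genuine mechanism. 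The low-degree evidence is favorable (Johnson's $H_1(\I_{g,1})\approx V(\omega_3)\oplus V(\omega_1)$ has no trivial summand, and Hain's presentation of $\gr(\I_{g,1})$ pins down $H_2$), and two plausible routes to the general case are: control the bottom-row transgressions directly via the Johnson homomorphism and the symplectic form on $\bwedge^3H$; or import the parity/vanishing from the Madsen--Weiss infinite loop space and its relation to $H^\ast(\Mod_{g,1};\Q)$. I expect that once the collapse is in hand, step (3)'s Poincaré-series bookkeeping is routine, so the entire weight of the conjecture rests on this single collapse statement.
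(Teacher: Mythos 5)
This statement is not proved in the paper: it is Morita's Conjecture (Conjecture 3.4 of \cite{Mo1}), recorded here as an open conjecture, so there is no proof of the authors' to compare yours against. What you have written is, as you yourself say, a reduction rather than a proof, and the reduction rests on statements that are themselves open. Step (1) needs the Rationality and Stability parts of Conjecture~\ref{conjecture:torelli} \emph{and} the stable finite-dimensionality of Conjecture~\ref{conjecture:stablyfinite}: Borel's twisted vanishing theorem applies only to algebraic coefficient modules, and the paper points out that $H_{3g-2}(\I_{g,1};\Q)$ contains infinite-dimensional cyclic $\Sp_{2g}\Z$--modules which provably do \emph{not} extend to $\Sp_{2g}\Q$--representations, so "one expects [the non-finite-dimensional part] to be invisible to stable $H^\ast(\Sp_{2g}\Z;-)$" is an unsupported hope, not a lemma. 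Even granting algebraicity, infinitely many distinct irreducibles can occur in a fixed $H^q$, and Borel's stable range depends on the representation, so collapsing $E_2^{p,q}$ to $H^p(\Sp_{2g}\Z;\Q)\otimes A^q$ needs a uniformity you do not address.

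Step (2) is the conjecture. One concrete error there: the vanishing of the odd Morita--Mumford--Miller classes on $\I_{g,1}$ is \emph{not} equivalent to $H^{\mathrm{odd}}(\I_{g,1};\Q)^{\Sp_{2g}\Z}=0$; the former is already known (the odd $\kappa$'s are proportional to Borel classes pulled back from $\Sp_{2g}\Z$, and the paper states their vanishing on $\I_{g,1}$ as a fact immediately before the conjecture), while the latter is wide open and is what your collapse argument actually requires. Finally, note that your step (3), if the first two steps held, would yield $A^\ast=\Q[e_2,e_4,\ldots]$ with the $e_{2i}$ nonzero and algebraically independent --- but the paper explicitly remarks that it is not even known which even classes $e_i$, or combinations thereof, are nonzero in $H^\ast(\I_{g,1};\Q)$. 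That your scheme would settle this too is a measure of how much is being assumed upstream. As a proposed line of attack the spectral-sequence framing is reasonable and standard, but nothing here constitutes a proof, and the paper offers none.
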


Note that all the $e_i$ generate the stable rational cohomology of
$\Mod_{g,1}$, by Madsen--Weiss \cite{MW}, and the odd classes
$e_1,e_3,e_5,\ldots$ vanish when restricted to $\I_{g,1}$. Morita's
Conjecture predicts the trivial representations that can occur in
$H^i(\I_{g,1};\Q)$.  However, it is not known which of the
even Morita--Mumford--Miller classes $e_i$, or combinations thereof, are
nonzero in $H^*(\I_{g,1};\Q)$. Thus even an affirmative answer to
Morita's Conjecture would not imply
Conjecture~\ref{conjecture:torelli} for the trivial representation.

Since Galatius \cite{Ga} has proven that $H^i(\Aut(F_n);\Q)=0$ for
$n\gg i$, it is natural to make the following conjecture.

\begin{conjecture}[Vanishing conjecture]
\label{conjecture:invpart}
  The $\GL_n\Z$--invariant part of the stable rational cohomology of
  $\IA_n$ vanishes.
\end{conjecture}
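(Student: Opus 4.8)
The plan is to attack Conjecture~\ref{conjecture:invpart} through the Lyndon--Hochschild--Serre spectral sequence of the extension $1\to\IA_n\to\Aut(F_n)\to\GL_n\Z\to1$, feeding in the two things that are available in the stable range: Galatius's theorem \cite{Ga} that $H^k(\Aut(F_n);\Q)=0$ for $k\geq1$ and $n\gg k$, and Borel's computation that $H^*(\GL_n\Z;\Q)$ is stably an exterior algebra over $\Q$ on classes $x_5,x_9,x_{13},\ldots$ in degrees $\equiv 1\pmod{4}$. The spectral sequence has $E_2^{p,q}=H^p(\GL_n\Z;H^q(\IA_n;\Q))$ and converges to $H^{p+q}(\Aut(F_n);\Q)$, so stably it converges to $\Q$ in total degree $0$. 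The left-hand column $E_2^{0,q}=H^q(\IA_n;\Q)^{\GL_n\Z}$ is exactly the group we must kill for $q\geq1$; note that any $\GL_n\Z$-fixed class spans a one-dimensional invariant subspace, so this invariant cohomology automatically lies in the finite-dimensional part $H^q(\IA_n;\Q)^{\fd}$, and Conjecture~\ref{conjecture:ian} would make its dimension the eventually-constant multiplicity of the trivial representation $V(0)$ there (this multiplicity is $0$ for $q=1$ by the computation \eqref{eq:ia1}).

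First I would attempt an induction on $i$: assuming $H^q(\IA_n;\Q)^{\GL_n\Z}=0$ stably for $1\leq q<i$, prove it for $q=i$. A class in $E_2^{0,i}$ can only support outgoing differentials $d_r\colon E_r^{0,i}\to E_r^{r,i-r+1}$ (there are no incoming ones in the first quadrant), and $E_\infty^{0,i}=0$ by Galatius, so it would suffice to show each such target vanishes stably. For $2\leq r\leq i$ the target is a subquotient of $H^r(\GL_n\Z;H^{i-r+1}(\IA_n;\Q))$ with $1\leq i-r+1<i$; \emph{if} the coefficient module were finite-dimensional, Borel's vanishing theorem for twisted coefficients would identify this stably with $H^r(\GL_n\Z;\Q)\otimes H^{i-r+1}(\IA_n;\Q)^{\GL_n\Z}$, which is $0$ by the inductive hypothesis. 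For $r=i+1$ the target sits inside $H^{i+1}(\GL_n\Z;\Q)$, which vanishes stably unless $i\equiv0\pmod{4}$. The remaining cases $i\equiv0\pmod{4}$, where $d_{i+1}$ could in principle carry $E_{i+1}^{0,i}$ onto the one-dimensional Borel class $x_{i+1}$, would have to be excluded separately --- e.g.\ by exploiting the $H^*(\GL_n\Z;\Q)$-module structure on the spectral sequence, or by showing directly that the classes $x_{4k+1}$ die only against \emph{non}-invariant (in fact infinite-dimensional) parts of $H^*(\IA_n;\Q)$.

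The central obstacle, and the reason this remains a conjecture, is that $H^*(\IA_n;\Q)$ is \emph{not} known to be finite-dimensional --- and in fact is genuinely infinite-dimensional in high degrees, e.g.\ $H_{2n-3}(\IA_n;\Q)$ is not finitely generated, by Bestvina--Bux--Margalit. So Borel's vanishing theorem does not apply to the coefficient modules $H^q(\IA_n;\Q)$, the groups $H^p(\GL_n\Z;H^q(\IA_n;\Q))$ are uncontrolled, and the higher differentials cannot be pinned down by the degree bookkeeping above. What one really needs is either a robust ``stable Borel vanishing'' for $H^p(\GL_n\Z;-)$ applied to the full (possibly infinite-dimensional) $\IA_n$-cohomology, or a direct argument --- plausibly via the trace maps of Morita and Kawazumi, which both produce and detect $\GL_n\Z$-invariant classes on $\IA_n$ --- showing that any stable positive-degree invariant class must be pulled back from $\Aut(F_n)$ and hence, by Galatius, vanishes. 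Supplying one of these inputs, or proving Conjecture~\ref{conjecture:ian} in a form strong enough to couple to the spectral sequence, is where the real difficulty lies.
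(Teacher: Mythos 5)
The statement you are addressing is a \emph{conjecture} in the paper, not a theorem: the authors offer no proof of Conjecture~\ref{conjecture:invpart}, only the single piece of evidence that it holds in degree $1$, since $H_1(\IA_n;\Q)\approx\bwedge^2\Q^n\otimes(\Q^n)^\ast$ (equation \eqref{eq:ia1}) contains no trivial subrepresentation for $n\geq 3$. So there is no proof in the paper to measure your argument against, and your proposal does not supply one either --- which you say yourself. What you have written is a strategy, not a proof, and it should not be labeled otherwise.

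That said, your analysis is sound as far as it goes. You correctly set up the Lyndon--Hochschild--Serre spectral sequence of $1\to\IA_n\to\Aut(F_n)\to\GL_n\Z\to 1$, correctly identify $E_2^{0,q}=H^q(\IA_n;\Q)^{\GL_n\Z}$ as the object to be killed, correctly observe that any invariant class lies in $H^q(\IA_n;\Q)^{\fd}$, and correctly feed in Galatius's vanishing of $H^{>0}(\Aut(F_n);\Q)$ stably. You also put your finger on exactly the right obstruction: the inductive step requires Borel's vanishing theorem for twisted coefficients, which is only available for finite-dimensional algebraic coefficient modules, whereas $H^q(\IA_n;\Q)$ is genuinely infinite-dimensional in some degrees (Bestvina--Bux--Margalit \cite{BBM1}) and is not even known to be an algebraic $\GL_n\Q$--module --- indeed the paper's Conjectures~\ref{conjecture:ian} and \ref{conjecture:stablyfinite} are precisely attempts to remedy this. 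The residual cases $i\equiv 0\pmod 4$, where a differential could hit a Borel class, are a second unresolved point you flag honestly. In short: the approach is the natural one and the diagnosis of where it fails is accurate, but the conjecture remains open, and your write-up is an outline of the difficulty rather than a resolution of it.
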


By the computation $H_1(\IA_n;\Q)\approx\bwedge^2\Q^n\otimes
(\Q^n)^\ast$ for $n\geq 3$, which has no trivial subrepresentations,
Conjecture~\ref{conjecture:invpart} is true for cohomology in
dimension $1$.

\para{Two finiteness conjectures} Bestvina--Bux--Margalit proved that both $H_{3n-2}(\I_{n,1};\Q)$ and
$H_{2n-3}(\IA_n;\Q)$ contain infinite dimensional permutation representations of $\Sp_{2g}\Z$ and $\GL_n\Z$ respectively (see below for details and references). One
might hope that stably such representations do not arise, and that all
irreducible $\Sp_{2n}\Z$--submodules of $H_i(\I_{n,1};\Q)$ and
$\GL_n\Z$--submodules of $H_i(\IA_n;\Q)$ are finite-dimensional for
$n\gg i$.  The limited evidence we have seems to point to the
following.

\begin{conjecture}[Stable finite-dimensionality]
  \label{conjecture:stablyfinite}
  For each $i\geq 1$ and each $n$ sufficiently large (depending on $i$), the
  natural maps 
  \[H_i(\I_{n,1};\Q)^{\fd}\hookrightarrow
  H_i(\I_{n,1};\Q)\]
  and
  \[H_i(\IA_n;\Q)^{\fd}\hookrightarrow
  H_i(\IA_n;\Q)\]
   are isomorphisms.
\end{conjecture}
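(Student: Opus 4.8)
The two statements are parallel, so I describe the plan for $\{H_i(\I_{n,1};\Q)\}$; the argument for $\{H_i(\IA_n;\Q)\}$ is identical with $\Sp_{2n}$ replaced by $\GL_n$ throughout. The plan is to upgrade the ad hoc sequences $\{H_i(\I_{n,1};\Q)\}$ to modules over a single algebraic object --- a ``representation stability category'' $\mathcal{SI}$ for the symplectic groups, whose morphisms record the inclusions $S_{n,1}\hookrightarrow S_{n+1,1}$ together with the symplectic automorphisms of homology --- and then to prove a structural theorem: every finitely generated $\mathcal{SI}$--module, evaluated on $\Sp_{2n}\Z$, is for $n$ large a direct sum of restrictions of irreducible algebraic $\Sp_{2n}\Q$--representations, hence in particular finite-dimensional. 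Granting this, the conjecture reduces to the single assertion that $H_i(\I_{n,1};\Q)$, as a functor of $n$, is a finitely generated $\mathcal{SI}$--module for each fixed $i$: a finitely generated module is generated by finitely many vectors, each spanning a finite-dimensional algebraic subrepresentation, so all of $H_i(\I_{n,1};\Q)$ then lies in its $\fd$ part once $n$ is large, which is exactly Conjecture~\ref{conjecture:stablyfinite}. Simultaneously one recovers the Rationality and Stability parts of Conjecture~\ref{conjecture:torelli}.

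The first block of work is formal: define $\mathcal{SI}$, verify that $\I_{n,1}\hookrightarrow\I_{n+1,1}$ together with the $\Sp_{2n}\Z$--action assembles $\{H_i(\I_{n,1};\Q)\}$ into an $\mathcal{SI}$--module, and prove the structure theorem above. The structure theorem is purely representation-theoretic; its heart is a ``noetherianity'' property of $\mathcal{SI}$ (submodules of finitely generated modules are finitely generated), provable by the same Gr\"obner-style combinatorics that underlies noetherianity of $\mathrm{FI}$, combined with the classical facts used throughout this paper --- that algebraic representations of $\Sp_{2n}$ are finite-dimensional, and that an $\Sp_{2n}\Z$--representation has well-behaved $\fd$ part (Margulis superrigidity / Borel density, as in the Remarks following Conjecture~\ref{conjecture:torelli}). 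This should be routine once $\mathcal{SI}$ is chosen so that its modules genuinely model uniform representation stability in the sense of Definition~\ref{definition:repstab1}.

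The main obstacle is finite generation of $H_i(\I_{n,1};\Q)$ over $\mathcal{SI}$, and here I would use the standard homological-stability machine: find a simplicial complex $X_n$ on which $\Mod_{n,1}$ (hence $\I_{n,1}$) acts, with connectivity growing in $n$ and with simplex stabilizers that are, up to finite index and extension by lower-genus Torelli groups, copies of $\I_{n-k,1}$; then the equivariant homology spectral sequence of $X_n$ expresses $H_i(\I_{n,1};\Q)$ in terms of $H_{<i}$ and $H_i$ of Torelli groups of smaller genus, and an induction on $(i,n)$ yields finite generation over $\mathcal{SI}$. Natural candidates for $X_n$ are Torelli analogues of the complexes of cut systems or separating curves, or the ``complex of minimizing cycles''. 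I expect this to be the genuinely hard step: establishing the required high connectivity and identifying the stabilizers cleanly enough to run the induction is precisely what is missing --- indeed finite generation of $\I_{n,1}$ itself is known only for $i=1$ (Johnson) and finite presentability is open. For $\IA_n$ one input is better --- the complex of partial bases of $\Z^n$ is known to be highly connected --- but the identification of stabilizers and the descent from $\Aut(F_n)$ to $\IA_n$ carry the same difficulty. A weaker fallback, yielding evidence rather than a proof, is to feed the computations of $H^*(\Mod_{n,1};\Q)$ (Madsen--Weiss), $H^*(\Aut(F_n);\Q)$ (Galatius), and $H^*(\Sp_{2n}\Z;\Q)$, $H^*(\GL_n\Z;\Q)$ (Borel) into the Lyndon--Hochschild--Serre spectral sequences $H_p(\Sp_{2n}\Z;H_q(\I_{n,1};\Q))\Rightarrow H_{p+q}(\Mod_{n,1};\Q)$ and its $\Aut(F_n)$ counterpart, and exploit the vanishing of stable $\Sp$-- and $\GL$--homology with nontrivial algebraic coefficients to constrain which representations can appear for small $q$; this constrains, but does not by itself eliminate, hypothetical stable infinite-dimensional summands.
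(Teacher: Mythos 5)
First, note that this statement is a \emph{conjecture} in the paper; the authors offer no proof, and your proposal is likewise not a proof but a program, so the real question is whether the program is sound. It has two gaps, one of which you acknowledge and one of which you do not. The unacknowledged gap is in your ``structure theorem.'' You assert that a finitely generated $\mathcal{SI}$--module is generated by finitely many vectors ``each spanning a finite-dimensional algebraic subrepresentation,'' and hence is pointwise finite-dimensional for large $n$. But the automorphism group at level $n$ of any category naturally acting on $\{H_i(\I_{n,1};\Q)\}$ is the arithmetic group $\Sp_{2n}\Z$, not the algebraic group $\Sp_{2n}\Q$, and a single vector in an $\Sp_{2n}\Z$--representation need not span a finite-dimensional subspace: the paper's own examples immediately following this conjecture (Mess for $H_1(\I_{2,1};\Q)$, Johnson--Millson for $H_3(\I_3;\Q)$, Bestvina--Bux--Margalit for $H_{3g-2}(\I_{g,1};\Q)$ and $H_{2n-3}(\IA_n;\Q)$) are infinite-dimensional \emph{cyclic} $\Sp_{2g}\Z$-- or $\GL_n\Z$--modules, on which the action provably does not extend to the $\Q$--group. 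So for the natural category, ``finitely generated'' does not imply pointwise finite-dimensional, and Conjecture~\ref{conjecture:finitegen} is consistent with these infinite-dimensional examples precisely for this reason. To make your structure theorem true you would have to build algebraicity of the action into the morphisms of $\mathcal{SI}$, at which point you cannot verify that $\{H_i(\I_{n,1};\Q)\}$ is a module over it without already knowing the stable algebraicity that Conjectures~\ref{conjecture:torelli} and~\ref{conjecture:stablyfinite} assert. Margulis superrigidity, which you invoke, only applies to representations already known to be finite-dimensional, so it cannot break this circle.

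The second gap you state yourself: finite generation of $H_i(\I_{n,1};\Q)$ (or $H_i(\IA_n;\Q)$) over any such category via a highly connected complex with identifiable Torelli stabilizers is exactly the hard open problem --- even finiteness properties of the groups $\I_{n,1}$ and $\IA_n$ themselves are unknown beyond degree one. The fallback via the Hochschild--Serre spectral sequence for $\I_{n,1}\to\Mod_{n,1}\to\Sp_{2n}\Z$ together with Borel vanishing constrains the $\fd$ part but says nothing about hypothetical infinite-dimensional non-algebraic summands, since Borel's theorems do not apply to such coefficients. So the proposal, while a reasonable research direction, does not establish the conjecture and contains a step (the structure theorem) that is false as stated for the only category over which the module structure is actually available.
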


One may even go so far as to give a conjectural picture of all of the
homology of $\I_{n,1}$ and $\IA_n$, including the infinite-dimensional
part.

\begin{conjecture}[Unstable finite generation]
  \label{conjecture:finitegen}
  For each $i\geq1$ and each $n\geq 1$:
  \begin{enumerate}
  \item The module $H_i(\I_{n,1};\Q)$ is a finitely-generated module
    over $\Sp_{2n}\Z$.
  \item The module $H_i(\IA_n;\Q)$ is a finitely-generated module over
    $\GL_n\Z$.
\end{enumerate}
\end{conjecture}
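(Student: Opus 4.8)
Both halves of the conjecture have the same shape: finite generation of $H_i(N_n;\Q)$ as a module over the arithmetic quotient $A_n$, where $(N_n,A_n)$ is $(\IA_n,\GL_n\Z)$ or $(\I_{n,1},\Sp_{2n}\Z)$, and I would attack them in parallel. In homological degree $1$ the statement is already known: by \eqref{eq:ia1} and Johnson's computation, $H_1(\IA_n;\Q)\approx\bwedge^2\Q^n\otimes(\Q^n)^\ast$ and $H_1(\I_{n,1};\Q)\approx\bwedge^3 H$ are finite-dimensional. Note that the representation-stability techniques of this paper bear on the \emph{finite-dimensional part} of $H_i$, which conjecturally extends to the algebraic group; the present statement concerns the full, genuinely infinite-dimensional module over the arithmetic group, so a different input is needed. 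The plan is to upgrade the pointwise question to finite generation of the whole sequence $\{H_i(N_n;\Q)\}_n$ over a combinatorial category for which a Noetherianity theorem holds, then descend to a single $n$.

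\para{A finitely generated resolution} For $\Aut(F_n)$ I would use the spine $K_n$ of Auter space: a contractible complex on which $\Aut(F_n)$ acts simplicially and properly (finite cell stabilizers) with \emph{finite} quotient. Restricting the action to $\IA_n$ and dividing, $K_n/\IA_n$ carries a proper $\GL_n\Z$-action, still with finite quotient since $\Aut(F_n)/\IA_n=\GL_n\Z$; and since all stabilizers are finite and we work over $\Q$, $H_*(\IA_n;\Q)\cong H_*(K_n/\IA_n;\Q)$, computed by the cellular chain complex $C_\bullet(K_n/\IA_n;\Q)$. Each $C_k$ is a finite direct sum of modules $\Q[\GL_n\Z/\Gamma_\sigma]$ with $\Gamma_\sigma$ finite, hence a finitely generated $\Q[\GL_n\Z]$-module. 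For $\I_{n,1}$ one does the same using the Harer spine of the Teichm\"uller space of $S_{n,1}$ (on which $\Mod_{n,1}$ acts properly with finite quotient), obtaining a finitely generated complex of $\Q[\Sp_{2n}\Z]$-modules computing $H_*(\I_{n,1};\Q)$. The catch is that $\Q[\GL_n\Z]$ and $\Q[\Sp_{2n}\Z]$ are not Noetherian, so finite generation need not pass to homology.

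\para{Stabilization and descent} The inclusions $\IA_n\hookrightarrow\IA_{n+1}$ together with the splittings $\Q^n\hookrightarrow\Q^{n+1}$ make $\{H_i(\IA_n;\Q)\}_n$ a module over the category $\mathrm{VIC}(\Z)$ of finite-rank free $\Z$-modules with split-injection morphisms; for $\I_{n,1}$ one uses the symplectic analogue $\mathrm{SI}(\Z)$. Assembled over all $n$, the cellular-chain complexes above should glue into a complex of $\mathrm{VIC}(\Z)$- (resp.\ $\mathrm{SI}(\Z)$-) modules that is finitely generated in each homological degree --- a ``central stability'' property of the Auter (resp.\ Harer) spines. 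Granting this, invoke local Noetherianity of $\mathrm{VIC}(\Z)$- (and $\mathrm{SI}(\Z)$-) modules over $\Q$: then $\{H_i(\IA_n;\Q)\}_n$ and $\{H_i(\I_{n,1};\Q)\}_n$, being subquotients of finitely generated modules, are finitely generated, i.e.\ finitely many classes living in finitely many $H_i(N_{n_0};\Q)$ generate everything under the structure maps. Applying only the morphisms with target of rank $n$ --- in particular the $A_n$-action --- shows $H_i(\IA_n;\Q)$ is finitely generated over $\GL_n\Z$ and $H_i(\I_{n,1};\Q)$ over $\Sp_{2n}\Z$.

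\para{The hard part} The two substantive inputs are the central-stability statement for the spines --- which I expect to reduce to a high-connectivity estimate for an auxiliary complex of partial bases of $\Q^n$ (resp.\ partial symplectic frames), in the spirit of the complexes used to prove classical homological stability for $\GL_n\Z$ and $\Sp_{2n}\Z$ --- and, more seriously, the Noetherianity of $\mathrm{VIC}(\Z)$- and especially $\mathrm{SI}(\Z)$-modules over $\Q$. The symplectic case lies deeper; absent such a theorem one would need an argument specific to these families. Once those are available the rest is formal.
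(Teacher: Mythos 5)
First, note that the statement you are addressing is a \emph{conjecture}: the paper offers no proof, only consistency checks against the known computations (the infinite-dimensional pieces of $H_1(\I_{2,1};\Q)$ and $H_3(\I_3;\Q)$ found by Mess and Johnson--Millson, and the Bestvina--Bux--Margalit classes, all of which happen to be cyclic or finitely generated over the arithmetic group). Your text is likewise a program rather than a proof, and you say so; but two of its steps deserve sharper scrutiny. The genuinely solid part is the first: the spine of Auter space (resp.\ the Harer spine of Teichm\"uller space) does give a finite-length complex of finitely generated $\Q[\GL_n\Z]$- (resp.\ $\Q[\Sp_{2n}\Z]$-) modules computing $H_*(\IA_n;\Q)$ (resp.\ $H_*(\I_{n,1};\Q)$), and you correctly observe that this alone proves nothing because these group rings are not Noetherian. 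The descent step at the end is also fine: finite generation over $\mathrm{VIC}(\Z)$ or $\mathrm{SI}(\Z)$ does imply finite generation of each $H_i(N_n;\Q)$ over $A_n$, since the morphisms of fixed source and target rank form a single orbit under post-composition by $A_n$. Also, a small slip: your claim that degree $1$ is ``already known'' via finite-dimensionality fails for $n=2$, where $H_1(\I_{2,1};\Q)$ is infinite-dimensional by Mess; the conjecture is stated for all $n\geq 1$ precisely because these modules need not be finite-dimensional.

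The two remaining inputs are not merely ``hard''; one of them fails as you have set it up. The cellular chain complex of the spine does \emph{not} assemble into a finitely generated complex of $\mathrm{VIC}(\Z)$-modules: already in chain degree $0$, the $\Aut(F_n)$-orbits of vertices of the Auter spine correspond to homeomorphism types of rank-$n$ graphs, and the $\mathrm{VIC}$-morphisms from lower rank only produce graphs obtained by wedging circles onto lower-rank graphs, so the generators at rank $n$ are genuinely new for every $n$ and grow without bound. The same problem occurs for the Harer spine. So ``central stability of the spine'' is false, and one would need to replace the spine by a different resolution that \emph{is} finitely generated degreewise over the category --- this is a substantive missing idea, not a technical verification. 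The second input, local Noetherianity of $\mathrm{VIC}(\Z)$- and $\mathrm{SI}(\Z)$-modules over $\Q$, is itself an open problem of comparable depth to the conjecture (the known Noetherianity results of this type require the base ring to be finite), so your reduction trades one open problem for two. The architecture you describe is the right shape for attacking the conjecture, but as written it does not constitute a proof, and should not be presented as one.
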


Note that Conjecture~\ref{conjecture:finitegen} is consistent with all
known computations of the homology groups of $\I_{n,1}$ and $\IA_n$,
including those that are known to be infinite-dimensional over $\Q$.  
Mess \cite[Corollary 1]{Me} proved that $H_1(\I_{2,1};\Q)$
  contains an infinite-dimensional irreducible permutation
  $\Sp_4\Z$--module, and Johnson--Millson showed that $H_3(\I_3;\Q)$
  contains an infinite-dimensional irreducible permutation
  $\Sp_6\Z$--module \cite[Proposition 5]{Me}.  The classes
  in $H_{2n-3}(\IA_n;\Q)$ found by
Bestvina--Bux--Margalit \cite{BBM1} span an infinite-dimensional
subspace, but as a $\GL_n\Z$--module this is a permutation module
generated by a single element; similarly, the classes in
 $H_{3g-2}(\I_{g,1};\Q)$ found by Bestvina--Bux--Margalit
  \cite{BBM2} span a cyclic
  $\Sp_{2g}\Z$--module. In particular, the action of $\GL_n\Z$ or $\Sp_{2g}\Z$ on such a subspace cannot be extended to an action of the corresponding $\Q$--group $\GL_n\Q$ or $\Sp_{2g}\Q$.

\section{Flag varieties, Schubert varieties, and rank-selected posets}
\label{section:flags}

The goal of this section is to demonstrate the appearance of representation stability in 
 the cohomology of various natural families of algebraic
varieties, as well as in algebraic combinatorics. These results are used in \cite{CEF2}  to compute arithmetic statistics for maximal tori in $\GL_n(\F_q)$ and Lagrangian tori in $\Sp_{2g}(\F_q)$.

\subsection{Cohomology of flag varieties}

Let $\cF_n$ be the complete flag variety parametrizing complete flags
in $\C^n$; this can be identified with $G/B$ where $G=\GL_n\C$ and $B$
is the Borel subgroup consisting of upper triangular matrices. The
inclusion $\GL_n\C\hookrightarrow\GL_{n+1}\C$ induces an inclusion of
$\cF_n$ as a closed subvariety of $\cF_{n+1}$. In terms of flags, this
amounts to regarding a complete flag $V_1<\cdots<V_n=\C^n$ as a flag
in $\C^{n+1}$ by appending $\C^{n+1}$ itself. The unitary group $U(n)$
also acts on $\cF_n$, with stabilizer a maximal torus $T$, giving an
identification of $\cF_n$ with $U(n)/T$. The normalizer $N(T)$ acts on
$U(n)/T$ on the right, which factors through an action of the Weyl
group $W=N(T)/T$. In this case $W$ can be identified with the group
$S_n$ of permutation matrices, so we obtain an $S_n$--action on
$\cF_n$, and thus an $S_n$--action on $H^i(\cF_n;\Q)$ for each $i\geq 0$.

The inclusion $\cF_n\hookrightarrow \cF_{n+1}$ induces for each $i\geq
0$ a homomorphism $H^i(\cF_{n+1};\Q)\to H^i(\cF_n;\Q)$, and the
sequence $\{H^i(\cF_n;\Q)\}$ is easily seen to be a consistent
sequence of $S_n$--representations. We will prove that this sequence
is representation stable in the sense of
Definition~\ref{definition:repstabrev}.

\begin{theorem}[Stability for the cohomology of flag varieties]
\label{thm:flag}
For each fixed $i\geq 0$, the sequence $\{H^i(\cF_n;\Q)\}$ of
$S_n$--representations is representation stable.
\end{theorem}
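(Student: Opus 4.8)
The plan is to check the three conditions of Definition~\ref{definition:repstabrev} for the sequence $\{H^i(\cF_n;\Q)\}$ equipped with the restriction maps $r_n\colon H^i(\cF_{n+1};\Q)\to H^i(\cF_n;\Q)$, using throughout the Borel presentation $H^*(\cF_n;\Q)\cong\Q[x_1,\dots,x_n]/(e_1,\dots,e_n)$, in which $x_i=c_1(V_i/V_{i-1})$ lies in degree $2$, the $e_k$ are the elementary symmetric polynomials, and the $S_n$-action is by permuting $x_1,\dots,x_n$ (this is exactly the $W=N(T)/T$-action on $H^*(G/B)$). Since appending $\C^{n+1}$ to a flag makes $V_{n+1}/V_n$ restrict to a trivial line bundle, $r_n$ is the $S_n$-equivariant graded algebra surjection sending $x_i\mapsto x_i$ for $i\le n$ and $x_{n+1}\mapsto 0$. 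In particular $H^{\mathrm{odd}}(\cF_n)=0$ and $H^{2j}(\cF_n)$ is the degree-$j$ graded piece of the coinvariant algebra, so it suffices to treat $i=2j$. Condition~I$'$ (surjectivity of $r_n$) is then immediate, since $H^*(\cF_n;\Q)$ is generated by $x_1,\dots,x_n$, all of which lie in the image of $r_n$.

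The heart of the argument is Condition~III (multiplicity stability). By the classical description of the coinvariant algebra as a graded $S_n$-module (Steinberg and Lusztig; in tableau form Lusztig and Kra\'skiewicz--Weyman, cf.\ Stanley), the multiplicity of $V_\lambda$ in $H^{2j}(\cF_n;\Q)$ equals the coefficient of $q^j$ in the fake-degree polynomial
\[f^\lambda(q)\;=\;q^{\,n(\lambda)}\,\frac{[n]_q!}{\prod_{c\in\lambda}[h(c)]_q}\;=\;\sum_{T}q^{\,\mathrm{maj}(T)},\]
where $n(\lambda)=\sum_i(i-1)\lambda_i$, $h(c)$ is the hook length, $[m]_q=1+q+\cdots+q^{m-1}$, and $T$ runs over standard Young tableaux of shape $\lambda$. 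Fix a partition $\mu=(\mu_1,\dots,\mu_\ell)$ with $|\mu|=k$ and set $\lambda=\mu[n]=(n-k,\mu_1,\dots,\mu_\ell)$. Then $n(\mu[n])=\sum_{i\ge1}i\,\mu_i$ does not depend on $n$; the hook lengths of the cells in rows $2,3,\dots$ of $\mu[n]$ are precisely the hook lengths of $\mu$; and the first-row hook lengths are the numbers $n-k-s+1$ for $\mu_1<s\le n-k$ (whose product is $[n-k-\mu_1]_q!$) together with $n-k-s+\mu'_s+1$ for $1\le s\le\mu_1$. Hence
\[f^{\mu[n]}(q)\;=\;\frac{q^{\,n(\mu[n])}}{\prod_{c\in\mu}[h(c)]_q}\cdot\frac{[n-k-\mu_1+1]_q\,[n-k-\mu_1+2]_q\cdots[n]_q}{\prod_{s=1}^{\mu_1}[\,n-k-s+\mu'_s+1\,]_q}.\]
For $n>j+k+\mu_1$ every $q$-integer $[m]_q$ appearing here has $m>j$, and $[m]_q\equiv(1-q)^{-1}\pmod{q^{j+1}}$ in $\Q[[q]]$ whenever $m>j$; since the numerator has $k+\mu_1$ such factors and the denominator $\mu_1$, the big fraction is $\equiv(1-q)^{-k}\pmod{q^{j+1}}$, so
\[f^{\mu[n]}(q)\;\equiv\;\frac{q^{\,n(\mu[n])}}{(1-q)^k\,\prod_{c\in\mu}[h(c)]_q}\pmod{q^{j+1}},\]
independently of $n$. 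Thus $[q^j]f^{\mu[n]}(q)$ is eventually constant in $n$ for each fixed $j$ and each fixed $\mu$ (and it vanishes once $\sum_i i\mu_i>j$, so only finitely many $\mu$ occur). This is Condition~III with $V(\mu)_n=V_{\mu[n]}$; in particular, for $n$ large the multiplicities $c_\mu$ of the irreducibles of $H^{2j}(\cF_n;\Q)$ are bounded by a constant independent of $n$.

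Condition~II$'$ (injectivity) follows formally from Condition~I$'$ together with this boundedness. Fix $j$ with $H^{2j}(\cF_n;\Q)\ne0$ and put $K=\ker r_n\subset H^{2j}(\cF_{n+1};\Q)$; any vector-space complement $U$ of $K$ maps isomorphically to $H^{2j}(\cF_n;\Q)$ under $r_n$, so it remains to choose one lying in no proper $S_{n+1}$-submodule. The complements of $K$ form an affine space of dimension $(\dim H^{2j}(\cF_n))(\dim K)$, which tends to infinity, whereas the variety of $S_{n+1}$-submodules of $H^{2j}(\cF_{n+1};\Q)$ has dimension at most $\sum_\mu\lfloor c_\mu^2/4\rfloor$, hence bounded independently of $n$; consequently the ``bad'' complements (those contained in a proper submodule) sweep out a subvariety of dimension strictly smaller than that of all complements once $\dim H^{2j}(\cF_n)$ exceeds this bound. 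As $\Q$ is infinite, a $\Q$-rational good complement exists, and its $S_{n+1}$-span is all of $H^{2j}(\cF_{n+1};\Q)$.

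The main obstacle is Condition~III; within it the real task is organizational, namely to recognize that padding the first row of $\mu$, after the first-row hook lengths cancel against $[n]_q!$, leaves a ratio of $q$-integers of arbitrarily large index, each congruent to $(1-q)^{-1}$ to any prescribed order. (One may argue Condition~III combinatorially instead: a standard Young tableau of shape $\mu[n]$ with $\mathrm{maj}\le j$ has all its descents at positions $\le j$, which forces the placement of the entries $j+2,\dots,n$ once $n$ is large, yielding an explicit bijection with such tableaux for $\mu[n+1]$.) I expect the verification of Condition~II$'$ above to be the least elegant point and would prefer a direct argument via the Schubert basis, using that $r_n$ sends $\sigma^{(n+1)}_w\mapsto\sigma^{(n)}_w$ for $w\in S_n$ and $\sigma^{(n+1)}_w\mapsto0$ otherwise.
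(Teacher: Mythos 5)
Your argument is correct, and on the crucial point --- Condition III --- it takes a genuinely different route from the paper. The paper also starts from Borel's coinvariant-algebra description and the Lusztig/Kraskiewicz--Weyman theorem (the multiplicity of $V(\lambda)_n$ in degree $i$ is the number of standard tableaux of shape $\lambda[n]$ with major index $i$), but it then argues bijectively: for each fixed descent set $S$, appending a box labelled $n+1$ to the first row is an injection on tableaux with descent set $S$, and a pigeonhole argument shows it is a bijection once $n>\max S+|\lambda|$. That is precisely the parenthetical alternative you sketch. Your main argument instead expands the fake-degree polynomial by the $q$-hook-length formula and shows $f^{\mu[n]}(q)\equiv q^{\sum_i i\mu_i}\big((1-q)^{|\mu|}\prod_{c\in\mu}[h(c)]_q\big)^{-1} \pmod{q^{j+1}}$ for $n>j+|\mu|+\mu_1$; your bookkeeping of the hook lengths of $\mu[n]$ checks out. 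What this buys you over the paper's bijection is a closed form for the stable multiplicities together with an explicit stable range; what the bijection buys is independence from the $q$-hook-length identity and an argument that transfers directly to the hyperoctahedral case (Theorem~\ref{thm:flagsp}), where the paper reuses it via Stembridge's flag major index. The one soft spot is Condition II$'$: the paper simply takes the explicit complement spanned by the monomials $x_1^{j_1}\cdots x_{n+1}^{j_{n+1}}$ with $0\le j_k<k$ and $j_{n+1}=0$, and observes that every degree-$j$ monomial in $x_1,\dots,x_{n+1}$ can be moved into this set by a permutation once $n>j$, so its $S_{n+1}$-orbit spans. Your generic-complement count is repairable but not right as literally stated: comparing the dimension of the space of complements with the dimension of the variety of submodules is not the relevant comparison. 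One must instead bound the incidence variety of pairs $(U,M)$ with $M$ a maximal proper submodule containing the complement $U$; its fibre over $M$ (when nonempty) has dimension $\dim U\cdot(\dim K-\dim V(\mu))$, where $V_{n+1}/M\cong V(\mu)_{n+1}$, so the bad locus is a proper subvariety once $\dim U>\max_\mu(c_\mu-1)$, which holds for large $n$ by your multiplicity bound. Since you flag this step yourself and propose the basis alternative, I would simply replace it with the explicit subspace.
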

\begin{proof}
  The cohomology $H^*(\cF_n;\Q)$ is described as follows. The trivial
  bundle $\cF_n\times \C^n$ is filtered by $k$--dimensional subbundles
  $U_i$ for $0\leq i\leq n$, where $U_i$ over a given flag is the
  $i$th subspace of that flag. The quotients $E_i\coloneq U_i/U_{i-1}$
  are line bundles over $\cF_n$.  Let $x_i\in H^2(\cF_n;\Q)$ be the
  first Chern class $c_1(E_i)$. These classes $\{x_i\}$ generate
  $H^*(\cF_n;\Q)$, as we will see in more detail below.   
  $S_n$ acts on $H^2(\cF_n;\Q)$ by permuting the generators $x_i$.

  We are trying to prove representation stability in the sense of
  Definition~\ref{definition:repstabrev}.  First note that $x_i\in
  H^2(\cF_{n+1};\Q)$ restricts to $x_i\in H^2(\cF_n;\Q)$. A basis for
  $H^i(\cF_n;\Q)$ is given by $\mathcal{B}_n=\{x_1^{j_1}\cdots
  x_n^{j_n}|0\leq j_k<k\}$ (see \cite[Proposition 10.3]{Fu}). Thus 
  the subset of $\mathcal{B}_{n+1}$ consisting of elements with
  $j_{n+1}=0$ restricts bijectively to the basis
  $\mathcal{B}_n$.  This gives the surjectivity condition of Definition~\ref{definition:repstabrev}.    
  Now, as long as $n>i$, any element of
  $\mathcal{B}_{n+1}$ with degree $i$ can be rearranged by a
  permutation in $S_{n+1}$ to have $j_{n+1}=0$ while still satisfying
  $0\leq j_k<k$ for all $k$. This shows that for large enough $n$, the
  $S_{n+1}$--orbit of the degree $i$ terms of this subset spans
  $H^i(\cF_{n+1};\Q)$, as desired.  This gives the injectivity condition of Definition~\ref{definition:repstabrev}.

  Proving stability of multiplicities is more involved.  A general
  theorem of Borel \cite{Bo1} states that the cohomology
  $H^*(\cF_n;\Q)$ is isomorphic to the co-invariant algebra on the
  $x_i$, defined as follows. Let $\Q[x_1,\ldots,x_n]^{S_n}$ be the
  ring of symmetric polynomials, and let $I_n$ be the ideal of
  $\Q[x_1,\ldots,x_n]$ generated by all symmetric polynomials with
  zero constant term. The \emph{co-invariant algebra}
  $R[x_1,\ldots,x_n]$ is defined to be the quotient
  \[R[x_1,\ldots,x_n]:=\Q[x_1,\ldots,x_n]/I_n.\] Thus
  $R[x_1,\ldots,x_n]$ inherits a natural grading from
  $\Q[x_1,\ldots,x_n]$, and $H^*(\cF_n;\Q)$ is isomorphic to
  $R[x_1,\ldots,x_n]$ as a graded $S_n$--module (see \cite[Proposition
  10.3]{Fu} for a combinatorial proof). It is not hard to see that
  $R[x_1,\ldots,x_n]$, and thus $H^*(\cF_n;\Q)$, is in fact isomorphic
  to the regular representation $\Q S_n$, which is \emph{not} representation
  stable. However, looking at each homogeneous piece individually, we
  have the following theorem of Stanley, Lusztig, and
  Kraskiewicz--Weyman:
  
  \begin{theorem}[\cite{Re}, Theorem 8.8]
    \label{theorem:kw}
    For any partition $\lambda$, as long as $i\leq \binom{n}{2}$, the
    multiplicity of $V(\lambda)_n$ in $R_i[x_1,\ldots,x_n]$ equals the
    number of standard tableaux of shape $\lambda[n]$ with major index
    equal to $i$.
  \end{theorem}
  
  Recall that a \emph{standard tableau of shape $\lambda$} is a
  bijective labeling of the boxes of the Young diagram for $\lambda$
  by the numbers $1,\ldots,n$ with the property that in each row and
  in each column the labels are increasing. Given such a labeling, the
  \emph{descent set} is the set of numbers $i$ so that the box labeled
  $i+1$ is in a lower row than the box labeled $i$. The \emph{major
    index} of a tableau is the sum of the numbers in the descent set.

  Fix a partition $\lambda$ and a finite set $S\subset \N$.  Let
  $\mathcal{T}_n$ be the set of standard tableaux of shape
  $\lambda[n]$ with descent set exactly $S$. We will show below that
  for sufficiently large $n$, the size of $\mathcal{T}_n$ is equal to
  the size of $\mathcal{T}_{n+1}$. Since only finitely many $S\subset
  \N$ have $\sum_{j\in S}j=i$, applying Theorem~\ref{theorem:kw} once
  $n$ is sufficiently large will prove that the multiplicity of
  $V(\lambda)_n$ in $R_i[x_1,\ldots,x_n]$ is eventually constant, as
  desired.

  First we exhibit an injection from $\mathcal{T}_n$ into
  $\mathcal{T}_{n+1}$. Note that the Young diagram for $\lambda[n+1]$
  is obtained from that of $\lambda[n]$ by adding an additional box at
  the end of the first row. Our operation on tableaux will be simply
  to fill this newly-added box with $n+1$. Since neither $n$ nor $n+1$
  can be a descent in the resulting tableau, and whether any other $j$ is a
  descent remains unchanged, the descent set is unchanged by this
  operation. Thus this operation, which is clearly injective, maps
  $\mathcal{T}_n\to\mathcal{T}_{n+1}$.

  It remains to show that for sufficiently large $n$, the operation is
  also surjective. Equivalently, we must show that for sufficiently
  large $n$, any tableau of shape $\lambda[n]$ with descent set $S$
  has the label $n$ in the top row. Let $k=\max S$.  If the label $n$
  is not in the top row, then no label greater than $k$ can be in the
  top row, for otherwise at least one number between $k$ and $n-1$
  would be a descent. But exactly $|\lambda|$ boxes are not contained
  in the first row of $\lambda[n]$. Thus taking $n$ greater than
  $k+|\lambda|$, the pigeonhole principle implies that in every
  tableau some label greater than $k$ appears in the top row. It
  follows that any tableau with descent set $S$ has the label $n$ in
  the top row, as desired.

  Applying Theorem~\ref{theorem:kw}, we see that the multiplicity of
  $V(\lambda)_n$ in $H^i(\cF_n;\Q)$ is eventually independent of $n$,
  as desired.
\end{proof}

It can be seen from the proof of Theorem~\ref{thm:flag} that $H^i(\cF_n;\Q)=R_i[x_1,\ldots,x_n]$ is
in fact uniformly representation stable.

\para{Lagrangian flags}
Let $\cF'_n$ be the flag variety parametrizing pairs of a Lagrangian
subspace $L$ of $\C^{2n}$, together with a complete flag on $L$. For
$G=\Sp_{2n}\C$ and $B$ a Borel subgroup, $\cF'_n$ is identified with
$G/B$. The Weyl group in this case is the hyperoctahedral group $W_n$.  
Borel proved in \cite{Bo1} that $H^*(\cF'_n;\Q)$ is isomorphic to the
co-invariant algebra for $W_n$.

\begin{theorem}\label{thm:flagsp}
  For each fixed $i\geq 0$, the sequence $\{H^i(\cF'_n;\Q)\}$ of
  $W_n$--representations is representation stable (in the sense of
  Definition~\ref{definition:repstabrev}).
\end{theorem}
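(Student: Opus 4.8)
The plan is to follow the proof of Theorem~\ref{thm:flag} step for step, with the hyperoctahedral group $W_n$ in place of $S_n$ and the $W_n$-co-invariant algebra in place of the $S_n$-co-invariant algebra. First I would record the structure of $H^*(\cF'_n;\Q)$: exactly as for $\cF_n$, the tautological flag $0=U_0<U_1<\cdots<U_n$ of subbundles of $\cF'_n\times\C^{2n}$ (where $U_i$ over a point is the $i$-th subspace of the flag on the chosen Lagrangian $L$) gives line bundles $E_i=U_i/U_{i-1}$ and classes $x_i=c_1(E_i)\in H^2(\cF'_n;\Q)$; these generate $H^*(\cF'_n;\Q)$, the inclusion $\cF'_n\hookrightarrow\cF'_{n+1}$ restricts $x_i$ to $x_i$, and $W_n$ acts on $\langle x_1,\dots,x_n\rangle=H^2(\cF'_n;\Q)$ by signed permutations, the sign changes arising from swapping $L$ with $L^\perp$ (cf.\ Remark~\ref{rem:Spweights}). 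Borel's theorem then identifies $H^*(\cF'_n;\Q)$, as a graded $W_n$-module, with the co-invariant algebra $R_{W_n}=\Q[x_1,\dots,x_n]/I_n$, where (with $\deg x_i=2$) $I_n$ is generated by the positive-degree $W_n$-invariants, i.e.\ by the elementary symmetric polynomials $e_1(x_1^2,\dots,x_n^2),\dots,e_n(x_1^2,\dots,x_n^2)$. As in type A, $R_{W_n}\cong\Q W_n$ as an ungraded $W_n$-module, which is not representation stable, so one must argue graded piece by graded piece.

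Second I would check the injectivity and surjectivity conditions of Definition~\ref{definition:repstabrev} for the restriction maps $\phi_n\colon H^i(\cF'_n;\Q)\leftarrow H^i(\cF'_{n+1};\Q)$. Surjectivity (Condition~I$'$) is immediate: the $x_j$ generate $H^*(\cF'_n;\Q)$ and lift to the $x_j\in H^*(\cF'_{n+1};\Q)$. For injectivity (Condition~II$'$), use the monomial basis $\mathcal{B}_n=\{x_1^{j_1}\cdots x_n^{j_n}\mid 0\leq j_k\leq 2k-1\}$ of $R_{W_n}$: the subset of $\mathcal{B}_{n+1}$ consisting of monomials with $j_{n+1}=0$ restricts bijectively onto $\mathcal{B}_n$, hence spans a subspace of $H^i(\cF'_{n+1};\Q)$ mapping isomorphically onto $H^i(\cF'_n;\Q)$; and for $n$ large every basis monomial of the fixed cohomological degree can be carried by an element of $W_{n+1}$ to one with $j_{n+1}=0$ still obeying $0\leq j_k\leq 2k-1$ (a greedy/Hall-type rearrangement identical to the one implicit in the proof of Theorem~\ref{thm:flag}, and easier here since the bound $2k-1$ is weaker than $k-1$), so the $W_{n+1}$-orbit of that subspace spans $H^i(\cF'_{n+1};\Q)$.

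Third, and this is the crux, I would establish stability of multiplicities by means of the $W_n$-analogue of Theorem~\ref{theorem:kw}. Such an analogue is available from the theory of fake degrees for wreath products: for a double partition $\mu=(\mu^+,\mu^-)$ of $n$ and cohomological degree in the appropriate range, the multiplicity of $V_\mu$ in $H^{\bullet}(\cF'_n;\Q)$ equals the number of standard Young bitableaux of shape $(\mu^+,\mu^-)$ whose flag major index $\operatorname{fmaj}$ has the prescribed value, where $\operatorname{fmaj}$ is determined by a descent set that depends only on local configurations of consecutive labels together with the datum of whether the label $1$ lies in $\mu^+$ or in $\mu^-$. Fix a double partition $\lambda=(\lambda^+,\lambda^-)$ and a value of the cohomological degree; there are only finitely many descent data achieving this value. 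For each such datum $S$, there is an injection from standard bitableaux of shape $\lambda[n]=((n-k,\lambda^+),\lambda^-)$ with descent datum $S$ into those of shape $\lambda[n+1]$, obtained by filling the box appended to the end of the first row of $\mu^+$ with the label $n+1$: this creates no descent at $n$ or $n+1$ and changes no other descent, so $S$ is preserved and the tableau stays standard. Moreover this map is surjective once $n$ exceeds $\max S+|\lambda^+|+|\lambda^-|$, since then the pigeonhole principle forces the largest labels, and in particular $n$, to lie in the first row of $\mu^+$ in any standard bitableau of shape $\lambda[n]$ with descent datum $S$ — only $|\lambda^+|+|\lambda^-|$ boxes lie outside that row, and no label exceeding $\max S$ may sit outside it without producing a descent larger than $\max S$. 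Summing over the finitely many admissible $S$, the multiplicity of $V(\lambda)_n$ in $H^i(\cF'_n;\Q)$ is therefore eventually constant, which is Condition~III; combining this with the first two paragraphs, and noting that the bound on $n$ is uniform in $\lambda$ once one also bounds the number of rows of $(\lambda^+,\lambda^-)$, one obtains uniform representation stability, just as after Theorem~\ref{thm:flag}.

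I expect the main obstacle to be pinning down the precise hyperoctahedral version of Theorem~\ref{theorem:kw} together with the one genuinely new verification it requires: that appending the largest label $n+1$ to the first row of $\mu^+$ leaves $\operatorname{fmaj}$ unchanged. The point to be careful about is that the ``negative'' contributions to $\operatorname{fmaj}$ — the ones distinguishing type B from type A — are, like the ordinary descent contributions, governed by local data among the labels $1,\dots,n$ and by the position of $1$, none of which is disturbed by introducing a new maximal label; once this is confirmed, the stabilization argument reduces verbatim to the pigeonhole argument used for $\cF_n$.
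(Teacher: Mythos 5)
Your proposal matches the paper's proof essentially step for step: the paper likewise reduces to Borel's identification of $H^*(\cF'_n;\Q)$ with the $W_n$--coinvariant algebra and then invokes Stembridge's theorem \cite[Theorem 5.3]{Ste} --- the hyperoctahedral analogue of Theorem~\ref{theorem:kw} you were looking for --- which counts the multiplicity of $V(\lambda)_n$ by double standard tableaux of shape $\lambda[n]$ with prescribed flag major index, followed by exactly your append-the-box injection into the first (main) row and the pigeonhole argument for eventual surjectivity. The only cosmetic difference is the convention for the flag descent set (in the paper the extra datum is whether the largest label $n$ lies in $\lambda^-$, rather than where the label $1$ sits), and one checks, just as you anticipated, that filling the newly added box of the main row with $n+1$ disturbs neither this datum nor any ordinary descent.
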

\begin{proof}
  Given a double partition $\lambda=(\lambda^+,\lambda^-)$, Stembridge
  \cite[Theorem 5.3]{Ste} generalized Stanley's theorem and proved
  that the multiplicity of $V(\lambda)_n$ in the $i^{\text{th}}$
  graded piece of the co-invariant algebra for $W_n$ is the number of
  double standard Young tableaux of shape $\lambda[n]$ whose flag
  major index is $i$, as long as $n^2\geq i$. We now summarize the
  necessary terminology. If $|\lambda^-|=k$, recall that
  $\lambda[n]=(\lambda^+[n-k],\lambda^-)$.  A \emph{double standard
    Young tableau} is a bijective labeling by the labels $1,\ldots,n$
  of the diagrams for $\lambda^+[n-k]$ and $\lambda^-$ together, which
  within each diagram is increasing on each row and column. The
  \emph{flag descent set} can be described as follows. Place the
  diagram for $\lambda^-$ above the diagram for $\lambda^+[n-k]$. Then
  the flag descent set consists of those $j$ for which $j+1$ appears
  below $j$ in the tableau, together with $n$ if and only if $n$
  appears in the diagram for $\lambda^-$. Finally, the \emph{flag
    major index} is \[2\sum j+|\lambda^-|,\] where the sum is over those
  $j$ in the flag descent set.

  As in the proof of Theorem~\ref{thm:flag}, it will suffice to prove
  that for each double partition $\lambda$ and each finite set
  $S\subset \N$, the number of double standard tableaux of shape
  $\lambda[n]$ with flag descent set $S$ is eventually
  constant. Passing from double tableaux of shape $\lambda[n]$ to
  $\lambda[n+1]$ requires adding a box to the first row of
  $\lambda^+[n-k]$; we always fill that box with $n+1$. Call this the
  \emph{main row} of the diagram. Note that the definition of flag descent
  set is such that this operation does not change the descent
  set. Thus it suffices to show that for sufficiently large $n$, every
  double standard Young tableau of shape $\lambda[n]$ having flag
  descent set $S$ has $n$ in the main row. When $n$ is larger than $\max
  S$ it cannot appear in the diagram for $\lambda^-$ above the main
  row. But there are exactly $|\lambda^+|$ boxes below the main row.
  So once $n\geq |\lambda^+|+\max S$, if $n$ were below the main row,
  some number larger than $\max S$ would appear in the descent
  set. Thus for sufficiently large $n$, the label $n$ must appear in
  the main row, as desired.

  Since only finitely many descent sets $S\subset N$ have associated
  flag major index $i$, we conclude that for each double partition
  $\lambda$, the multiplicity of $V(\lambda)_n$ in $H^i(\cF'_n;\Q)$ is
  eventually constant. Injectivity and surjectivity follow as in the
  proof of Theorem~\ref{thm:flag}, so we conclude that
  $\{H^i(\cF'_n;\Q)\}$ is representation stable.
\end{proof}

\subsection{Cohomology of Schubert varieties}
\label{section:schubert}

Recall from above that $\cF_n=G/B$ is the variety of complete flags in $\C^n$, where $G=\GL_n\C$ and $B$ is a Borel subgroup; $G$ naturally acts on $\cF_n=G/B$ by left multiplication. Choosing the standard flag in $\C^n$ as a basepoint, each permutation
$w\in S_n$ determines a flag, which can be identified with $[w]\in
G/B$. The orbits of
the flags $[w]$ under the Borel subgroup $B$ are the Bruhat cells
$BwB$. The \emph{Schubert variety} $X_w$ associated to $w$ is the
closure $\overline{B[w]}$ in $G/B$ of the Bruhat cell $BwB$. 

Let $T$
be a maximal torus in $G$.  Then the $G$--action on $G/B$ restricts to
a $T$--action and this $T$--action preserves $X_w$.  We denote by
$H^*_T(X_w;\Q)$ the equivariant cohomology with respect to $T$.  
There is an action of $S_n$ on $H^*_T(X_w;\Q)$, which is somewhat involved 
to describe; it is given in Tymoczko \cite{Ty}.

Given $w\in S_n$, we can view it as an element of $S_{n+1}$ by the usual inclusion; let $X_w[n+1]$ be the corresponding Schubert variety in $\cF_{n+1}$, and so on. Then the equivariant cohomology $\{H^*_T(X_w[n];\Q)\}$ is a consistent sequence of $S_n$--representations.

\begin{theorem}[Stability for the cohomology of Schubert varieties]
\label{theorem:schubert}
Let $w$ be any permutation.  Then for each fixed $i\geq 0$ the
sequence $\{H^i_T(X_w;\Q)\}$ of $S_n$--representations is
multiplicity stable.
\end{theorem}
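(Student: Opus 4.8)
The plan is to reduce the statement to two ingredients: representation stability of the symmetric powers $\{\Sym^b(\Q^n)\}$, and multiplicity stability of the (bounded--dimensional) non-equivariant cohomology $\{H^{2a}(X_w[n];\Q)\}$ under Tymoczko's $S_n$--action. Fix $w$ and choose $k$ with $w\in S_k$. First, the $T$--fixed points of $X_w[n]$ are indexed by $\{v\in S_n : v\leq w\}$ in Bruhat order, and since $w$ fixes every position $>k$ so does each $v\leq w$; hence the interval $\{v\leq w\}$ and the lengths $\ell(v)$ are independent of $n$ for $n\geq k$. In fact the closed embedding $\cF_k\hookrightarrow\cF_n$ identifies $X_w[n]$ with $X_w[k]$ as a variety, so $H^*(X_w[n];\Q)$ is independent of $n$ as a graded vector space, with $\dim H^{2a}(X_w[n];\Q)=\#\{v\leq w : \ell(v)=a\}$ and $H^{\mathrm{odd}}(X_w;\Q)=0$ (Bialynicki--Birula gives a paving by affine cells $BvB/B\cong\mathbb{A}^{\ell(v)}$ for $v\leq w$). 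Thus $X_w$ is equivariantly formal and $H^*_T(X_w;\Q)$ is free over $R:=H^*_T(\mathrm{pt};\Q)=\Q[t_1,\dots,t_n]$. In Tymoczko's model \cite{Ty} the $S_n$--action on $H^*_T(X_w;\Q)$ is compatible with this $R$--module structure, acting on scalars through the permutation action on $t_1,\dots,t_n$; so the maximal ideal $\mathfrak m=(t_1,\dots,t_n)$ is $S_n$--stable, and the $\mathfrak m$--adic filtration of $H^*_T(X_w;\Q)$ is by $S_n$--submodules, of finite length in each fixed cohomological degree. Passing to the associated graded, and using $\gr_{\mathfrak m}R\cong R$ together with $H^*_T(X_w;\Q)/\mathfrak m\,H^*_T(X_w;\Q)\cong H^*(X_w;\Q)$, gives an isomorphism of graded $S_n$--representations
\[
H^i_T(X_w[n];\Q)\ \cong\ \bigoplus_{2a+2b=i} H^{2a}(X_w;\Q)\otimes_{\Q}\Sym^b(\Q^n),
\]
where $\Q^n=\langle t_1,\dots,t_n\rangle$ carries the permutation action; here we use that a finite--length filtered $\Q S_n$--module is isomorphic to its associated graded, since $\Q S_n$ is semisimple.

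Next, $\{\Sym^b(\Q^n)\}$ is uniformly representation stable for each fixed $b$: the degree--$b$ monomials in $t_1,\dots,t_n$ split into $S_n$--orbits indexed by partitions $\rho\vdash b$, and the orbit of $t_1^{\rho_1}\cdots t_\ell^{\rho_\ell}$ (with $\ell=\ell(\rho)$) is $\Ind_{H\times S_{n-\ell}}^{S_n}(\Q\boxtimes\Q)$, where $H<S_\ell$ permutes the equal parts of $\rho$; so $\Sym^b(\Q^n)$ is a fixed finite direct sum of such induced representations, each uniformly representation stable by Theorem~\ref{thm:hemmer}. Granting the remaining ingredient --- that $\{H^{2a}(X_w[n];\Q)\}$ is multiplicity stable for each fixed $a$ --- we note this sequence is bounded--dimensional and hence uniformly multiplicity stable (only finitely many irreducibles occur); then the distributivity argument in the proof of Theorem~\ref{thm:classicalSn}(1) shows that each $\{H^{2a}(X_w;\Q)\otimes\Sym^b(\Q^n)\}$ is uniformly multiplicity stable, as is the finite direct sum over the pairs $(a,b)$ with $2a+2b=i$. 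By the displayed isomorphism this yields multiplicity stability for $\{H^i_T(X_w[n];\Q)\}$, as claimed. (The natural maps run $H^i_T(X_w[n{+}1];\Q)\to H^i_T(X_w[n];\Q)$, so we work purely with multiplicities, in the spirit of Definition~\ref{definition:repstabrev}.)

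The heart of the argument is therefore the claim that $\{H^{2a}(X_w[n];\Q)\}$, with Tymoczko's $S_n$--action, is multiplicity stable for each fixed $a$. Since $X_w[n]$ is a fixed variety this is a bounded--dimensional sequence of $S_n$--representations, so for $n$ large it can only involve irreducibles of bounded dimension; for $S_n$ with $n$ large the only such irreducibles are the trivial and sign representations, so one must show that the multiplicity of the trivial representation (equivalently, the number of sign summands) in $H^{2a}(X_w[n];\Q)$ is eventually constant in $n$. This should follow by unwinding Tymoczko's \cite{Ty} description of the $S_n$--action on the basis of Schubert classes $\{[X_v] : v\leq w,\ \ell(v)=a\}$: for $w$ fixed, the Coxeter generators $(j,j{+}1)$ with $j\geq k$ act on these classes in a uniform, essentially trivial way once $n$ is large, so that the isomorphism type of $H^{2a}(X_w[n];\Q)$ stabilizes --- a stabilization in the same spirit as the pigeonhole argument in the proof of Theorem~\ref{thm:flag}. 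Making this last point precise, i.e.\ controlling how the new symmetries introduced by enlarging $n$ act on the finite piece $H^*(X_w;\Q)$, is the step I expect to be the main obstacle.
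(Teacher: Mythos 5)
Your reduction is structurally sound up to its last step, but that last step is a genuine gap, and it is exactly where the content of the theorem lives. Via equivariant formality and the $\mathfrak{m}$-adic filtration you correctly reduce the problem to two inputs: uniform stability of $\{\Sym^b(\Q^n)\}$ (your orbit/induction argument via Theorem~\ref{thm:hemmer} is fine) and multiplicity stability of $\{H^{2a}(X_w[n];\Q)\}$ with the $S_n$--action induced from Tymoczko's. You then observe that the second input amounts to showing that the number of trivial (equivalently, sign) summands in a bounded-dimensional sequence of $S_n$--representations is eventually constant, and you leave this unproved, offering only the expectation that the Coxeter generators $(j,j{+}1)$ with $j\geq k$ act ``essentially trivially'' once $n$ is large. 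That is not an argument: Tymoczko's action is defined by a GKM-type construction, it does not obviously induce anything computable on the quotient $H^{2a}(X_w[n];\Q)$ without real work, and a priori the trivial/sign multiplicities could oscillate with $n$. Nothing in your proposal rules this out.

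The paper sidesteps this issue entirely by invoking Tymoczko's Theorem 1.1, which determines the full graded $S_n$--module structure at once: $H^*_T(X_w[n];\Q)\cong\bigoplus_{v\leq w}\Q[t_1,\ldots,t_n][\ell(v)]$, with the permutation action on each polynomial summand and the index set independent of $n$. This immediately reduces the theorem to multiplicity stability of the homogeneous pieces $\{P_i[t_1,\ldots,t_n]\}$, which is handled via the isomorphism $\Q[x_1,\ldots,x_n]\cong R[x_1,\ldots,x_n]\otimes\Lambda[x_1,\ldots,x_n]$ of graded $S_n$--modules together with Theorem~\ref{thm:flag}. Note that Tymoczko's theorem also supplies precisely your missing ingredient: comparing her decomposition with your filtration shows, by induction on $a$, that $H^{2a}(X_w[n];\Q)$ is a trivial representation of dimension $\#\{v\leq w : \ell(v)=a\}$. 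But that is the point --- the step you defer is essentially equivalent to the structural theorem you would need to quote anyway, so your argument as written does not close.
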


\begin{proof}[Proof of Theorem~\ref{theorem:schubert}]
  For $v\in S_n$, let $\ell(v)$ denote the
  length of $v$ with respect to the standard Coxeter generators. For a
  graded ring $M$ let $M[n]$ denote the shift in grading by
  $n$. Tymoczko proved \cite[Theorem 1.1]{Ty} that
  \[H^*_T(X_w;\Q)=\bigoplus_{[v]\in X_w}\Q[t_1,\ldots,t_n][\ell(v)]\]
  as graded $S_n$--modules.  Here the sum is over those permutations
  $v\in S_n$ whose image $[v]$ lies in $X_w$. It is standard (see, e.g.,
  \cite[Proposition 10.7]{Fu}) that these are exactly the $v$ for
  which $v\leq w$ in the Bruhat partial order. The Bruhat order has
  the property that $v\leq w$ in $S_n$ if and only if $v\leq w$ when
  considered as elements of $S_{n+1}$. Thus for fixed $w$ the
  collection of $v$ in the sum is independent of $n$; similarly the
  lengths $\ell(v)$ do not change. Denote the degree
$i$ homogeneous polynomials over $\Q$ by $P_i[x_1,\ldots,x_n]$. Since
  \[H^i_T(X_w;\Q)=\bigoplus_{[v]\in
    X_w}P_{i-\ell(v)}[t_1,\ldots,t_n],\] it suffices to prove that the
  homogeneous polynomials $\{P_i[x_1,\ldots,x_n]\}$ are representation
  stable for each $i\geq 0$.

  As an aside, we remark that the surjection $H^i_T(X_w;\Q)\to
  H^i(X_w;\Q)$ is given by mapping each $t_i\mapsto 0$, so
  \[H^i(X_w;\Q)= \bigoplus_{\substack{[v]\in X_w,\\\ell(v)=i}}\Q.\]
  Combining this with the preceding discussion, we see that classical
  homological stability holds for the ordinary cohomology
  $\{H^i(X_w;\Q)\}$ of Schubert varieties.

  Note that $P_i[x_1,\ldots,x_{n+1}]$ is spanned by monomials which
 involve at most $i$ variables; thus for $n\geq i$ any such monomial
  is the image under $S_{n+1}$ of a monomial in
  $P_i[x_1,\ldots,x_n]$. This verifies surjectivity, and injectivity
  is immediate.
Let  \[\Lambda[x_1,\ldots,x_n]\coloneq \Q[x_1,\ldots,x_n]^{S_n}\] be the ring of symmetric polynomials; $\Q[x_1,\ldots,x_n]$ is a free
  $\Lambda[x_1,\ldots,x_n]$--module, and in
  fact \[\Q[x_1,\ldots,x_n]\approx
  R[x_1,\ldots,x_n]\otimes_{\Q}\Lambda[x_1,\ldots,x_n]\] as graded
  $S_n$--modules (see, e.g., the proof of \cite[Theorem 8.8]{Re}). It
  follows that \[P_i[x_1,\ldots,x_n]\approx \bigoplus_{j+k=i}
  R_j[x_1,\ldots,x_n]^{\oplus \dim \Lambda_k[x_1,\ldots,x_n]}\] as
  $S_n$--representations. We can see that the dimension $\dim\Lambda_k[x_1,\ldots,x_n]$ is eventually
  constant as follows. It is classical that the ring of symmetric
functions 
is a polynomial algebra $\Q[e_1,\ldots,e_n]$ on the elementary
symmetric polynomials $\{e_j\}$. Since the degree of $e_j$ is $j$, we
see that once $n$ is larger than $i$, the dimension of
$\Lambda_i[x_1,\ldots,x_n]$ is the number of partitions of $i$ and
thus does not depend on $n$.

For any $\lambda$ the multiplicity of
  $V(\lambda)_n$ in $R_j[x_1,\ldots,x_n]$ is eventually constant by
  Theorem~\ref{thm:flag}.  Since there are finitely many solutions to $j+k=i$ once
  $i$ is fixed, we may assume all these multiplicities have stabilized
  for $n$ large enough. We conclude that the multiplicity of
  $V(\lambda)_n$ in $P_i[x_1,\ldots,x_n]$ is eventually constant, as
  desired.
\end{proof}

\begin{remark} The results of Tymoczko quoted in the proof of
  Theorem~\ref{theorem:schubert} hold more generally for other
  semisimple groups $G$, replacing the polynomial algebra with the 
  $W$--algebra induced by the coadjoint action on the root system
  \cite[Theorem 4.10]{Ty}; here $W$ is the Weyl group of $G$.  We believe that it should be possible to
  prove representation stability for the equivariant cohomology of the
  corresponding Schubert varieties.
\end{remark}

\subsection{Rank-selected posets}
\label{section:lefschetz}

The poset $Z_n$ of subsets of the finite set $\{1,\ldots,n\}$,
ordered by inclusion, is a basic object of study in combinatorics.
The group $S_n$ acts on $\{1,\ldots, n\}$, inducing an action on
$Z_n$.  One can view this action as an analogue of the $S_n$--action
on the flag variety ${\cal F}_n$.  In this subsection we prove some
stability results for some refinements of these actions on the
associated cohomology groups.

Suppose $G$ is a group acting on an $n$--dimensional space $X$.  The \emph{Lefschetz representation} associated to this
action is the virtual $G$--representation
\[\sum_{i=0}^n(-1)^i H_i(X;\Q),\]
meaning the formal linear combination of the representations
$H_i(X;\Q)$.  The name reflects the observation that for each $g\in
G$, the associated \emph{virtual character} is the Lefschetz number
\[\sum_{i=0}^n(-1)^i  \tr\big(g_*\colon H_i(X;\Q)\to H_i(X;\Q)\big).\]

For any finite set $S\subset \N$ we may consider the
\emph{rank-selected} poset $Z_n(S)$. This is the poset consisting of
 $\emptyset$ and $\{1,\ldots,n\}$, together with those
subsets of $\{1,\ldots,n\}$ whose cardinality lies in $S$. Let
$|Z_n(S)|$ be the geometric realization of this poset. The natural
action of the symmetric group $S_n$ on $Z_n$ preserves the subposet
$Z_n(S)$, yielding an action of $S_n$ on the geometric realization
$|Z_n(S)|$. Let $L_n(S)$ be the associated Lefschetz
representation \[L_n(S)\coloneq \sum_i(-1)^i H_i(|Z_n(S)|;\Q).\] 


\begin{theorem}[Stability for Lefschetz representations of
  rank-selected posets]
  Let $S\subset \N$ be any finite set.  Then the sequence $\{L_n(S)\}$
  of virtual $S_n$--representations is multiplicity stable.
\end{theorem}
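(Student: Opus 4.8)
The plan is to use the well-known formula of Stanley expressing the Lefschetz representation $L_n(S)$ of a rank-selected Boolean lattice in terms of induced representations, and then to reduce representation stability of $\{L_n(S)\}$ to Theorem~\ref{thm:hemmer}. Recall that for $S = \{s_1 < \cdots < s_k\} \subset \{1,\ldots,n-1\}$, Stanley showed that, up to a shift by $\pm 1$, $L_n(S)$ is the virtual representation $\sum_{T \subseteq S}(-1)^{|S \setminus T|}\beta_n(T)$, where $\beta_n(T) = \Ind_{S_{T}}^{S_n}\mathbf{1}$ is the permutation representation of $S_n$ on the set of chains of subsets with cardinalities given by $T$; equivalently $\beta_n(T) = \Ind_{S_{t_1}\times S_{t_2-t_1}\times\cdots\times S_{n-t_j}}^{S_n}\mathbf{1}$ for the composition determined by $T$. (If $\max S = n$ or $\min S$ forces degenerate behavior one truncates appropriately; the $\emptyset$ and $\{1,\ldots,n\}$ in the definition of $Z_n(S)$ only contribute the reduced-homology shift, which does not affect multiplicity stability.) Since $L_n(S)$ is a fixed $\Z$--linear combination — with coefficients independent of $n$ — of the $\beta_n(T)$ for $T \subseteq S$, it suffices to prove that each sequence $\{\beta_n(T)\}$ is uniformly multiplicity stable, as finite linear combinations of (uniformly) multiplicity stable sequences are again (uniformly) multiplicity stable.

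First I would make precise the identification of $\beta_n(T)$ with an induced representation in the form required by Theorem~\ref{thm:hemmer}. Write $T = \{t_1 < \cdots < t_j\}$ with $t_j \leq n-1$ (discarding any element equal to $n$, which only changes things by the shift). Then the largest ``gap'' $n - t_j$ grows with $n$, while the initial block sizes $t_1, t_2 - t_1, \ldots, t_j - t_{j-1}$ are fixed. Set $k := t_j$ and let $H := S_{t_1}\times S_{t_2-t_1}\times\cdots\times S_{t_j-t_{j-1}} < S_k$, acting on the trivial representation $V = \Q$. Then for $n \geq k$ we have $\beta_n(T) = \Ind_{H \times S_{n-k}}^{S_n}(V \boxtimes \Q)$, which is exactly the sequence covered by Theorem~\ref{thm:hemmer}. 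That theorem gives uniform representation stability (in particular uniform multiplicity stability), with the decomposition stabilizing once $n \geq 2k$.

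Having reduced to Theorem~\ref{thm:hemmer}, the remaining work is bookkeeping: for a general finite $S \subset \N$ and a given index $n$, only those elements of $S$ that are $\leq n-1$ are ``active,'' but since we only care about large $n$, eventually every element of $S$ is active, so the combinatorial identity for $L_n(S)$ settles into a stable shape. Combining the finitely many stable sequences $\{\beta_n(T)\}_{T\subseteq S}$ with the fixed signs $(-1)^{|S\setminus T|}$, and absorbing the overall homological shift (which is an overall sign, hence irrelevant to multiplicities), we conclude that $\{L_n(S)\}$ is multiplicity stable, with an explicit range $n \geq 2\max(S)$ coming from the stable range in Theorem~\ref{thm:hemmer}.

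The main obstacle, and the only genuinely nontrivial input, is already packaged for us: it is Theorem~\ref{thm:hemmer} (Hemmer's theorem), which supplies uniform multiplicity stability for the building-block induced representations $\Ind_{H\times S_{n-k}}^{S_n}(V\boxtimes\Q)$. The only delicate point on our side is to cite the correct form of Stanley's formula for the rank-selected Lefschetz character and to check that the coefficients in the inclusion–exclusion expression — and the homological degree shift — are genuinely independent of $n$; once that is verified, the rest is immediate. (One should also note, as in the proof of Theorem~\ref{thm:pure}, that although $L_n(S)$ is only a \emph{virtual} representation, multiplicity stability for a $\Z$--linear combination of multiplicity stable sequences follows formally, since each multiplicity $c_{\lambda,n}$ of $V(\lambda)_n$ in $L_n(S)$ is the corresponding $\Z$--linear combination of the eventually-constant multiplicities in the $\beta_n(T)$.)
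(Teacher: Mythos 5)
Your proof is correct, but it takes a genuinely different route from the paper's. The paper invokes Stanley's Theorem 4.3, which gives the multiplicity of $V(\lambda)_n$ in $(-1)^{|S|-1}(L_n(S)\oplus\Q)$ directly as the number of standard Young tableaux of shape $\lambda[n]$ with descent set exactly $S$, and then reuses the combinatorial stabilization argument from the proof of Theorem~\ref{thm:flag} (filling the new box in the first row of $\lambda[n+1]$ with $n+1$ is eventually a bijection on tableaux with a fixed descent set). You instead expand the Lefschetz character by the Hopf trace formula / rank-selection inclusion--exclusion as a fixed $\Z$--linear combination, with coefficients independent of $n$, of the permutation modules $\alpha_n(T)=\Ind_{S_{t_1}\times S_{t_2-t_1}\times\cdots\times S_{n-t_j}}^{S_n}\mathbf{1}$, and then apply Theorem~\ref{thm:hemmer} to each. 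Both arguments are sound; they are two faces of the same combinatorics (your permutation modules decompose by Kostka numbers, and the ``exact descent set'' count is the M\"obius inversion of the ``descent set contained in $S$'' count). What your route buys: an explicit stable range $n\geq 2\max(S)$ from Hemmer's bound, and independence from the finer tableau statistic --- indeed the character identity you use holds for the virtual representation by Euler characteristic alone, without appealing to Cohen--Macaulayness of the rank-selected Boolean lattice. What the paper's route buys: the stable multiplicities themselves, in closed combinatorial form, and uniformity of treatment with Theorems~\ref{thm:flag} and~\ref{thm:flagsp} (the hyperoctahedral analogue, where the descent-set technology transfers directly via Stembridge's theorem, whereas your induction-based argument would need the $W_n$ branching rules instead). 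Your final remark --- that multiplicity stability passes to fixed integer linear combinations because each $c_{\lambda,n}$ is the corresponding combination of eventually constant multiplicities --- is exactly the right justification for working with virtual representations.
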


\begin{proof}
  Consider the related virtual representation
  \[L'_n(S):=(-1)^{|S|-1}(L_n(S)\oplus \Q).\] Clearly $\{L'_n(S)\}$ is
  multiplicity stable if and only if $\{L_n(S)\}$ is multiplicity
  stable. Given a partition $\lambda$, Stanley \cite[Theorem 4.3]{Sta}
  proves that the multiplicity of $V(\lambda)_n$ in $L'_n(S)$ equals
  the number of standard Young tableaux with shape $\lambda[n]$ whose
  descent set is exactly $S\cap \{1,\ldots,n-1\}$. As we saw in the
  proof of Theorem~\ref{thm:flag}, this implies that the multiplicity
  of $V(\lambda)_n$ is constant for sufficiently large $n$, as desired.
\end{proof}

Let $C_n$ be the $n$--dimensional \emph{cross-polytope}, i.e.\ the
convex hull of the set of unit coordinate vectors $\{\pm e_1,\ldots,
\pm e_n\}$ in $\R^n$.  Let $Q_n$ be the poset of \emph{faces} of
$C_n$, meaning convex hulls of subsets of vertices. For $S\subset \N$,
let $Q_n(S)$ be the rank-selected poset consisting of faces whose
dimension lies in $S$, together with $\emptyset$ and $C_n$. The hyperoctahedral group $W_n$ naturally acts on $C_n$,
and thus on the poset $Q_n(S)$ and its geometric realization
$|Q_n(S)|$. Let $L^C_n(S)$ be the associated Lefschetz representation
\[L^C_n(S)\coloneq\sum_i(-1)^i H_i(|Q_n(S)|;\Q).\]

\begin{theorem}[Stability for Lefschetz representations of
  rank-selected cross-polytopes]
  Let $S\subset \N$ be any finite set.  Then the sequence
  $\{L^C_n(S)\}$ of virtual $W_n$--representations is multiplicity
  stable.
\end{theorem}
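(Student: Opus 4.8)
The plan is to mimic the proof just given for rank-selected Boolean posets, substituting the hyperoctahedral analogue of Stanley's theorem for Stanley's original result, and substituting the tableau-stabilization argument from the proof of Theorem~\ref{thm:flagsp} for the one used in the proof of Theorem~\ref{thm:flag}.

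First I would normalize the virtual representation. Exactly as in the symmetric-group case, set
\[{L'}^C_n(S)\coloneq (-1)^{|S|-1}\big(L^C_n(S)\oplus\Q\big),\]
so that $\{{L'}^C_n(S)\}$ is multiplicity stable if and only if $\{L^C_n(S)\}$ is. The poset $Q_n$ of faces of the cross-polytope $C_n$ is the ``signed Boolean algebra'': a face is a subset $T\subseteq\{1,\dots,n\}$ together with a choice of sign $\pm$ for each element of $T$, its dimension being $|T|-1$, and $W_n$ acts in the evident way. Since the boundary of $C_n$ is a simplicial sphere, $|Q_n(S)|$ is Cohen--Macaulay, so this normalization is the right one and ${L'}^C_n(S)$ is a genuine (non-virtual) character. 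Stanley's analysis of groups acting on finite posets, in its hyperoctahedral form (\cite{Sta}; compare the co-invariant-algebra statement of Stembridge \cite[Theorem 5.3]{Ste} used in the proof of Theorem~\ref{thm:flagsp}), then gives, for each double partition $\lambda=(\lambda^+,\lambda^-)$, an identity of the shape: the multiplicity of $V(\lambda)_n$ in ${L'}^C_n(S)$ equals the number of double standard Young tableaux of shape $\lambda[n]$ whose flag descent set equals $\widehat S\cap\{1,\dots,n\}$, where $\widehat S$ is a finite set depending only on $S$ (the flag descent set and flag major index being defined exactly as in the proof of Theorem~\ref{thm:flagsp}). The bookkeeping here --- the precise form of $\widehat S$, which accounts for the shift between the dimension of a face and its rank and for the role of $n$ itself in the flag descent set --- is the one genuinely fiddly point; I expect no conceptual difficulty once it is pinned down, and this is the step I flag as the main obstacle.

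With this combinatorial description in place, multiplicity stability is immediate from the proof of Theorem~\ref{thm:flagsp}: the Young diagram of $\lambda[n+1]$ is obtained from that of $\lambda[n]$ by adding one box to the first row of $\lambda^+[\,n-|\lambda^-|\,]$ (the ``main row''), and filling that box with the label $n+1$ changes neither the flag descent set nor the membership condition, giving an injection from the tableaux of shape $\lambda[n]$ with flag descent set $\widehat S\cap\{1,\dots,n\}$ into those of shape $\lambda[n+1]$; conversely, once $n>|\lambda^+|+\max\widehat S$ the pigeonhole principle forces the label $n$ to lie in the main row, so this injection is a bijection. Hence for fixed $\lambda$ and fixed finite $S$ the multiplicity of $V(\lambda)_n$ in $L^C_n(S)$ is eventually independent of $n$, which is Condition III and completes the proof.
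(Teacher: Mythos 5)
Your proposal is correct and follows essentially the same route as the paper: the paper likewise normalizes to $(-1)^{|S|-1}(L^C_n(S)\oplus\Q)$, cites Stanley's hyperoctahedral result (his Theorem 6.4) expressing the multiplicity of $V(\lambda)_n$ as a count of double standard Young tableaux of shape $\lambda[n]$ with flag descent set $S\cap\{1,\ldots,n-1\}$, and then invokes the stabilization argument from the proof of Theorem~\ref{thm:flagsp}. The only difference is that you leave the exact indexing of the descent set as a flagged bookkeeping point, which the paper resolves by quoting Stanley's statement directly.
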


\begin{proof}
  Given a double partition $\lambda=(\lambda^+,\lambda^-)$, Stanley
  \cite[Theorem 6.4]{Sta} shows that the multiplicity of
  $V(\lambda)_n$ in $(-1)^{|S|-1}(L^C_n(S)\oplus \Q)$ is the number of
  double standard Young tableaux of shape $\lambda[n]$ whose flag
  descent set is exactly $S\cap \{1,\ldots,n-1\}$. As we showed in the
  proof of Theorem~\ref{thm:flagsp}, this implies that the
  multiplicity of $V(\lambda)_n$ is constant for sufficiently large
  $n$, as desired.
\end{proof}

\subsection{The $(n+1)^{n-1}$ conjecture}
\label{section:nplusone}

There is a variation of the co-invariant algebra (discussed in the
proof of Theorem~\ref{thm:flag} above) that has been intensely studied
by combinatorialists. The symmetric group $S_n$ acts on
$\Q[x_1,\ldots,x_n,y_1,\ldots ,y_n]$ diagonally, permuting the $x_\bullet$
and the $y_\bullet$ separately. The \emph{diagonal co-invariant algebra} is
the $\Q$--algebra defined by:
\[R_n:=\Q[x_1,\ldots ,x_n,y_1,\ldots ,y_n]/I_n\]
\noindent
where $I_n$ denotes the ideal generated by the $S_n$--invariant
polynomials without constant term. The bigrading of $\Q[x_1,\ldots
,x_n,y_1,\ldots ,y_n]$ by total degree in $\{x_\bullet\}$ and total
degree in $\{y_\bullet\}$ descends to a bigrading $(R_n)_{i,j}$ of the
algebra $R_n$. This bigrading is preserved by the action of $S_n$ on
$R_n$.  The \emph{$(n+1)^{n-1}$ conjecture} was the conjecture
that \[\dim(R_n)=(n+1)^{n-1}.\] This conjecture was proved by Haiman
(see, e.g., the survey \cite{Hai}), using a connection between this
problem and the geometry of the Hilbert scheme of configurations of
$n$ points in $\C^2$.  Just as with the classical co-invariant
algebra, the structure of $R_n$ as an $S_n$--representation has been
determined \cite[Theorem 4.24]{Hai}.  However, the following seems to
be unknown.  It can be viewed as an ``asymptotic refinement'' of the
$(n+1)^{n-1}$ conjecture.

\begin{question}
\label{question:coinv}
  Is the sequence of $S_n$--representations
  $\{(R_n)_{i,j}\}$ representation stable for each fixed $i,j\geq 1$?  \footnote{Since this paper was first posted, this question has been answered affirmatively by the authors and Ellenberg in \cite[Section 3]{CEF1}, along with the generalization to $R_n^{(k)}$.}
\end{question}

This question has a
natural generalization to the ``$k$--diagonal co-invariant algebra''
$R_n^{(k)}$ for $k\geq 3$, by which we mean the algebra defined by the
same construction as above, with $kn$ variables partitioned into $k$
subcollections and $S_n$ acting diagonally on each subcollection
separately.  In this case the dimension of $R^{(k)}_n$ is not known.
It would be especially interesting if representation
stability as in Question
\ref{question:coinv} could be proved without knowing the irreducible
decomposition, or even the dimension, of $R^{(k)}_n$.

\section{Congruence subgroups, modular representations and stable
  periodicity}
\label{section:congruence}
Recall that a \emph{modular representation} of a finite group $G$ is an action of $G$ on a vector space over a field of positive characteristic dividing the order of $G$. Such representations need not decompose as a direct sum of irreducible representations and in general are very difficult to analyze. For finite groups of Lie type, for example $G=\SL_n(\F_p)$, the modular representation theory is significantly better understood in the \emph{defining characteristic} of $G$, meaning in this case over a field of characteristic $p$.
There are a number of important examples of groups $\Gamma$ whose cohomology $H^i(\Gamma;\F_p)$ is naturally a modular representation of a finite group of Lie type.   Examples of such $\Gamma$ include various congruence subgroups 
of arithmetic groups as well as congruence subgroups of mapping class
groups.

After explaining in detail a key motivating example, we briefly
review the modular representation theory that will be needed to
formulate representation stability in this context.  One new
phenomenon here is that natural sequences of representations arise
that do not satisfy representation stability, but instead exhibit a
form of ``stable periodicity'' as representations.  After defining
this precisely, we present several results and conjectures using this
concept.

\subsection{A motivating example}
Consider the following fundamental example from arithmetic. For any
prime $p$ the \emph{level $p$ congruence subgroup}
$\Gamma_n(p)<\SL_n\Z$ is the kernel
\[\Gamma_n(p)\coloneq \ker (\pi\colon\SL_n\Z\twoheadrightarrow\SL_n(\F_p))\]
where $\pi$ is the map reducing the entries of a matrix modulo
$p$. Charney proved in \cite{Ch} that over $\Q$ (indeed even over
$\Z[1/p]$) the sequence of groups $\{\Gamma_n(p)\}$ satisfy classical
homological stability. Furthermore, she proved that this is equivalent
to the claim that the natural action of $\SL_n(\F_p)$ on
$H^i(\Gamma_n(p);\Q)$ is trivial for large enough $n$, so that
\[H^i(\Gamma_n(p);\Q)^{\SL_n(\F_p)}
=H^i(\Gamma_n(p);\Q)=H^i(\SL_n\Z;\Q).\]

Replacing the coefficient field $\Q$ with $\F_p$ or its algebraic
closure $\Fpbar$, the situation becomes more interesting, and the
cohomology is much richer (see, e.g., \cite{Ad,As}).  First note that Charney's result is not
true in this case: the action of $\SL_n(\F_p)$ on
$H^i(\Gamma_n(p);\Fpbar)$ is certainly not trivial.  We can work this
out for $H_1(\Gamma_n(p);\F_p)$ explicity.  Each $B\in\Gamma_n(p)$ can
be written as $B=I+pA$ for some $A$.  It is easy to check that the map
$B\mapsto A\pmod{p}$ gives a surjective homomorphism
\begin{equation}
\label{eq:abelianize1}
\psi\colon  \Gamma_n(p)\to\fsl_n(\F_p)
\end{equation} 
where $\fsl_n(\F_p)$ is the abelian group of traceless $n\times n$
matrices with entries in $\F_p$.  Lee--Szczarba \cite{LSz} observed
that the proof of the Congruence Subgroup Property implies that $\psi$
yields an isomorphism
\[H_1(\Gamma_n(p);\Z)\approx H_1(\Gamma_n(p);\F_p)\approx
\fsl_n(\F_p).\] We thus see that, since the dimension of
$H_1(\Gamma(n,p);\F_p)$ increases with $n$, the sequence of groups
$\{\Gamma_n(\F_p)\}$ does not satisfy homological stability over
$\F_p$ in the classical sense.  However, it is clear from the
construction that the $\SL_n(\F_p)$--action on
$H_1(\Gamma_n(p);\F_p)\approx \fsl_n(\F_p)$ is just the usual
(modular) \emph{adjoint representation}; this is a modular
representation because $\fsl_n(\F_p)$ is a vector space over $\F_p$,
and $p$ divides the order of $\SL_n(\F_p)$.  We can thus hope to use
the modular representation theory of $\SL_n(\F_p)$ to define and study
a version of representation stability for each sequence
$\{H^i(\Gamma_n(p),\F_p)\}$ of $\SL_n(\F_p)$--representations.  For
example, $\fsl_n(\F_p)$ is an irreducible
$\SL_n(\F_p)$--representation, and so an appropriate form of
representation stability holds for $\{H_1(\Gamma_n(p);\F_p)\}$.

One can do all of the above for level $p$ congruence subgroups
$\Gamma^{\Sp}_{2g}(p)$ of $\Sp_{2g}\Z$.  As we will explain in
\S\ref{section:periodicity}, something new happens here: the sequence
of $\Sp_{2g}\F_2$--representations $\{H_1(\Gamma^{\Sp}_{2g}(2);\F_2)\}$
is only representation stable when restricted to even $g$, or to odd
$g$.  Indeed, for each $p\geq 2$ we will see below natural
examples of sequences that are ``stably periodic'' with period $p$.

\subsection{Modular representations of finite groups of Lie type}

In order to formalize the notion of representation stability in the
modular case, we need to review the pertinent representation theory.

\para{Representations of \boldmath$\SL_n(\Fpbar)$ and
  \boldmath$\Sp_{2n}(\Fpbar)$ in their defining characteristic}
Before restricting to the finite group $\SL_n(\F_p)$, we consider
representations of the algebraic group $\SL_n(\Fpbar)$ in the defining
characteristic $p$. While it is not true in this context that every
representation is completely reducible, irreducible representations of
$\SL_n(\Fpbar)$ over $\Fpbar$ are still classified by highest weights,
as follows.  We give the details for the case of $\SL_n$, but all
claims hold for $\Sp_{2n}$ as well. A nice reference for these
assertions is Humphreys \cite[Chapters 2 and 3]{Hu}.

Let $T<\SL_n(\Fpbar)$ be the maximal torus consisting of diagonal
matrices.  Let ${U<\SL_n(\Fpbar)}$ be the subgroup of strictly
upper-triangular matrices. Any representation $V$ of $\SL_n(\Fpbar)$
decomposes into eigenspaces for $T$. A vector $v\in V$ is called a
\emph{highest weight vector} if $v$ is an eigenvector for $T$ and is
invariant under $U$, in which case its \emph{weight} is the
corresponding eigenvalue $\lambda\in T^*$. Writing $T^*$ additively,
we identify $T^*$ with $\Z[L_1,\ldots,L_n]/(L_1+\cdots+L_n)$. The same
applies to $\Sp_{2n}(\Fpbar)$, with $T^*=\Z[L_1,\ldots,L_n]$. In
either case, a weight is called \emph{dominant} if it can be written
as a nonegative integral combination of the fundamental weights
$\omega_i=L_1+\cdots+L_i$.

The basics of the classification of irreducible
$\SL_n(\Fpbar)$--representations are the same as in the characteristic
0 case: every irreducible representation contains a unique highest
weight vector; the highest weight $\lambda$ determines the irreducible
representation; and every dominant weight occurs as the highest weight
of an irreducible representation. Thus we may unambiguously denote by
$V(\lambda)_n$ the irreducible representation of $\SL_n(\Fpbar)$ or
$\Sp_{2n}(\Fpbar)$ with highest weight $\lambda$. However, much less
is known about these irreducible representations than in the
characteristic $0$ case, and there is no known way to uniformly
construct all irreducible representations. Even the dimensions of the
irreducible representations are not known in general.

One approach to the construction of irreducible
$\SL_n(\Fpbar)$--representations $V(\lambda)$ is through Weyl
modules. This process starts with the irreducible representation
$V(\lambda)_\Q$ of $\SL_n\Q$ with weight $\lambda$. There is then a
special $\Z$--form $V(\lambda)_\Z\subset V(\lambda)_\Q$ so that
$\SL_n\Fpbar$ acts on the \emph{Weyl module} $W(\lambda)\coloneq
V(\lambda)_\Z\otimes \Fpbar$. The Weyl module $W(\lambda)$ is
generated by a single highest weight vector with weight $\lambda$, but
in general $W(\lambda)$ will not be irreducible. However, $W(\lambda)$
always admits a unique simple quotient, which must be the irreducible
representation $V(\lambda)$. We will see below that for fixed
$\lambda$, the question of whether $W(\lambda)$ is irreducible can
depend on the residue of $n$ modulo $p$.

\para{Restriction to finite groups of Lie type}
Given any representation of $\SL_n(\Fpbar)$, we may ``twist'' it by
precomposing with the Frobenius map $\SL_n(\Fpbar)\to
\SL_n(\Fpbar)$. This twisted representation clearly remains
irreducible; in fact for any $\lambda$ the twist of $V(\lambda)_n$ by
the Frobenius is $V(p\lambda)_n$.
A dominant weight $\lambda$ is called \emph{$p$--restricted} if it can
be written as $\lambda=\sum c_i\omega_i$ with $0\leq c_i<p$.
If $\lambda$ is $p$--restricted, then the restriction of the
irreducible representation $V(\lambda)_n$ from $\SL_n(\Fpbar)$ to
$\SL_n(\F_p)$ remains irreducible.  Every irreducible
representation of $\SL_n(\F_p)$ is of this form. Thus we have found
all $p^{n-1}$ irreducible representations of $\SL_n(\F_p)$ and all
$p^n$ irreducible representations of $\Sp_{2n}(\F_p)$.

\para{Uniqueness of composition factors} In the modular case we cannot
decompose a representation into a direct sum of irreducibles.
However, by the Jordan--H\"older theorem, the irreducible
representations that occur as the composition factors in any
Jordan--H\"older decomposition of any representation are indeed
unique.

\subsection{Stable periodicity and congruence subgroups}
\label{section:periodicity}

The definition of representation stability in the modular case needs
to be altered in a fundamental way in order to apply to several
natural examples.  One of these examples is the \emph{level $p$
  symplectic congruence subgroup} $\Gamma_{2g}^{\rm Sp}(p)<\Sp_{2g}\Z$
defined as the kernel
\[\Gamma_{2g}^{\rm Sp}(p)\coloneq \ker (\pi\colon\Sp_{2g}\Z\twoheadrightarrow\Sp_{2g}\F_p)\]
where $\pi$ is the map reducing the entries of a matrix modulo the
prime $p$.  Building on work of Sato, Putman \cite{Pu1} has shown, among many other things, that
for $g\geq 3$ and $p$ odd there is an $\Sp_{2g}\Z$--equivariant
isomorphism:
\[H_1(\Gamma_{2g}^{\rm Sp}(p),\Z)\approx H_1(\Gamma_{2g}^{\rm
  Sp}(p),\F_p)\approx \fsp_{2g}(\F_p)\] where $\fsp_{2g}(\F_p)$ is the
adjoint representation of $\Sp_{2g}(\F_p)$ on its Lie algebra.  Putman
also proved that the group $H_1(\Gamma_{2g}^{\rm Sp}(2),\F_2)$ is an
extension of $ \fsp_{2g}(\F_2)$ by $H:=H_1(S_g;\F_2)$.

Note that $\fsp_{2g}\F_p$ sits inside $\fgl_{2g}\F_p\approx H^*\otimes
H\approx H\otimes H$ as $\fsp_{2g}\F_p\approx \Sym^2 H$.  When $p$ is
odd, $\fsp_{2g}\F_p\approx \Sym^2 H$ is irreducible with highest
weight vector $a_1\cdot a_1$ and highest weight $2\omega_1$ (see
Hogeweij \cite[Corollary 2.7]{Ho}).  The situation is different for
$p=2$: the representation $\fsp_{2g}\F_2\approx \Sym^2 H$ is no longer
irreducible. Indeed since \[(x+y)^2=x^2+2x\cdot y+y^2=x^2+y^2\] there
is an embedding $H\hookrightarrow \Sym^2 H$ defined by $x\mapsto
x\cdot x$; this is a map of $\Sp_{2g}(\F_2)$--representations since
$a^2=a$ in $\F_2$.  Recalling that $a_1\cdot a_1$ has highest weight
$2\omega_1$, over $\overline{\F}_2$ we see here the isomorphism
between $V(2\omega_1)$ and the twist of $V(\omega_1)\approx H$ by the
Frobenius map $a\mapsto a^2$.  Since $x\cdot y=y\cdot x=-y\cdot x$,
the quotient $\Sym^2 H/H$ is isomorphic to $\bwedge^2 H$.  This has an
invariant contraction $\bwedge^2 H\to \F_2$ (represented by the
symplectic form) and an invariant vector $\omega=a_1\cdot b_1+\cdots
a_g\cdot b_g$ (representing the symplectic form).  These are
independent when $g$ is odd, but not when $g$ is even.  Thus
$\fsp_{2g}\F_2$ has composition factors $V(0),V(\omega_1),V(\omega_2)$
if $g$ is odd, and $V(0)^2,V(\omega_1),V(\omega_2)$ if $g$ is even (see
\cite[Lemma 2.10]{Ho}).

In order to take situations like this into account, we must build periodicity into the
definition of stability.

\begin{definition}[Stable periodicity]
  Let $G_n=\SL_n(\F_p)$ or $\Sp_{2n}(\F_p)$.  Let $\{V_n\}$ be a
  consistent (c.f.\ \S\ref{section:repstab:def}) sequence of modular
  $G_n$--representations, i.e.\ representations of vector spaces over
  $\F_p$.  The sequence $\{V_n\}$ is \emph{stably
    representation periodic}, or just \emph{stably periodic}, if
  Condition I (Injectivity) and Condition II (Surjectivity) of
  Definition~\ref{definition:repstab1} hold, together with the
  following:
  \begin{enumerate} 
  \item[{\bf PMIII.}](Stable periodicity of multiplicities): For each
    highest weight vector $\lambda$, the multiplicity of $V(\lambda)$
    as a composition factor in the Jordan--H\"older series for $V_n$
    as a $G_n$--representation is \emph{stably periodic}: there exists
    $C=C_\lambda$ so that for all sufficiently large $n$, this
    multiplicity is periodic in $n$ with period $C$.
  \end{enumerate}
 
  \medskip Similarly we have the corresponding notion of
  \emph{uniformly stably periodic}, where we additionally require that
  the eventual period $C$ does not depend on $\lambda$, and also
  \emph{mixed tensor stably periodic}.  We note that a representation
  stable sequence is also stably periodic with period $C$ for any
  $C\geq 1$.
\end{definition}

We will apply the above definition to give a conjectural picture of
the cohomology of congruence groups.

\begin{conjecture}[Modular periodic stability for congruence groups]
  \label{conjecture:modular1}
  Fix any $i\geq 0$ and any prime $p$.  Then
  \begin{enumerate}
  \item The sequence of $\SL_n(\F_p)$--representations
    $\{H_i(\Gamma_n(p);\F_p)\}$ is uniformly mixed tensor stably
    periodic with period $p$.
  
  \item The sequence of $\Sp_{2n}(\F_p)$--representations
    $\{H_i(\Gamma^{\rm Sp}_{2n}(p);\F_p)\}$ is uniformly stably periodic with
    period $p$.
  \end{enumerate}
\end{conjecture}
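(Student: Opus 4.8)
The plan is to follow the template of \S\ref{section:liealg}: replace the congruence group by the graded Lie algebra arising from its congruence filtration, prove a \emph{stably periodic} analogue of the Lie-algebra stability machinery over $\Fpbar$, and feed in the known computation of $H_1$ as the base case. I describe part (2), the symplectic case; part (1) runs in parallel except that one works throughout with \emph{mixed tensor} composition multiplicities (in the sense of \S\ref{section:strong}), since the relevant graded pieces are built from the modular adjoint representation $\fsl_n(\F_p)$, whose $\SL_n(\Fpbar)$-theoretic incarnation is the mixed tensor representation $V(1;1)_n$. First I would pass to the associated graded Lie algebra: writing $\Gamma_{2n}^{(k)}\coloneq\Gamma^{\rm Sp}_{2n}(p^k)$, the commutator estimates $[\Gamma_{2n}^{(j)},\Gamma_{2n}^{(k)}]\subseteq\Gamma_{2n}^{(j+k)}$ together with $\Gamma_{2n}^{(k)}/\Gamma_{2n}^{(k+1)}\approx\fsp_{2n}(\F_p)$ (the argument of \eqref{eq:abelianize1}, extended to higher levels) yield a graded $\F_p$--Lie algebra $\L_n\coloneq\bigoplus_{k\geq1}\fsp_{2n}(\F_p)\,t^k$ with bracket $[Xt^j,Yt^k]=[X,Y]t^{j+k}$, carrying a grading-preserving $\Sp_{2n}(\F_p)$-action which is adjoint on each graded piece. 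For $p$ odd, $\Gamma^{\rm Sp}_{2n}(p)$ is uniformly powerful as a $p$-adic analytic group, so a Lazard/Quillen-type spectral sequence relates $H_*(\L_n;\F_p)$ to $H_*(\Gamma^{\rm Sp}_{2n}(p);\F_p)$ equivariantly; it then suffices to prove that each $\{H_i(\L_n;\F_p)^j\}$ is uniformly stably periodic with period $p$ as $\Sp_{2n}(\Fpbar)$--representations (in the Jordan--H\"older sense), and to check that the spectral sequence preserves this property.

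The technical heart is a modular, stably periodic version of \S\ref{section:classicalstability}. Concretely, I need: (i) a stably periodic analogue of Theorem~\ref{thm:classicalstability}, namely that tensor products and exterior powers of stably periodic sequences of $\Sp_{2n}(\Fpbar)$--representations are again stably periodic, with the period of a product dividing the least common multiple of the two periods; and (ii) a stably periodic analogue of Theorem~\ref{thm:LRinvert}, the ``division'' step. Since semisimplicity and the Littlewood--Richardson rule fail over $\Fpbar$, these must be proved through Weyl and tilting modules: a tensor product of tilting modules is tilting, a tensor product of Weyl modules has a Weyl filtration with Littlewood--Richardson multiplicities, while the failure of a Weyl module $W(\lambda)_n$ to be irreducible --- equivalently, the jump in the composition multiplicity of $V(\lambda)_n$ --- is governed by the linkage principle and the Jantzen sum formula, which involve the affine Weyl group at $p$. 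The point is that for fixed $\lambda$ whether such a jump occurs depends only on $n\bmod p$, and does so uniformly in the remaining data; this is precisely the source of the period $p$. Granting (i) and (ii), the complex-chasing argument proving the equivalences in Theorem~\ref{thm:equivhomLie}, in the form used for $\Sp$ in Theorem~\ref{thm:equivSpLie} (which, crucially, does not require type-preservation), transfers verbatim with ``uniformly multiplicity stable'' replaced by ``uniformly stably periodic''.

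It remains to verify the base case and assemble. Each graded piece $\L_n^k=\fsp_{2n}(\F_p)\,t^k$ is the adjoint representation, whose $\Sp_{2n}(\Fpbar)$--composition factors are $V(2\omega_1)_n$ and $V(0)_n$, the latter with multiplicity $0$ or $1$ according to whether $p\nmid 2n$ or $p\mid 2n$; for $p=2$ there is in addition a factor $V(\omega_1)_n$ together with the parity behaviour described after Conjecture~\ref{conjecture:modular1} (see Hogeweij \cite{Ho}). Thus $\{\L_n^k\}$ is already uniformly stably periodic with period $p$, and the modular Lie-algebra machinery yields stable periodicity of $\{H_i(\L_n;\F_p)\}$, hence of the ``$p$-local part'' of $\{H_i(\Gamma^{\rm Sp}_{2n}(p);\F_p)\}$. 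The prime $p=2$ lies outside Lazard's hypotheses; there I would instead take Putman's computation of $H_1(\Gamma^{\rm Sp}_{2n}(2);\F_2)$ \cite{Pu} as the base case and analyze the Hochschild--Serre spectral sequence of $1\to\Gamma^{\rm Sp}_{2n}(2)\to\Sp_{2n}\Z\to\Sp_{2n}\F_2\to1$ directly, using classical $\F_2$--homological stability for $\Sp_{2n}\Z$ to control the abutment and thereby separate the stable ``arithmetic'' contribution (period $1$) from the period-$p$ congruence-filtration contribution.

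The hard part will be step (i)--(ii): upgrading the qualitative $\bmod\,p$ periodicity of modular decomposition numbers to a statement that is uniform in $n$ and holds \emph{simultaneously} for all composition factors, with no clean combinatorial substitute for Littlewood--Richardson available. A secondary obstacle is making the passage from group homology to Lie-algebra homology precise and $\Sp_{2n}(\F_p)$--equivariant in every degree --- especially at $p=2$ and in low degrees --- which requires isolating, via homological stability of the ambient arithmetic group, the part of $H_*(\Gamma^{\rm Sp}_{2n}(p);\F_p)$ not detected by $\L_n$.
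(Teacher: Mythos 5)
This statement is a \emph{conjecture} in the paper, not a theorem: the authors give no proof of Conjecture~\ref{conjecture:modular1}, only the $i=1$ evidence supplied by the Lee--Szczarba computation $H_1(\Gamma_n(p);\F_p)\approx\fsl_n(\F_p)$ and Putman's computation of $H_1(\Gamma^{\rm Sp}_{2n}(p);\F_p)$. Your proposal is therefore a candidate strategy for an open problem, and should be evaluated as such.

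The most serious gap is in the passage from group homology to Lie algebra homology. Lazard's isomorphism relates the \emph{continuous} cohomology of a uniformly powerful compact $p$-adic analytic group --- here the pro-$p$ completion $\Gamma^{\rm Sp}_{2n}(\Z_p)(p)$ --- to the cohomology of its $\Q_p$-Lie algebra; it says nothing about the discrete arithmetic group $\Gamma^{\rm Sp}_{2n}(p)<\Sp_{2n}\Z$. The $\F_p$--cohomology of the discrete group contains, in addition to whatever the congruence filtration can see, contributions that are invisible to the pro-$p$ completion (already over $\Q$ this is Borel's stable cohomology of $\Sp_{2n}\Z$, and over $\F_p$ the extra terms are strictly larger and poorly understood, cf.\ \cite{Ad,As}). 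There is no spectral sequence in the literature of the type you invoke relating $H_*(\Gamma^{\rm Sp}_{2n}(p);\F_p)$ to $H_*(\L_n;\F_p)$ equivariantly, and the final sentence --- that one would ``check that the spectral sequence preserves [stable periodicity]'' --- glosses over the fact that you have neither a spectral sequence nor any control on its differentials or abutment. This is not a technicality; it is the conceptual reason the conjecture remains open. By contrast, the paper's Lie-algebra machinery in \S\ref{section:liealg} applies to genuinely nilpotent discrete groups via Nomizu's theorem, a setting where group cohomology literally equals Lie algebra cohomology, and $\Gamma_n(p)$ is nothing like nilpotent.

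The second gap, which you flag honestly, is the modular analogue of Theorems~\ref{thm:classicalstability} and \ref{thm:LRinvert}. The Weyl-filtration multiplicities in $W(\lambda)\otimes W(\mu)$ are indeed the characteristic-zero Littlewood--Richardson numbers, but passing to composition multiplicities of simples requires the decomposition numbers $[W(\nu)_n:V(\mu)_n]$, and the claim that these are eventually periodic in $n$ with period exactly $p$, uniformly over the relevant (bounded) set of weights, is not in the literature and is itself a nontrivial assertion about the affine Weyl group combinatorics at $p$ as the rank varies. The observation that $\lambda$ and $\nu$ being linked ``depends only on $n\bmod p$'' is not the same as the multiplicities being periodic; the Jantzen sum formula gives an alternating sum of Weyl characters, and inverting this to extract composition factors is precisely the hard part of modular representation theory, not something that can be invoked as a black box. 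Without a precise statement and proof of a stably-periodic Clebsch--Gordan and a stably-periodic Theorem~\ref{thm:LRinvert} over $\Fpbar$, the ``complex-chasing'' transfer of Theorem~\ref{thm:equivSpLie} does not go through. Your identification of the composition factors of $\L_n^k=\fsp_{2n}\F_p$ for varying $p$ and $n\bmod p$ is correct and matches the paper's discussion following Conjecture~\ref{conjecture:modular1}, so the base case is in order; but the two reductions above are both genuinely open, and the first one in particular would require a completely different mechanism connecting discrete congruence-group cohomology to the congruence filtration.
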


We note that mixed tensor representations are really needed in Part 1
of Conjecture~\ref{conjecture:modular1}, since for example
\[H_1(\Gamma_n(p))=\fsl_n\F_p=V(L_1-L_n)=V(\omega_1+\omega_{n-1})=V(1;1)_n\]
is not representation stable, but is mixed representation stable.  We
also remark that periodicity is also needed in the conjecture.  For
example, by the discussion above, the sequence $\{H_1(\Gamma^{\rm
  Sp}_{2n}(2);\F_2)\}$ is a stably periodic sequence of
$\Sp_{2n}\F_2$--representations with stable period $2$.  These
examples also verify that Conjecture~\ref{conjecture:modular1} is true
for $i=1$.

\subsection{The abelianization of the Torelli group}

Dennis Johnson computed that the abelianization of the Torelli group
$\I_{g,1}$ comes from two sources. The first is the so-called Johnson
homomorphism, which is purely algebraically defined, and captures the
action of $\I_{g,1}$ on the universal two-step nilpotent quotient of
$\pi_1(S_{g,1})$ (but see \cite{CF} for a geometric perspective); its
image is $\bwedge^3 H_1(S_{g,1};\Z)$. The second is the
Birman--Craggs--Johnson homomorphism, which views the Torelli group as
gluing maps for Heegard splittings and bundles together the Rokhlin
invariants of the resulting homology 3--spheres. Its image is
2--torsion and is isomorphic to the space $B_3$ of Boolean polynomials
on $H_1(S_{g,1};\F_2)$ of degree at most 3. Johnson showed that these
quotients exhaust the homology of the Torelli group, but with some
overlap. He concludes in \cite{Jo2} that for $g\geq 3$ there is an
isomorphism of abelian groups:
\[H_1(\I_{g,1},\Z)\approx\bwedge^3 H_1(S_{g,1};\Z)\oplus B_2,\]
where $B_2$ is the space of Boolean polynomials of degree at most 2.

The action of $\Sp_{2g}\Z$ on $H_1(\I_{g,1};\Z)$ descends to an action
of $\Sp_{2g}(\Z/2\Z)$ on the torsion subgroup
$H_1(\I_{g,1};\Z)_{\Torsion}\approx B_2$. Shvartsman \cite{Sh} has
recently determined the structure of $H_1(\I_{g,1};\Z)_{\Torsion}$ as
an $\Sp_{2g}(\F_2)$--module. From his calculation we deduce the
following.

\begin{theorem}
\label{thm:torabtor}
The torsion subgroup $H_1(\I_{g,1};\Z)_{\Torsion}$ of the
abelianization of $\I_{g,1}$ is uniformly stably periodic with period
2. The subsequence for even $g$ is uniformly representation
stable, and the subsequence for odd $g$ is uniformly representation
stable.
\end{theorem}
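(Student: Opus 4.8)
The plan is to deduce the statement from Shvartsman's determination \cite{Sh} of $\Torsion(H_1(\I_{g,1};\Z))$ as an $\Sp_{2g}(\F_2)$--module, after installing the natural maps and checking Conditions I and II of stable periodicity. Recall from Johnson's theorem \cite{Jo2} that for $g\geq 3$ the torsion subgroup is, as an abelian group, the space $B_2$ of Boolean polynomials of degree at most $2$ on $V_g\coloneq H_1(S_{g,1};\F_2)$, and that the conjugation action of $\Mod_{g,1}$ equips it with a natural $\Sp_{2g}(\F_2)$--action, the identification with $B_2$ being equivariant for the linear action on $V_g$. The inclusion $S_{g,1}\hookrightarrow S_{g+1,1}$ gives $\I_{g,1}\hookrightarrow \I_{g+1,1}$ and hence a map $H_1(\I_{g,1};\Z)\to H_1(\I_{g+1,1};\Z)$ carrying torsion to torsion; under the above identification this is the inclusion $B_2(V_g)\hookrightarrow B_2(V_{g+1})$ coming from $V_g\hookrightarrow V_{g+1}$, equivariant for $\Sp_{2g}(\F_2)\hookrightarrow \Sp_{2g+2}(\F_2)$. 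So $\{\Torsion(H_1(\I_{g,1};\Z))\}$ is a consistent sequence of $\Sp_{2g}(\F_2)$--representations.

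First I would verify Conditions I and II. Injectivity is immediate from $B_2(V_g)\hookrightarrow B_2(V_{g+1})$. For surjectivity, every degree $\leq 2$ Boolean polynomial on $V_{g+1}$ is a sum of monomials each involving at most two coordinate functionals; by Witt's theorem $\Sp_{2g+2}(\F_2)$ acts transitively on configurations of at most two independent vectors with prescribed symplectic pairings, so (for $g$ not too small) each such monomial lies, modulo lower-degree monomials, in the $\Sp_{2g+2}(\F_2)$--orbit of one supported on $V_g$. Iterating down in degree shows the $\Sp_{2g+2}(\F_2)$--span of the image of $B_2(V_g)$ is all of $B_2(V_{g+1})$.

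The essential point is Condition PMIII, and here I would invoke Shvartsman together with the discussion of $\bwedge^2 V_g$ over $\F_2$ from \S\ref{section:periodicity}. The degree filtration on $B_2(V_g)$ is $\Sp_{2g}(\F_2)$--invariant with associated graded $\F_2\oplus V_g\oplus \bwedge^2 V_g$, so the Jordan--H\"older factors of $B_2(V_g)$ are those of $V(0)$, of $V_g=V(\omega_1)$, and of $\bwedge^2 V_g$. By \cite[Lemma 2.10]{Ho} (see \S\ref{section:periodicity}), the composition factors of $\bwedge^2 V_g$ are $V(0),V(\omega_2)$ when $g$ is odd and $V(0)^{2},V(\omega_2)$ when $g$ is even, the extra trivial factor in the even case reflecting the fact that the invariant vector $\omega=\sum a_i\wedge b_i$ lies in the kernel of the symplectic contraction $\bwedge^2 V_g\to\F_2$ precisely when $g$ is even. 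Hence for all $g\geq 3$ the composition factors of $\Torsion(H_1(\I_{g,1};\Z))$ are exactly $V(0)$, $V(\omega_1)$, $V(\omega_2)$, with $V(\omega_1)$ and $V(\omega_2)$ of multiplicity $1$ independent of $g$ and $V(0)$ of multiplicity depending only on the parity of $g$ (two for odd $g$, three for even $g$); Shvartsman's theorem \cite{Sh} confirms these multiplicities and that no further factors occur. Thus for every weight $\lambda$ the multiplicity of $V(\lambda)$ is eventually periodic in $g$ with period dividing $2$, and since the period $2$ is valid simultaneously for all $\lambda$, the sequence is uniformly stably periodic with period $2$.

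For the parity subsequences, one restricts to even $g$ (respectively odd $g$): then every composition-factor multiplicity is eventually constant, which is Condition III$'$ with a bound independent of $\lambda$; Conditions I and II for the subsequence follow by composing the maps $g\to g+1$ (the $\Sp_{2g+4}(\F_2)$--span of the image of the $g$th term already contains the $(g+1)$st term, hence equals the $(g+2)$nd). Therefore each of the two subsequences is uniformly representation stable, as claimed. The main obstacle is the precise accounting of composition factors of $B_2(V_g)$ over $\F_2$ — in particular the parity-dependent splitting behavior of $\bwedge^2 V_g$ — which is exactly the content supplied by \cite{Sh} and \cite{Ho}; once that is in hand, the periodic-stability formalism applies with little further work.
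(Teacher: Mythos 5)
Your proposal is correct and follows essentially the same route as the paper: the entire content of the paper's proof is the list of Jordan--H\"older factors $V(0)^{2},V(\omega_1),V(\omega_2)$ for $g$ odd and $V(0)^{3},V(\omega_1),V(\omega_2)$ for $g$ even, obtained from Shvartsman's computation together with the parity-dependent splitting of $\bwedge^2 H_1(S_{g,1};\F_2)$, exactly as you argue. You additionally spell out the consistency of the sequence and Conditions I and II via the degree filtration on $B_2$, which the paper's proof leaves implicit.
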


\begin{proof}
  The results of Shvartsman \cite{Sh} give the following list of the
  simple $\Sp_{2g}(\F_2)$--modules appearing in a composition series
  for $H_1(\I_{g,1};\Z)_{\Torsion}$ for $g\geq 3$.  We list modules by
  their highest weight.
  \begin{align*}
    &V(0),V(\omega_1),V(0),V(\omega_2)&\text{ for $g$ odd }\\
    &V(0),V(\omega_1),V(0),V(\omega_2),V(0)&\text{ for $g$ even}
  \end{align*}
  The discrepancy between $g$ even and $g$ odd arises from the same
  source as the corresponding discrepancy for $\bwedge^2
  H_1(S_{g,1};\F_2)$ discussed above.
\end{proof}

\subsection{Level $p$ mapping class groups}
The \emph{level $p$ mapping class group $\Mod_{g,1}(p)$} is the kernel
of the composition \[\Mod_{g,1}\twoheadrightarrow
\Sp_{2g}\Z\twoheadrightarrow \Sp_{2g}\F_p.\] The group $\Mod_{g,1}(p)$
is the ``mod $p$'' analogue of the Torelli group $\I_{g,1}$, since it
is the subgroup of $\Mod_{g,1}$ acting trivially on
$H_1(S_{g,1};\F_p)$.  Hain \cite[Proposition 5.1]{Ha3} proved that for
$g\geq 3$ the group $H^1(\Mod_{g,1}(p);\Z)$ is trivial, so the
abelianization $H_1(\Mod_{g,1}(p);\Z)$ consists entirely of torsion
elements.

Putman \cite{Pu1}, building on work of Sato,  recently proved that elements of
$H_1(\Mod_{g,1}(p);\Z)$ come from three sources. The first is the
abelianization of the congruence subgroup $\fsp_{2g}(\F_p)$, which we
discussed above.  The second source is a ``mod $p$'' version of the
Johnson homomorphism, which has image $\bwedge^3 H_1(S_{g,1};\F_p)$. The
third source contributes only when $p=2$, and is a quotient $B_2/\F_2$
coming from the Birman--Craggs--Johnson homomorphism. The quotient
$\Sp_{2g}\F_p$ naturally acts on $H_1(\Mod_{g,1}(p);\Z)$, and it
follows from Putman's characterization that $H_1(\Mod_{g,1}(p);\Z)$ is
in fact an $\F_p$--representation of $\Sp_{2g}\F_p$.

\begin{theorem}
  \label{thm:ModgL}
  Fix a prime $p$. Then the sequence $\{H_1(\Mod_{g,1}(p);\Z)\}$ of
  $\Sp_{2g}\F_p$--representations is periodically uniformly
  representation stable with period $p$.
\end{theorem}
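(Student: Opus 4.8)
The plan is to reduce the computation of $H_1(\Mod_{g,1}(p);\Z)$ to understanding each of the three pieces identified by Putman, and then to apply the analysis of $\fsp_{2g}\F_p$ and $\bwedge^3 H$ that was already carried out earlier in the section. First I would recall Putman's structure theorem: for $g$ large, $H_1(\Mod_{g,1}(p);\Z)$ is built from (i) the abelianization of the congruence subgroup, which is $\fsp_{2g}(\F_p)$; (ii) the mod-$p$ Johnson homomorphism with image $\bwedge^3 H_1(S_{g,1};\F_p)$; and (iii) for $p=2$ only, the quotient $B_2/\F_2$ coming from the Birman--Craggs--Johnson homomorphism. Since the extension is assembled from these pieces in a way that is $\Sp_{2g}\F_p$-equivariant and functorial in $g$, it suffices to prove that each piece is periodically uniformly representation stable with period $p$ and that the composition-factor multiplicities simply add.

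Next I would handle the three pieces in turn. For $\bwedge^3 H$ where $H=H_1(S_{g,1};\F_p)=\Q^{2g}$ is the standard $\Sp_{2g}\F_p$-module, the composition factors are essentially $V(\omega_3)$ and $V(\omega_1)$, with the usual caveat that over $\F_2$ or $\F_3$ the Weyl module $W(\omega_3)$ may fail to be irreducible, and whether it does can depend on the residue of $g$ modulo $p$; this is exactly the kind of residue-dependent behavior that the definition of stable periodicity with period $p$ was designed to capture. For $\fsp_{2g}\F_p$, I would invoke precisely the discussion preceding Theorem~\ref{thm:torabtor}: when $p$ is odd, $\fsp_{2g}\F_p\approx\Sym^2 H$ is irreducible with highest weight $2\omega_1$, so the multiplicities are genuinely constant (hence periodic with any period, in particular $p$); when $p=2$, the composition factors are $V(0),V(\omega_1),V(\omega_2)$ for $g$ odd and $V(0)^2,V(\omega_1),V(\omega_2)$ for $g$ even, which is visibly periodic with period $2$. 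For the $p=2$ piece $B_2/\F_2$, I would use Johnson's identification of $B_2$ with Boolean polynomials of degree $\leq 2$ on $H_1(S_{g,1};\F_2)$ together with the $\Sp_{2g}\F_2$-module analysis already used in the proof of Theorem~\ref{thm:torabtor} (from Shvartsman \cite{Sh}), whose composition factors again oscillate with period $2$ according to the parity of $g$.

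For the injectivity and surjectivity conditions (Conditions I and II of Definition~\ref{definition:repstab1}), I would use the natural maps $\Mod_{g,1}(p)\hookrightarrow\Mod_{g+1,1}(p)$ induced by the inclusion $S_{g,1}\hookrightarrow S_{g+1,1}$ extending by the identity, and check that on $H_1$ each of the three pieces maps injectively onto a subspace whose $\Sp_{2(g+1)}\F_p$-span is everything; for $\bwedge^3 H$ and $\Sym^2 H$ this is a routine linear-algebra computation with a symplectic basis, and for $B_2/\F_2$ it follows from the functoriality of the Birman--Craggs--Johnson homomorphism in the surface.

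The main obstacle I expect is not any single piece individually but controlling the extension: Putman's theorem describes $H_1(\Mod_{g,1}(p);\Z)$ with overlaps among the three sources, so one must verify that the overlaps themselves vary periodically with $g$ and that passing to composition factors (rather than direct summands, which need not exist in the modular setting) the total multiplicity of each $V(\lambda)$ is the sum of the contributions minus the overlaps, each of which is periodic with period $p$. Once the bookkeeping of composition factors is organized $\Sp_{2g}\F_p$-equivariantly and shown to stabilize up to period $p$ uniformly in $\lambda$ (using that only finitely many $\lambda$ can occur in a fixed homological degree, here degree $1$), Condition PMIII follows, and together with Conditions I and II this gives periodic uniform representation stability with period $p$, as claimed.
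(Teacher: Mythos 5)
Your proposal follows essentially the same route as the paper: invoke Putman's extension description of $H_1(\Mod_{g,1}(p);\Z)$ in terms of $\fsp_{2g}\F_p$, $\bwedge^3 H$, and (for $p=2$) $B_2/\F_2$, compute the composition factors of each piece using the analysis already carried out for $\fsp_{2g}\F_p$ and Shvartsman's description of $B_2$, and observe that the resulting Jordan--H\"older multiplicities vary periodically in $g$ with period $p$. One small correction: the multiplicity of $V(\omega_1)$ in $\bwedge^3 H$ drops exactly when $g\equiv 1\bmod p$, and this happens for \emph{every} prime $p$ (it is not a low-characteristic artifact of $\F_2$ or $\F_3$), but since the multiplicities are still periodic with period $p$ your conclusion is unaffected.
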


\begin{proof}
  Let $H:=H_1(S_{g,1};\F_p)$ be the standard representation of
  $\Sp_{2g}\F_p$. For any prime $p$, the representation $\bwedge^3 H$
  has as composition factors the simple $\Sp_{2g}\F_p$--modules:
  \begin{align*}
    &V(\omega_1),V(\omega_3)&\text{for }g\equiv 1\bmod{p}\\
    &V(\omega_1),V(\omega_3),V(\omega_1)&\text{for }g\not\equiv
    1\bmod{p}
  \end{align*}
 
  Putman proves in \cite[Theorem 7.8]{Pu1} that for $p$ odd and $g\geq
  5$, the group $H_1(\Mod_{g,1}(p);\Z)$ is an extension of
  $\fsp_{2g}\F_p$ by $\bwedge^3 H$. Thus $H_1(\Mod_{g,1}(p);\Z)$ has
  composition factors
  \begin{align*}
    &V(\omega_1),V(2\omega_1),V(\omega_3),
    &\text{ for }g\equiv 1\bmod{p}\ \\
    &V(\omega_1)^2,V(2\omega_1),V(\omega_3)&\text{ for }g\not\equiv
    1\bmod{p}.
  \end{align*}

  For $p=2$, Putman proves that $H_1(\Mod_{g,1}(2);\Z)$ is an
  extension of $H_1(\Gamma_{2g}^{\rm Sp}(p);\F_p)$ by
  $\bwedge^3H\oplus B_2/\F_2$. The former has composition factors
  $\fsp_{2g}\F_2$ and $V(\omega_1)$, and Shvartsman describes $B_2$ as
  in Theorem~\ref{thm:torabtor}.  We conclude that for $g\geq 5$, the
  group $H_1(\Mod_{g,1}(2);\Z)$ has the following composition factors
  as an $\Sp_{2g}\F_2$--module:
  \begin{align*}
    &V(0)^2,V(\omega_1)^4,V(\omega_2)^2,V(\omega_3)&\text{ for $g$ odd  }\\
    &V(0)^4,V(\omega_1)^5,V(\omega_2)^2,V(\omega_3)&\text{ for $g$
      even}
  \end{align*}
  Thus in both cases we see that the abelianization is periodic and
  uniformly multiplicity stable with period $p$.
\end{proof}

Given Theorem~\ref{thm:ModgL}, it is natural to make the following
conjecture.

\begin{conjecture}[Modular periodic stability for $\Mod_{g,1}(p)$]
  Fix any $i\geq 0$ and a prime $p$. Then the sequence of
  $\Sp_{2g}\F_p$--representations $\{H_i(\Mod_{g,1}(p);\Z)\}$ is
  uniformly stably periodic with period $p$.
\end{conjecture}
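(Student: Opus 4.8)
The plan is to deduce the statement from Putman's structural computation of $H_1(\Mod_{g,1}(p);\Z)$ \cite{Pu} (building on Sato), together with bookkeeping of Jordan--H\"older multiplicities. First recall that $H^1(\Mod_{g,1}(p);\Z)=0$ for $g\geq 3$ by Hain \cite{Ha3}, so $H_1(\Mod_{g,1}(p);\Z)$ is torsion, and by Putman's results it is in fact an $\F_p$--vector space, hence an $\Sp_{2g}\F_p$--representation. The stabilization maps $\phi_g$ are those induced by the surface inclusions $S_{g,1}\hookrightarrow S_{g+1,1}$ (extending mapping classes by the identity), which carry $\Mod_{g,1}(p)$ into $\Mod_{g+1,1}(p)$ and so induce $\Sp_{2g}\F_p$--equivariant maps $H_1(\Mod_{g,1}(p);\Z)\to H_1(\Mod_{g+1,1}(p);\Z)$ compatible with the upper-left inclusion $\Sp_{2g}\F_p\hookrightarrow\Sp_{2g+2}\F_p$. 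Since composition factors are additive in short exact sequences of $\F_p[\Sp_{2g}\F_p]$--modules, it suffices to (i) list the Jordan--H\"older factors of each module in Putman's filtration of $H_1(\Mod_{g,1}(p);\Z)$; (ii) check that the resulting multiplicities are functions of $g$ eventually periodic with period $p$, uniformly in the labelling highest weight; and (iii) verify Conditions I and II of Definition~\ref{definition:repstab1} for $\phi_g$.

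For step (i), write $H=H_1(S_{g,1};\F_p)$. For $p$ odd and $g\geq 5$, Putman's work shows $H_1(\Mod_{g,1}(p);\Z)$ is an extension of $\fsp_{2g}\F_p$ by $\bwedge^3 H$; here $\fsp_{2g}\F_p\cong\Sym^2 H$ is the irreducible $V(2\omega_1)$ by Hogeweij \cite{Ho}, while the factors of $\bwedge^3 H$ come from analyzing the contraction $\bwedge^3 H\to H$ together with the map $H\to\bwedge^3 H$ wedging with the symplectic form, whose composite is multiplication by a scalar $\equiv 0\bmod p$ precisely when $g\equiv 1\bmod p$; this yields factors $V(\omega_1),V(\omega_3)$ for $g\equiv 1\bmod p$ and $V(\omega_1),V(\omega_1),V(\omega_3)$ otherwise. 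For $p=2$ one uses instead the filtration of $H_1(\Mod_{g,1}(2);\Z)$ with successive quotients $H_1(\Gamma^{\mathrm{Sp}}_{2g}(2);\F_2)$, $\bwedge^3 H$, and $B_2/\F_2$, where $B_2$ is the space of Boolean polynomials of degree $\leq 2$ on $H$; here $H_1(\Gamma^{\mathrm{Sp}}_{2g}(2);\F_2)$ is itself an extension of $\fsp_{2g}\F_2$ by $H$, the factors of $\fsp_{2g}\F_2$ are $V(0),V(\omega_1),V(\omega_2)$ or $V(0)^2,V(\omega_1),V(\omega_2)$ according as $g$ is odd or even (\cite{Ho}), and $B_2$ is described by Shvartsman \cite{Sh} as in the proof of Theorem~\ref{thm:torabtor}. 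Assembling these lists gives the composition factors of $H_1(\Mod_{g,1}(p);\Z)$; in every case the multiplicity of each simple $V(\lambda)$ is eventually constant on the residue class of $g$ modulo $p$, and only finitely many $V(\lambda)$ occur, so the period is uniform in $\lambda$. Note $p$ is the minimal period in general: for $p$ odd the multiplicity of $V(\omega_1)$ is $1$ on the class $g\equiv 1\bmod p$ and $2$ elsewhere, which is exactly the new modular feature, forced by the failure of semisimplicity of $\bwedge^3 H$ (and of $\fsp_{2g}\F_2$) at special residues of $g$.

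For step (iii), the maps $\phi_g$ are compatible with Putman's filtrations: the Johnson quotient $\bwedge^3 H_g$ maps into $\bwedge^3 H_{g+1}$, the congruence piece $\Sym^2 H_g$ into $\Sym^2 H_{g+1}$, and (for $p=2$) $B_2(H_g)$ into $B_2(H_{g+1})$, each compatibly with the inclusion $H_g\hookrightarrow H_{g+1}$ of a hyperbolic pair. This reduces Conditions I and II for $\{H_1(\Mod_{g,1}(p);\Z)\}$ to the same conditions for these elementary sequences, which are checked directly: injectivity follows from injectivity of $H_g\hookrightarrow H_{g+1}$, and surjectivity from the observation that each basis monomial of $\bwedge^3 H_{g+1}$, $\Sym^2 H_{g+1}$, or $B_2(H_{g+1})$ involves boundedly many symplectic basis vectors, hence once $g$ is large lies in the $\Sp_{2g+2}\F_p$--orbit of a monomial supported on the first $2g$ coordinates, i.e.\ in the span of the image of stage $g$. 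Combining (i)--(iii), the sequence $\{H_1(\Mod_{g,1}(p);\Z)\}$ restricted to each residue class modulo $p$ satisfies Conditions I, II, and III$'$, which is the asserted periodic uniform representation stability with period $p$.

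The main obstacle is making steps (ii) and (iii) refer to the same object: since $H_1(\Mod_{g,1}(p);\Z)$ is generally not a semisimple $\F_p[\Sp_{2g}\F_p]$--module, its Jordan--H\"older multiplicities do not determine it, so one must invoke the \emph{naturality in $g$} of Putman's (and Sato's) exact sequences, and of the extension classes in them, in order to conclude that the genuine modules assemble along $\phi_g$ as Definition~\ref{definition:repstab1} requires; pinning down a clean statement of this naturality is the real work. A secondary difficulty, and the source of the period, is the modular input to step (i) --- the submodule structure of $\Sym^2 H$, $\bwedge^3 H$, and $B_2(H)$ over $\F_p$ --- for which there is no characteristic-$p$ analogue of Theorem~\ref{thm:classicalstability} to fall back on.
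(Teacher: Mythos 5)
The statement you have set out to prove is stated in the paper as a \emph{conjecture}, and the quantifier ``Fix any $i\geq 0$'' shows it is intended to cover all homology groups $H_i(\Mod_{g,1}(p);\Z)$ (the ``$H_1$'' in the display is evidently carried over from Theorem~\ref{thm:ModgL}, which immediately precedes it). Your argument only touches $i=1$: everything rests on Putman's explicit description of the abelianization, and no analogue of that description is available for $H_i$ with $i\geq 2$. As a proof of the conjecture it therefore has an unfillable gap at present --- which is precisely why the authors leave the general statement open. What you have actually written is, in substance, a proof of the degree-one case, which the paper already establishes as Theorem~\ref{thm:ModgL}.

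Comparing your argument with the paper's proof of that theorem: the two are essentially identical. Both use Putman's extension of $\fsp_{2g}\F_p$ by $\bwedge^3 H$ for $p$ odd (respectively the three-step filtration with quotients $H_1(\Gamma^{\rm Sp}_{2g}(2);\F_2)$, $\bwedge^3 H$, and $B_2/\F_2$ for $p=2$), Hogeweij's composition series for $\fsp_{2g}\F_p$, the residue-dependent composition series of $\bwedge^3 H$, and Shvartsman's description of $B_2$, and both conclude that the Jordan--H\"older multiplicities depend only on $g$ modulo $p$. You go beyond the paper in two respects, both to your credit: you sketch \emph{why} the multiplicity of $V(\omega_1)$ in $\bwedge^3 H$ jumps exactly at $g\equiv 1\bmod p$ (the contraction composed with wedging against the symplectic form is a scalar that vanishes mod $p$ precisely there), and you attempt to verify Conditions I and II for the stabilization maps, correctly identifying that this requires naturality in $g$ of Putman's and Sato's exact sequences. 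The paper's proof verifies only the multiplicity condition (PMIII) and is silent on Conditions I and II, so your flagged ``main obstacle'' is a genuine loose end in the published argument as well, not a defect peculiar to your write-up.
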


We believe that all of the material in this section can be extended to
corresponding ``level $p$ congruence subgroups'' of $\IA_n$.

\small

\noindent
Dept.\ of Mathematics\\
Stanford University\\
450 Serra Mall\\
Stanford, CA 94305\\
E-mail: church@math.stanford.edu
\medskip

\noindent
Dept. of Mathematics\\
University of Chicago\\
5734 University Ave.\\
Chicago, IL 60637\\
E-mail: farb@math.uchicago.edu
\medskip

\end{document}